\documentclass[11pt,reqno]{amsart}

\usepackage{amsmath, amsthm, amssymb}
\usepackage{amsfonts}
\usepackage{mathtools}

\usepackage[ansinew]{inputenc}
\usepackage[dvips]{epsfig}
\usepackage{graphicx}
\usepackage[english]{babel}
\usepackage{enumitem}

\usepackage{stackengine}
\stackMath

\usepackage{aliascnt}

\theoremstyle{plain}
\newtheorem{theorem}{Theorem}[section]

\newaliascnt{lemma}{theorem}
\newtheorem{lemma}[lemma]{Lemma}
\aliascntresetthe{lemma}

\newaliascnt{proposition}{theorem}
\newtheorem{proposition}[proposition]{Proposition}
\aliascntresetthe{proposition}

\newaliascnt{corollary}{theorem}

\aliascntresetthe{corollary}

\theoremstyle{definition}
\newaliascnt{definition}{theorem}
\newtheorem{definition}[definition]{Definition}
\aliascntresetthe{definition}

\newaliascnt{remark}{theorem}

\aliascntresetthe{remark}

\newaliascnt{example}{theorem}

\aliascntresetthe{example}

\theoremstyle{definition}
\newtheorem*{remark*}{Remark}
\newtheorem*{assumption*}{Assumption}

\numberwithin{equation}{section}

\usepackage[backgroundcolor=white, bordercolor=blue,
linecolor=blue]{todonotes}

\usepackage{hyperref}
\usepackage{cleveref}

\usepackage{dsfont}
\usepackage{bbm}

\newcommand{\N}{\mathbb{N}}
\newcommand{\R}{\mathbb{R}}

\newcommand{\Z}{\mathbb{Z}}

\newcommand{\eps}{\epsilon}

\newcommand{\bbR}{\mathbb{R}}

\DeclareMathOperator{\dist}{dist}

\DeclareMathOperator{\supp}{supp}

\DeclareMathOperator*{\osc}{osc}

\DeclareMathOperator{\tail}{Tail}

\newcommand{\average}{{\mathchoice {\kern1ex\vcenter{\hrule height.4pt
width 6pt depth0pt} \kern-9.7pt} {\kern1ex\vcenter{\hrule
height.4pt width 4.3pt depth0pt} \kern-7pt} {} {} }}
\newcommand{\dashint}{\average\int}

\begin{document}
\allowdisplaybreaks
\title[Quantitative homogenization and uniform regularity]{Quantitative periodic homogenization and uniform regularity for nonlocal equations}

\author{Kyeongbae Kim}
 \address{Institute for Applied Mathematics, University of Bonn, Endenicher Allee 60, 53115, Bonn, Germany}
 \email{kkim@uni-bonn.de}
 \urladdr{https://sites.google.com/view/kyeongbaekim/}

\author{Simon Nowak}
\address{Fakult\"at f\"ur Mathematik, Universit\"at Bielefeld, Postfach 100131, D-33501 Bielefeld, Germany}
\email{simon.nowak@uni-bielefeld.de}

\author{Marvin Weidner}
 \address{Institute for Applied Mathematics, University of Bonn, Endenicher Allee 60, 53115, Bonn, Germany}
 \email{mweidner@uni-bonn.de}
 \urladdr{https://sites.google.com/view/marvinweidner/}



\allowdisplaybreaks

\begin{abstract}
In this work, we study quantitative homogenization for a class of nonlocal equations with periodically oscillating H\"older continuous coefficients. For Dirichlet problems in bounded $C^{1,\alpha}$ domains, we prove $L^2$ convergence rates that are expected to be optimal. In addition, we establish interior Lipschitz estimates and optimal boundary H\"older estimates for weak solutions, with constants independent of the period of oscillation.
\end{abstract}

\maketitle

\section{Introduction}

Quantitative homogenization for local second-order elliptic equations has been studied extensively over the past decades. In the periodic setting, the pioneering work of Avellaneda and Lin \cite{AL87} introduced a compactness method for proving regularity estimates with constants independent of the period of the coefficients. Subsequent work of Kenig, Lin and Shen \cite{KLS12,KLS13,KLS14} established convergence rates in bounded domains, uniform boundary estimates and asymptotic expansions of Green functions. We refer to the monograph of Shen \cite{Shen18} for a systematic account of the periodic theory. 

More recently, a quantitative theory has also been developed in the stochastic setting. We mention in particular the optimal variance and corrector estimates of Gloria, Neukamm and Otto \cite{GO11,GNO15}, and the corresponding large scale regularity theory of the mentioned authors \cite{GNO20}. A variational approach to quantitative homogenization and large scale regularity was independently developed by Armstrong and Smart \cite{AS16} and Armstrong, Kuusi and Mourrat \cite{AKM16,AKM17}. For detailed treatments of quantitative stochastic homogenization, we refer to the monographs \cite{AKM19,AK24}.

For nonlocal operators related to the fractional Laplacian, quantitative homogenization is far less understood already in the periodic setting, especially in bounded domains. A crucial difficulty is that even the homogenized solution generally has singular derivatives near the boundary. Indeed, the natural boundary behavior corresponds to $d_\Omega^s$, where $d_\Omega$ denotes the distance to the boundary. Despite this singular boundary behavior, the aim of this paper is to establish $L^2$ convergence rates for periodic nonlocal Dirichlet problems in bounded domains that match the known rates on the whole space, together with interior and boundary regularity estimates that are uniform in the period of oscillation.

\subsection{Quantitative homogenization of nonlocal Dirichlet problems}
To be more precise, for $s\in(0,1)$ and $\eps\in(0,1]$, we consider nonlocal operators formally given by
\begin{equation}\label{intro:operator}
L_\eps u(x)\coloneqq \mathrm{P.V.}\int_{\bbR^n}
\frac{u(x)-u(y)}{|x-y|^{n+2s}}K(x/\eps,y/\eps)\,dy,
\end{equation}
where $n>2s$. The coefficient $K:\bbR^n\times\bbR^n\to\bbR$ is symmetric, uniformly elliptic, and periodic in each variable, in the sense that
\begin{equation}\label{intro:kernel}
\begin{gathered}
K(\xi,\eta)=K(\eta,\xi),\qquad
\Lambda^{-1}\leq K(\xi,\eta)\leq\Lambda,\\
K(\xi+k,\eta+\ell)=K(\xi,\eta)
\qquad\text{for all }k,\ell\in\mathbb Z^n
\end{gathered}
\end{equation}
for some $\Lambda\geq1$. Thus we identify $K$ with a function on $\mathbb T^n\times\mathbb T^n$, where $\mathbb T^n=(-\frac12,\frac12]^n$. Moreover, we say that $K \in C^\alpha(\mathbb T^n)$ for some $\alpha \in (0,1)$ if 
\begin{align*}
    [K]_{C^\alpha(\mathbb T^n)} := \sup_{\xi,\eta,h \in \mathbb T^n} \left( \frac{|K(\xi+h,\eta+h) - K(\xi,\eta)|
    }{|h|^\alpha} \right) < \infty. 
\end{align*}

The homogenized operator is a constant multiple of the fractional Laplacian, namely
\begin{equation}\label{intro:homogenized}
\overline L u(x)\coloneqq\overline K\,\mathrm{P.V.}\int_{\bbR^n}
\frac{u(x)-u(y)}{|x-y|^{n+2s}}\,dy,
\qquad
\overline K\coloneqq\int_{\mathbb T^n}\int_{\mathbb T^n}K(\xi,\eta)\,d\eta\,d\xi.
\end{equation}
The identification of this homogenized operator is due to Kassmann, Piatnitski and Zhizhina \cite{KPZ19}. In contrast to their paper, the main goal of this work is to establish quantitative homogenization results with optimal convergence rates.
To do so, we study the Dirichlet problems
\begin{equation}\label{intro:dirichlet}
\left\{\begin{aligned}
L_\eps u_\eps&=h&&\quad\text{in }\Omega,\\
u_\eps&=g&&\quad\text{in }\bbR^n\setminus\Omega,
\end{aligned}\right.
\qquad
\left\{\begin{aligned}
\overline L\overline u&=h&&\quad\text{in }\Omega,\\
\overline u&=g&&\quad\text{in }\bbR^n\setminus\Omega,
\end{aligned}\right.
\end{equation}
where $\Omega\subset\bbR^n$ is a bounded domain. In contrast to local Dirichlet problems, the datum $g$ is prescribed on the whole complement of $\Omega$.

We are now in a position to state our first main result. For the precise notion of weak solutions, we refer to Section \ref{secprelim}.

\begin{theorem}[Quantitative homogenization]\label{thm:intro:homogenization}
Let $s\in(0,1)$, $\alpha\in(0,1)$ and $\varepsilon\in(0,1]$.
Assume that $\Omega$ is a bounded $C^{1,\alpha}$ domain and that
$K\in C^\alpha(\mathbb T^n)$ satisfies
\eqref{intro:kernel}. Let $u_\varepsilon$ and $\overline u$ be weak
solutions of \eqref{intro:dirichlet} with
$h\in C^{1,\alpha}(\overline\Omega)$ and
$g\in W^{1,\infty}(\mathbb R^n)$. Then we have
\begin{equation}\label{intro:rate}
\begin{aligned}
\|u_\varepsilon-\overline u\|_{L^2(\Omega)}
&\leq c\Bigl(
\|h\|_{C^{1,\alpha}(\overline\Omega)}
+\|g\|_{W^{1,\infty}(\mathbb R^n)}
\Bigr)
\begin{cases}
\varepsilon^{2s},
& s<\frac12,\\
\varepsilon(1+|\ln\varepsilon|)^{3/2},
& s=\frac12,\\
\varepsilon^{2-2s},
& s>\frac12,
\end{cases}
\end{aligned}
\end{equation}
for some constant
$c=c(n,s,\Lambda,\alpha,
[K]_{C^\alpha(\mathbb T^n)},\Omega)$.
\end{theorem}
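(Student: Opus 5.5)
The plan is a two-scale expansion with a corrector, combined with a duality argument. The boundary layer is handled through the $d_\Omega^s$ behavior of $\overline u$.

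\textbf{Step 1: corrector and expansion.} First I introduce the periodic corrector $\chi$ solving the cell problem associated with $L_1$:
\begin{equation*}
L_1(\chi + x)=\text{(const)} \quad\text{in a suitable periodic sense}.
\end{equation*}
For $s<\frac12$ the corrector is not needed at leading order, since the error is $O(\eps^{2s})$ from oscillation alone. I will handle the three regimes in a unified way. Write
\begin{equation*}
w_\eps = u_\eps - \overline u - \eps\,\chi(x/\eps)\cdot\nabla\overline u\,\eta_\delta(x),
\end{equation*}
where $\eta_\delta$ is a cutoff vanishing in the layer $\{d_\Omega<\delta\}$. For $s<\frac12$ I drop the corrector term entirely.

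\textbf{Step 2: residual estimate.} Next I compute $L_\eps w_\eps$ in $\Omega$. The key identity is that $L_\eps\overline u-\overline L\overline u$ splits into two parts:
\begin{itemize}
\item a short-range part, $|x-y|<\eps$, bounded by $\eps^{2-2s}|D^2\overline u|$;
\item a long-range part, where averaging over periods gives $K\to\overline K$, leaving a flux-type remainder.
\end{itemize}
The flux-type remainder is written in divergence (nonlocal-divergence) form, so that it is estimated in a dual norm $H^{-s}$. This uses the Hölder continuity of $K$, needed to make the commutator with the periodic structure quantitative. Interior regularity of $\overline u$ follows from $h\in C^{1,\alpha}$, $g\in W^{1,\infty}$ and $C^{1,\alpha}$ domains (known fractional Laplacian theory). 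This gives $|D^k\overline u|\lesssim d_\Omega^{s-k}$ plus harmless terms.

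\textbf{Step 3: energy and duality.} Testing with $w_\eps$ (which vanishes outside $\Omega$, up to the cutoff) gives an $H^s$ bound of the residual. The residual contributions are of size
\begin{equation*}
\int_\Omega \min\{\eps^{2-2s}d^{s-2},\ \eps^{2s}d^{-s}\cdots\}\,dx,
\end{equation*}
and I optimize $\delta\sim\eps$. To upgrade from the energy rate to the $L^2$ rate I use the standard duality argument. Solve $L_\eps^*\phi_\eps = f$ and $\overline L\overline\phi=f$ with $f\in L^2$, and compare using the expansion for both problems. This doubles the gain, yielding $\eps^{2s}$ for $s<\frac12$ and $\eps^{2-2s}$ for $s>\frac12$. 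The two competing scales $\eps^{2s}$ and $\eps^{2-2s}$ coincide at $s=\frac12$. There the layer integrals are logarithmic, and the product of the primal and dual logarithms together with the cutoff gives $(1+|\ln\eps|)^{3/2}$.

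\textbf{Main obstacle.} I expect the hardest step to be the boundary layer. The dual solution $\overline\phi$ only has $d^s$ regularity, so the integrals $\int_{\{d<\delta\}}|\nabla\overline u||\nabla\overline\phi|$ and the tails of the nonlocal residual near $\partial\Omega$ must be controlled carefully, using weighted estimates $\|d^{1-s}\nabla\overline u\|_{L^\infty}$. I would first try to bound everything by weighted Hardy inequalities in $H^s_0(\Omega)$.
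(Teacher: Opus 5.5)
Your outline has genuine gaps, beginning with the residual in Step~2.

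\textbf{The residual and the correctors.} Averaging $z\mapsto K(x/\eps,(x-z)/\eps)$ over a period cell does not give the constant $\overline K$. It gives the oscillating function $\overline K(x/\eps)$, where $\overline K(\xi)=\int_{\mathbb T^n}K(\xi,\xi-\zeta)\,d\zeta$. Hence $L_\eps\overline u-\overline L\overline u$ contains the non-small term $h(x)\bigl(\overline K(x/\eps)-\overline K\bigr)/\overline K$. This term requires a second cell problem $L_1\psi=-(\overline K(\cdot)-\overline K)/\overline K$ and the extra term $\eps^{2s}\psi(\cdot/\eps)h$ in the expansion; it is exactly the source of the rate $\eps^{2s}$ for $s<\frac12$.

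The short-range part is not $O(\eps^{2-2s}|D^2\overline u|)$ either. Since $\zeta\mapsto K(\xi,\xi-\zeta)$ is not even, the first-order Taylor term survives as a drift $\eps^{1-2s}b(x/\eps)\cdot\nabla\overline u$. For $s\in(\frac14,\frac12)$ this exceeds $\eps^{2s}$ and can only be controlled through its oscillation, i.e.\ by the corrector you dropped.

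Moreover, for $s\le\frac12$ your cell problem has no meaning, because $\int z|z|^{-n-2s}K(x,x-z)\,dz$ diverges at infinity. At $s=\frac12$ the divergence is logarithmic, so the ``unified'' treatment breaks exactly there. The paper instead uses an $\eps$-dependent corrector whose right-hand side is truncated to $\eps^{-1}\mathbb T^n$, and proves $\eps$-independent $C^\sigma$ bounds for it by periodic cancellation (\autoref{lem.bdd.periodic}).

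\textbf{The duality step and the boundary layer.} Your exponent bookkeeping (energy error $\eps^{\min\{s,1-s\}}$, then doubled) matches the target, but Step~3 does not close. To double the rate you must expand the dual solution as well. For $f\in L^2$, however, the homogenized dual solution has only $H^{2s}_{\mathrm{loc}}$ interior regularity, and no $D^2$ or weighted bounds $|D^k\cdot|\lesssim d_\Omega^{s-k}$. For $s<\frac12$ it has no gradient at all. Your primal residual bounds use $D^2\overline u$ and $|D^k\overline u|\lesssim d_\Omega^{s-k}$, which come from $h\in C^{1,\alpha}$ and are unavailable for the dual. Any such comparison also needs control of the oscillating dual solution uniformly in $\eps$ up to $\partial\Omega$, and energy estimates plus Hardy inequalities do not give this.

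Likewise, cutting off only the corrector leaves $\overline u$ itself untouched. In the layer $\{d_\Omega\lesssim\eps\}$ its regularity scale is comparable to $\eps$, so the residual has no small factor there.

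\textbf{What the paper uses instead.} The paper supplies these missing ingredients as follows.
\begin{enumerate}
\item An Avellaneda--Lin compactness argument gives large-scale interior and boundary $C^\beta$ estimates for $L_\eps$ down to scale $\eps$ (\autoref{lem.large.int}, \autoref{lem.large.bdry}). These yield $\eps$-independent Green function and Poisson kernel bounds (\autoref{lem.green}, \autoref{lem.fracsob.green}).
\item $\overline u$ is replaced by the solution $\overline u_\eps$ in the enlarged domain $\Omega^\eps$, so that $d_\eps\ge\eps$ on $\Omega$. Both $\overline u-\overline u_\eps$ and the $L_\eps$-harmonic part $\theta_\eps$ are then controlled by the Poisson kernel.
\item The non-divergence part of the residual satisfies pointwise bounds such as $|F|\lesssim\eps\,d_\eps^{-1-s}+\eps^{2s}$. $L^2$ norms are then computed directly from the Green representation, so no regularity of a dual homogenized problem is needed.
\end{enumerate}

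Finally, the power $\frac32$ at $s=\frac12$ is not a product of primal and dual logarithms. It requires Green bounds with the exact boundary exponent $s$, i.e.\ uniform optimal $C^s$ boundary regularity (Section~\ref{sec6}). The paper obtains this by feeding the preliminary rates (\autoref{lem.conv.sub}) into a perturbation argument. With exponents $\beta<s$ one only gets $\eps^\gamma$ for $\gamma<1$.
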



The powers of $\eps$ in \eqref{intro:rate} coincide with those obtained on the whole space by Piatnitski, Sloushch, Suslina and Zhizhina \cite{PSSZ24}. Their proof relies on spectral methods and does not appear to readily extend to the case of bounded domains. For problems in bounded domains, Chen, Chen, Kumagai and Wang \cite{CCKW24} proved $L^2$ estimates with the same powers of $\eps$ as in \eqref{intro:rate}, but only under the additional assumption that $\overline u\in C_c^2(\Omega)$. This assumption is highly restrictive, as it excludes the natural boundary behavior of solutions to the nonlocal Dirichlet problem \eqref{intro:dirichlet}, which in general have only $C^s(\overline\Omega)$ regularity. Without this assumption, in \cite{CCKW24} only suboptimal rates in $L^1$ are established. Our  \autoref{thm:intro:homogenization} removes this restriction and shows that the natural boundary singularity of $\overline u$ does not cause any loss in the convergence rates, since we obtain the improved rates in $L^2$ as in the whole space, without imposing any additional regularity assumptions such as $\overline u\in C_c^2(\Omega)$. Moreover, we would like to point out that we can establish $L^1$ convergence rates of the form $\eps(1+|\ln\eps|)$ in case $s \in (0,\frac{1}{2}]$ and $h \equiv 0$; see \autoref{conv.sleq12}.

Moreover, in comparison to \cite{CCKW24} we weaken the smoothness assumptions on the right-hand side $h$, allow for nonzero exterior data $g$, and require merely H\"older continuity of the coefficient $K$, rather than $C^2$ regularity as in \cite{CCKW24}. The latter improvement is particularly relevant, as the assumption $K \in C^2$ allows the authors in \cite{CCKW24} to rewrite the equation in nondivergence form and to evaluate the nonlocal operator in a pointwise sense.  This approach is no longer available under our weaker assumptions. Instead, we must work directly on the level of weak solutions, which is considerably more involved, especially since we need to deal with the double integrals arising in the weak formulation of the problem. A key feature of our approach is the use of large-scale regularity estimates, which enable us to overcome these difficulties and establish optimal convergence rates (see more details below). In an upcoming (AI-assisted) preprint, we will establish \autoref{thm:intro:homogenization} without any regularity assumption on $K$ via a completely different method that does not use any correctors.

\subsection{Uniform regularity for nonlocal equations}
Our second main result concerns regularity estimates for solutions to \eqref{intro:dirichlet} that are \emph{uniform in the oscillation period $\eps$}. For second-order elliptic equations, such estimates go back to Avellaneda and Lin \cite{AL87}; see also \cite{KLS13,Shen18} for the periodic theory and \cite{AS16,AKM16,GNO20,AKM19,AK24} for large scale regularity in stochastic homogenization. The underlying principle is that, at scales larger than the oscillation period, solutions can be approximated by solutions of the homogenized equation, allowing for a transfer of regularity to such large scales. Combined with standard regularity estimates below this scale, this yields regularity bounds on all scales.

For nonlocal equations, an extensive regularity theory has been developed over the past decades. For equations with measurable coefficients, Harnack inequalities and H\"older estimates were established for example in the works \cite{BL02,Sil06,Kas09,DKP14,DKP16}. Higher interior regularity estimates across small scales under additional continuity assumptions on the coefficients were obtained for instance in \cite{Fal20,Now21,KNS22,FeR024S,DFMN26}.
At the boundary, optimal $C^s$ estimates for the fractional Laplacian and related estimates for translation-invariant operators of order $2s$ were established in \cite{ROS14,ROS17,RS16,RW24}. Moreover, higher order boundary regularity was obtained in \cite{AR20,Gru15}. For nonlocal equations with H\"older coefficients, optimal boundary regularity and sharp Green function estimates were recently established in \cite{KW25}. Boundary estimates under slightly weaker assumptions on the coefficients were obtained in \cite{BKK25}.

The aforementioned higher regularity estimates for nonlocal equations with variable coefficients depend on the modulus of continuity of the coefficients. Applying them to \eqref{intro:operator} therefore does not give bounds that are uniform in $\eps$, since the H\"older seminorm of $K(\cdot/\eps,\cdot/\eps)$ grows like $\eps^{-\delta}$. By invoking homogenization techniques, we prove that periodicity of $K$ allows us to obtain both interior Lipschitz estimates and boundary $C^s$ estimates with constants independent of $\eps$.

In order to formulate the estimates, we recall the nonlocal tail and the nonlocal excess given by
\begin{equation}\label{intro:tail}
\begin{aligned}
\tail(u;B_R(x_0))&\coloneqq R^{2s}\int_{\bbR^n\setminus B_R(x_0)}
\frac{|u(y)|}{|x_0-y|^{n+2s}}\,dy,\\
E(u;B_R(x_0))&\coloneqq
\dashint_{B_R(x_0)}|u|\,dx
+\tail(u;B_R(x_0)).
\end{aligned}
\end{equation}

\begin{samepage}
\begin{theorem}[Uniform interior and boundary regularity]\label{thm:intro:regularity}
Let $s\in(0,1)$, $\alpha \in(0,1)$ and $\eps\in(0,1]$. Suppose that $K\in C^\alpha(\mathbb T^n)$ satisfies \eqref{intro:kernel} and that $\Omega$ is a $C^{1,\alpha}$ domain. Let $x_0\in\overline\Omega$, $0<R\leq1$ and let $u_\eps\in H^s(B_R(x_0))\cap L^1_{2s}(\bbR^n)$ be a weak solution of
 \begin{equation*}
\left\{
\begin{alignedat}{3}
L_\eps u_\eps&=f&&\qquad \mbox{in  $\Omega\cap B_R(x_0)$}, \\
u_\eps&=0&&\qquad  \mbox{in $B_R(x_0)\setminus \Omega$},
\end{alignedat} \right.
\end{equation*}
where $f\in L^\infty(\Omega\cap B_R(x_0))$. Then
\begin{equation}\label{intro:boundary}
[u_\eps]_{C^s(B_{R/2}(x_0))}
\leq cR^{-s}\left(E(u_\eps;B_R(x_0))
+R^{2s}\|f\|_{L^\infty(\Omega\cap B_R(x_0))}\right),
\end{equation}
where $c$ depends only on $n,s,\Lambda,\alpha$, $[K]_{C^\alpha(\mathbb T^n)}$ and on $\Omega$, but not on $\eps$.

Moreover, if $s>\frac12$ and $B_R(x_0)\subset\Omega$, then
\begin{equation}\label{intro:gradient}
\|\nabla u_\eps\|_{L^\infty(B_{R/2}(x_0))}
\leq cR^{-1}\left(E(u_\eps;B_R(x_0))
+R^{2s}\|f\|_{L^\infty(B_R(x_0))}\right),
\end{equation}
where $c$ depends only on $n,s,\Lambda,\alpha$ and $[K]_{C^\alpha(\mathbb T^n)}$, but not on $\eps$.
\end{theorem}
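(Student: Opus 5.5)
We follow the Avellaneda--Lin strategy: large-scale excess decay obtained from approximation by the homogenized equation, combined with small-scale regularity obtained by rescaling. By the scaling $x\mapsto x_0+Rx$, the operator $L_\eps$ becomes $L_{\eps/R}$ with the same $K$, and $\Omega$ becomes a $C^{1,\alpha}$ domain with flatter boundary. We may therefore assume $R=1$, $x_0=0$ and normalize $E(u_\eps;B_1)+\|f\|_{L^\infty}\le 1$. If $\eps\ge c_0$ for a small $c_0$, the rescaled kernel $K(\cdot/\eps,\cdot/\eps)$ has a H\"older seminorm bounded in terms of $c_0$. The known results for H\"older coefficients (boundary $C^s$ from \cite{KW25}, interior $C^{1,\beta}$ for $s>\frac12$ from e.g.\ \cite{FeR024S,DFMN26}) then give the claim. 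The real task is to propagate this regularity from scale $1$ down to scale $\eps$. Below scale $\eps$ we rescale by $\eps$ and again apply the small-scale estimates.

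\textbf{Approximation lemma.} For $\eps\le r\le 1$, let $v$ solve $\overline L v=f$ in $\Omega\cap B_{r}$ with $v=u_\eps$ outside $\Omega\cap B_r$. By a compactness argument, or quantitatively through correctors and a local version of \autoref{thm:intro:homogenization}, we aim to show
\[
\dashint_{B_{r/2}}|u_\eps-v|\,dx\le C(\eps/r)^{\gamma}\,\Bigl(E(u_\eps;B_r)+r^{2s}\|f\|_{L^\infty}\Bigr)
\]
for some $\gamma>0$. The compactness route uses three ingredients. First, the uniform De Giorgi--Nash--Moser H\"older estimates of \cite{DKP14,DKP16} and the boundary H\"older estimates give equicontinuity. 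Second, the qualitative homogenization of \cite{KPZ19} identifies the limit operator as $\overline L$. Third, stability of tails under $L^1_{2s}$ convergence lets us pass to the limit in the equation.

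\textbf{Excess decay and iteration.} For the homogenized operator $\overline K(-\Delta)^s$ we have sharp regularity. At the boundary of a $C^{1,\alpha}$ domain, $v$ is $C^s$ with $\|v\|_{C^s(B_{\theta r})}$ controlled by $E$. In the interior, $v$ is $C^{1,\beta}$ for $s>\frac12$, since $f\in L^\infty$ gives $C^{2s}$ regularity with $2s>1$. For the boundary estimate we use the quantity $\Phi(r)=r^{-s}E(u_\eps;B_r)$ at boundary points, and combine it with the interior estimate at the distance scale to recover the full $C^s$ seminorm. For the interior Lipschitz estimate we use the excess $\inf_{\ell}r^{-1}E(u_\eps-\ell;B_r)$ over affine $\ell$. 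Note that $L_\eps\ell\neq0$ in general, so we subtract $\ell+\eps\chi(\cdot/\eps)$ instead, where $\chi$ is a first-order corrector; alternatively we use the fact that $u-\ell$ solves an equation with a bounded right-hand side of size $\lesssim|\nabla\ell|\,\eps^{1-2s}\cdot$ (a term that is small for $s>\frac12$ after a suitable normalization). Fixing $\theta$ small, the approximation lemma yields a one-step improvement
\[
\Phi(\theta r)\le \tfrac12\Phi(r)+C(\eps/r)^\gamma\Phi(r)+C r^{\sigma}\|f\|_{L^\infty}.
\]
Iterating over $r=\theta^k$ until $r\sim\eps$, the error terms form a convergent geometric series. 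The tail terms must be controlled throughout the iteration; they contribute a sum of the earlier $\Phi(\theta^j)$ weighted by $\theta^{(k-j)(2s-\text{exponent})}$, which is summable because the exponents satisfy $s<2s$ and $1<2s$.

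\textbf{Main obstacle.} The main difficulty is the interior Lipschitz case, because affine functions are not $L_\eps$-harmonic. We need correctors $\chi$ solving the periodic cell problem with $\nabla\chi$ bounded, which relies on the H\"older continuity of $K$ on the torus. We also need to show that $x\cdot e+\eps\chi(x/\eps)$ is an exact $L_\eps$-solution, up to a controlled error, that grows linearly at infinity; this requires $s>\frac12$ for the tail to converge. This step is where I expect the real work to lie.
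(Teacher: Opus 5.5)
\textbf{Interior Lipschitz part.} This part is essentially the paper's argument: a compactness one-step improvement for the excess $\inf_{a,b}E(u_\eps-a-b\cdot(x+\eps\phi(x/\eps));B_r)$, using that $L_\eps(x+\eps\phi(x/\eps))=0$ exactly when $s>\frac12$, iterated down to $r\sim\eps$, followed by the $C^{1,\sigma}$ estimates for H\"older coefficients below scale $\eps$. Two small corrections. First, only $\|\phi\|_{L^\infty}$ enters, because $\eps\phi(\cdot/\eps)=O(\eps)\le O(r)$ at large scales; so this step is not where the real work lies, and bounded $\nabla\chi$ is not needed. Second, your alternative of treating $L_\eps\ell$ as a small right-hand side fails: by scaling, $L_\eps\ell$ has size $|\nabla\ell|\,\eps^{1-2s}$, which is large when $s>\frac12$, not small.

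\textbf{Boundary estimate: the iterated inequality is false.} The genuine gap is the boundary estimate with the exact exponent $s$. You iterate $\Phi(\theta r)\le\frac12\Phi(r)+C(\eps/r)^\gamma\Phi(r)+Cr^\sigma\|f\|_{L^\infty}$ with $\Phi(r)=r^{-s}E(u_\eps;B_r)$. Take $K$ constant, so $L_\eps=\overline L$ and the $\eps$-error vanishes, and take $f=0$ and $u=(x_n)_+^s$ in the half-space. Then $E(u;B_r)=r^sE(u;B_1)$, so $\Phi$ is constant and the inequality fails. The homogenized $C^s$ estimate only gives $\Phi(\theta r)\le C\Phi(r)$ with some $C\ge1$. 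Iterating that loses $C^k$, which yields only $C^\beta$ for $\beta<s$; this is exactly what \autoref{lem.large.bdry} already gives by compactness.

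To reach $s$, you must subtract the boundary profile. The paper uses $\mathcal E(r)=r^{-s}\inf_a E(u_\eps-a\varphi;B_r)$, where $\varphi$ is $\overline L$-harmonic with $\varphi\asymp d_\Omega^s$. It gets a genuine gain $\rho^{s+\alpha'}$ from the up-to-the-boundary H\"older regularity of $\overline v/\varphi$ for homogenized solutions. It then controls the coefficients by telescoping, $|c_{\rho^i}-c_{\rho^{i-1}}|\lesssim\mathcal E(\rho^i)+\mathcal E(\rho^{i-1})$.

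\textbf{Boundary estimate: the error must be algebraic.} The approximation error must be algebraic in $\eps/r$, so that $\sum_i(\eps/\rho^i)^\sigma$ stays bounded. Compactness only gives ``error $\le\delta$ once $\eps/r\le\eps_0(\delta)$''. That is not summable over dyadic scales and again only yields exponents $\beta<s$. So the ``compactness'' option in your approximation lemma is not available, and the quantitative route is where the work lies.

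In the paper, the rate comes from \autoref{lem.conv.sub}, an $L^1$ rate for merely H\"older exterior data obtained via mollification and Poisson kernel bounds. It is applied on a smoothed $C^{1,\alpha}$ domain $D_r$ with $\Omega\cap B_{r/16}\subset D_r\subset\Omega\cap B_{r/8}$. It is used twice: to compare $\varphi$ with its $L_\eps$-harmonic replacement $\varphi_\eps$, and to compare a cutoff of $u_\eps-c_r\varphi_\eps$ with its $\overline L$-replacement. The uniform H\"older bound of $u_\eps-c_r\varphi_\eps$ is what makes it admissible exterior data.

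These rates rely on the uniform Green function and Poisson kernel bounds of \autoref{lem.green}. Those bounds are in turn derived from the suboptimal ($\beta<s$) uniform boundary regularity. The proof must therefore be bootstrapped in this order:
\begin{itemize}
\item compactness gives $C^\beta$ for every $\beta<s$;
\item this gives the Green and Poisson kernel bounds;
\item these give the convergence rates;
\item the profile-subtracted iteration of \autoref{lem.decay.C^s} then gives $C^s$.
\end{itemize}
Your proposal has neither the profile subtraction nor this bootstrap.
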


\end{samepage}
To the best of our knowledge, \eqref{intro:boundary} and \eqref{intro:gradient} are the first boundary $C^s$ and interior Lipschitz estimates, respectively, that are uniform in the oscillation period for the class of nonlocal divergence form operators considered here. The boundary exponent $s$ cannot be improved, as is already seen from the boundary behavior of solutions to the Dirichlet problem for the fractional Laplacian. The restriction $s>\frac12$ in \eqref{intro:gradient} is also necessary already for the fractional Laplacian with bounded right-hand sides.

We remark that the estimates at scales larger than the period require no continuity of the coefficients. Indeed, for merely measurable periodic $K$, we prove interior oscillation decay with every exponent $\beta<\min\{2s,1\}$, including $\beta=1$ when $s>\frac12$, and boundary decay with every exponent $\beta<s$, down to scales comparable to $\eps$, see \autoref{lem.large.int} and \autoref{lem.large.bdry}. In other words, the H\"older assumption on $K$ in \autoref{thm:intro:regularity} is used primarily to include smaller scales.

Moreover, as an application of \autoref{thm:intro:regularity}, we obtain upper bounds for the Green function and the Poisson kernel associated to $L_\epsilon$ and $\Omega$ with constants independent of $\eps$; see \autoref{thm.green.C^s}. We also prove a Liouville-type theorem, classifying all global solutions to $L_\eps u_\eps = 0$ with suitable growth at infinity; see \autoref{thm.liou}. In this respect, our results provide nonlocal counterparts of key aspects of the classical theory by Avellaneda--Lin \cite{AL87}.

\subsection{Further comparison with previous results}
Qualitative homogenization for nonlocal equations has been studied in several settings. Arisawa \cite{Ari09} treated nonlocal integro-differential equations in nondivergence form, while Schwab \cite{Sch10,Sch13} considered periodic and stochastic homogenization for fully nonlinear integro-differential equations. In a related local setting, we refer to \cite{Eva92,HP08} for qualitative homogenization of second-order equations including degenerate cases. Homogenization of symmetric nonlocal operators in divergence form of the type \eqref{intro:homogenized} was established by Kassmann, Piatnitski and Zhizhina \cite{KPZ19}, and of stable-like operators in stationary ergodic media by Chen, Chen, Kumagai and Wang \cite{CCKW21}. The latter authors also obtained quantitative stochastic homogenization results for random conductance models with stable-like jumps \cite{CCKW25}.

Let us discuss in more detail the quantitative results closest to \autoref{thm:intro:homogenization}. For $C^2$ coefficients, bounded $C^{1,1}$ domains, suitably regular forcing and homogeneous exterior data, Chen, Chen, Kumagai and Wang \cite[Theorem~1.1]{CCKW24} proved $L^1$ convergence estimates with the weaker rates $\eps^s$ for $s<\frac12$ and $\eps^{1-s}$ for $s>\frac12$, and a logarithmic modification of $\eps^{1/2}$ at $s=\frac12$. The $L^2$ estimate under the compact support assumption on $\overline u$ discussed above is proved in \cite[Theorem~1.2]{CCKW24}.

The whole-space estimates of Piatnitski, Sloushch, Suslina and Zhizhina \cite{PSSZ24} give convergence of the resolvents in the $L^2\to L^2$ operator norm, with essentially the same rates that we obtain in \autoref{thm:intro:homogenization}. As mentioned, the methods applied in \cite{PSSZ24} seem to be global in nature.

Higher-order approximations on the whole space were recently constructed by Piatnitski, Sloushch, Suslina and Zhizhina \cite{PSSZ26} and Chen, Chen, Kumagai and Wang \cite{CCKW26}. These works improve the error by adding further corrector terms to the homogenized solution. In contrast, our error estimate \eqref{intro:rate} controls $u_\eps-\overline u$ directly in a bounded domain.

\subsection{Proof strategy}
\label{subsec:idea}

We briefly outline the main ideas behind the proofs of our main result \autoref{thm:intro:homogenization}. A key idea to obtain optimal convergence rates is to introduce two nonlocal correctors $\phi=(\phi_i)_{i=1}^n$ and $\psi$, defined as periodic weak solutions of
\begin{align*}
   L_1\phi_i(x)&=
\begin{cases}
\int_{\R^n}\frac{z_i}{|z|^{n+2s}}\bigl(\overline K - K(x,x-z)\bigr)\, dz,
& s>\frac12, \\
\int_{\eps^{-1}\mathbb T^n}\frac{z_i}{|z|^{n+2s}}\bigl(\overline K - K(x,x-z) \bigr)\, dz,
& s\leq\frac12,
\end{cases}\\
L_1\psi(x)&=-\overline K^{-1}\bigl(\overline K(x)-\overline K\bigr), \quad \text{where} \quad \overline{K}(x) = \int_{\mathbb{T}^n}K(x,x-z)\,dz.
\end{align*}
The first-order corrector $\phi$ compensates for the oscillations of the coefficients, while $\psi$ accounts for the long-range interactions of the nonlocal operator. Note that these objects were already introduced in \cite{CCKW24}. To prove optimal homogenization rates, a classical idea in homogenization theory is to establish suitable two-scale expansions of the solution $u_\eps$ to \eqref{intro:dirichlet} in terms of $\overline{u}$ and the corrector of the form
\begin{align*}
    u_\eps(x) = \overline{u}(x) + \eps \phi(x/\eps) \nabla \overline{u}(x) + \dots,
\end{align*}
which amounts to proving $L^2$ estimates of the error $\widetilde w_\eps := u_\eps - \big( \overline{u}+ \eps \phi(\cdot/\eps) \cdot\nabla \overline{u} \big)$. A key difficulty in the nonlocal setting comes from the lack of higher boundary regularity of solutions to the nonlocal Dirichlet problem. In fact, in general we only have that $\overline{u} \in C^s(\overline{\Omega})$, and therefore, it does not necessarily hold $\nabla \overline{u} \in H^s(\Omega)$. This means that $\widetilde w_\eps$ cannot be estimated via energy methods. To overcome this difficulty, we enlarge $\Omega$ to an $\eps$-tubular neighborhood $\Omega^\eps$ and consider the auxiliary problem
\begin{equation}\label{conv:eq.overbarueps}
\left\{
\begin{alignedat}{3}
\overline{L}\overline{u}_\eps&=h&&\qquad \mbox{in  $\Omega^{\eps}$}, \\
\overline{u}_\eps&=g&&\qquad  \mbox{in $\bbR^n\setminus \Omega^{\eps}$}.
\end{alignedat} \right.
\end{equation}
Since $\Omega$ lies at a distance comparable to $\eps$ from the boundary of $\Omega^\eps$, we have $\nabla\overline u_\eps\in H^s(\Omega)$. This motivates estimating the errors
\begin{equation*}
w_\eps\coloneqq u_\eps-\left(\overline u_\eps
+\eps\phi(\cdot/\eps)\cdot\nabla \overline u_\eps
+\eps^{2s}\psi(\cdot/\eps)h\right), \qquad b_\eps := \overline{u} - \overline{u}_\eps.
\end{equation*} 

The overall idea of replacing the homogenized solution by a solution in a slightly larger domain is inspired by \cite{KLS12},  who employed this construction in local elliptic homogenization to overcome the lack of $H^2$ regularity in Lipschitz domains. Note that in our nonlocal setting, the long-range interactions introduce additional difficulties in controlling the resulting  error terms. Another distinguishing feature of our approach is that we combine the corrector expansion with uniform Green function and Poisson kernel estimates of $L_\eps$, rather than the $L^2$ boundary-value estimates used in \cite{KLS12}, to estimate the errors $w_\eps$ and $b_\eps$.

We achieve this by first establishing large scale interior Lipschitz and boundary $C^\beta$ estimates for every $\beta < s$ by a compactness argument, inspired by Avellaneda--Lin \cite{AL87}; see \autoref{lem.large.int} and \autoref{lem.large.bdry}. Thereby, we prove a preliminary version of \autoref{thm:intro:regularity}. These estimates yield almost sharp bounds for the Green function and Poisson kernel associated with $L_\eps$, with constants independent of $\eps$; see Section \ref{sec:green}. Using these bounds to estimate the errors $w_\eps$ and $b_\eps$, ultimately yields the optimal convergence rates in \autoref{thm:intro:homogenization}, at least for $s \not= \frac12$, i.e. we obtain for any $\gamma \in (0,1)$,
 \begin{align}
 \label{eq:prelim-conv-intro}
     \|u_\eps-\overline{u}\|_{L^2(\Omega)}\lesssim 
        \eps^{2s}\mbox{\large$\chi$}_{\{s<\frac12\}}+\eps^{\gamma}\mbox{\large$\chi$}_{\{s=\frac12\}}+\eps^{2-2s}\mbox{\large$\chi$}_{\{s>1/2\}}.
 \end{align}
We refer the reader to \autoref{lem.conv} and the subsequent  discussion, where we explain in more details how $w_\eps$ and $b_\eps$ are estimated.

To obtain the optimal boundary $C^s$ regularity in \autoref{thm:intro:regularity}, we make use of the convergence rates in \eqref{eq:prelim-conv-intro} and combine them with sharp boundary regularity estimates of the homogenized equation. These estimates in turn yield sharp bounds for the Green function and Poisson kernel with the optimal boundary exponent $s$; see \autoref{thm.green.C^s}. Finally, by repeating the aforementioned procedure to obtain suitable two-scale expansions of $u_\eps$, and using this time the sharp bounds, we obtain the optimal convergence rate $\eps(1+|\ln\eps|)^{3/2}$ also in case $s=\frac12$, completing the proof of \autoref{thm:intro:homogenization}.

\subsection{Acknowledgments}
Kyeongbae Kim and Marvin Weidner were supported by the Deutsche Forschungsgemeinschaft (DFG,
German Research Foundation) under Germany's Excellence Strategy - EXC-2047/1 - 390685813 and through the CRC 1720 ``Analysis of criticality: from complex phenomena to models and estimates'', 53930965.
Simon Nowak was supported by the Deutsche
Forschungsgemeinschaft (DFG, German Research Foundation) - SFB 1283/2 2021 - 317210226.

\subsection{AI statement} AI tools were used exclusively in the final stages of the preparation of this manuscript to check the paper for errors and inconsistencies.

\section{Preliminaries} \label{secprelim}
Throughout the paper, we denote by $c$ a universal constant which is bigger or equal than 1. Moreover, we use the parenthesis to show the dependence of the constant $c$ such as $c=c(n,s,\Lambda)$ if the constant $c$ depends only on $n,s,\Lambda$.

First we introduce some notation that is used frequently in the paper.
\begin{itemize}
    \item Let $\mathbb{T}^n\coloneqq (-\frac12,\frac12]^n$ and  $a\mathbb{T}^n(z)\coloneqq z+(-\frac{a}2,\frac{a}2]^n$ for any $a>0$ and $z\in\bbR^n$.
    \item For any $x\in\bbR^n$, $\|x\|_\infty\coloneqq \max\limits_{i}\{|x_i|\,:\,x=(x_i)\}$. 
    \item For any set $D$, we say that $u\in L^1_{2s}(D)$ if $\|u\|_{L^1_{2s}(D)}\coloneqq\int_{D}\frac{|u(y)|}{(1+|y|)^{n+2s}}\,dy<\infty$.
    \item For any $u\in L^1_{2s}(\bbR^n)$, we write
    \begin{equation}\label{defn.excess}
    \begin{aligned}
    \mathrm{Tail}(u;B_r(x_0))&\coloneqq r^{2s}\int_{\bbR^n\setminus B_r(x_0)}\frac{|u(y)|}{|x_0-y|^{n+2s}}\,dy,\\
        E(u;B_r(x_0))&\coloneqq \dashint_{B_r(x_0)}|u|\,dx+\mathrm{Tail}(u;B_r(x_0)).
    \end{aligned}
    \end{equation}
    \item For any bounded domain $\Omega$, we define $d_\Omega(x)\coloneqq \mathrm{dist}(x,\partial\Omega)$. In addition, for $\rho>0$, we write 
\begin{align}\label{defn.B_2Omega} 
    B_\rho(\Omega)\coloneqq\{x\in\bbR^n:\dist(x,\Omega)<\rho\}.
\end{align}
\end{itemize}

In order to control the growth at infinity, for a measurable set $A\subset\bbR^n$ we write
\begin{equation}\label{intro:tailspace}
\|v\|_{L^1_{2s}(A)}\coloneqq\int_A\frac{|v(y)|}{(1+|y|)^{n+2s}}\,dy,
\end{equation}
and denote by $L^1_{2s}(A)$ the corresponding weighted $L^1$ space. 

Let us make precise the notion of weak solutions that we are going to use. For $h\in L^2(\Omega)$, a function $u_\eps\in L^1_{2s}(\bbR^n)$ is a weak solution of the first problem in \eqref{intro:dirichlet} if $u_\eps\in H^s(U)$ for some bounded open neighborhood $U$ of $\overline\Omega$, if $u_\eps=g$ almost everywhere in $\bbR^n\setminus\Omega$, and if
\[
\frac12\iint_{\bbR^n\times\bbR^n}
\frac{(u_\eps(x)-u_\eps(y))(\varphi(x)-\varphi(y))}{|x-y|^{n+2s}}
K(x/\eps,y/\eps)\,dx\,dy
=\int_\Omega h\varphi\,dx
\]
for every $\varphi\in H^s(\bbR^n)$ that vanishes almost everywhere in $\bbR^n\setminus\Omega$. Here $H^s(U)=W^{s,2}(U)$ denotes the usual fractional Sobolev space. The definition for $\overline u$ is the same, with $K(x/\eps,y/\eps)$ replaced by $\overline K$.

\subsection{Periodic settings}
Here we recall the fractional Sobolev space in $\mathbb{T}^n$ and provide the existence and regularity of periodic solutions to nonlocal problems, and prove qualitative homogenization results.

For any $s\in(0,1)$, we recall the fractional Sobolev space $H^{s}({\mathbb{T}^n})$ given by 
\begin{align*}
    H^{s}({\mathbb{T}^n})\coloneqq\left\{f:\mathbb{T}^n\to\bbR\,:\,\int_{\mathbb{T}^n}\int_{\mathbb{T}^n}\frac{|f(x+z)-f(x)|^2}{|z|^{n+2s}}\,dx\,dz+\|f\|^2_{L^2(\mathbb{T}^n)}<\infty\right\}.
\end{align*}
In addition, we write its mean zero subspace $\dot{H}^s(\mathbb{T}^n)$ by 
\begin{align*}
    \dot{H}^{s}(\mathbb{T}^n)\coloneqq \left\{u\in H^{s}({\mathbb{T}^n})\,:\, \int_{\mathbb{T}^n}u=0\right\}.
\end{align*}
Now we establish the existence of periodic solutions to nonlocal equations with fractional divergence-type data and their regularity. 
\begin{lemma}\label{lem.bdd.periodic}
    Let $\eps\in(0,1]$, $i\in \{1,2,\ldots ,n\}$ and $K_0=K_0(x,y)$ be a periodic function with $K_0(x,y)=K_0(y,x)$ and
    $|K_0|\leq\Lambda$ for some $\Lambda\geq1$. There is a weak solution $u_i\in \dot{H}^{s}(\mathbb{T}^n)$ to
    \begin{align}\label{eq.periodic}
        Lu_i=\int_{\eps^{-1}\mathbb{T}^n}\frac{z_i}{|z|^{n+2s}}K_0(x,x-z)\,dz+\mbox{\large$\chi$}_{\{s>1/2\}}\int_{\bbR^n\setminus \eps^{-1}\mathbb{T}^n}\frac{z_i}{|z|^{n+2s}}K_0(x,x-z)\,dz,
    \end{align}
    in the sense that for any $\psi\in H^{s}(\mathbb{R}^n)$ with $\supp \psi\Subset \bbR^n$,
    \begin{equation}\label{bdd.periodic:weak.for}
    \begin{aligned}
        &\int_{\bbR^n}\int_{\bbR^n}\frac{(u_i(x)-u_i(x-z))(\psi(x)-\psi(x-z))}{|z|^{n+2s}}K(x,x-z)\,dx\,dz\\
        &=\int_{\bbR^n}\int_{\eps^{-1}\mathbb{T}^n}\frac{z_i(\psi(x)-\psi(x-z))}{|z|^{n+2s}}K_0(x,x-z)\,dz\,dx\\
        &\quad+\mbox{\large$\chi$}_{\{s>1/2\}}\int_{\bbR^n}\int_{\bbR^n\setminus\eps^{-1}\mathbb{T}^n}\frac{z_i(\psi(x)-\psi(x-z))}{|z|^{n+2s}}K_0(x,x-z)\,dz\,dx.
    \end{aligned}
    \end{equation}
    In addition, there is a small constant $\beta=\beta(n,s,\Lambda)$ such that
    \begin{equation*}
        \|u_i\|_{C^\beta(\mathbb{T}^n)}\leq c(n,s,\Lambda).
    \end{equation*}
    If $K \in C^\alpha(\mathbb T^n)$ for some $\alpha>0$, then for any $\sigma\in(0,\min\{2s,1\})$, we have
    \begin{align*}
        \|u_i\|_{C^{\sigma}(\mathbb{T}^n)}\leq c,
    \end{align*}
    where $c=c(n,s,\Lambda,\sigma,\alpha,[K]_{C^\alpha(\mathbb{T}^n)})$.
\end{lemma}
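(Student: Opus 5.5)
Existence will come from Lax--Milgram on $\dot H^s(\mathbb T^n)$ applied to the periodic bilinear form
\[
\mathcal E(u,v)=\int_{\mathbb T^n}\int_{\bbR^n}\frac{(u(x)-u(x-z))(v(x)-v(x-z))}{|z|^{n+2s}}K(x,x-z)\,dz\,dx .
\]
By \eqref{intro:kernel} and periodicity, $\mathcal E(u,u)$ is comparable to the Gagliardo seminorm on $\mathbb T^n$: the near part $|z|\le 1$ gives the seminorm, and the far part is bounded by $\|u\|_{L^2(\mathbb T^n)}^2$. A Poincar\'e inequality on mean-zero functions then makes $\mathcal E$ coercive on $\dot H^s(\mathbb T^n)$.

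For the right-hand side I will use the functional
\[
\ell(v)=\int_{\mathbb T^n}\int_{A}\frac{z_i(v(x)-v(x-z))}{|z|^{n+2s}}K_0(x,x-z)\,dz\,dx,
\]
where $A=\eps^{-1}\mathbb T^n$ if $s\le\frac12$ and $A=\bbR^n$ if $s>\frac12$. I split it at $|z|=1$.
\begin{itemize}
\item On $|z|\le1$, Cauchy--Schwarz gives $|\ell|\lesssim [v]_{H^s}\bigl(\int_{|z|\le1}|z|^{2-n-2s}\,dz\bigr)^{1/2}$, which is finite for every $s<1$.
\item On $|z|>1$ I bound $|v(x)-v(x-z)|$ by $2\|v\|_{L^1(\mathbb T^n)}$ after integrating in $x$ over the period. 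This produces $\int_{1<|z|,\,z\in A}|z|^{1-n-2s}\,dz$. For $s>\frac12$ this is finite over all of $\bbR^n$. For $s\le\frac12$ it is finite only because $A$ is truncated, and then it is bounded by $\eps^{-(1-2s)}$ (or by $|\ln\eps|$ when $s=\frac12$).
\end{itemize}

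The growth in the case $s\le\frac12$ is a problem for bounds uniform in $\eps$. I fix it with a cancellation: the cube $\eps^{-1}\mathbb T^n$ is symmetric, and the term $\int z_i v(x)$ integrated against $K_0(x,x-z)$ combines with the term involving $v(x-z)$ after the change of variables $x\mapsto x-z$, $z\mapsto -z$, using the symmetry of $K_0$ and periodicity. Concretely, $\ell(v)=\int\!\!\int z_i v(x)|z|^{-n-2s}(K_0(x,x-z)+K_0(x,x+z))\,dz\,dx$. On each annulus of integer-period shells I then use periodicity of $K_0$ in $z$ to replace $K_0(x,x\pm z)$ by its average. This should give a bound uniform in $\eps$ for the $L^2$ or $H^{-s}$ norm of the right-hand side. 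If this cancellation fails, I will settle for the bound $\eps$-dependent in $\ell$, which still gives existence.

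Passing from test functions on $\mathbb T^n$ to compactly supported $\psi$ on $\bbR^n$ in \eqref{bdd.periodic:weak.for} is done by periodizing $\psi$, i.e. replacing it by $\sum_k\psi(\cdot+k)$, and using Fubini together with periodicity of $u_i$, $K$ and $K_0$.

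For the $C^\beta$ bound I first obtain an $L^2$ bound on $u_i$ from coercivity. The equation then reads $Lu_i=f$ with $f$ a fractional divergence of a bounded vector field, which lies in a suitable dual space. De Giorgi--Nash--Moser-type H\"older estimates for nonlocal equations with such data (as in \cite{Kas09,DKP14}, in the version allowing data $\int z_i(\psi(x)-\psi(x-z))\ldots$) give local boundedness and $C^\beta$ on unit balls, and periodicity makes the bound global. For the higher estimate with $K\in C^\alpha$, I use the higher interior regularity for H\"older coefficients \cite{Fal20,Now21,KNS22,DFMN26}. For the far part $|z|>1$ the right-hand side is a bounded function, while the near part is a fractional divergence of bounded data of order $2s-1$. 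These yield $C^\sigma$ for every $\sigma<\min\{2s,1\}$.

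I expect the main obstacle to be the near-part datum $\int_{|z|\le1} z_i|z|^{-n-2s}K_0(x,x-z)\,dz$. It is not absolutely convergent when $s\ge\frac12$ and is only a distribution, so I must check that it fits the hypotheses of the cited regularity results with exactly the threshold $\min\{2s,1\}$. I would handle this in the divergence-type weak form and cite the nonlocal Calder\'on--Zygmund / Schauder results for equations with nonlocal divergence data.
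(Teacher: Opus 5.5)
Your overall architecture matches the paper's: variational existence on $\dot H^s(\mathbb T^n)$ with periodization of test functions, a split of the datum into short- and long-range parts, a periodic cell cancellation for the long-range part when $s\le\frac12$, and local regularity for the short-range part. Two steps, however, do not hold up as written.

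First, your symmetrized formula has the wrong sign. Substituting $x\mapsto x-z$ and using periodicity together with $K_0(x+z,x)=K_0(x,x+z)$, one gets
\[
\ell(v)=\int_{\mathbb T^n}\int_A\frac{z_i\,v(x)}{|z|^{n+2s}}\bigl(K_0(x,x-z)-K_0(x,x+z)\bigr)\,dz\,dx
=2\int_{\mathbb T^n}v(x)\int_A\frac{z_i}{|z|^{n+2s}}K_0(x,x-z)\,dz\,dx .
\]
With your plus sign the $z$-integrand is odd on the symmetric set $A$, so $\ell\equiv0$. Also, at $s=\frac12$ the two pieces are not separately integrable near $z=0$, so this rewriting must be restricted to $|z|>1$.

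Second, and more importantly, the cancellation is not an optional refinement. It is the entire source of $\eps$-independence, so your fallback to an $\eps$-dependent bound does not prove the lemma. What it must produce is also not an $L^2$ or $H^{-s}$ bound, which controls only the energy and not H\"older norms. You need the pointwise bound $\sup_x|f(x)|\le c(n,s,\Lambda)$ for $f(x)=\int_{\eps^{-1}\mathbb T^n\setminus B_1}z_i|z|^{-n-2s}K_0(x,x-z)\,dz$. Moreover, replacing $K_0$ by its average is not exact; the error has to be estimated. The paper proceeds as follows:
\begin{itemize}
\item It subtracts $\overline K_0(x)=\int_{\mathbb T^n}K_0(x,x-z)\,dz$, which costs nothing because $z_i|z|^{-n-2s}$ integrates to zero over the symmetric region.
\item On each full cell $\mathbb T^n(p)$, $K_0(x,x-\cdot)-\overline K_0(x)$ has mean zero. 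So the weight can be replaced by $z_i|z|^{-n-2s}-p_i|p|^{-n-2s}=O(|p|^{-n-2s})$, which is summable over $p$.
\item The $O(\eps^{1-n})$ cells cut by $\partial(\eps^{-1}\mathbb T^n)$ carry weight $O(\eps^{n+2s-1})$ each, and the finitely many cut cells near the inner boundary contribute $O(1)$.
\end{itemize}

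For the short-range part, you appeal to versions of the cited De Giorgi and higher-regularity results that would allow data of the form $\int z_i(\psi(x)-\psi(x-z))\ldots$, but you do not pin them down. This is exactly the obstacle you flag, and it is left open. The paper avoids any divergence-data regularity theory by writing $u_i=v_i+w_i$:
\begin{itemize}
\item $w_i$ solves $Lw_i=f$ with $f$ the bounded long-range datum, so \autoref{lem.reg.int} (or De Giorgi theory for measurable $K$) applies directly.
\item $v_i$ carries only the short-range datum, plus the absolutely convergent tail when $s>\frac12$. On each $B_r$ with $r\le1$, the paper compares $v_i$ with the replacement $v$ solving $Lv=0$ in $B_r$, $v=v_i$ outside. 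Testing with $v_i-v$, one uses Cauchy--Schwarz on $\{|z|<2r\}$, where $\int|z|^{2-n-2s}\,dz<\infty$, and the support of $v_i-v$ on $\{|z|\ge2r\}$. This gives $\dashint_{B_r}|v_i-v|^2\le cr^{2\sigma}$.
\item A Campanato iteration with the decay of $v$ then closes the argument: a small exponent by De Giorgi for measurable $K$, and any $\sigma<\min\{2s,1\}$ by \autoref{lem.reg.int} for $K\in C^\alpha$. Periodicity makes the bound global.
\end{itemize}
With the sign fixed, the cancellation carried through to an $\eps$-uniform $L^\infty$ bound, and this comparison step in place of the unspecified citation, your proof becomes the paper's.
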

\begin{proof}
First, we prove the existence of the solution using a variational argument. Define 
\begin{align*}
    E_i(w)&\coloneqq \frac12\int_{\mathbb{T}^n}\int_{\mathbb{R}^n}\frac{|w(x)-w(x-z)|^2}{|z|^{n+2s}}K(x,x-z)\,dz\,dx\\
    &\quad-\int_{\mathbb{T}^n}\int_{\eps^{-1}\mathbb{T}^n}\frac{z_i(w(x)-w(x-z))}{|z|^{n+2s}}K_0(x,x-z)\,dz\,\,dx\\
    &\quad-\mbox{\large$\chi$}_{\{s>1/2\}}\int_{\mathbb{T}^n}\int_{\bbR^n\setminus\eps^{-1}\mathbb{T}^n}\frac{z_i(w(x)-w(x-z))}{|z|^{n+2s}}K_0(x,x-z)\,dz\,dx
\end{align*}
for any $w\in \dot{H}^{s}(\mathbb{T}^n)$. First, we see
\begin{align*}
    \int_{\mathbb{T}^n}\int_{\mathbb{R}^n}\frac{|w(x)-w(x-z)|^2}{|z|^{n+2s}}K(x,x-z)\,dz\,dx&=    \int_{\mathbb{T}^n}\int_{\mathbb{T}^n}\frac{|w(x)-w(x-z)|^2}{|z|^{n+2s}}K(x,x-z)\,dz\,dx\\
    &\quad+    \int_{\mathbb{T}^n}\int_{\mathbb{R}^n\setminus \mathbb{T}^n}\frac{|w(x)-w(x-z)|^2}{|z|^{n+2s}}K(x,x-z)\,dz\,dx\\
    &\leq c\left( [w]^2_{H^{s}(\mathbb{T}^n)}+\int_{\bbR^n\setminus\mathbb{T}^n}\frac{\|w\|^2_{L^2(\mathbb{T}^n)}}{|z|^{n+2s}}\,dz\right)\\
    &\leq c \|w\|^2_{H^{s}(\mathbb{T}^n)}
\end{align*}
for some constant $c=c(n,s)$, where we have used the fact that
\begin{align}\label{cond.periodic}
    \int_{\mathbb{T}^n}g(x+z)\,dx=\int_{\mathbb{T}^n}g(x)\,dx=\int_{\mathbb{T}^n(z)}g(x)\,dx
\end{align}
for any $z\in\bbR^n$ and periodic function $g$. Similarly, using H\"older's inequality and \eqref{cond.periodic}, we observe
\begin{align*}
    &\left|\int_{\mathbb{T}^n}\int_{\eps^{-1}\mathbb{T}^n}\frac{z_i(w(x)-w(x-z))}{|z|^{n+2s}}K_0(x,x-z)\,dz\,\,dx\right|\\
    &\quad+\left|\mbox{\large$\chi$}_{\{s>1/2\}}\int_{\mathbb{T}^n}\int_{\bbR^n\setminus\eps^{-1}\mathbb{T}^n}\frac{z_i(w(x)-w(x-z))}{|z|^{n+2s}}K_0(x,x-z)\,dz\,dx\right|\leq c\|w\|_{H^{s}(\mathbb{T}^n)}.
\end{align*}
We have proved $E_i(w)\leq c(1+\|w\|^2_{H^{s}(\mathbb{T}^n)})$ for any $w\in \dot{H}^{s}(\mathbb{T}^n)$. Moreover, we can prove coercivity of the functional $E_i(\cdot)$. Thus, using standard variational methods, we obtain a weak solution $u_i$ such that for any $\psi\in \dot{H}^{s}(\mathbb{T}^n)$, 
\begin{align*}
    &\int_{\mathbb{T}^n}\int_{\mathbb{R}^n}\frac{(u_i(x)-u_i(x-z))(\psi(x)-\psi(x-z))}{|z|^{n+2s}}K(x,x-z)\,dz\,dx\\
    &=\int_{\mathbb{T}^n}\int_{\eps^{-1}\mathbb{T}^n}\frac{z_i( \psi(x)-\psi(x-z))}{|z|^{n+2s}}K_0(x,x-z)\,dz\,dx\\
    &\quad+\mbox{\large$\chi$}_{\{s>1/2\}}\int_{\mathbb{T}^n}\int_{\bbR^n\setminus\eps^{-1}\mathbb{T}^n}\frac{z_i( \psi(x)-\psi(x-z))}{|z|^{n+2s}}K_0(x,x-z)\,dz \,dx.
\end{align*}
Now we want to prove that $u_i$ indeed satisfies \eqref{bdd.periodic:weak.for}. Fix $\psi\in H^{s}(\bbR^n)$ with $\psi\equiv 0$ on $\bbR^n\setminus D$ for some $D\Subset \bbR^n$.  Then for 
\begin{align*}
    \widetilde{\psi}(x)\coloneqq\sum_{y\in\mathbb{Z}^n}\psi(x+y)-(\psi(\cdot+y))_{\mathbb{T}^n}\in \dot{H}^{s}(\mathbb{T}^n),
\end{align*}
we have 
\begin{align*}
    &\int_{\mathbb{T}^n}\int_{\mathbb{R}^n}\frac{(u_i(x)-u_i(x-z))(\widetilde{\psi}(x)-\widetilde{\psi}(x-z))}{|z|^{n+2s}}K(x,x-z)\,dz\,dx\\
    &=\int_{\mathbb{T}^n}\int_{\eps^{-1}\mathbb{T}^n}\frac{z_i(\widetilde{\psi}(x)-\widetilde{\psi}(x-z))}{|z|^{n+2s}}K_0(x,x-z)\,dz\,dx\\
    &\quad+\mbox{\large$\chi$}_{\{s>1/2\}}\int_{\mathbb{T}^n}\int_{\bbR^n\setminus\eps^{-1}\mathbb{T}^n}\frac{z_i(\widetilde{\psi}(x)-\widetilde{\psi}(x-z))}{|z|^{n+2s}}K_0(x,x-z)\,dz\,dx.
\end{align*}
Using this and the periodicity of $u_i$, $K$, $K_0$, we deduce that $u_i$ is the desired weak solution to \eqref{eq.periodic}. 

Now we are going to prove the regularity of the solution $v_i$ by 
\begin{align}\label{bdd.periodic:eq.v_i}
    &Lv_i=\int_{4\mathbb{T}^n}\frac{z_i}{|z|^{n+2s}}K_0(x,x-z)\,dz+\mbox{\large$\chi$}_{\{s>1/2\}}\int_{\bbR^n\setminus 4\mathbb{T}^n}\frac{z_i}{|z|^{n+2s}}K_0(x,x-z)\,dz.
\end{align}
Note that when $s>\frac12$, we immediately derive the desired regularity estimates for $u_i$ from the regularity of $v_i$. When $s\leq\frac12$, we consider another solution $w_i\coloneqq u_i-v_i$ such that
\begin{equation}\label{bdd.periodic:eq.w_i}
    Lw_i(x)=\int_{\eps^{-1}\mathbb{T}^n\setminus 4\mathbb{T}^n}\frac{z_i}{|z|^{n+2s}}K_0(x,x-z)\,dz\eqqcolon f(x).
\end{equation}
Indeed, if we naively estimate $u_i$ as in the estimate of $v_i$ when $s\leq\frac12$, we get $\|u_i\|_{C^\sigma}\lesssim c(\eps^{-1+2s}+|\ln\eps|_{\mbox{\large$\chi$}_{\{s=1/2\}}})$. Instead, we use the periodicity of function $K_0$ to get some cancellation.

\begin{itemize}[leftmargin=*]
    \item We prove the regularity of $v_i$ via the perturbation argument. Fix $\sigma\in(0,\min\{2s,1\})$, $r\leq1$. Let $v$ be a weak solution to 
 \begin{equation*}
\left\{
\begin{alignedat}{3}
L v&=0&&\qquad \mbox{in  $B_r$}, \\
v&=v_i&&\qquad  \mbox{in $\bbR^n\setminus B_r$}.
\end{alignedat} \right.
\end{equation*}
Then, by the standard energy estimates, we have
\begin{align*}
    &r^{-n}\iint_{\bbR^{2d}}\frac{|(v_i-v)(x)-(v_i-v)(x-z)|^2}{|z|^{n+2s}}\,dz\,dx\\
    &\leq cr^{-n}\int_{\bbR^n}\int_{4\mathbb{T}^n}\frac{|z_i||(v_i-v)(x)-(v_i-v)(x-z)|}{|z|^{n+2s}}\,dz\,dx\\
    &\quad+\mbox{\large$\chi$}_{\{s>1/2\}}cr^{-n}\int_{\bbR^n}\int_{\bbR^n\setminus 4\mathbb{T}^n}\frac{|z_i||(v_i-v)(x)-(v_i-v)(x-z)|}{|z|^{n+2s}}\,dz\,dx\eqqcolon J_1+J_2,
\end{align*}
where $c=c(n,s,\Lambda)$. To estimate $J_1$, we rewrite it as
\begin{align*}
    J_1=\int_{\bbR^n}\int_{B_{2r}}(\cdot)+\int_{\bbR^n}\int_{4\mathbb{T}^n\setminus B_{2r}}(\cdot)\eqqcolon J_{1,1}+J_{1,2}.
\end{align*}
By the fact that $(u_i-v)\equiv 0$ in $\bbR^n\setminus B_r$ and the Cauchy inequality, we have
\begin{align*}
    J_{1,1}&\leq c\int_{B_{4r}}\dashint_{B_{2r}}\frac{|z_i||(v_i-v)(x)-(v_i-v)(x-z)|}{|z|^{n+2s}}\,dz\,dx\\
    &\leq \frac{r^{-n}}8[u_i-v]^2_{H^{s}(\bbR^n)}+c\int_{B_{4r}}\dashint_{B_{2r}}|z|^{2-(n+2s)}\,dz\,dx\\
    &\leq \frac{r^{-n}}8[v_i-v]^2_{H^{s}(\bbR^n)}+cr^{2-2s}
\end{align*}
for some constant $c=c(n,s,\Lambda)$. Similarly, we estimate $J_{1,2}$ as 
\begin{align*}
    J_{1,2}&\leq c\dashint_{B_r}|v_i-v|\,dx\int_{4\mathbb{T}^n\setminus B_{2r}}|z|^{1-(n+2s)}\,dz\\
    &\leq \frac{r^{-n}}8[v_i-v]^2_{H^{s}(\bbR^n)}+cr^{2s}\left(\int_{4\mathbb{T}^n\setminus B_{2r}}|z|^{1-(n+2s)}\,dz\right)^2\\
    &\leq \frac{r^{-n}}8[v_i-v]^2_{H^{s}(\bbR^n)}+c\mu(r),
\end{align*}
for some constant $c=c(n,s,\Lambda,\sigma)$, where
\begin{align*}
    \mu(r)\coloneqq \begin{cases}
        r^{2-2s}&\quad\text{if }s>\frac12,\\
        r(1+|\ln r|)^2&\quad\text{if }s=\frac12,\\
        r^{2s}&\quad\text{if }s<\frac12.
    \end{cases}
\end{align*} Now, using $(u_i-v)\equiv 0$ in $\bbR^n\setminus B_r$, we estimate $J_2$ as 
\begin{align*}
    J_2\leq c\dashint_{B_r}|v_i-v|\,dx\int_{\bbR^n\setminus 4\mathbb{T}^n}|z|^{1-(n+2s)}\,dz\leq \left(\frac{r^{-n}}8[v_i-v]^2_{H^{s}}+cr^{2s}\right).
\end{align*}
Combining all estimates $J_1$ and $J_2$, and using the Sobolev inequality, we have for $\sigma<\min\{1,2s\}$,
\begin{align}\label{bdd.periodic:comp.est}
    \dashint_{B_{r}}|v_i-v|^2\,dx\leq cr^{2\sigma},
\end{align}
for some constant $c=c(n,s,\sigma,\Lambda)$, where we have used the fact that $|\ln r|\leq c(q)r^{-q}$ for any $q>0$.
Now using the regularity of $v$ by \autoref{lem.reg.int}, for any $\rho\in(0,1]$ and $\sigma_0\coloneqq (\sigma+\min\{1,2s\})/2$, we get
\begin{align}\label{bdd.periodic:hol.v}
    E(v-(v)_{B_{\rho r}};B_{\rho r})\leq c\rho^{\sigma_0}E(v-(v)_{B_{ r}};B_r)
\end{align}
for some constant $c=c(n,s,\alpha,[K]_{C^\alpha(\mathbb{T}^n)},\sigma)$.
Therefore, for any $\rho\in(0,2^{-4}]$, we derive
\begin{align*}
    E(v_i-(v_i)_{B_{\rho r}};B_{\rho r})&\leq E(v-(v)_{B_{\rho r}};B_{\rho r})+E(v_i-v;B_{\rho r})\\
    &\leq c\left(\rho^{\sigma_0}E(v-(v)_{B_r};B_r)+\rho^{-n}E(v_i-v;B_r)\right)\\
    &\leq c\left(\rho^{\sigma_0} E(v_i-(v_i)_{B_r};B_r)+\rho^{-2n}r^{\sigma}\right),
\end{align*}
where we have used \eqref{bdd.periodic:hol.v} and \eqref{bdd.periodic:comp.est}. As in \cite[Theorem 4.4]{DN25}, we have for any $r\leq1$,
\begin{align}\label{bdd.periodic:est.hol.u}
    \dashint_{B_{r}}|v_i-(v_i)_{B_r}|\leq cr^{\sigma},
\end{align}
where $c=c(n,s,\alpha,[K]_{C^\alpha(\mathbb{T}^n)},\sigma)$. Since this is true for the function $v_{i_0}(x)\coloneqq v_i(x+x_0)$ for any $x_0\in \mathbb{T}^n$, we have the desired H\"older estimate 
\begin{equation}\label{bdd.periodic:reg.v_i}
    \|v_i\|_{C^\sigma(\mathbb{T}^n)}\leq c(n,s,\Lambda,[K]_{C^\alpha(\mathbb{T}^n)},\alpha,\sigma)
\end{equation}
for any $\sigma\in(0,\min\{2s,1\})$. Note that when $K$ is merely measurable, then we obtain the same estimate \eqref{bdd.periodic:reg.v_i} for a small constant $\sigma=\sigma(n,s,\Lambda)$ and $c=c(n,s,\Lambda)$.
\item We prove the regularity of $w_i$ given in \eqref{bdd.periodic:eq.w_i} by obtaining the boundedness of the right-hand side $f$.
\noindent
First, we observe
\begin{align*}
    f(x)&=\int_{\eps^{-1}\mathbb{T}^n\setminus 4\mathbb{T}^n}\frac{z_i}{|z|^{n+2s}}\left(K_0\left(x,x-{z}\right)-\overline{K}_0(x)\right)\,dz\\
    &=\sum_{p\in \Z^n}\int_{\mathbb{T}^n(p)\cap (\eps^{-1}\mathbb{T}^n\setminus 4\mathbb{T}^n)}\frac{z_i}{|z|^{n+2s}}\left(K_0\left(x,x-{z}\right)-\overline{K}_0(x)\right)\,dz\\
    &=\sum_{\substack{p\in \Z^n,\mathbb{T}^n(p)\setminus (\eps^{-1}\mathbb{T}^n)\neq\emptyset\\
    \mathbb{T}^n(p)\cap (\eps^{-1}\mathbb{T}^n)\neq\emptyset}}\int_{\mathbb{T}^n(p)\cap (\eps^{-1}\mathbb{T}^n)}\frac{z_i}{|z|^{n+2s}}\left(K_0\left(x,x-{z}\right)-\overline{K}_0(x)\right)\,dz\\
    &\quad+\sum_{\substack{p\in \Z^n,\mathbb{T}^n(p)\setminus (4\mathbb{T}^n)\neq\emptyset\\
    \mathbb{T}^n(p)\cap (4\mathbb{T}^n)\neq\emptyset}}\int_{\mathbb{T}^n(p)\cap (\eps^{-1}\mathbb{T}^n)}\frac{z_i}{|z|^{n+2s}}\left(K_0\left(x,x-{z}\right)-\overline{K}_0(x)\right)\,dz\\
    &\quad+\sum_{\substack{p\in \Z^n,\\
    \mathbb{T}^n(p)\subset (\eps^{-1}\mathbb{T}^n\setminus 4\mathbb{T}^n)}}\int_{\mathbb{T}^n(p)\cap (\eps^{-1}\mathbb{T}^n\setminus 4\mathbb{T}^n)}\frac{z_i}{|z|^{n+2s}}\left(K_0\left(x,x-{z}\right)-\overline{K}_0(x)\right)\,dz\eqqcolon \sum_{i=1}^3A_i,
\end{align*}
where $\overline{K}_0(x)\coloneqq\int_{\mathbb{T}^n}K_0(x,x-z)\,dz$, we have used the change of variables and the fact that
\begin{align*}
    \int_{\eps^{-1}\mathbb{T}^n\setminus 4\mathbb{T}^n}\frac{z_i}{|z|^{n+2s}}\,dz=0
\end{align*}
for the first equality. Note that for the set $S_\eps$  defined by
\begin{align*}
    S_\eps&\coloneqq \{p\in \Z^n\,:\, \mathbb{T}^n(p)\setminus (\eps^{-1}\mathbb{T}^n)\neq \emptyset,\quad \mathbb{T}^n(p)\cap (\eps^{-1}\mathbb{T}^n)\neq\emptyset\},
\end{align*}
we have that if $p\in S_\eps$, then
\begin{align*}
\eps^{-n+1}/c\leq |S_\eps|\leq c \eps^{-n+1}    \quad\text{and}\quad \eps^{-1}/c\leq |z|\leq c\eps^{-1}\quad\text{for any }z\in \mathbb{T}^n(p).
\end{align*}
Thus, using this, we estimate $A_1+A_2$ as
\begin{align*}
    A_1+A_2\leq c\left(\sum_{p\in S_\eps}\int_{\mathbb{T}^n(p)}\eps^{2s-1+n}\,dz+\sum_{p\in S_4}\int_{\mathbb{T}^n(p)}\,dz\right)\leq c.
\end{align*}
Now for $A_3$, we use the periodicity condition of $K$ to see that
\begin{align*}
    A_3&=\sum_{\substack{p\in S}}\int_{\mathbb{T}^n(p)}\left(\frac{z_i}{|z|^{n+2s}}-\frac{p_i}{|p|^{n+2s}}\right)K_0\left(x,x-{z}\right)\,dz-\sum_{\substack{p\in S}}\int_{\mathbb{T}^n(p)}\left(\frac{z_i}{|z|^{n+2s}}-\frac{p_i}{|p|^{n+2s}}\right)\overline{K}_0(x)\,dz,
\end{align*}
where $S\coloneqq\{p\in\Z^n\,:\, \mathbb{T}^n(p)\subset \eps^{-1}\mathbb{T}^n\setminus 4\mathbb{T}^n\}$. Note that $\|p\|_\infty\geq2$ as $p\in S$. Thus we observe $|z|\geq1$ and $|tz+(1-t)p|\geq \frac{|z|}{16}$ for any $z\in \mathbb{T}^n(p)$ and $t\in[0,1]$, when $p\in S$.
Next, using this observation, we further estimate $A_{3}$ as
\begin{align*}
    A_{3}&\leq c\sum_{p\in S}\int_{\mathbb{T}^n(p)}\frac{|z_i-p_i|}{|z|^{n+2s}}\,dz+\int_{\mathbb{T}^n(p)}|p_i|\left(\frac1{|z|^{n+2s}}-\frac{1}{|p|^{n+2s}}\,dz\right)\\
    &\leq c\left(\int_{|z|>1}\frac{\,dz}{|z|^{n+2s}}+\sum_{p\in S}\int_{\mathbb{T}^n(p)}\int_{0}^1|z-p||z|\left(\frac{\,dt\,dz}{|tz+(1-t)p|^{n+2s+1}}\right)\right)\\
    &\leq c\int_{|z|>1}\frac{\,dz}{|z|^{n+2s}}
\end{align*}
for some constant $c=c(n,s)$. Therefore, combining all estimates $A_1,A_2$ and $A_3$ yields $|f(x)|\leq c$, which implies for any $\sigma<\min\{2s,1\}$, 
\begin{equation}\label{bdd.periodic:reg.w_i}
    \|w_i\|_{C^\sigma(\mathbb{T}^n)}\leq c
\end{equation}
for some constant $c=c(n,s,\Lambda,\sigma,\alpha,[K]_{C^\alpha(\mathbb{T}^n)})$ by \autoref{lem.reg.int}. Moreover, when $A$ is merely measurable, by the standard nonlocal De Giorgi-Nash-Moser theory (see for instance \cite{DKP16}), we have $\|w_i\|_{C^\sigma(\mathbb{T}^n)}\leq c(n,s,\Lambda)$ for some small constant $\sigma=\sigma(n,s,\Lambda)$.
\end{itemize} 
Combining estimates \eqref{bdd.periodic:reg.v_i} and \eqref{bdd.periodic:reg.w_i} yields the desired result. This completes the proof.
\end{proof}
The next lemma is also about the existence and regularity of solutions to nonlocal equations when the right-hand side is given by non-divergence-type data. 
\begin{lemma}\label{lem.bdd.periodic2}
    Let $f$ be a bounded periodic function with $\int_{\mathbb{T}^n}f=0$ and $|f|\leq 1$. There is a weak solution $u_0\in \dot{H}^{s}(\mathbb{T}^n)$ to
    \begin{align*}
        L u_0=f,
    \end{align*}
    in the sense that for any $\psi\in H^{s}(\bbR^n)$ with $\supp \psi\Subset \bbR^n$,
    \begin{equation}\label{bdd.periodic2:weak.for}
    \begin{aligned}
        & \frac{1}{2} \int_{\bbR^n}\int_{\bbR^n}\frac{(u_i(x)-u_i(x-z))(\psi(x)-\psi(x-z))}{|z|^{n+2s}}K(x,x-z)\,dx\,dz=\int_{\bbR^n}f(x)\psi(x)\,dx.
    \end{aligned}
    \end{equation}
    Moreover, if  $K \in C^\alpha(\mathbb T^n)$ for some $\alpha>0$, then for any $\sigma\in(0,\min\{2s,1\})$,
    \begin{align*}
        \|u_i\|_{C^{\sigma}(\mathbb{T}^n)}\leq c,
    \end{align*}
    where $c=c(n,s,\Lambda,\alpha,[K]_{C^\alpha(\mathbb{T}^n)},\sigma)$. 
\end{lemma}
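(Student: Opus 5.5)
The plan follows the proof of \autoref{lem.bdd.periodic}. Existence is variational and the Hölder estimate comes from interior regularity with a bounded right-hand side. Throughout, $u_i$ in the statement is read as $u_0$, and $L$ as $L_1$ with kernel $K$.

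\emph{Existence.} On $\dot H^s(\mathbb T^n)$ we minimize
\[
E_0(w)\coloneqq\frac12\cdot\frac12\int_{\mathbb T^n}\int_{\bbR^n}\frac{|w(x)-w(x-z)|^2}{|z|^{n+2s}}K(x,x-z)\,dz\,dx-\int_{\mathbb T^n}fw\,dx .
\]
As in the proof of \autoref{lem.bdd.periodic}, the quadratic part is bounded above by $c\|w\|_{H^s(\mathbb T^n)}^2$ using \eqref{cond.periodic}. For coercivity we need the periodic Poincaré inequality $\|w\|_{L^2(\mathbb T^n)}\le c[w]_{H^s(\mathbb T^n)}$ for mean-zero $w$; together with $\Lambda^{-1}\le K$ this bounds the quadratic part from below by $c^{-1}\|w\|_{H^s(\mathbb T^n)}^2$. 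The linear term is bounded by $\|f\|_{L^2(\mathbb T^n)}\|w\|_{L^2(\mathbb T^n)}$. The functional is convex and weakly lower semicontinuous, so a minimizer $u_0$ exists. Its Euler--Lagrange equation holds for all test functions in $\dot H^s(\mathbb T^n)$. Because $\int_{\mathbb T^n}f=0$ and constants do not change the quadratic form, it also holds for every periodic $H^s$ test function, and this is the one place where the mean-zero condition is used.

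\emph{Passing to compactly supported test functions.} Given $\psi\in H^s(\bbR^n)$ with compact support, we periodize, $\widetilde\psi(x)=\sum_{y\in\Z^n}\psi(x+y)$; this is a finite sum on bounded sets. Periodicity of $u_0$, $K$ and $f$, together with the change of variables $x\mapsto x+y$, turns the periodic identity tested with $\widetilde\psi$ into \eqref{bdd.periodic2:weak.for}. Concretely, $\int_{\mathbb T^n}f\widetilde\psi=\int_{\bbR^n}f\psi$, and the double integral over $\mathbb T^n\times\bbR^n$ against $\widetilde\psi$ unfolds to the one over $\bbR^n\times\bbR^n$ against $\psi$. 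This is exactly the step done for \autoref{lem.bdd.periodic}.

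\emph{Regularity.} We first obtain $L^\infty$ and $L^2$ bounds for $u_0$. Testing with $u_0$ and using Poincaré gives $\|u_0\|_{H^s(\mathbb T^n)}\le c$. Since $u_0$ is periodic, $\|u_0\|_{L^1_{2s}(\bbR^n)}$ and every tail $\tail(u_0;B_1(x_0))$ are controlled by $\|u_0\|_{L^1(\mathbb T^n)}\le c$. We then apply the interior regularity estimate \autoref{lem.reg.int} on $B_1(x_0)$ with $\|f\|_{L^\infty}\le1$ (alternatively, the perturbation and Campanato iteration carried out for $v_i$ in the proof of \autoref{lem.bdd.periodic}, now with right-hand side $f$ in place of the divergence-type data). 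This yields
\[
[u_0]_{C^\sigma(B_{1/2}(x_0))}\le c\bigl(E(u_0;B_1(x_0))+\|f\|_{L^\infty}\bigr)\le c
\]
for every $\sigma<\min\{2s,1\}$. Covering $\mathbb T^n$ by such balls and using a De Giorgi-type $L^\infty$ bound for the sup norm then gives $\|u_0\|_{C^\sigma(\mathbb T^n)}\le c$.

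The main obstacle is making the coercivity and unfolding rigorous; the Poincaré inequality on the torus, which holds for mean-zero $w$ by a Fourier argument, is the key ingredient.
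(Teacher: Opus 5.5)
Your proposal is correct and follows the paper's proof. Existence comes from minimizing the energy on $\dot H^s(\mathbb T^n)$, with $\int_{\mathbb T^n}f=0$ used to admit the non-mean-zero periodized test functions when unfolding to \eqref{bdd.periodic2:weak.for} as in \autoref{lem.bdd.periodic}, and the H\"older bound comes from \autoref{lem.reg.int} with bounded right-hand side, with tails controlled by periodicity. You also fill in details the paper only gestures at (the torus Poincar\'e inequality for coercivity and the unfolding computation), and your factor $\frac14$ in the energy is the one consistent with the $\frac12$ in \eqref{bdd.periodic2:weak.for}; the paper's $\frac12$ only changes the solution by a factor of $2$.
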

\begin{proof}
    First, for the existence of the solution, 
    we consider the functional $E(\cdot)$ by
    \begin{align*}
        E(w)\coloneqq \frac12\int_{\mathbb{T}^n}\int_{\mathbb{R}^n}\frac{|w(x)-w(x-z)|^2}{|z|^{n+2s}}K(x,x-z)\,dz\,dx-\int_{\mathbb{T}^n}fw\,dx.
    \end{align*}
    Then we note for any $w\in \dot{H}^{s}(\mathbb{T}^n)$,
    \begin{align*}
        \left|\int_{\mathbb{T}^n}fw\,dx\right|\leq \|f\|_{L^2(\mathbb{T}^n)}\|w\|_{\dot{H}^{s}(\bbR^n)}.
    \end{align*}
    Therefore, using this, the fact that $\int_{\mathbb{T}^n}f=0$ and following the same argument as in \autoref{lem.bdd.periodic}, we get the existence of the solution $u_0$ to \eqref{bdd.periodic2:weak.for}. Then its regularity follows from \autoref{lem.reg.int}. This completes the proof.
\end{proof}

Now, we present a qualitative homogenization result concerning the convergence of a sequence of homogenized operators and the corresponding solutions. Such a result is used for a compactness argument in the next section.
\begin{lemma}\label{lem.conv.qual}
    Let $(K_l)_l$ be a sequence of periodic kernel coefficients with $\Lambda^{-1}\leq K_l\leq \Lambda$ for some $\Lambda\geq1$. Let $(\eps_l)_l$ be a sequence such that $\lim\limits_{l\to\infty}\eps_l= 0$ and let  $u_{l}$ be a weak solution to 
     \begin{equation*}
\left\{
\begin{alignedat}{3}
L_{\eps_l}u_{l}&=f_l&&\qquad \mbox{in  $\Omega_l\cap B_1$}, \\
u_l&=0&&\qquad  \mbox{in $B_1\setminus \Omega_l$},
\end{alignedat} \right.
\end{equation*}
where $\Omega_l$ is a bounded domain and
    \begin{align*}
        L_{\eps_l}u_l(x)\coloneqq \int_{\bbR^n}\frac{u_l(x)-u_l(x-z)}{|z|^{n+2s}}K_l(x/\eps_l,(x-z)/\eps_l)\,dy
    \end{align*}
    with
    \begin{align}\label{conv:ass}
        E(u_l;B_{1})+\|f_l\|_{L^\infty(B_{1})}\leq 1.
    \end{align}
    Then we have the following:
    \begin{itemize}
        \item Assume 
        \begin{align}\label{conv.qual:domain}
       B_1\cap\{x_n>0\}\subset \Omega_l\cap B_1\subset B_1\cap\{x_n>-\eps_l\}.
   \end{align} Fix a cut off function $\psi\in C_c^\infty(B_{\frac34})$ with $\psi\equiv1 $ on $B_{\frac12}$. Then $u_{l}\psi\to u $ in $L^2({\bbR^n})$ up to a subsequence and $u$ is a weak solution to
      \begin{equation*}
\left\{
\begin{alignedat}{3}
\overline{L}u&=g&&\qquad \mbox{in  $B_{\frac12}\cap\{x_n>0\}$}, \\
u&=0&&\qquad  \mbox{in $B_{\frac12}\cap\{x_n\leq0\}$}
\end{alignedat} \right.
\end{equation*}
for some $g\in L^\infty(B_1)$ with
    \begin{align*}
        \overline{L}u\coloneqq\int_{\bbR^n}\frac{u(x)-u(x-z)}{|z|^{n+2s}}\overline{K}\,dz
    \end{align*}
    and  
    \begin{align}\label{conv.qual:goal1}
        E(u;B_{1})+\|g\|_{L^\infty(B_1)}\leq c,
    \end{align}
   where $\overline{K}\in[\Lambda^{-1},\Lambda]$ and $c=c(n,s,\Lambda)$.
   \item Let $\Omega_l\cap B_1=B_1$. Fix a cut off function $\psi\in C_c^\infty(B_{\frac34})$ with $\psi\equiv1 $ on $B_{\frac12}$. Then $u_{l}\psi\to u $ in $L^2({\bbR^n})$ up to a subsequence and $u$ is a weak solution to
      \begin{equation*}
\overline{L}u=g\quad\text{in }B_{\frac12}
\end{equation*}
where $g\in L^\infty(B_1)$ and  
    \begin{align}\label{conv.qual:goal2}
        E(u;B_{1})+\|g\|_{L^\infty(B_1)}\leq c,
    \end{align}
   for some constant $c=c(n,s,\Lambda)$.
    \end{itemize}
\end{lemma}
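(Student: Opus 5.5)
We argue by compactness plus a qualitative identification of the limit operator in the spirit of \cite{KPZ19}. The plan has four steps: uniform bounds, extraction of a limit, homogenization of the bilinear form, and recovery of the boundary condition.

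\textbf{Step 1: uniform bounds.} Starting from \eqref{conv:ass}, we derive $\eps_l$-independent bounds. Testing the equation with $u_l\eta^2$, for a cutoff $\eta\in C_c^\infty(B_{7/8})$ with $\eta\equiv1$ on $B_{3/4}$, is admissible in both cases. In the boundary case it is admissible because $u_l=0$ on $B_1\setminus\Omega_l$. The Caccioppoli inequality for nonlocal equations (see \cite{DKP16}) then gives
\[
[u_l]_{H^s(B_{3/4})}+\|u_l\|_{L^2(B_{3/4})}\le c(n,s,\Lambda).
\]
The $L^2$ bound itself comes from local boundedness, since $E(u_l;B_1)\le 1$. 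Local boundedness holds in the interior and up to the boundary for functions vanishing outside $\Omega_l$. Hence $(u_l\psi)$ is bounded in $H^s(\bbR^n)$. By the compact embedding we obtain $u_l\psi\to u$ in $L^2$ along a subsequence. We also extract $u_l\to u$ a.e.\ on $B_{3/4}$ and $f_l\rightharpoonup^* g$ in $L^\infty(B_1)$, with $\|g\|_{L^\infty}\le1$.

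For the tail, Fatou's lemma gives $\tail(u;B_1)\le\liminf_l\tail(u_l;B_1)\le1$. This requires defining $u$ outside $B_{3/4}$ as an a.e.\ limit. We take weak-$*$ limits of $u_l$ in $L^1$ against the weight $|y|^{-n-2s}$ on $\bbR^n\setminus B_{1/2}$. Since no strong convergence is available there, we only use that the tail contributions converge weakly, which is all the weak formulation needs for test functions supported in $B_{1/2}$. This yields \eqref{conv.qual:goal1} and \eqref{conv.qual:goal2}.

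\textbf{Step 2: identifying $\overline L$.} This is the main obstacle. We must pass to the limit in
\[
\frac12\iint\frac{(u_l(x)-u_l(y))(\varphi(x)-\varphi(y))}{|x-y|^{n+2s}}K_l(x/\eps_l,y/\eps_l)\,dx\,dy,
\qquad \varphi\in C_c^\infty(B_{1/2}),
\]
where the kernel converges only weakly and $u_l$ converges only strongly in $L^2_{\rm loc}$.

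We first pass to a subsequence with $\overline{K_l}\coloneqq\int_{\mathbb T^n}\int_{\mathbb T^n}K_l\to\overline K\in[\Lambda^{-1},\Lambda]$. Next we split the integral into $|x-y|<\delta$ and $|x-y|\ge\delta$.

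For $|x-y|\ge\delta$ the weight is bounded. The products $u_l(x)\varphi(y)$ and $u_l(x)\varphi(x)$ converge strongly in $L^1_{\rm loc}$ where $u_l$ converges strongly. Meanwhile, $K_l(x/\eps_l,y/\eps_l)-\overline{K_l}\rightharpoonup0$ weakly-$*$ in $L^\infty(\bbR^{2n})$ by periodicity, since $\eps_l\to0$. Strong times weak convergence then gives the limit with $\overline K$. The far-field part, where $u_l$ converges only weakly, involves the fixed function $\varphi(x)/|x-y|^{n+2s}$. We handle it by averaging $K_l$ in $y$ at fixed $x$: the $y$-oscillation at scale $\eps_l$ integrated against a smooth weight is $o(1)$ uniformly.

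The near-diagonal part is bounded by
\[
c\,[u_l]_{H^s(B_{3/4})}\Bigl(\iint_{|x-y|<\delta}\frac{|\varphi(x)-\varphi(y)|^2}{|x-y|^{n+2s}}\Bigr)^{1/2}\le c\,\delta^{1-s},
\]
uniformly in $l$. The same bound holds for the limit. Letting $l\to\infty$ and then $\delta\to0$ gives
\[
\frac{\overline K}{2}\iint\frac{(u(x)-u(y))(\varphi(x)-\varphi(y))}{|x-y|^{n+2s}}\,dx\,dy=\int g\varphi\,dx.
\]

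\textbf{Step 3: boundary case.} Under \eqref{conv.qual:domain}, every $\varphi\in C_c^\infty(B_{1/2}\cap\{x_n>0\})$ is an admissible test function for all $l$, so Step 2 yields the equation in $B_{1/2}\cap\{x_n>0\}$. The condition $u=0$ on $B_{1/2}\cap\{x_n\le0\}$ comes from the a.e.\ convergence. For every $\tau>0$, $u_l=0$ on $B_{1/2}\cap\{x_n<-\tau\}$ once $\eps_l<\tau$. Finally, the $H^s$ bound passes to the limit, so $u\in H^s(B_{3/4})$ as required for the notion of weak solution.
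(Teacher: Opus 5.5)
\textbf{Gap: the far-field term in Step 2, and the definition of $u$ outside $B_{3/4}$.} For $\varphi\in C_c^\infty(B_{1/2})$ the bilinear form contains
\[
\int_{B_{1/2}}\varphi(x)\int_{\bbR^n\setminus B_{3/4}}\frac{u_l(y)\,K_l(x/\eps_l,y/\eps_l)}{|x-y|^{n+2s}}\,dy\,dx .
\]
Outside $B_1$ there is no equation. So $u_l$ is only bounded in the weighted $L^1$ space, and it need not even have a weak $L^1$-limit (only a weak-$*$ limit as a measure).

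You cannot average $K_l$ in $y$ at fixed $x$, because the $y$-weight $u_l(y)|x-y|^{-n-2s}$ is not smooth. Averaging in $x$ at fixed $y$ is legitimate, but it leaves $\int_{\mathbb T^n}K_l(\xi,y/\eps_l)\,d\xi$. This factor still oscillates in $y$ and multiplies the merely weakly convergent $u_l(y)$, so the product need not converge to the product of the limits. The identity $\overline{L}u=\lim f_l$ you derive is therefore false in general.

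Here is a counterexample. Let $K(\xi,\eta)=a(\xi)a(\eta)$ with $a$ periodic, nonconstant and $1\le a\le 2$. Set $\bar a=\int_{\mathbb T^n}a$ and $\overline{a^2}=\int_{\mathbb T^n}a^2$. Take $f_l=0$, $u_l=a(\cdot/\eps_l)$ on $B_3\setminus B_2$, and $u_l=0$ on the rest of $\bbR^n\setminus B_1$.
\begin{itemize}
\item The displayed term tends to $\bar a\,\overline{a^2}\int\varphi\Phi$, where $\Phi(x)=\int_{B_3\setminus B_2}|x-y|^{-n-2s}\,dy$.
\item Your limit, with $u=\bar a$ on $B_3\setminus B_2$ and $\overline K=\bar a^2$, predicts $\bar a^3\int\varphi\Phi$.
\item In this example all remaining terms converge correctly, since $u_l$ converges strongly in $B_1$. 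Hence $\overline{L}u$ differs from $\lim f_l=0$ by a nonzero multiple of $\Phi$.
\end{itemize}

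Moreover, the function you build is not the one in the statement. The $L^2(\bbR^n)$-limit of $u_l\psi$ vanishes outside $B_{3/4}$, and on $B_{3/4}\setminus B_{1/2}$ it equals $\lim u_l\psi\neq\lim u_l$. That compact support is exactly what later gives $E(u_l\psi-(u_l\psi)_{B_\theta};B_\theta)\to E(u-(u)_{B_\theta};B_\theta)$ in \autoref{lem.exp.sol}. A tail that converges only weakly does not pass to the limit inside the absolute value.

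\textbf{Fix: localize before passing to the limit, as the paper does.} By \cite[Lemma 2.3]{BKK25}, $v_l\coloneqq u_l\psi$ solves $L_{\eps_l}v_l=g_l$ in $\Omega_l\cap B_{1/2}$, with $v_l=0$ in $B_{1/2}\setminus\Omega_l$. Here $g_l$ is $f_l$ plus the nonlocal contribution of $(1-\psi)u_l$, and $\|g_l\|_{L^\infty}\le c(n,s,\Lambda)$ by \eqref{conv:ass}. Since $v_l$ is bounded in $H^s(\bbR^n)$ and supported in $B_{3/4}$, you get $v_l\to u$ in $H^\beta(\bbR^n)$ for every $\beta<s$.

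Now every term of the bilinear form carrying $K_l(x/\eps_l,y/\eps_l)$ has a strongly convergent factor on a bounded set. For $y\notin B_{3/4}$ only $v_l(x)\varphi(x)$ survives. So the weak-$*$ convergence $K_l(x/\eps_l,y/\eps_l)\rightharpoonup\overline K$ coming from periodicity suffices. The oscillating tail product is hidden in $g_l$, and $g$ is simply a weak limit of $g_l$ that is never identified. In general $g\neq\lim f_l$, which is why the statement only asserts the equation for some bounded $g$.

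Your uniform bounds, your choice of test functions, and your argument for $u=0$ on $B_{1/2}\cap\{x_n\le0\}$ carry over unchanged to $v_l$.
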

Note that assumption \eqref{conv:ass} ensures uniform bounds of $u_l$ in $H^s(B_{1})$, by standard energy estimates. However, this does not imply strong convergence of the sequence in $L^p(\bbR^n\setminus B_1)$ for any $p\geq1$. We overcome this issue by using a localization argument; see \eqref{conv.qual:goal2}. 

\begin{proof}
We define $\overline{K}_l\coloneqq \int_{\mathbb{T}^n}\int_{\mathbb{T}^n}K_l(x,y)\,dx\,dy$. Note that $\Lambda^{-1} \le \overline{K}_l \le \Lambda$, and therefore, up to a subsequence, there is $\overline{K} \in [\Lambda^{-1},\Lambda]$ such that $\lim\limits_{l\to\infty}\overline{K}_l= \overline{K}$. Next, by \cite[Proposition 2.2.2]{Shen18}, as $\Lambda^{-1}\leq K_l\leq\Lambda$, we observe for any $N>1$ and $F=F(x,y)\in L^1(B_N\times B_N)$,
    \begin{align}\label{conv:limit1}
        \lim_{l\to\infty}\int_{B_N}\int_{B_N}F(x,y) K_l(x/\eps_l,y/\eps_l)\,dx\,dy=\int_{B_N}\int_{B_N}F(x,y) \overline{K}\,dx\,dy.
    \end{align}

\begin{itemize}[leftmargin=*]
    \item By \cite[Lemma 2.3]{BKK25}, a localized function $v_l\coloneqq u_l\psi$ is a weak solution to 
    \begin{align*}
        L_{\eps_l}v_l=g_l\quad\text{in }\Omega_l \cap B_{\frac12}
    \end{align*}
    with $v_l\equiv 0$ in $B_{\frac12}\setminus \Omega_l$ and $\|g_l\|_{L^\infty(B_1)}\leq c$ by \eqref{conv:ass}. In addition, by standard energy estimates of $u_l$ and by using \eqref{conv:ass} again, we have 
    \begin{align}\label{conv.qual:est.vlgl}
        \|v_l\|_{H^{s}(\bbR^n)}+\|g_l\|_{L^\infty(B_1)}\leq c(\|u_l\|_{H^{s}(B_{3/4})}+\|g_l\|_{L^\infty(B_1)} )\leq c
    \end{align}
    for some constant $c=c(n,s,\Lambda)$. Thus,
    up to a subsequence, we get
    \begin{align*}
      \|u\|_{H^s(\bbR^n)}\leq c,\quad v_l\to u\quad\text{in }H^{\beta}(\bbR^n),\quad g_l\rightharpoonup g\quad\text{in }L^2(B_{\frac34})
    \end{align*}
  for any $\beta<s$ and
    \begin{align*}
        E(u;B_1)+\|g\|_{L^\infty(B_1)}\leq c
    \end{align*}
    for some constant $c=c(n,s,\Lambda)$. Thus, we have \eqref{conv.qual:goal1}.  In addition, we obtain $u\equiv 0$ on $(\bbR^n\setminus B_{\frac34})\cup (B_{\frac12}\cap \{x_n\leq0\})$, as $v_l\equiv 0$ on $B_1\setminus\Omega_l\supset B_{\frac12}\cap \{x_n\leq-\eps_l\}$ and the fact that $\eps_l\to0$ as $l\to\infty$. Now, it remains to prove $\overline{L}u=g$ in $ B_{\frac12}\cap \{x_n>0\}$. To this end, we fix $\varphi\in C_c^\infty(B_{\frac12}\cap\{x_n>0\})$ and  show that for any $\delta>0$, there is a large integer $l_0$ such that if $l\geq l_0$, then
    \begin{align*}
        I\coloneqq\left|\iint_{\bbR^{2d}}\frac{(v_l(x)-v_l(y))(\varphi(x)-\varphi(y))}{|x-y|^{n+2s}}K_l(x/\eps_l,y/\eps_l)-\frac{(u(x)-u(y))(\varphi(x)-\varphi(y))}{|x-y|^{n+2s}}\overline{K}\,dx\,dy\right|\leq\delta.
    \end{align*}
    First, we observe
    \begin{align*}
        \iint_{\bbR^{2d}}\frac{|u(x)-u(y)||\varphi(x)-\varphi(y)|}{|x-y|^{n+2s}}\,dx\,dy<\infty.
    \end{align*} Thus, there is a small constant $\sigma>0$ such that
    \begin{align*}
       \iint_{G_\sigma}\frac{|u(x)-u(y)||\varphi(x)-\varphi(y)|}{|x-y|^{n+2s}}\,dx\,dy\leq \frac\delta{3\Lambda},
    \end{align*}
    where $G_\sigma\coloneqq \{(x,y)\in\bbR^n\,:\,|x|+|y|\geq \sigma^{-1}\}$. Next, using \eqref{conv:limit1} and the fact that $u_l\to u$ in $H^{\beta}(\bbR^n)$ for any  $\beta\in(0,s)$, there is a sufficiently large number $l_1$ such that for any $l\geq l_1$,
    \begin{align*}
        &\left|\iint_{\bbR^{2d}\setminus G_\sigma}\frac{(u(x)-u(y))(\varphi(x)-\varphi(y))}{|x-y|^{n+2s}}(K_l(x/\eps_l,y/\eps_l)-\overline{K})\right|\leq \frac\delta3,\\
        &\left|\iint_{\bbR^{2d}}\frac{\left[(v_l(x)-v_l(y))-(u(x)-u(y))\right](\varphi(x)-\varphi(y))}{|x-y|^{n+2s}}\right|\leq \|v_l-u\|_{H^{\beta}(\bbR^n)}\|\varphi\|_{C^1(\bbR^n)}\leq\frac\delta{3\Lambda}.
    \end{align*}
    Therefore, we derive
    \begin{align*}
        I&=\left|\iint_{\bbR^{2d}}\frac{\left[(v_l(x)-v_l(y))-(u(x)-u(y))\right](\varphi(x)-\varphi(y))}{|x-y|^{n+2s}}K_l(x/\eps_l,y/\eps_l)\right|\\
        &\quad+\left|\iint_{\bbR^{2d}\setminus G_\sigma}\frac{(u(x)-u(y))(\varphi(x)-\varphi(y))}{|x-y|^{n+2s}}(K_l(x/\eps-l,y/\eps_l)-\overline{K})\right|\\
        &\quad+\left|\iint_{ G_\sigma}\frac{(u(x)-u(y))(\varphi(x)-\varphi(y))}{|x-y|^{n+2s}}(K_l(x/\eps-l,y/\eps_l)-\overline{K})\right|\leq\delta.
    \end{align*}
    Now using this and the fact that $\varphi\equiv 0$ on $B_1\setminus \Omega_l$ by \eqref{conv.qual:domain} and $g_l\rightharpoonup f$ in $L^2(B_{\frac34})$, we get for any $\delta>0$ that there is a large integer $l_0$ such that if $l\geq l_0$, then
    \begin{align*}
        |\langle\overline{L}u,\varphi\rangle-\langle g,\varphi\rangle |\leq I+|\langle g-g_l,\varphi\rangle|\leq 2\delta.
    \end{align*}
    Since $u\in H^{s}(\bbR^n)$ and by density, we have for any $\varphi\in H^{s}_0(B_{\frac12}\cap\{x_n>0\})$, $\langle\overline{L}u,\varphi\rangle=\langle g,\varphi\rangle$, which gives the desired result.
    \item For the second one, by following the same lines as in the first claim together with the choice of $\varphi\in C_c^\infty(B_{\frac12})$ instead of $\varphi\in C_c^\infty(B_{\frac12}\cap\{x_n>0\})$, we get the desired result.
\end{itemize}
\end{proof}
Lastly, we note that by following the above argument, we can get the same result when the kernel coefficient $K$ satisfies for $a\in S^{n-1}$, $K(x+ap,y+aq)=K(x,y)$ for all $p,q\in\mathbb{Z}^n$.
\subsection{Regularity for nonlocal equations} In this subsection, we provide several known regularity results for nonlocal equations. The first and second lemmas serve to prove uniform regularity estimates for homogenized problems. The third lemma is used to establish sharp convergence rates for such problems.

The first one is about interior regularity results.
\begin{lemma}\label{lem.reg.int}
    Let $\alpha\in(0,1)$ and $K\in C^\alpha(\mathbb{T}^n)$. Let $u$ be a weak solution to
    \begin{align*}
        Lu=f\quad\text{in }B_R(x_0),
    \end{align*}
    where $f\in L^\infty(B_R(x_0))$. Then for any $\sigma\in(0,\min\{1,2s\})$ and $\rho\in(0,1]$, we have 
    \begin{align}\label{reg.int:goal1}
        E(u-(u)_{B_{\rho R}(x_0)};B_{\rho R}(x_0))\leq c\rho^{\sigma}(E(u;B_R(x_0))+\|R^{2s}f\|_{L^\infty(B_R(x_0))})
    \end{align}
    for some constant $c=c(n,s,\Lambda,\alpha,[K]_{C^\alpha(\mathbb{T}^n)},\sigma)$. If $s>\frac12$, then  for any $\sigma\in(0,\min\{2s-1,\alpha\})$, we have
    \begin{align*}
        E(u-(u)_{B_{\rho R}(x_0)}-(\nabla u)_{B_{\rho R}(x_0)}\cdot (x-x_0);B_{\rho R}(x_0))\leq c\rho^{1+\sigma}(E(u;B_R(x_0))+\|R^{2s}f\|_{L^\infty(B_R(x_0))})
    \end{align*}
    for some constant $c=c(n,s,\Lambda,\alpha,[K]_{C^\alpha(\mathbb{T}^n)},\sigma)$. In addition, when $s>\frac12$, we have 
    \begin{align}\label{reg.int:goal3}
        E(u-(u)_{B_{\rho R}(x_0)};B_{\rho R}(x_0))\leq c\rho(E(u;B_R(x_0))+\|R^{2s}f\|_{L^\infty(B_R(x_0))})
    \end{align}
     for some constant $c=c(n,s,\Lambda,\alpha,[K]_{C^\alpha(\mathbb{T}^n)})$.
\end{lemma}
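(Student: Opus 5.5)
This lemma is a known regularity result for the fixed operator $L=L_1$, whose coefficient $K(x,y)$ is $C^\alpha$ in the translation sense $|K(x+h,y+h)-K(x,y)|\le[K]_{C^\alpha}|h|^\alpha$. We do not need uniformity in $\eps$ here, since the constants may depend on $[K]_{C^\alpha(\mathbb T^n)}$. The plan is therefore a perturbation (freezing) argument from translation-invariant operators, in the spirit of \cite{Fal20,Now21,DN25}, and we run it at every scale.

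First, we reduce to $x_0=0$ and, for $R\le 1$, rescale to $R=1$. Rescaling preserves the translation-H\"older condition with a seminorm multiplied by $R^\alpha\le1$, so constants stay controlled. If $R$ were large the seminorm would grow, but the statement is only used for $R\le1$, and for larger $R$ one covers by unit balls.

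Second, fix a small ball $B_r(z)\subset B_{1/2}$. Let $K_z(x,y):=K(z+\tfrac{x-y}{2}\cdot 0+\dots)$; concretely, take the frozen kernel $k_z(h):=K(z,z-h)$. It is translation-invariant and elliptic, but not necessarily even in $h$, so we symmetrize it to $\tfrac12(K(z,z-h)+K(z,z+h))$. By the symmetry of $K$ and the translation H\"older condition, $|K(x,x-h)-k_z(h)|\le c|x-z|^\alpha$.

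Third, let $v$ solve the frozen equation $L_z v=0$ in $B_r(z)$ with $v=u$ outside. For translation-invariant kernels of this type, $v$ enjoys $C^{2s+\gamma}$-type estimates when $2s+\gamma<1$, and $C^{1,\gamma}$ estimates when $s>\frac12$, with excess decay at rate $\rho^{\min\{1,2s\}}$ or $\rho^{1+\gamma}$. These follow from the standard theory for $\mathcal L$ with bounded even kernels: difference quotients, or \cite{RS16}-type estimates. The energy estimate for $u-v$, tested with $u-v$, gives
\[
[u-v]_{H^s}^2 \lesssim r^{2\alpha}\,(\text{energy of }u\text{ near }B_r(z)\text{ plus tail}) + r^{n+2s}\|f\|_\infty^2 .
\]
Via Poincar\'e/Sobolev, this bounds $\dashint_{B_r}|u-v|$ by $r^{\alpha}$ times the local excess plus $r^{2s}\|f\|$. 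We then combine this with the decay of $v$ and iterate in the standard Campanato fashion, as in \cite[Theorem 4.4]{DN25}. Since $\sigma<\min\{1,2s\}$, the perturbation errors $r^\alpha$, $r^{2s}$ are absorbed, and the iteration yields \eqref{reg.int:goal1}. Tails are handled by the usual splitting $\tail(\cdot;B_{\rho r})\le c\rho^{2s}\tail(\cdot;B_r)+c\sum_j$ over dyadic annuli.

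For $s>\frac12$ we repeat the iteration with affine approximants. We subtract $(\nabla u)_{B}\cdot x$, noting that $L$ applied to an affine function is bounded with H\"older dependence. Indeed, $\int \tfrac{h_i}{|h|^{n+2s}}K(x,x-h)\,dh$ converges for $s>\frac12$ after symmetrization, and it is $C^\alpha$ in $x$. This produces a $C^{1,\sigma}$ Campanato decay for $\sigma<\min\{2s-1,\alpha\}$. The bound \eqref{reg.int:goal3} then follows from it by summing the geometric series of gradient averages, which gives $|\nabla u|$ bounded and so linear decay.

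The main obstacle is the second step. In the weak formulation the error term $\iint (K(x,y)-k_z(x-y))\,(\dots)$ involves the double integral, which must be estimated both near the diagonal and far away. The far-away part is where the tail enters and only boundedness of $K$ is available. Controlling it without losing the exponent, especially the drift-like term in the $C^{1,\sigma}$ case, is the delicate point. There we would first try to split at $|x-y|\sim r$ and bound the long-range part by the tail multiplied by $r^{2s}$.
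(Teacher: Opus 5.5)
Your scheme (freeze, compare, iterate in Campanato fashion) is the kind of argument behind the estimates that the paper simply imports. Its proof cites the oscillation bound \cite[Proposition 2.12]{KNS22} and the $C^{1,\sigma}$ bound \cite[Theorem 1.12]{KNS22}. It turns them into excess decay via the tail estimates \cite[Lemmas 2.4 and 2.10]{KNS22}, and it obtains \eqref{reg.int:goal3} from the resulting gradient bound. Reconstructing this is legitimate, but the comparison step, which you yourself single out, has a genuine gap.

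After symmetrizing, $\tilde k_z(h)=\frac12\bigl(K(z,z-h)+K(z,z+h)\bigr)$, the bound $|K(x,x-h)-\tilde k_z(h)|\le c|x-z|^\alpha$ is false. The translation-H\"older condition does give $|K(x,x-h)-K(z,z-h)|\le [K]_{C^\alpha(\mathbb T^n)}|x-z|^\alpha$ uniformly in $h$. However, $K(z,z+h)=K(z+h,z)$ is linked to $K(x,x-h)$ only through the shift $x-h-z$. So the correct bound for $x\in B_r(z)$ is $c\bigl(r^\alpha+\min\{|h|^\alpha,1\}\bigr)$, which is generically of order one for $|h|\gtrsim 1$. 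For example, if $K(\xi,\eta)=a(\xi)+a(\eta)$ with $a$ periodic and H\"older, the difference at $x=z$ is $\frac12\bigl(a(z-h)-a(z+h)\bigr)$.

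Hence your energy inequality with the factor $r^{2\alpha}$ is unjustified. The remedy you propose is to control the long-range part through the tail, using only the boundedness of $K$ there. After testing with $w=u-v$ and applying Poincar\'e, this gives only $\dashint_{B_r}|u-v|\lesssim E(u-(u)_{B_{2r}};B_{2r})+r^{2s}\|f\|_{L^\infty}$, with no small factor. Nothing can then be absorbed, and the iteration does not close.

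The missing idea is to freeze only where symmetrizing is cheap. Take the comparison kernel equal to $\tilde k_z(x-y)$ on $B_{2r}(z)\times B_{2r}(z)$ and equal to $K$ elsewhere. Then the kernel error is $O(r^\alpha)$ on its whole support, so $u-v$ is genuinely small. The comparison function solves $L_zv=g$ in $B_{r/2}(z)$, where $L_z$ is the translation-invariant operator with kernel $\tilde k_z(x-y)|x-y|^{-n-2s}$ and $g(x)=\int_{\mathbb{R}^n\setminus B_{2r}(z)}(v(x)-v(y))\bigl(\tilde k_z(x-y)-K(x,y)\bigr)|x-y|^{-n-2s}\,dy$. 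For every constant $m$ this satisfies $|g|\lesssim r^{-2s}\bigl(\|v-m\|_{L^\infty(B_{r/2}(z))}+\mathrm{Tail}(v-m;B_{2r}(z))\bigr)$. A bounded right-hand side of this natural size is enough for $\sigma<\min\{1,2s\}$.

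In the $C^{1,\sigma}$ case, boundedness of the kernel difference is again insufficient. The contribution of an affine part $b\cdot x$ to $g$ would only be bounded by $|b|r^{1-2s}$, which is an error $|b|r$ against the target $r^{1+\sigma}$. The sharp bound $c(r^\alpha+\min\{|h|^\alpha,1\})$ instead gives $r^{2s}|g|\lesssim|b|(r^{2s}+r^{1+\alpha})$, up to a logarithm if $2s=1+\alpha$. This is exactly where the restriction $\sigma<\min\{2s-1,\alpha\}$ enters.

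There are two smaller inaccuracies. First, $\int_{|h|<1}h_iK(x,x-h)|h|^{-n-2s}\,dh$ only sees the odd part of $h\mapsto K(x,x-h)$, which is $O(|h|^\alpha)$. It therefore converges only if $2s<1+\alpha$, and even then it is $C^{1+\alpha-2s}$ rather than $C^\alpha$ in $x$. So $L$ of an affine function should be handled in weak form, as in Lemma~\ref{lem.bdd.periodic}. Second, covering by unit balls gives no decay from scale $R>1$ to $\rho R\ge 1$; that is the large-scale regularity of Section~3. This is harmless only because the lemma is applied with $R\le 1$.
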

\begin{proof}
    From \cite[Proposition 2.12]{KNS22}, for any $\sigma\in(0,\min\{1,2s\})$ and $r\in(0,1]$, we have oscillation estimates 
    \begin{align*}
        \osc_{B_{r R}(x_0)}u\leq cr^{\sigma}(E(u;B_R(x_0))+\|R^{2s} f\|_{L^\infty(B_R(x_0))}),
    \end{align*}
    where $c=c(n,s,\Lambda,\alpha,[K]_{C^\alpha(\mathbb{T}^n)},\sigma)$. Note that the excess functional $E(\cdot)$ involves a tail term. Thus, combining this and the tail estimates given in \cite[Lemma 2.4]{KNS22} yields the desired estimate \eqref{reg.int:goal1}. When $s>\frac12$,  using the $C^{1,\sigma}$ estimate given in \cite[Theorem 1.12]{KNS22} for any $\sigma\in(0,\min\{2s-1,\alpha\})$ and the higher order version of the tail estimate given in \cite[Lemma 2.10]{KNS22}, we show the second claim. Together with the fact that
    \begin{align*}
        E((\nabla u)_{B_{\rho R}(x_0)}\cdot (x-x_0);B_{\rho R}(x_0)\leq c\rho R\|\nabla u\|_{L^\infty(B_{R/2}(x_0))}\leq c\rho( E(u;B_R(x_0))+\|R^{2s}f\|_{L^\infty(B_R(x_0))}),
    \end{align*}
    the second claim gives \eqref{reg.int:goal3}. This completes the proof.
\end{proof}
Next, we provide boundary regularity in $C^{1,\alpha}$ domains. We refer to Appendix \ref{sec.appendix} for the definition of a $C^{1,\alpha}$ domain and related notions such as the $C^{1,\alpha}$ radius and $C^{1,\alpha}$ norm.
\begin{lemma}\label{lem.reg.bdd1}
     Let $r_0,M>0$ and let $\partial\Omega\in C^{1,\alpha}$ with $C^{1,\alpha}$ radius $r_0$ and $C^{1,\alpha}$ norm bounded by $M$. Let $\alpha\in(0,1)$ and $K\in C^\alpha(\mathbb{T}^n)$. Let $u$ be a weak solution to
     \begin{equation*}
\left\{
\begin{alignedat}{3}
Lu&=f&&\qquad \mbox{in  $\Omega\cap B_R(x_0)$}, \\
u&=0&&\qquad  \mbox{in $ B_R(x_0)\setminus \Omega$}
\end{alignedat} \right.
\end{equation*}
for some $f\in L^\infty(B_R(x_0))$. Then we have for any $\rho\in(0,1]$,
\begin{align*}
    E(u-(u)_{B_{\rho R}(x_0)};B_{\rho R}(x_0))\leq c\rho^s(E(u;B_R(x_0))+\|R^{2s}f\|_{L^\infty(B_R(x_0))})
\end{align*}
for some constant $c=c(n,s,\Lambda,\alpha,[K]_{C^\alpha(\mathbb{T}^n)},r_0,M)$. 
\end{lemma}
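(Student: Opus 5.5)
This estimate is the boundary analogue of \autoref{lem.reg.int}, and I would derive it from known boundary regularity for nonlocal equations with H\"older coefficients in $C^{1,\alpha}$ domains, mainly \cite{KW25}. The operator here is $L=L_1$ with coefficient $K(x,y)$, which satisfies $|K(x+h,y+h)-K(x,y)|\le [K]_{C^\alpha}|h|^\alpha$. This is the translation-type H\"older condition used in \cite{KW25}.

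\textbf{Step 1: reduction by scaling.} By translation and scaling, $u_R(x)\coloneqq u(x_0+Rx)$ solves an equation with kernel $K((x_0+Rx)/1,(x_0+Ry)/1)$. Since $R\le 1$, its H\"older seminorm is $\le [K]_{C^\alpha}$, and the domain $(\Omega-x_0)/R$ has $C^{1,\alpha}$ radius $\ge r_0$ and norm $\le M$. So I may assume $R=1$ and $x_0=0$. If $\dist(x_0,\partial\Omega)$ is large relative to $\rho$, the interior estimate \autoref{lem.reg.int} with $\sigma\ge s$ handles the corresponding balls. This uses $s<\min\{1,2s\}$.

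\textbf{Step 2: boundary $C^s$ bound.} From \cite{KW25} (optimal $C^s$ boundary regularity for H\"older coefficients in $C^{1,\alpha}$ domains) I get
\[
\|u\|_{C^s(B_{1/2})}\le c\bigl(\|u\|_{L^\infty(B_{3/4})}+\tail(u;B_{3/4})+\|f\|_{L^\infty(B_1)}\bigr).
\]
The local $L^\infty$ norm is controlled by $E(u;B_1)+\|f\|_{L^\infty}$ via the standard nonlocal local boundedness estimate from De Giorgi theory, e.g.\ \cite{DKP16}, applied to the zero extension across $\partial\Omega$. Combining and covering gives $[u]_{C^s(B_{1/2})}\le c(E(u;B_1)+\|f\|_{L^\infty})$. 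Hence for $\rho\le 1/2$,
\[
\dashint_{B_\rho}|u-(u)_{B_\rho}|\le c\rho^s(E(u;B_1)+\|f\|_{L^\infty}).
\]

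\textbf{Step 3: the tail part.} It remains to bound $\tail(u-(u)_{B_\rho};B_\rho)$, which is the main technical step. I would split the exterior of $B_\rho$ into dyadic annuli $B_{2^{k+1}\rho}\setminus B_{2^k\rho}$ up to scale $1/2$, plus the far part. On each annulus I would use the oscillation bound $|u-(u)_{B_\rho}|\le c(2^k\rho)^s(E+\|f\|)$ from Step 2. Summing against the weight $\rho^{2s}(2^k\rho)^{-2s}$ gives a convergent geometric series $\sum 2^{k(s-2s)}$, so this part is $\lesssim\rho^s$. The far part gives $\rho^{2s}(\tail(u;B_1)+|(u)_{B_\rho}|)\lesssim \rho^{2s}E(u;B_1)\le\rho^s E$. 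This matches the tail argument in \cite[Lemma 2.4]{KNS22} used in \autoref{lem.reg.int}. The case $\rho\in(1/2,1]$ is trivial.

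The main obstacle is Step 2: confirming that \cite{KW25} covers weak solutions of divergence-form kernels with this exact H\"older condition and only local vanishing data $u=0$ in $B_R\setminus\Omega$, with tail dependence. If only $L^\infty$ data is available there, I would first localize via \cite[Lemma 2.3]{BKK25} (as in \autoref{lem.conv.qual}), which turns the tail into a bounded right-hand side.
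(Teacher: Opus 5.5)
Your proposal is correct and follows the paper's route: the paper likewise cites \cite[Theorem 1.6]{KW25}, with constants depending on $\Omega$ only through $r_0,M$, for the boundary bound $\osc_{B_{rR}(x_0)}u\le cr^s\bigl(E(u;B_R(x_0))+\|R^{2s}f\|_{L^\infty(B_R(x_0))}\bigr)$, and then turns this into excess decay via the tail estimate \cite[Lemma 2.4]{KNS22}. The differences are cosmetic: you quote KW25 in $L^\infty$-plus-tail form and therefore add a De Giorgi local boundedness step, and you carry out the dyadic tail summation by hand rather than citing \cite{KNS22}.
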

\begin{proof}
    By \cite[Theorem 1.6]{KW25}, we have 
    \begin{align*}
        \osc_{B_{r R}(x_0)} u\leq cr^s(E(u;B_R(x_0))+\|R^{2s}f\|_{L^\infty(B_R(x_0))})
    \end{align*}
    for some constant $c=c(n,s,\Lambda,\alpha,[K]_{C^\alpha(\mathbb{T}^n)},\Omega)$. Although the theorem states that the constant depends on $\Omega$, we can easily obtain a more precise dependence of the constants by carefully inspecting the proof. Indeed, the constant $c$ depends on $n,s,\Lambda,\alpha,[K]_{C^\alpha(\mathbb{T}^n)},r_0,M$. Then combining this and the tail estimate in \cite[Lemma 2.4]{KNS22} yields the desired estimate. This completes the proof.
\end{proof}
The third result is about global regularity of the Dirichlet problem for the fractional Laplacian.

\begin{lemma}\label{lem.reg.bdd2}
  Let $r_0,M,N>0$ and let $\partial\Omega\in C^{1,\alpha}$ with $C^{1,\alpha}$ radius $r_0$, $C^{1,\alpha}$ norm bounded by $M$, $\mathrm{diam}(\Omega)\leq N$ and $0\in \Omega$. Let $u$ be a weak solution to 
     \begin{equation*}
\left\{
\begin{alignedat}{3}
(-\Delta)^{s}u&=f&&\qquad \mbox{in  $\Omega$}, \\
u&=g&&\qquad  \mbox{in $ \R^n \setminus \Omega$},
\end{alignedat} \right.
\end{equation*}
where $f\in C^{1}(\Omega)$ and $g\in C^{1}(B_2(\Omega))\cap L^1_{2s}(\bbR^n)$. Then we have the following:
\begin{itemize}
    \item Let $s\neq\frac12$. Then for any $k\in\{0,1\}$ and $\mu\in(0,\min\{2s,1\}]$ with $k+\mu\geq s$,
    Then we have
\begin{align*}
    \frac{|\nabla^ku(x)-\nabla^ku(y)|}{|x-y|^\mu}\leq c\left(d_\Omega^{s-(k+\mu)}(x)+d_\Omega^{s-(k+\mu)}(y)\right)(\|f\|_{C^{1}(\Omega)}+\|g\|_{C^{1}(B_2(\Omega)\setminus \Omega)}+\|g\|_{L^1_{2s}(\bbR^n\setminus \Omega)}),
\end{align*}
where $c=c(n,s,\Lambda,\alpha,r_0,M,N,\mu)$.
\item Let $s< \frac12$. Let $f\in C^{1,\delta}(\Omega)$ for some $\delta>0$. If $2s+\delta<1$, then we have
\begin{align*}
    \frac{|\nabla u(x)-\nabla u(y)|}{|x-y|^{2s+\delta}}\leq c\left(d_\Omega^{-1-s-\delta}(x)+d_\Omega^{-1-s-\delta}(y)\right)(\|f\|_{C^{1+\delta}(\Omega)}+\|g\|_{C^{1}(B_2(\Omega)\setminus \Omega)}+\|g\|_{L^1_{2s}(\bbR^n\setminus \Omega)}),
\end{align*}
where $c=c(n,s,\Lambda,\alpha,r_0,M,N,\delta)$. 
\item Let $s=\frac12$. Let $f\in C^{1,\delta}(\Omega)$ for some $\delta>0$. Then we have 
\begin{align*}
    &\frac{|\nabla u(x)-\nabla u(y)|}{|x-y|}\leq c\left(d_\Omega^{-\frac32}(x)+d_\Omega^{-\frac32}(y)\right)(\|f\|_{C^{1+\delta}(\Omega)}+\|g\|_{C^{1}(B_2(\Omega)\setminus \Omega)}+\|g\|_{L^1_{2s}(\bbR^n\setminus \Omega)}),\\
    & \frac{|\nabla^2 u(x)-\nabla^2 u(y)|}{|x-y|^\delta}\leq c\left(d_\Omega^{-\frac32-\delta}(x)+d_\Omega^{-\frac32-\delta}(y)\right)(\|f\|_{C^{1+\delta}(\Omega)}+\|g\|_{C^{1}(B_2(\Omega)\setminus \Omega)}+\|g\|_{L^1_{2s}(\bbR^n\setminus \Omega)}),
\end{align*}
where $c=c(n,\Lambda,\alpha,r_0,M,N,\delta)$. 
\end{itemize}
\end{lemma}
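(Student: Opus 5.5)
The statement is essentially a collection of known global regularity results for the fractional Laplacian in $C^{1,\alpha}$ domains, so the proof will reduce it to the literature. We split $u=u_1+u_2$: $u_1$ solves $(-\Delta)^s u_1=f$ in $\Omega$ with $u_1=0$ outside, and $u_2$ is $s$-harmonic in $\Omega$ with exterior datum $g$. The plan is to produce, for each piece, weighted H\"older bounds of the form $|\nabla^k u(x)-\nabla^k u(y)|\lesssim (d_\Omega^{s-k-\mu}(x)+d_\Omega^{s-k-\mu}(y))|x-y|^\mu$. These will come from combining three ingredients: the boundary behaviour $|u|\lesssim d_\Omega^s$, more precisely $u/d_\Omega^s\in C^{\alpha'}(\overline\Omega)$ (Ros-Oton--Serra for $C^{1,\alpha}$ domains); the global $C^s(\mathbb R^n)$ bound; and interior estimates on Whitney balls.

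\textbf{Homogeneous-data part $u_1$.} By \cite{ROS14,ROS17} (or \cite[Theorem 1.6]{KW25} with constant kernel) we have $\|u_1\|_{C^s(\mathbb R^n)}\le c\|f\|_{L^\infty}$ and $|u_1|\le c\,d_\Omega^s\|f\|_{L^\infty}$. Fix $x\in\Omega$ and set $r=d_\Omega(x)/2$. On $B_r(x)$ the equation $(-\Delta)^s u_1=f$ holds with $f\in C^1$ (resp.\ $C^{1,\delta}$). Rescale $B_r(x)$ to $B_1$ and apply interior Schauder estimates for the fractional Laplacian, which give $C^{2s+1}$ or $C^{2s+1+\delta}$ regularity away from integers, using the tail bound $\mathrm{Tail}(u_1-u_1(x);B_r(x))\lesssim r^s$ that follows from the global $C^s$ bound. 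After scaling back this yields $[\nabla^k u_1]_{C^\mu(B_{r/2}(x))}\lesssim r^{s-k-\mu}$ whenever $k+\mu<2s+1$ (resp.\ $<2s+1+\delta$), with all constants scale invariant since $r\le N$.

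From the local bound to the global two-point inequality there are two cases. If $|x-y|<d_\Omega(x)/4$, the local estimate applies directly. Otherwise we use the pointwise bounds $|\nabla^k u|\lesssim d_\Omega^{s-k}$ obtained at the centre of each ball; for $k=0$ the global $C^s$ bound together with $\mu\ge s$ gives the result when $\mu\le s$ after rewriting. In general $|\nabla^k u(x)-\nabla^k u(y)|\le |\nabla^k u(x)|+|\nabla^k u(y)|\lesssim d^{s-k}(x)+d^{s-k}(y)$. Dividing by $|x-y|^\mu\gtrsim \max(d(x),d(y))^\mu$ in the far regime then gives the bound, since $s-k-\mu\le 0$ by the hypothesis $k+\mu\ge s$. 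A subtlety arises when $d(y)$ is much smaller than $d(x)$: there we bound $d^{s-k}(x)/|x-y|^\mu\le d^{s-k-\mu}(x)$, and for the $y$ term we use $|x-y|\ge d(y)$ in the same way.

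\textbf{Exterior data $u_2$.} We extend $g$ to $\tilde g\in C^1_c$ near $\Omega$ plus a far part: write $g=\eta g+(1-\eta)g$ with $\eta$ a cutoff equal to $1$ on $B_1(\Omega)$. The far part contributes a smooth right-hand side $-(-\Delta)^s((1-\eta)g)$ in $\Omega$ with $C^k$ norms bounded by $\|g\|_{L^1_{2s}}$. For the near part, set $v=u_2-\eta g$; it satisfies $(-\Delta)^s v=-(-\Delta)^s(\eta g)$ in $\Omega$ with $v=0$ outside. Here is a catch: $(-\Delta)^s(\eta g)$ is only $C^{1-2s}$ for $g\in C^1$ when $s<1/2$, and merely bounded or blowing up for $s\ge1/2$. \textbf{This is the main obstacle.} I would first try to avoid differentiating the right-hand side. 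The interior estimate for $u_2$ on $B_r(x)$ then uses that $u_2$ is $s$-harmonic, so $u_2$ is smooth inside, and its derivatives are controlled by the oscillation of $u_2$ on $B_r(x)$ plus tails. These are in turn controlled by $r^s$ via the boundary $C^s$ estimate for $v$, plus the $C^1$ norm of $g$ near $\partial\Omega$ (which is $\lesssim r$, better than $r^s$). If this loses at $s=1/2$, one instead treats $u_2$ through the Poisson kernel representation. The $s=\frac12$ second-derivative estimates are the endpoint case where $2s+1=2$ is an integer; there the assumption $f\in C^{1,\delta}$ lets interior Schauder give $C^{2,\delta}$, which yields the stated exponents $-\frac32$ and $-\frac32-\delta$. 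The case $s<\frac12$ with exponent $2s+\delta$ is handled identically, with $k=1$ and $\mu=2s+\delta<1$.
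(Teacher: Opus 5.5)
Your architecture matches the paper's: a global $C^s$ bound, interior Schauder estimates on $B_{d_\Omega(x)/2}(x)$ with tails controlled by that bound, and the near/far two-point argument. Your far-regime handling, including treating $k=0$ through the $C^s$ bound and $\mu\ge s$, is fine.

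The gap is in the exterior-data part. Your only concrete source for $\mathrm{osc}_{B_r(x)}u_2+\mathrm{Tail}\lesssim r^s$ is the boundary $C^s$ estimate for $v=u_2-\eta g$. That function solves $(-\Delta)^s v=-(-\Delta)^s(\eta g)$ in $\Omega$ with $v=0$ outside, so the estimate needs $(-\Delta)^s(\eta g)\in L^\infty(\Omega)$. For $g$ merely $C^1$ this fails for every $s\ge\frac12$, not only at $s=\frac12$. Indeed, the second difference $2w(x)-w(x+z)-w(x-z)$ of a $C^1$ function is in general only $o(|z|)$, which is not integrable against $|z|^{-n-2s}$ near $0$ once $2s\ge 1$. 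For $s<\frac12$ your subtraction does work. For $s\ge\frac12$, however, you note the obstruction and then use the estimate anyway, and the Poisson-kernel alternative is only named. So the global $C^s$ bound on which everything else rests is never actually obtained when $s\ge\frac12$.

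The paper sidesteps this by not splitting $u$. Every ball $B_R(x_0)\subset\Omega$ sees $(-\Delta)^s u=f$, so interior regularity applies to $u$ itself. The only global input is
$\|u\|_{C^s(B_2(\Omega))}\lesssim\|f\|_{L^\infty}+\|g\|_{C^1(B_2(\Omega)\setminus\Omega)}+\|g\|_{L^1_{2s}}$.
This is taken from \cite[Proposition 2.6.16]{FRRO24} after cutting off the far part of $g$, which, as you say, only adds a smooth bounded right-hand side.

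If you want to prove that bound rather than cite it, your fallback works once carried out. Let $x_0\in\partial\Omega$ be nearest to $x$ and use the following facts:
\begin{itemize}
\item $\int P(x,y)\,dy=1$;
\item $P(x,y)\lesssim d_\Omega^s(x)\,d_\Omega^{-s}(y)(1+d_\Omega(y))^{-s}|x-y|^{-n}$, which follows from the zero-exterior-data estimates you already use for $u_1$;
\item $|g(y)-g(x_0)|\lesssim|y-x_0|\le 2|x-y|$ for $y\in B_2(\Omega)\setminus\Omega$.
\end{itemize}
These give $|u_2(x)-g(x_0)|\lesssim d_\Omega^s(x)\bigl(\int_{B_2(\Omega)\setminus\Omega}d_\Omega^{-s}(y)|x-y|^{1-n}\,dy+1\bigr)$ times the data norms, hence $\lesssim d_\Omega^s(x)$, since $s<1$.

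Alternatively, replace $\eta g$ by a Whitney-regularized extension $G$ of $g$ with $|\nabla^2G|\lesssim d_\Omega^{-1}$ in $\Omega$. Then $|(-\Delta)^sG|\lesssim d_\Omega^{1-2s}$ there, up to a logarithm at $s=\frac12$. You can then apply the boundary $C^s$ estimate for right-hand sides bounded by $d_\Omega^{-\gamma}$ with $\gamma<s$; note $2s-1<s$.
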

\begin{proof}
For any $B_R(x_0)\subset \Omega$, by the standard regularity theory, if $2s+\delta\notin \N$, then 
\begin{align*}
    [u]_{C^{2s+\delta}(B_{R/2}(x_0))}&\leq \frac{c}{R^{2s+\delta}}(E(u-(u)_{B_R(x_0)};B_R(x_0))+[R^{2s+\delta}f]_{C^\delta(B_R(x_0))})
\end{align*}
for some constant $c=c(n,s,\Lambda,\delta)$. In addition, from \cite[Proposition 2.6.16]{FRRO24} together with a suitable cutoff argument, we have 
\begin{align*}
    \|u\|_{C^s(B_2(\Omega))}\leq c(\|g\|_{C^{1}(B_2(\Omega)\setminus \Omega)}+\|g\|_{L^1_{2s}(\bbR^n\setminus \Omega)}+\|f\|_{L^\infty(\Omega)})
\end{align*}
for some constant $c=c(n,s,\Lambda,\alpha,r_0,M,N)$. Therefore, combining the above two estimates together with a standard covering argument yields the desired estimates.
\end{proof}

\section{Large scale regularity estimates}
In this section, we prove regularity estimates for solutions $u_\eps$ to nonlocal equations involving the homogenized operator $L_\eps $, with constants independent of the small parameter $\eps$.

We present the main result of this section.
\begin{theorem}\label{thm.uni.est}
    Let $\alpha\in(0,1)$, $\eps\in(0,1)$, $K\in C^\alpha(\mathbb{T}^n)$, $f\in L^\infty(\Omega\cap B_R(x_0))$ and $\partial\Omega\in C^{1,\alpha}$ with $C^{1,\alpha}$ radius $r_0$ and $C^{1,\alpha}$ norm bounded by $M$ for some constants $r_0,M>0$. Let $u_\eps$ be a weak solution to 
    \begin{equation*}
\left\{
\begin{alignedat}{3}
L_\eps u_\eps&=f&&\qquad \mbox{in  $\Omega\cap B_R(x_0)$}, \\
u_\eps&=0&&\qquad  \mbox{in $B_R(x_0)\setminus \Omega$}
\end{alignedat} \right.
\end{equation*}
for some $R>0$ and $x_0\in\bbR^n$. Then for any $\beta\in(0,s)$, we have
\begin{align}\label{uni.est:main.est1}
    [u_\eps]_{C^{\beta}(B_{R/2}(x_0))}\leq cR^{-\beta}(E(u_\eps;B_R(x_0))+R^{2s}\|f\|_{L^\infty(\Omega\cap B_R(x_0))}),
\end{align}
where $c=c(n,s,\Lambda,\alpha,[K]_{C^{\delta}(\mathbb{T}^n)},\beta,r_0,M)$. If $B_R(x_0)\subset \Omega$, then for any $\beta\in(0,\min\{2s,1\})$,
\begin{align*}
    [u_\eps]_{C^\beta(B_{R/2}(x_0))}\leq cR^{-\beta}(E(u_\eps;B_R(x_0))+R^{2s}\|f\|_{L^\infty( B_R(x_0))})
\end{align*}
for some constant $c=c(n,s,\Lambda,\alpha,[K]_{C^{\alpha}(\mathbb{T}^n)},\beta)$. Furthermore, when $B_R(x_0)\subset \Omega$ and $s>\frac12$, we get
\begin{align*}
    \|\nabla u_\eps\|_{L^\infty(B_{R/2}(x_0))}\leq cR^{-1}(E(u_\eps;B_R(x_0))+R^{2s}\|f\|_{L^\infty(B_R(x_0))})
\end{align*}
for some constant $c=c(n,s,\Lambda,\alpha,[K]_{C^{\alpha}(\mathbb{T}^n)})$.
\end{theorem}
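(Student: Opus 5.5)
We follow the Avellaneda--Lin compactness scheme: homogenization transfers regularity at scales above $\eps$, and Lemmas \ref{lem.reg.int} and \ref{lem.reg.bdd1} cover the scales below $\eps$.

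\textbf{Reduction.} By translation and the scaling $u(x)\mapsto u(x_0+Rx)$ we may assume $x_0=0$ and $R=1$. The operator $L_\eps$ then becomes $L_{\eps/R}$ with the same periodic $K$. The domain becomes $\Omega/R$, whose $C^{1,\alpha}$ radius and norm are only improved when $R\le1$. For $R>1$ we would cover $B_{R/2}$ by unit-scale balls; alternatively we restrict to the relevant range $R\le c(r_0)$ and treat the rest by covering. It therefore suffices to prove, for every point $z\in B_{1/2}$ and every $r\in(0,1/4)$, an excess decay
\[
E(u_\eps-(u_\eps)_{B_r(z)};B_r(z))\le c\,r^{\beta}\bigl(E(u_\eps;B_1)+\|f\|_{L^\infty}\bigr).
\]
Campanato's characterization then gives the $C^\beta$ seminorm.

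\textbf{Step 1: one-step improvement at large scales.} Fix $\beta<s$ in the boundary case and $\beta<\min\{2s,1\}$ in the interior case. We claim there are $\theta\in(0,1/4)$ and $\eps_0>0$, depending only on $n,s,\Lambda,\beta$ (and on $r_0,M$ at the boundary), such that the following holds. If $\eps\le\eps_0$, $E(u_\eps;B_1)+\|f\|_{L^\infty}\le1$, and $u_\eps$ solves the problem in $B_1$ with a flat-enough boundary in the sense of \eqref{conv.qual:domain}, or with no boundary, then
\[
E(u_\eps-(u_\eps)_{B_\theta};B_\theta)\le\theta^{\beta}.
\]
We prove this by contradiction using Lemma \ref{lem.conv.qual}. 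A violating sequence converges, after localization, to a solution $u$ of $\overline L u=g$ satisfying the bounds \eqref{conv.qual:goal1}. For this constant-coefficient fractional Laplacian, Lemma \ref{lem.reg.bdd1} (with $K\equiv\overline K$, at a flat boundary) or Lemma \ref{lem.reg.int} gives decay $\theta^{s}$ or $\theta^{\min\{2s,1\}}$ respectively, with $c\theta^{s}\le\frac12\theta^\beta$ once $\theta$ is small.

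The main obstacle is passing the excess to the limit. $L^2$ convergence of $u_l\psi$ in $B_{3/4}$ handles the average term. The tail over $\bbR^n\setminus B_\theta$ only converges inside $B_{3/4}$; outside $B_{3/4}$ it is bounded by $\theta^{2s}E(u_l;B_1)\le\theta^{2s}$, which is harmless since $2s>\beta$. A $C^{1,\alpha}$ boundary is not flat; we handle this by flattening at scale $\theta^k$, where it deviates from a half-space by $O(\theta^{k\alpha})$. This deviation is absorbed by allowing the $\eps_l$-like thickness in \eqref{conv.qual:domain}, or by a small additional perturbation.

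\textbf{Step 2: iteration and small scales.} We rescale Step 1 by $\theta^k$. Since $v(x)=u_\eps(\theta^k x)$ solves the analogous problem with $\eps/\theta^k$, we can iterate as long as $\eps/\theta^k\le\eps_0$. Subtracting averages is allowed because constants solve the equation in the interior; at the boundary we do not subtract the average, since the $u=0$ condition already forces decay. The tails accumulate as geometric sums, which are controlled because $\beta<2s$. This gives excess decay down to $r\sim\eps/\eps_0$. Below that scale we rescale $B_{\eps/\eps_0}$ to unit size. The operator becomes $L_{\eps_0}$, whose kernel has Hölder seminorm at most $\eps_0^{-\alpha}[K]_{C^\alpha}$, a fixed constant, so Lemma \ref{lem.reg.bdd1} and Lemma \ref{lem.reg.int} give decay $r^{s}$ or $r^{\sigma}$ there. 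Concatenating the two ranges yields \eqref{uni.est:main.est1} and the interior $C^\beta$ bound.

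\textbf{Step 3: Lipschitz for $s>\frac12$.} Here Step 1 must target $\beta=1$, i.e.\ decay of
\[
\inf_{\ell\ \text{affine}}E(u_\eps-\ell;B_\theta)\le\theta^{1+\sigma}.
\]
This needs an affine correction: affine functions are not exact solutions of $L_\eps$. We use $\ell(x)+\eps\phi(x/\eps)\cdot\nabla\ell$ with the corrector from Lemma \ref{lem.bdd.periodic}; for $s>\frac12$ the right-hand side there includes all of $\bbR^n$, so this is an exact solution. The error is $O(\eps)$, bounded since $\phi$ is bounded. The iteration then gives $|\nabla\ell_k|\le C$ uniformly, with summable errors $\eps/\theta^k\lesssim\eps_0$. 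The small scales are again handled by \eqref{reg.int:goal3}, yielding the gradient bound.
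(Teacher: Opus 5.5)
Your proposal is correct and follows essentially the same route as the paper: a compactness-based one-step improvement via \autoref{lem.conv.qual} (as in \autoref{lem.exp.sol}, \autoref{lem.exp.bdry}, and \autoref{lem.exp.lip} with the corrected affine functions $a+b\cdot(x+\eps\phi(x/\eps))$ for $s>\frac12$), iteration down to scale $\sim\eps$, and \autoref{lem.reg.int}, \autoref{lem.reg.bdd1} below that scale after rescaling, then covering. The only difference is cosmetic and at the boundary. The paper shifts the center inward by $R_i\eps_1 e_n$ so that the one-sided inclusion \eqref{conv.qual:domain} holds literally. Your two-sided thickness instead needs the (trivial) remark that in \autoref{lem.conv.qual} test functions compactly supported in $B_{\frac12}\cap\{x_n>0\}$ become admissible for large $l$.
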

To obtain this, we first establish regularity estimates down to the microscopic scale $\eps$ without assuming any smoothness of the associated kernel coefficient $K$ based on a compactness argument. More precisely, we prove uniform decay estimates at scales $r\geq \eps$. Next, assuming that the kernel coefficient $K$ is H\"older continuous, we use the previously mentioned regularity results for nonlocal problems to obtain the corresponding uniform estimates at scales $r\leq \eps$. Combining these estimates yields the desired result. 

We divide this section into three subsections, devoted to interior estimates at scales $r\geq \eps$, boundary estimates on scales $r\geq \eps$, and uniform estimates on all scales.

\subsection{Interior estimates}

The main result of this subsection is the following interior regularity result at large scales $r \ge \eps$. Since we are working only on large scales, throughout this subsection, we do not make any regularity assumptions on $K$.

\begin{proposition}\label{lem.large.int}
    Let $\eps\in(0,1)$ and let $v_\eps$ be a weak solution to 
    \begin{align*}
        L_\eps u_\eps=f\quad\text{in }B_1.
    \end{align*}
    For any $r\in[\eps,1]$ and $\beta<\min\{2s,1\}$, we have 
    \begin{align}\label{large.int:main.est}
        E(u_\eps-(u_\eps)_{B_r};B_r)\leq cr^{\beta}(E(u_\eps;B_1)+\|f\|_{L^\infty(B_1)})
    \end{align}
    for some constant $c=c(n,s,\Lambda,\beta)$. Furthermore, if $s>\frac12$, we have \eqref{large.int:main.est} with $\beta=1$.
\end{proposition}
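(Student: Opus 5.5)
The argument is an Avellaneda--Lin type compactness scheme. It consists of a one-step improvement at a fixed scale $\theta$, valid whenever $\eps$ is small relative to the current scale, which is then iterated by rescaling. No regularity of $K$ is used. Fix $\beta<\min\{2s,1\}$, or $\beta=1$ when $s>\frac12$.

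\emph{Step 1: one-step decay.} I would show the following. There exist $\theta\in(0,\frac14)$ and $\eps_0>0$, depending only on $n,s,\Lambda,\beta$, such that if $\eps\le\eps_0$, $L_\eps u_\eps=f$ in $B_1$ and $E(u_\eps;B_1)+\|f\|_{L^\infty(B_1)}\le 1$, then
\[E(u_\eps-(u_\eps)_{B_\theta};B_\theta)\le \theta^{\beta}.\]
For $\beta=1$ the right-hand side has to be adjusted, as explained below.

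Choose $\theta$ first. Solutions of $\overline L u=g$ in $B_{1/2}$ with $E(u;B_1)+\|g\|_{L^\infty}\le c$ satisfy
\[E(u-(u)_{B_\rho};B_\rho)\le C\rho^{\beta_0}\]
for some $\beta_0\in(\beta,\min\{2s,1\})$. When $s>\frac12$ they even satisfy $C\rho$ together with a $C^{1,\gamma}$ expansion. This follows from Lemma~\ref{lem.reg.int} applied with constant $K=\overline K$ on $B_{1/2}$ and rescaled. Take $\theta$ so small that $C\theta^{\beta_0}\le\frac12\theta^\beta$.

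Then argue by contradiction. Suppose sequences $\eps_l\to0$, $K_l$, $u_l$, $f_l$ violate the claim. Lemma~\ref{lem.conv.qual} (interior case) gives $u_l\psi\to u$ in $L^2$, with $u$ solving $\overline L u=g$ in $B_{1/2}$. Since $\psi\equiv1$ on $B_{1/2}$ and $\theta<\frac12$, the averages $\dashint_{B_\theta}|u_l-(u_l)_{B_\theta}|$ converge to the corresponding quantity for $u$.

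\emph{Main obstacle: the tail.} The tail of $u_l$ outside $B_\theta$ does not converge, because there is no strong convergence outside $B_1$. I would split the tail into three pieces:
\begin{itemize}
\item the part over $B_{1/2}\setminus B_\theta$, which converges;
\item the part over $B_1\setminus B_{1/2}$;
\item the part over $\bbR^n\setminus B_1$.
\end{itemize}
The last two are bounded by $c\theta^{2s}E(u_l;B_1)+c\theta^{2s}|(u_l)_{B_\theta}|$. Since $2s>\beta$, shrinking $\theta$ absorbs these into $\frac12\theta^\beta$. The same splitting controls the limit's excess. This yields the contradiction.

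\emph{Step 2: iteration.} I would apply Step 1 to the rescaled functions $u^{(k)}(x)=\big(u_\eps(\theta^k x)-(u_\eps)_{B_{\theta^k}}\big)/\Theta_k$. These solve an equation of the same type with period $\eps/\theta^k$ and right-hand side $\theta^{2sk}f/\Theta_k$; periodicity of the rescaled kernel in $\frac{\eps}{\theta^k}\mathbb Z^n$ is harmless, since $K_l$ is arbitrary in Lemma~\ref{lem.conv.qual}. The iteration runs as long as $\eps/\theta^k\le\eps_0$.

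The quantity controlling the rescaled excess is $E(u_\eps-(u_\eps)_{B_{\theta^k}};B_{\theta^k})$. Its tail includes all larger annuli, so I would use a tail-summation inequality of the form
\[\tail(v;B_{\theta r})\le c\theta^{2s}\tail(v;B_r)+c\theta^{2s}\sum_j\ldots,\]
and absorb the sum geometrically using $\beta<2s$. Subtracting the mean changes the tail by $|(u)_{B_{\theta^{k+1}}}-(u)_{B_{\theta^k}}|\lesssim\theta^{-n}E_k$, which is summable. Induction then gives $E_k\le C\theta^{k\beta}$ for all $k$ with $\theta^k\ge\eps/\eps_0$. Scales $r\in[\eps,\eps/\eps_0]$ cost only a constant factor $\eps_0^{-n-\beta}$, and intermediate $r$ are handled by comparing with the nearest $\theta^k$.

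\emph{Case $\beta=1$, $s>\frac12$.} Plain mean subtraction gives only $\beta_0<1$ without a logarithm. Here I would instead subtract affine functions, following Avellaneda--Lin. The one-step claim becomes
\[E\big(u_\eps-(u_\eps)_{B_\theta}-q\cdot x;B_\theta\big)\le\theta^{1+\gamma}\]
for some vector $q$ with $|q|\le C$. It uses the $C^{1,\gamma}$ estimate for $\overline L$ from Lemma~\ref{lem.reg.int} with constant kernel, valid for $\gamma<2s-1$.

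Affine functions are not solutions of $L_\eps$, which is the genuine difficulty. I would handle it by subtracting $q\cdot x+\eps\,q\cdot\phi(x/\eps)$, where $\phi$ is the corrector of Lemma~\ref{lem.bdd.periodic} with $K_0=\overline K-K$ in the case $s>\frac12$. By construction $L_\eps(x_i+\eps\phi_i(\cdot/\eps))=0$; the sign convention has to be checked. Since $\|\phi\|_{L^\infty}\le c$, the corrector term has size $\eps$, and the linear growth in the tail converges because $s>\frac12$. The coefficients satisfy $\sum|q_k|<\infty$ because $\sum\theta^{k\gamma}$ converges. Iterating the improvement and the bound $|q|\le C$ then gives $E(u_\eps-(u_\eps)_{B_r};B_r)\le cr$ for $r\ge\eps$, after absorbing $\eps|q|\le r|q|$.
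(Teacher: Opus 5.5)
Your proposal is correct and follows the paper's proof essentially step by step. It uses a compactness-based one-step improvement via Lemma~\ref{lem.conv.qual} (the paper's Lemma~\ref{lem.exp.sol}), iterates by rescaling while $\eps/\theta^k\le\eps_0$, and for $s>\frac12$ improves by corrected affine functions $a+b\cdot(x+\eps\phi(x/\eps))$ with $L_\eps(x+\eps\phi(\cdot/\eps))=0$, exactly as in Lemma~\ref{lem.exp.lip}.

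Two minor points need attention:
\begin{itemize}
\item Absorbing $c\theta^{2s}|(u_l)_{B_\theta}|$ requires a bound on the mean that does not depend on $\theta$. You can get it from local boundedness, or use the paper's device \eqref{exp.bdry:defn.c_1}: compare $E(u_l\psi-(u_l\psi)_{B_\theta};B_\theta)$ with $E(u_l-(u_l)_{B_\theta};B_\theta)$, where the constant cancels because $(u_l\psi)_{B_\theta}=(u_l)_{B_\theta}$.
\item The tail-summation in your Step~2 is unnecessary. Your one-step estimate already carries the full excess $E$ on both sides, so it iterates directly.
\end{itemize}
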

The proof proceeds by establishing a one step improvement of oscillation via a compactness argument and then iterating this estimate to deduce a suitable decay estimate.

We first give the lemma on the one step improvement of oscillation below the gradient level.
\begin{lemma}\label{lem.exp.sol}
    Let $\eps\in(0,1)$ and let $u_\eps$ be a weak solution to 
    \begin{align*}
        L_\eps u_\eps=f\quad\text{in }B_1.
    \end{align*} Fix $\beta\in(0,\min\{2s,1\})$. Then there are $\eps_0, \theta \in (0,1)$ depending only on $n,s,\Lambda,\beta$ such that for any $\eps\leq \eps_0$,
    \begin{align*}
        E(u_\eps-(u_\eps)_{B_\theta};B_\theta)\leq \theta^{\beta}(E(u_\eps;B_1)+\|f\|_{L^\infty(B_1)})
    \end{align*}
    for some constant $c=c(n,s,\Lambda,\beta)$.
\end{lemma}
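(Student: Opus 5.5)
We argue by compactness and contradiction, in the spirit of Avellaneda--Lin, using \autoref{lem.conv.qual} (second bullet) together with interior regularity for the homogenized operator $\overline L=\overline K(-\Delta)^s$.

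\textbf{Step 1: choice of $\theta$.} Fix $\beta<\min\{2s,1\}$ and set $\beta_1\coloneqq(\beta+\min\{2s,1\})/2$. For a weak solution $u$ of $\overline L u=g$ in $B_{1/2}$ with $\overline K\in[\Lambda^{-1},\Lambda]$, \autoref{lem.reg.int} (the constant kernel is trivially H\"older) gives
\[
E(u-(u)_{B_\rho};B_\rho)\le c_0\rho^{\beta_1}\bigl(E(u;B_{1/2})+\|g\|_{L^\infty(B_{1/2})}\bigr),\qquad c_0=c_0(n,s,\Lambda,\beta).
\]
Combined with the bound $E(u;B_1)+\|g\|_{L^\infty}\le c$ from \eqref{conv.qual:goal2}, and with $E(u;B_{1/2})\le cE(u;B_1)$, the right-hand side is at most $c_1\rho^{\beta_1}$. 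Choose $\theta\in(0,1/4)$ so small that $c_1\theta^{\beta_1}\le\frac12\theta^\beta$.

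\textbf{Step 2: contradiction setup.} Suppose the claim fails for this $\theta$ and every $\eps_0$. Then there are $\eps_l\to0$, kernels $K_l$ satisfying \eqref{intro:kernel}, data $f_l$, and solutions $u_l$ such that
\[
E(u_l-(u_l)_{B_\theta};B_\theta)>\theta^\beta\bigl(E(u_l;B_1)+\|f_l\|_{L^\infty}\bigr).
\]
Since both sides are $1$-homogeneous, we normalize $E(u_l;B_1)+\|f_l\|_{L^\infty(B_1)}=1$. Because \autoref{lem.conv.qual} allows the kernel to vary with $l$, we may vary $K_l$ freely. By the second bullet of that lemma, $v_l=u_l\psi\to u$ in $L^2$ along a subsequence, where $u$ solves $\overline L u=g$ in $B_{1/2}$ with $E(u;B_1)+\|g\|\le c$.

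\textbf{Step 3: passing to the limit in the excess.} This is the main obstacle, since the excess contains a tail over all of $\mathbb R^n$ while we only have local $L^2$ convergence of the localizations $u_l\psi$. We split
\[
E(u_l-(u_l)_{B_\theta};B_\theta)=\dashint_{B_\theta}|u_l-(u_l)_{B_\theta}|+\theta^{2s}\int_{\mathbb R^n\setminus B_\theta}\frac{|u_l-(u_l)_{B_\theta}|}{|y|^{n+2s}}\,dy .
\]
On $B_{1/2}$ we have $u_l=v_l\to u$ in $L^1$, so the local average and the tail portion over $B_{1/2}\setminus B_\theta$ converge to the corresponding quantities for $u$. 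The remaining tail portion over $\mathbb R^n\setminus B_{1/2}$ does not converge, but it is bounded by
\[
c\,\theta^{2s}\bigl(E(u_l;B_1)+|(u_l)_{B_\theta}|\bigr)\le c\,\theta^{2s}\theta^{-n},
\]
which is not small as it stands. To fix this, I would replace $u_l$ by $v_l$ in the tail: $v_l$ agrees with $u_l$ on $B_{1/2}$, so this changes the left-hand side only through the far tail. Rather than compare exactly, I would choose $\theta$ so that this far-tail contribution is $\le\frac14\theta^\beta$. That works because $2s>\beta$ and $|(u_l)_{B_\theta}|\le c\theta^{-n}$ is too crude, so instead I would bound $(u_l)_{B_\theta}$ uniformly via the De Giorgi local boundedness estimate, $\|u_l\|_{L^\infty(B_{1/2})}\le c\,E(u_l;B_1)$. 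This gives a far tail of size $c_2\theta^{2s}$, and we may choose $\theta$ with $c_2\theta^{2s}\le\frac14\theta^\beta$.

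\textbf{Step 4: conclusion.} Taking $l\to\infty$ and using the convergent near part, we get
\[
\theta^\beta\le\limsup_l E(u_l-(u_l)_{B_\theta};B_\theta)\le E(u-(u)_{B_\theta};B_\theta)\ \text{(tail truncated at } B_{1/2})+\tfrac14\theta^\beta\le\tfrac12\theta^\beta+\tfrac14\theta^\beta,
\]
where for the truncated $u$-tail we again use the Step 1 estimate. This is a contradiction, which yields $\eps_0$ and $\theta$ depending only on $n,s,\Lambda,\beta$.
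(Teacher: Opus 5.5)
Your argument is correct and has the same compactness--contradiction skeleton as the paper: $\theta$ is chosen via \autoref{lem.reg.int} for $\overline L$, you normalize, and you apply the second part of \autoref{lem.conv.qual}. The only real difference is how you handle the tail in Step 3.

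You bound the far tail of $u_l-(u_l)_{B_\theta}$ over $\bbR^n\setminus B_{1/2}$ absolutely. This forces you to control the mean $(u_l)_{B_\theta}$, so you import the De Giorgi estimate $\|u_l\|_{L^\infty(B_{1/2})}\le c\bigl(E(u_l;B_1)+\|f_l\|_{L^\infty(B_1)}\bigr)$. You dropped the $f_l$ term, but it is harmless after normalization.

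The paper instead carries out the comparison with $v_l=u_l\psi$ that you mention and then abandon. Since $\psi\equiv1$ on $B_{1/2}\supset B_\theta$, the averages $(u_l)_{B_\theta}=(v_l)_{B_\theta}=:m$ coincide. The pointwise bound $\bigl||u_l(y)-m|-|v_l(y)-m|\bigr|\le|u_l(y)-v_l(y)|\le|u_l(y)|$ then makes the mean cancel, which gives \eqref{exp.bdry:defn.c_1}: $|E(v_l-m;B_\theta)-E(u_l-m;B_\theta)|\le c_1\theta^{2s}E(u_l;B_1)$, with no regularity input at all. Moreover, the $v_l$ are supported in $B_{3/4}$ and converge in $L^2(\bbR^n)$, so the full excess $E(v_l-(v_l)_{B_\theta};B_\theta)$ converges to $E(u-(u)_{B_\theta};B_\theta)$. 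No truncated excess of $u$ is needed.

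Your route costs one extra ingredient, which is standard and valid for merely measurable kernels, so it is equally sound. The paper's route is purely elementary, and the same comparison is reused verbatim in \autoref{lem.exp.lip}, where affine correctors are subtracted, and in \autoref{lem.exp.bdry}.

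One organizational point: both smallness conditions on $\theta$, namely $c_1\theta^{\beta_1}\le\frac12\theta^\beta$ and $c_2\theta^{2s}\le\frac14\theta^\beta$, must be imposed in Step 1, before the contradiction sequence is chosen. This is legitimate because $c_2$ depends only on $n,s,\Lambda$.
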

\begin{proof}
    Fix $\beta\in(0,\min\{2s,1\})$ and a cutoff function $\psi\in C_c^\infty(B_{\frac34})$ with $\psi\equiv 1$ on $B_{\frac12}$. First, we make an appropriate choice for $\theta=\theta(n,s,\Lambda,\beta)$. To do so, note that there is a constant $c_1=c_1(n,s)$ such that for any $u\in L^1_{2s}(\bbR^n)$ and $\theta\leq\frac12$,
\begin{equation}\label{exp.bdry:defn.c_1}
    \begin{aligned}
     |E(u\psi-(u\psi)_{B_\theta};B_{\theta})-E(u-(u)_{B_\theta};B_\theta)|&=\theta^{2s}\left|\int_{\bbR^n\setminus B_{\frac12}}\frac{|(u\psi)(y)-(u)_{B_\theta}|}{|y|^{n+2s}}-\frac{|u(y)-(u)_{B_\theta}|}{|y|^{n+2s}}\,dy\right|\\
     &\leq \theta^{2s}\int_{\bbR^n\setminus B_{\frac12}}\frac{|(u\psi)(y)-u(y)|}{|y|^{n+2s}}\,dy\\
     &\leq c_1\theta^{2s}E(u;B_1).
\end{aligned}
\end{equation}
Next, let us fix $c_0=c_0(n,s,\Lambda)$ determined in \eqref{conv.qual:goal2}. For any weak solution $u\in L^2(B_1)\cap L^1_{2s}(\bbR^n)$ to
\begin{align}\label{exp.sol:eq}
\begin{cases}
    \overline{L}u=g\quad\text{in }B_{\frac12},\\
     E(u;B_1)+\|g\|_{L^\infty(B_1)}\leq c_0,
\end{cases}
\end{align}
using \autoref{lem.reg.int}, we have for any $\rho\leq\frac14$,
\begin{align*}
    E(u-(u)_{B_\rho};B_\rho)\leq c\rho^{\beta_0}(E(u;B_{1/2})+\|g\|_{L^\infty(B_{1/2})}) \leq c\rho^{\beta_0}(E(u;B_1)+\|g\|_{L^\infty(B_1)})\leq c\rho^{\beta_0},
\end{align*}
where $\beta_0\coloneqq \frac12(\beta+\min\{1,2s\})<\min\{1,2s\}$ and $c=c(n,s,\Lambda,\beta)$. Now we fix $\theta=\theta(n,s,\Lambda,\beta)$ such that for any weak solution $u$ to \eqref{exp.sol:eq},
\begin{align}\label{exp.sol:cond.theta}
    E(u-(u)_{B_\theta};B_\theta)\leq \frac{\theta^{\beta}}{2}\quad\text{and}\quad c_1\theta^{2s}\leq \frac{\theta^{\beta}}{4}.
\end{align}
With this choice of $\theta=\theta(n,s,\Lambda,\beta)$, we now prove the main result by contradiction. Assume that there are sequences $(\eps_l)_l$, $(K_l)_l$, $(u_{\eps_l})_l$ and $(f_l)_l$ such that $u_{\eps_l}$ is a weak solution to
 \begin{equation*}
\int_{\bbR^n}\frac{u_{\eps_l}(x)-u_{\eps_l}(x-z)}{|z|^{n+2s}}K_l(x/\eps_l,(x-z)/\eps_l)\,dy=f_l(x)\quad\text{in }B_1
\end{equation*}
and $\lim\limits_{l\to\infty}\eps_l=0$, but
\begin{align*}
    E(u_{\eps_l}-(u_{\eps_l})_{B_\theta};B_\theta)>\theta^{\beta}(E(u_{\eps_l};B_1)+\|f_l\|_{L^\infty(B_1)}).
\end{align*}
By the normalization, we may assume $E(u_{\eps_l};B_1)+\|f\|_{L^\infty(B_1)}=1$, which gives 
\begin{align}\label{exp.sol.contr}
    E(u_{\eps_l}-(u_{\eps_l})_{B_\theta};B_\theta)>\theta^{\beta}.
\end{align}
Now applying the second claim in \autoref{lem.conv.qual}, there is a solution $u$ to 
\begin{align*}
    \overline{L}u=g\quad\text{in }B_{\frac12}
\end{align*}
such that $u_{\eps_l}\psi\to u$ in $L^2(\bbR^n)$ up to a subsequence and
\begin{align*}
    E(u;B_1)+\|g\|_{L^\infty(B_1)}\leq c_0.
\end{align*}
Then by \eqref{exp.sol:cond.theta}, we have 
\begin{align*}
    E(u-(u)_{B_\theta};B_\theta)\leq \frac{\theta^\beta}2.
\end{align*}
On the other hand, using \eqref{exp.bdry:defn.c_1}, \eqref{exp.sol:cond.theta}, \eqref{exp.sol.contr} and the assumption that $E(u_{\eps_l};B_1)\leq 1$  we observe 
\begin{align*}
    E(u_{\eps_l}\psi-(u_{\eps_l}\psi)_{B_\theta};B_\theta)\geq E(u_{\eps_l}-(u_{\eps_l})_{B_\theta};B_\theta)-c_1\theta^{2s}E(u_{\eps_l};B_1)\geq \frac{3}4\theta^{\beta}.
\end{align*}
However, using the convergence $u_{\eps_l}\psi\to u$ in $L^2(\bbR^n)$, we have
\begin{align*}
    \frac{\theta^\beta}2\geq E(u-(u)_{B_\theta};B_\theta)=\lim_{l\to\infty} E(u_{\eps_l}\psi-(u_{\eps_l}\psi)_{B_\theta};B_\theta)>\frac34\theta^\beta,
\end{align*}
which is a contradiction. This completes the proof.
\end{proof}
Next we prove a higher order version of \autoref{lem.exp.sol}.
\begin{lemma}\label{lem.exp.lip}
 Let $s>\frac12$, $\eps\in(0,1)$ and let $u_\eps$ be a weak solution to 
    \begin{align*}
        L_\eps u_\eps=f\quad\text{in }B_1.
    \end{align*}
    Let $\phi=(\phi_i)_i$ be a periodic solution to 
    \begin{align}\label{exp.lip:per.eq}
        L_1\phi(x)=-\int_{\bbR^n}\frac{z}{|z|^{n+2s}}K(x,x-z)\,dz.
    \end{align}
    Fix $\gamma\in(1,2s)$.
    Then there are $\eps_0 ,\theta \in (0,1)$ depending only on $n,s,\Lambda,\gamma$ such that for any $\eps\leq \eps_0$,
    \begin{align*}
        \inf_{a\in\bbR,\,b\in\bbR^n}{E}(u_\eps-a-b\cdot(x+\eps\phi(x/\eps));B_\theta)\leq \theta^{\gamma}({E}(u_\eps;B_1)+\|f\|_{L^\infty(B_1)}).
    \end{align*}
\end{lemma}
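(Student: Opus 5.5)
We argue by compactness and contradiction, as in \autoref{lem.exp.sol}, now at the $C^{1,\gamma-1}$ level. First fix $\theta$ using the homogenized problem. Set $\gamma_0\coloneqq\frac12(\gamma+2s)$, so that $\gamma_0\in(1,2s)$. For any weak solution $u$ of $\overline L u=g$ in $B_{\frac12}$ with $E(u;B_1)+\|g\|_{L^\infty(B_1)}\leq c_0$, we use the $C^{1,\sigma}$ estimate of \autoref{lem.reg.int}. Since $\overline K$ is constant, its $C^\alpha$ seminorm vanishes and $\sigma$ can be taken up to $2s-1$. This gives, with $a=(u)_{B_\rho}$ and $b=(\nabla u)_{B_\rho}$,
\begin{align*}
E(u-a-b\cdot x;B_\rho)\leq c\rho^{\gamma_0}, \qquad |b|\leq c.
\end{align*}
We choose $\theta$ with $c\theta^{\gamma_0}\leq\theta^\gamma/4$. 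We also need the analogue of \eqref{exp.bdry:defn.c_1} for the cutoff $u\psi$. This is subtler here, since the affine part $b\cdot x$ enters the tail. The error is $\theta^{2s}\int_{\bbR^n\setminus B_{1/2}}|u-u\psi||y|^{-n-2s}\,dy\leq c_1\theta^{2s}$, which is harmless as long as $2s>\gamma$ and $\theta$ is small, because we compare $u_\eps\psi$ and $u_\eps$ against the same affine function.

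Next comes the contradiction sequence. Suppose $\eps_l\to0$ and $u_l$ are normalized so that $E(u_l;B_1)+\|f_l\|_{L^\infty}=1$, but the infimum exceeds $\theta^\gamma$ for every $a,b$. By the second claim of \autoref{lem.conv.qual}, $u_l\psi\to u$ in $L^2$, where $u$ solves the homogenized equation with the bounds above. Let $a,b$ be the affine approximation of $u$ in $B_\theta$. We test the contradiction hypothesis with the same $a,b$ applied to $u_l$, that is, with the function $u_l-a-b\cdot(x+\eps_l\phi(x/\eps_l))$. Since $\phi$ is bounded by \autoref{lem.bdd.periodic} (the case $s>\frac12$ with $K_0=-K$; only the $C^\beta$ bound, without regularity of $K$, is needed), we have $|\eps_l b\cdot\phi(x/\eps_l)|\leq c\eps_l$ uniformly on $\bbR^n$. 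This contributes at most $c\eps_l$ to both the average and the tail part of $E$ on $B_\theta$. Hence
\begin{align*}
\theta^\gamma&<E(u_l-a-b\cdot x;B_\theta)+c\eps_l\\
&\leq E(u_l\psi-a-b\cdot x;B_\theta)+c_1\theta^{2s}+c\eps_l.
\end{align*}
Passing to the limit, the localized tails converge, because $u_l\psi$ has compact support in $B_{3/4}$ and converges in $L^2$. We obtain $\theta^\gamma\leq\theta^\gamma/4+c_1\theta^{2s}$, which contradicts the choice of $\theta$ once $c_1\theta^{2s}\leq\theta^\gamma/4$.

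The main obstacle is the bookkeeping of the tail of the affine part. The function $b\cdot x$ is unbounded, but its tail over $\bbR^n\setminus B_\theta$ is finite because $2s>1$. Moreover, $u_l\psi-a-b\cdot x$ and $u_l-a-b\cdot x$ differ only outside $B_{1/2}$, by $u_l(1-\psi)$. So the cutoff estimate reduces exactly to \eqref{exp.bdry:defn.c_1}.

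The role of the corrector in this statement is only through its uniform boundedness. Its equation \eqref{exp.lip:per.eq} is what makes $x+\eps\phi(x/\eps)$ an $L_\eps$-harmonic coordinate. That property is needed later, when the statement is iterated by subtracting $b\cdot(x+\eps\phi(x/\eps))$ and rescaling, but it is not needed for this one-step estimate.
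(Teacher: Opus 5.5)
Your argument is correct, and its skeleton coincides with the paper's: $\theta$ is chosen from the $C^{1,\sigma}$ estimate for $\overline L$ in \autoref{lem.reg.int}, the cutoff error $c_1\theta^{2s}$ comes from \eqref{exp.bdry:defn.c_1}, compactness comes from the second claim of \autoref{lem.conv.qual}, and only the uniform bound on $\phi$ from \autoref{lem.bdd.periodic} is used.

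The one genuine difference is how the infimum over $(a,b)$ is handled. The paper takes minimizers $(\tilde a_l,\tilde b_l)$ for $u_{\eps_l}\psi$ via \autoref{lem.funct}. It uses \eqref{funct:bdd.a_0b_0} to get $|\tilde a_l|+|\tilde b_l|\le c$ and subsequential convergence. It then invokes \eqref{funct:limit} to identify the limit of the minimal excesses with $\inf_{a,b}E(u-a-b\cdot x;B_\theta)$.

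You instead use that the contradiction hypothesis is a lower bound valid for every pair, and test it only at the fixed pair $a=(u)_{B_\theta}$, $b=(\nabla u)_{B_\theta}$ coming from the limit. Then the corrector term is $O(\eps_l)$ simply because $b$ is fixed and $\|\phi_l\|_{L^\infty}$ is uniformly bounded. You need only the elementary continuity $|E(v;B_\theta)-E(w;B_\theta)|\le E(v-w;B_\theta)$ under $L^2$ convergence.

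Your route is therefore shorter and avoids the existence and convergence of minimizers altogether. What the paper's route buys is that the minimizer bounds of \autoref{lem.funct} are needed anyway in the iteration proving \autoref{lem.large.int}, to produce the pairs $(a_{l+1},b_{l+1})$ with the bound \eqref{large.int:ind2.lip}. So the paper sets up that machinery once and reuses it here.
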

First, we give some comments on the lemma.
\begin{itemize}
    \item The existence of the periodic function $\phi_i$ follows from \autoref{lem.bdd.periodic}. In addition, we have $\|\phi_i\|_{L^\infty(\bbR^n)}\leq c(n,s,\Lambda)$.
    \item By \autoref{lem.funct} below, there exist $a_0\in\bbR$ and $b_0\in\bbR^n$ such that
    \begin{align*}
         \inf_{a\in\bbR,\,b\in\bbR^n}{E}(u_\eps-a-b\cdot(x+\eps\phi(x/\eps));B_\theta)= {E}(u_\eps-a_0-b_0\cdot(x+\eps\phi(x/\eps);B_\theta).
    \end{align*}
    \item By the construction of $\phi_i$ given in \eqref{exp.lip:per.eq}, we observe 
    \begin{equation*}
    L_\eps(u_\eps-a-b\cdot(x+\eps\phi(x/\eps))=0,
\end{equation*}
which is essential to close the iteration scheme in the proof of \autoref{lem.large.int}.
\end{itemize}

\begin{proof}[Proof of \autoref{lem.exp.lip}]
We proceed as in the proof of \autoref{lem.exp.sol}. First, we choose an appropriate $\theta=\theta(n,s,\Lambda,\gamma)$.
To do so, fix a cutoff function $\psi\in C_c^\infty(B_{\frac34})$ with $\psi\equiv 1$ on $B_{\frac12}$ and recall the constant $c_1=c_1(n,s)$ determined in \eqref{exp.bdry:defn.c_1}.

Next, for any weak solution $u$ to
    \begin{align*}
    \begin{cases}
        &\overline{L}u=g\quad\text{in }B_{\frac12},\\
        &E(u;B_1)+\|g\|_{L^\infty(B_1)}\leq c_0,
    \end{cases}
    \end{align*}
    where the constant $c_0=c_0(n,s,\Lambda)$ is determined in \eqref{conv.qual:goal2}, by \autoref{lem.reg.int}, we then obtain for any $\rho\in(0,\frac14]$,
    \begin{align*}
         E(u-(u)_{B_\rho}-(\nabla u)_{B_\rho}\cdot  x;B_\rho)\leq c\rho^{\frac{\gamma+2s}2}(E(u;B_1)+\|g\|_{L^\infty(B_1)})\leq c\rho^{\frac{\gamma+2s}2}
    \end{align*}
    for some constant $c=c(n,s,\Lambda,\gamma)$. Thus, we now fix a small $\theta=\theta(n,s,\Lambda,\gamma)$ such that
   \begin{align}\label{exp.lip:cond.theta}
         E(u-(u)_{B_\theta}-(\nabla u)_{B_\theta}\cdot  x;B_\theta)\leq \frac{\theta^{\gamma}}2\quad\text{and}\quad c_1\theta^{2s}\leq \frac{\theta^{\gamma}}4.
    \end{align}
    Taking $\theta$ as above, we prove the main result by contradiction. Assume that there are sequences $(\eps_l)_l$, $(K_l)_l$, $(u_{\eps_l})_l$, $(\phi_l)_l$ and $(f_l)_l$ such that $\lim\limits_{l\to\infty}\eps_l=0$ and,  $u_{\eps_l}$ and $\phi_l$ are weak solutions to
 \begin{align*}
\int_{\bbR^n}\frac{u_{\eps_l}(x)-u_{\eps_l}(x-z)}{|z|^{n+2s}}K_l(x/\eps_l,(x-z)/\eps_l)\,dy=f_l(x)\quad\text{in }B_1
\end{align*}
and
\begin{align*}
    \int_{\bbR^n}\frac{\phi_{l}(x)-\phi_{l}(x-z)}{|z|^{n+2s}}K_l(x,x-z)\,dy=-\int_{\bbR^n}\frac{z}{|z|^{n+2s}}K_l(x,x-z)\,dy,
\end{align*}
respectively, but
\begin{align*}
    \inf_{a\in\bbR,\,b\in\bbR^n}E(u_{\eps_l}-a-b\cdot(x+\eps_l\phi_{l}(x/\eps_l));B_\theta)>\theta^{\gamma}(E(u_{\eps_l};B_1)+\|f\|_{L^\infty(B_1)}).
\end{align*}
After normalization, we may assume $E(u_{\eps_l};B_1)+\|f_l\|_{L^\infty(B_1)}=1$. Then we have
\begin{align}\label{exp.lip:contr}
     \inf_{a\in\bbR,\,b\in\bbR^n}E(u_{\eps_l}-a-b\cdot(x+\eps_l\phi_{l}(x/\eps_l));B_\theta)> \theta^{\gamma}.
\end{align}
On the other hand, let us fix  $\widetilde{a}_l\in\bbR$ and $\widetilde{b}_l\in\bbR^n$ such that
\begin{align*}
    E(u_{l}-\widetilde{a}_l-\widetilde{b}_l\cdot(x+\eps_l\phi_{}(\cdot/\eps_l));B_\theta)=\inf_{a\in\bbR,\,b\in\bbR^n}E(u_{l}-a-b\cdot(x+\eps_l\phi_{l}(\cdot/\eps_l));B_\theta),
\end{align*}
where $u_l\coloneqq u_{\eps_l}\psi$. In addition, we have
\begin{align}\label{exp.lip:phibdd}
    E(u_l;B_\theta)\leq c(n,s,\theta) E(u_l;B_1)\leq c(n,s,\Lambda,\gamma)\quad\text{and}\quad \|\phi_l\|_{L^\infty}\leq c(n,s),
\end{align}
where we have used \autoref{lem.bdd.periodic} for the boundedness of $\phi_l$. By \eqref{funct:bdd.a_0b_0}, we obtain 
\begin{align*}
    |\widetilde{a}_l|+|\widetilde{b}_l|\leq c(n,s,\Lambda,\gamma)
\end{align*}
for some constant $c$ independent of $l$. This implies $\widetilde{a}_l\to a_\infty$, $\widetilde{b}_l\to b_\infty$ up to a subsequence.

Next, by the second claim in \autoref{lem.conv.qual}, there is a solution $u$ to
\begin{align*}
    \overline{L}u=g\quad\text{in }B_{\frac12},
\end{align*}
where $u_{l}=u_{\eps_l}\psi\to u$ in $L^2(\bbR^n)$ and 
\begin{align*}
    E(u;B_1)+\|g\|_{L^\infty(B_1)}\leq c_0.
\end{align*}
Thus, by the second claim in \autoref{lem.funct} together with the fact that $\widetilde{a}_l\to a_\infty$, $\widetilde{b}_l\to b_\infty$, $u_l\to u$ in $L^2(\bbR^n)$ and $\|\eps_l\phi(\cdot/\eps_l)\|_{L^\infty}\leq c\eps_l$ for some constant $c=c(n,s,\Lambda,\gamma)$ independent of $l$ by \eqref{exp.lip:phibdd}, we have 
\begin{align*}
    \inf_{a\in\bbR,\,b\in\bbR^n}E(u-a-b\cdot x;B_\theta)=\lim_{l\to\infty}E(u_{l}-\widetilde{a}_l-\widetilde{b}_l\cdot(x+\eps_l\phi_{}(\cdot/\eps_l));B_\theta)=E(u-a_\infty-b_\infty\cdot x;B_\theta).
\end{align*}
Then, using \eqref{exp.lip:cond.theta}, we get 
\begin{align}\label{exp.lip:limit.contr}
    E(u-a_\infty-b_\infty\cdot x;B_\theta)\leq E(u-(u)_{B_\theta}-(\nabla u)_{B_\theta}\cdot x;B_\theta)\leq \frac{\theta^{\gamma}}{2}.
\end{align}
However, we have 
\begin{align*}
    E(u_l-\widetilde{a}_l-\widetilde{b}_l\cdot(x+\eps_l\phi_{\eps}(\cdot/\eps_l));B_\theta)\geq E(u_{\eps_l}-\widetilde{a}_l-\widetilde{b}_l\cdot(x+\eps_l\phi_{}(\cdot/\eps_l));B_\theta)- E(u_{\eps_l}-u_{l};B_\theta)\geq\frac34\theta^{\gamma},
\end{align*}
where we have used \eqref{exp.lip:contr} and the fact that
\begin{align*}
   E(u_{\eps_l}-u_{l};B_\theta)= E(u_{\eps_l}-u_{\eps_l}\psi;B_\theta)\leq c_1\theta^{2s}E(u_{\eps_l};B_1)\leq c_1\theta^{2s}\leq \frac{\theta^{\gamma}}{4},
\end{align*}
by \eqref{exp.bdry:defn.c_1} and \eqref{exp.lip:cond.theta}. 
Therefore, we have 
\begin{align*}
 E(u-a_\infty-b_\infty\cdot x;B_\theta)=\lim_{l\to\infty}    E(u_l-\widetilde{a}_l-\widetilde{b}_l\cdot(x+\eps_l\phi_{\eps_l}(x));B_\theta)> \frac34\theta^{\gamma},
\end{align*}
which contradicts \eqref{exp.lip:limit.contr}. This completes the proof.
\end{proof}
Now we are ready to prove the excess decay estimates at scales $r\geq \eps$ stated in \autoref{lem.large.int} by an iterative application of \autoref{lem.exp.sol} and \autoref{lem.exp.lip}.
\begin{proof}[Proof of \autoref{lem.large.int}]
We split the proof into two parts.
\begin{itemize}
    \item First, we prove \eqref{large.int:main.est}. To this end, fix $\beta\in(0,\min\{2s,1\})$, then there are $\eps_0=\eps_0(n,s,\Lambda,\beta)$  and $\theta=\theta(n,s,\Lambda,\beta)$ determined in \autoref{lem.exp.sol}. If $\eps\geq \eps_0\theta$, then we get the desired estimate by making the constant $c$ sufficiently large as $r\geq \eps_0\theta$ and the constants $\eps_0$ and $\theta$ depend only on $n,s,\Lambda,\beta$. Thus we may assume that there is a positive integer $k$ such that $\frac{\eps_0}2\leq \eps/\theta^k< \eps_0$. For any $i\leq k$, the function $u_{\eps,i}(x)\coloneqq u_{\eps}(\theta^ix)-(u_{\eps}(\theta^ix))_{B_1}$ solves
   \begin{equation*}
L_{\eps/\theta^i} u_{\eps,i}=f_i\quad\text{in }B_1,
\end{equation*}
where $f_{i}(x)\coloneqq\theta^{2si}f(\theta^{i}x)$. As $\eps/\theta^i\leq \eps_0$, by \autoref{lem.exp.sol}, we have 
\begin{align*}
    E(u_{\eps,i};B_\theta)\leq \theta^\beta(E(u_{\eps,i}-(u_{\eps,i})_{B_1};B_1)+\|f_i\|_{L^\infty(B_1)}),
\end{align*}
which, after scaling back, yields
\begin{align*}
    E(u_\eps-(u_\eps)_{B_{\theta^{i+1}}};B_{\theta^{i+1}})\leq \theta^\beta (E(u_\eps-(u_\eps)_{B_{\theta^{i}}};B_{\theta^{i}})+\theta^{2si}\|f\|_{L^\infty(B_1)}).
\end{align*}
Thus, after the iteration, we have for any $j\leq k$,
\begin{equation}\label{large.int:ineq1}
\begin{aligned}
    E(u_\eps-(u_\eps)_{B_{\theta^{j}}};B_{\theta^{j}})&\leq \theta^{j\beta} (E(u_\eps-(u_\eps)_{B_{1}};B_{1})+\sum_{i=0}^j\theta^{(2s-\beta)i}\|f\|_{L^\infty(B_1)})\\
    &\leq c\theta^{j\beta}(E(u_\eps-(u_\eps)_{B_{1}};B_{1})+\|f\|_{L^\infty(B_1)})
\end{aligned}
\end{equation}
for some constant $c=c(n,s,\Lambda,\beta)$ independent of $j$. From this, we deduce for any $r\geq \frac{\eps}{\eps_0}$,
\begin{align*}
    E(u_\eps-(u_\eps)_{B_{r}};B_{r})\leq cr^{\beta}(E(u_\eps-(u_\eps)_{B_{1}};B_{1})+\|f\|_{L^\infty(B_1)})
\end{align*}
for some constant $c=c(n,s,\Lambda,\beta)$. Moreover when $r\in [\eps,\frac{\eps}{\eps_0}]$, as $\eps_0$ depends only on $n,s,\Lambda,\beta$, we have 
\begin{align*}
    E(u_\eps-(u_\eps)_{B_{r}};B_{r})\leq c E(u_\eps-(u_\eps)_{B_{\eps/\eps_0}};B_{\eps/\eps_0})\leq cr^{\beta}(E(u_\eps-(u_\eps)_{B_{1}};B_{1})+\|f\|_{L^\infty(B_1)})
\end{align*}
for some constant $c=c(n,s,\Lambda,\beta)$. Thus, for any $r\geq \eps$, we obtain \eqref{large.int:main.est}.
\item Let $s>\frac12$. We are now going to prove \eqref{large.int:main.est} with $\beta=1$. Fix $\gamma\in(1,2s)$ and constants $\eps_0=\eps_0(n,s,\Lambda,\gamma)$ and $\theta=\theta(n,s,\Lambda,\gamma)$ given in \autoref{lem.exp.lip}. As in the previous case, we may assume $\eps\leq \eps_0$  so that there is a positive integer $k$ such that $\frac{\eps_0}2\leq \eps/\theta^k< \eps_0$. Recall the periodic solution $\phi=(\phi_i)_i$ given in \eqref{exp.lip:per.eq} and note that $L_\eps(x+\eps \phi(\cdot/\eps))=0$. Inductively, we show that for any $i\leq k$, there are $a_i\in\bbR$ and $b_i\in \bbR^n$ such that
\begin{equation}\label{large.int:ind.lip}
\begin{aligned}
    &E\left(u_\eps-\sum_{j=1}^i\theta^{\gamma(j-1)}a_i-\sum_{j=1}^i\theta^{(\gamma-1)(j-1)}b_j\cdot (x+\eps \phi(\cdot/\eps))\,;\,B_{\theta^i}\right)\\
    &\leq \theta^{i\gamma}\left(E(u_\eps;B_1)+\sum_{j=1}^i \theta^{(2s-\gamma)(i-j)}\|f\|_{L^\infty(B_1)}\right)
\end{aligned}
\end{equation}
and
\begin{align}\label{large.int:ind2.lip}
    |a_i|+|b_i|\leq c\left(E(u_\eps;B_1)+\sum_{j=1}^i\theta^{(s-\frac12)(i-j)}\|f\|_{L^\infty(B_1)}\right)
\end{align}
for some constant $c=c(n,s,\Lambda,\gamma)$. Since $\|\phi_i\|_{L^\infty}\leq c(n,s,\Lambda)$ by \autoref{lem.bdd.periodic}, using \autoref{lem.exp.lip} and \eqref{funct:bdd.a_0b_0} together with the fact that $\theta$ depends only on $n,s,\Lambda,\gamma$, there are $a_1\in \bbR$ and $b_1\in\bbR^n$ such that \eqref{large.int:ind.lip} and \eqref{large.int:ind2.lip} with $i=1$ holds. Suppose \eqref{large.int:ind.lip} and \eqref{large.int:ind2.lip} are true for any $i=1,2,\ldots l$.  Then we define
\begin{align*}
    u_{\eps,l}(x)\coloneqq \theta^{-l\gamma}\left(u_\eps(\theta^l x)-\sum_{j=1}^l\theta^{\gamma(i-1)}a_i-\sum_{j=1}^l\theta^{(\gamma-1)(j-1)}b_j\cdot (\theta^{l}x+\eps \phi(\theta^l x/ \eps))\right)
\end{align*}
to see that 
\begin{equation}\label{large.int:ind.eq}
\begin{aligned}
    &L_{\eps/\theta^l}u_{\eps,l}=f_l\quad\text{in }B_1,\\
    &E(u_{\eps,l};B_1)\leq  E(u_\eps;B_1)+\sum_{j=1}^l\theta^{(2s-\gamma)(l-j)}\|f\|_{L^\infty(B_1)},
\end{aligned}
\end{equation}
where $f_l(x)\coloneqq \theta^{(2s-\gamma)l}f(\theta^{l}x)$ and the last inequality follows from the inductive assumption.  By applying \autoref{lem.exp.lip}, there are  $a_{l+1}\in\bbR$ and $b_{l+1}\in\bbR^{n}$ such that
\begin{align}\label{large.int:ineq2}
    E(u_{\eps,l}-a_{l+1}-b_{l+1}\cdot(x+(\eps/\theta^l)\phi(\theta^lx/\eps));B_\theta)\leq \theta^{\gamma}(E(u_{\eps,l};B_1)+\|f_{l}\|_{L^\infty(B_1)}).
\end{align}
Moreover, using \eqref{funct:bdd.a_0b_0}, $\|\phi_i\|_{L^\infty}\leq c$, the fact that $\theta$ depends only on $n,s,\Lambda$ and the inequality given in \eqref{large.int:ind.eq}, we have 
\begin{align*}
    |a_{l+1}|+|b_{l+1}|\leq c(n,s,\Lambda,\gamma)E(u_{\eps_l};B_1)\leq c(n,s,\Lambda,\gamma)\left(E(u_\eps;B_1)+\sum_{j=1}^l \theta^{(2s-\gamma)(l-j)}\|f\|_{L^\infty(B_1)}\right),
\end{align*} which shows \eqref{large.int:ind2.lip} with $i=l+1$.
On the other hand, scaling \eqref{large.int:ineq2} back, we get \eqref{large.int:ind.lip} with $i=l+1$. Thus, we inductively prove \eqref{large.int:ind.lip} and \eqref{large.int:ind2.lip} for every $i\leq k$.

\noindent
Using \eqref{large.int:ind.lip} , we deduce for $A_i\coloneqq\sum\limits_{j=1}^i\theta^{\gamma(i-1)}a_i$ and $B_j\coloneqq \sum\limits_{j=1}^i\theta^{(\gamma-1)(j-1)}b_j$,
\begin{align}\label{large.int:exc.gra}
    E(u_\eps-A_i-B_i\cdot (x+\eps\phi(\cdot/\eps));B_{\theta^i})&\leq c\theta^{i\gamma }\left(E(u_\eps;B_1)+\sum_{j=1}^i \theta^{(\gamma-2s)(i-j)}\|f\|_{L^\infty(B_1)}\right).
\end{align}
Note that we will use such an expansion estimate in proving the Liouville theorem given in \autoref{thm.liou} below. In this proof, we choose explicitly the constant $\gamma=\frac{1+2s}{2}\in(1,2s)$. We then use the fact that
\begin{align*}
    E(x;B_{\theta^i})\leq c(n,s)\theta^{i},\quad E(1;B_{\theta^i})\leq c(n,s),\quad \|\phi\|_{L^\infty}\leq c(n,s,\Lambda)
\end{align*}
and \eqref{large.int:ind2.lip} to further estimate
\begin{align*}
    E(u_\eps-A_i;B_{\theta^i})&\leq c(\theta^i+\eps)\left(E(u_\eps;B_1)+\|f\|_{L^\infty(B_1)}\right)
\end{align*}
for some constant $c=c(n,s,\Lambda)$ independent of $i$. Therefore, for every $i\leq k$, we derive
\begin{align}\label{large.int:ineq3}
    \dashint_{B_{\theta^i}}|u_\eps-(u_\eps)_{B_{\theta^i}}|\,dx\leq  2E(u_\eps-A_i;B_{\theta^i})&\leq c(\theta^i+\eps)\left(E(u_\eps;B_1)+\|f\|_{L^\infty(B_1)}\right),
\end{align}
where we have used the fact that for any $r>0$ and $a\in\bbR$, $\int_{B_r}|u-(u)_{B_r}|\,dx\leq 2\int_{B_r}|u-a|\,dx$.
As in the tail estimate given in \cite[Lemma 2.4]{KNS22} together with the fact that the constant $\theta$ depends only on $n,s,\Lambda$, we deduce 
\begin{align*}
    E(u_\eps-(u_\eps)_{B_{\theta^i}};B_{\theta^i})\leq c\left(\sum_{j=0}^{i}\theta^{2s(i-j)} \dashint_{B_{\theta^j}}|u_\eps-(u_\eps)_{B_{\theta^j}}|\,dx+\theta^{-2si}E(u_\eps;B_1)\right)
\end{align*}
for some constant $c=c(n,s,\Lambda)$. Combining this and \eqref{large.int:ineq3} yields that if $\frac{\eps_0}2\leq \eps/\theta^k\leq \eps_0$ for some positive integer $k$, then for any $i\leq k$,
\begin{align*}
    E(u_\eps-(u_\eps)_{B_{\theta^i}};B_{\theta^i})\leq c\theta^i(E(u_\eps;B_1)+\|f\|_{L^\infty(B_1)}).
\end{align*}
By the same argument used in the last part of the previous case, we derive \eqref{large.int:main.est} with $\beta=1$ and $r\in[\eps,1]$. 
\end{itemize}
This completes the proof.
\end{proof}

We end this subsection with the following Liouville theorem.
\begin{theorem}\label{thm.liou}
    Let $\gamma\in(0,2s)$ and $\eps\in(0,1]$. Let $u_\eps$ be a weak solution to 
    \begin{equation}\label{liou:eq}
\left\{
\begin{alignedat}{3}
L_\eps u_\eps&=0&&\qquad \mbox{in  $\bbR^n$}, \\
\|u_\eps\|_{L^\infty(B_R)}&\leq cR^{\gamma}&&\qquad  \mbox{for any $R\geq1$}
\end{alignedat} \right.
\end{equation}
for some constant $c>0$. Then there exist $a\in\bbR$ and $b\in\bbR^n$ such that 
\begin{align*}
    u_\eps(x)=a+b\cdot(x+\eps\phi(x/\eps)),
\end{align*}
where $\phi=(\phi_i)$ is a periodic solution to \eqref{exp.lip:per.eq}. 
\end{theorem}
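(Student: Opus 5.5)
The plan is to prove the Liouville theorem by a blow-down argument. The large-scale excess decay of \autoref{lem.large.int} and the first-order expansion \eqref{large.int:exc.gra} are applied on balls $B_R$ with $R\to\infty$. First I would reduce to $\eps=1$: since $v(x)\coloneqq \eps^{-\gamma}u_\eps(\eps x)$ solves $L_1 v=0$ in $\bbR^n$ with the same growth bound (up to constants), and the claimed form is scale-compatible, it suffices to treat $\eps=1$. For large $R$ I then set $w_R(x)\coloneqq u(Rx)$. This function solves $L_{1/R}w_R=0$ in $B_1$, and the growth assumption gives $E(w_R;B_1)\le cR^{\gamma}$, because the tail is controlled by $\sum_k 2^{-2sk}(2^kR)^\gamma\lesssim R^\gamma$ using $\gamma<2s$.

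\textbf{Case $\gamma<\min\{2s,1\}$ (or in general a first step).} Pick $\beta\in(\gamma,\min\{2s,1\})$, and for $s>\frac12$ take $\beta=1$. For fixed $r_0\ge 1$ and $R\ge r_0$, applying \autoref{lem.large.int} to $w_R$ at scale $r=r_0/R\ge 1/R$ and scaling back gives
\[
E(u-(u)_{B_{r_0}};B_{r_0})\le c\,(r_0/R)^{\beta}R^{\gamma}\to 0 \quad (R\to\infty).
\]
Hence $u$ is constant on every $B_{r_0}$, so $u\equiv a$ whenever $\gamma<\beta$. This already settles $s\le\frac12$, as well as $\gamma<1$ when $s>\frac12$.

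\textbf{Case $s>\frac12$, $\gamma\in[1,2s)$.} Here I use the expansion \eqref{large.int:exc.gra} with exponent $\gamma'\in(\gamma,2s)$. Applying it to $w_R$ with the period $1/R$ and scaling back produces $A^R\in\bbR$ and $B^R\in\bbR^n$ such that, for $\theta^i\ge c/R$ and $r_0=R\theta^i$,
\[
E\bigl(u-A^R-B^R\cdot(x+\phi(x));B_{r_0}\bigr)\le c\,(r_0/R)^{\gamma'}R^{\gamma}.
\]
The bounds \eqref{large.int:ind2.lip} give $|A^R|\lesssim R^\gamma$ and $|B^R|\lesssim R^{\gamma-1}$; in rescaled form these become bounds on $A^R$ and $B^R$. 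The right-hand side tends to zero as $R\to\infty$ with $r_0$ fixed. Using \eqref{funct:bdd.a_0b_0}, the affine-corrector functions fitted on $B_{r_0}$ are bounded uniformly in $R$, so along a subsequence $A^R\to a$ and $B^R\to b$, and then $u=a+b\cdot(x+\phi)$ on $B_{r_0}$. The limits are consistent for increasing $r_0$ by uniqueness of the minimizer (or simply because they agree a.e.), so $u=a+b\cdot(x+\phi(x))$ on all of $\bbR^n$.

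The main obstacle is bookkeeping the exponents in the iteration \eqref{large.int:ind.lip}. The coefficients $a_j,b_j$ there are stated for the normalized excess $E(w_R;B_1)\sim R^\gamma$, and one must check that the geometric sums $\sum\theta^{(\gamma'-1)(j-1)}b_j$ stay bounded after rescaling. I expect this to work because $\gamma'>1$. I would first try to avoid tracking the $a_j,b_j$ explicitly: use only the final excess bound on $B_{r_0}$ together with the compactness of the minimizing parameters on a fixed ball. I also need $L_1(x+\phi)=0$, which holds by \eqref{exp.lip:per.eq}, so that the limit identification is legitimate.
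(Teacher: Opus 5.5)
Your argument is correct, but it finishes differently from the paper.

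\textbf{The paper's route.} It runs the corrector expansion with the single exponent $\gamma_0=\frac{\gamma+2s}{2}$ and stops at the intermediate ball $B_{R^\delta}$, where the excess is $\lesssim R^{-\delta_0}$. It upgrades this to an $L^\infty$ bound on $B_{R^\delta/2}$ by local boundedness. Finally it shows that the fitted parameters along $R=2^ir$ form a Cauchy sequence.

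\textbf{Your route.} You keep one ball $B_{r_0}$ fixed, with $r_0\ge 2/\eps_0$ so that $r_0=R\theta^i$ stays in the range $i\le k$ where the iteration is valid. You then send $R=r_0\theta^{-i}\to\infty$. By scale invariance of $E$ this gives $E(u-(u)_{B_{r_0}};B_{r_0})=0$ via \autoref{lem.large.int} when $\gamma<\beta$, and $\inf_{a,b}E(u-a-b\cdot(x+\phi);B_{r_0})=0$ otherwise. After that, \eqref{funct:exist} yields an exact minimizer.

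\textbf{What each route buys.} Yours removes the local boundedness step and the Cauchy bookkeeping. Your case split also avoids a small issue in the paper: for $s>\frac12$ and $\gamma<2-2s$ one has $\gamma_0<1$, which is outside the range of \autoref{lem.exp.lip}. You only invoke the expansion when $\gamma\ge1$. The paper's route gives a quantitative $L^\infty$ rate on growing balls, which the statement does not require.

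\textbf{Two remarks on your write-up.} First, the bounds \eqref{large.int:ind2.lip} only give $|A^R|\lesssim R^{\gamma}$, so they cannot supply compactness. Uniform bounds must come from the fixed ball: either pass to the minimizer, or apply the norm-equivalence argument behind \eqref{funct:bdd.a_0b_0} to $E(A^R+B^R\cdot(x+\phi);B_{r_0})\le E(u;B_{r_0})+o(1)$. Second, the consistency step over increasing $r_0$ is unnecessary. Since $E$ contains the tail over $\bbR^n\setminus B_{r_0}$, vanishing excess on one ball already gives the identity a.e.\ on $\bbR^n$.
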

\begin{proof}
   For $\gamma_0\coloneqq \frac{\gamma+2s}{2}\in(\gamma,2s)$, by \autoref{lem.exp.sol} with $\beta=\gamma_0$ if $s\leq\frac12$ and \autoref{lem.exp.lip} with $\gamma=\gamma_0$ if $s>\frac12$, there are small constants $\eps_0=\eps_0(n,s,\Lambda,\gamma)$ and $\theta=\theta(n,s,\Lambda,\gamma)$. Next, for any large constant $R>1$, there is a positive integer $k$ such that 
   \begin{align}\label{liou:ineq1}
       \frac{\eps_0}2\leq\eps/(R\theta^k)< \eps_0.
   \end{align} 
   Now, let $u_{\eps,R}(x)\coloneqq u_{\eps}(Rx)$ to see that 
   \begin{equation*}
\left\{
\begin{alignedat}{3}
L_{\eps/R} u_{\eps,R}&=0&&\qquad \mbox{in  $\bbR^n$}, \\
\|u_{\eps,R}\|_{L^\infty(B_M)}&\leq c(M R)^{\gamma}&&\qquad  \mbox{for any $M\geq1$}.
\end{alignedat} \right.
\end{equation*}
As in the proof of \eqref{large.int:ineq1} and \eqref{large.int:exc.gra} together with \eqref{liou:ineq1}, for any $i\leq k$, we get $A_i\in\bbR$ and $B_i\in\bbR^n$ such that
\begin{align*}
    E(u_{\eps,R}-A_i-B_i\cdot(x+\eps R^{-1}\phi(\cdot/(\eps R^{-1}));B_{\theta^{i}})\leq c\theta^{i\gamma_0}E(u_{\eps,R};B_1)
\end{align*}
for some constant $c=c(n,s,\Lambda,\gamma)$. Note that $s\leq\frac12$, we have $B_i=0$. After scaling it back, we derive
\begin{align}\label{liou:ineq2}
    E(u_{\eps}-A_i-B_iR^{-1}\cdot(x+\eps\phi(\cdot/\eps));B_{R\theta^i})\leq c\theta^{i\gamma_0}R E(u_{\eps,R};B_1),
\end{align}
where $c=c(n,s,\Lambda,\gamma)$. We now assume $R^{-\delta}<\frac{\eps_0}2$ for $\delta\coloneqq\frac{\gamma_0-\gamma}{2\gamma_0}$, by taking $R>1$ sufficiently large. Then as $R^{1-\delta}<\theta^{-k}$ by \eqref{liou:ineq1}, there is a positive integer $j\leq k-1$ such that
\begin{align*}
    \theta^{-j}\leq R^{1-\delta}< \theta^{-j-1}.
\end{align*}
Using this and \eqref{liou:ineq2} with $i=j$ together with the fact that
\begin{align*}
    E(u_{\eps,R};B_1)\leq c\left( R^{\gamma}+\int_{\bbR^n\setminus B_1}\frac{|u(Rx)|}{|y|^{n+2s}}\,dy\right)\leq cR^{\gamma}
\end{align*}
by the growth assumption in \eqref{liou:eq}, we obtain
\begin{align*}
     E(u_{\eps}-a_R-b_R\cdot(x+\eps\phi(\cdot/\eps));B_{R^\delta})\leq cR^{(\delta-1)\gamma_0+\gamma}\leq cR^{-\delta_0}
\end{align*}
for some constant $c=c(n,s,\Lambda,\gamma)$, where $a_R\in\bbR$, $b_R\in \bbR^n$ and $\delta_0\coloneqq \frac{\gamma_0-\gamma}{2}>0$. In particular, $b_R=0$ when $s\leq\frac12$. By the fact that
\begin{align*}
    L_\eps(u_\eps-a_R-b_R\cdot(x+\eps\phi(\cdot/\eps)))=0\quad\text{in }\bbR^n,
\end{align*}
we use the local boundedness estimate to get that for large $R\geq1$,
\begin{align}\label{liou:ineq3}
    \|u_\eps-a_R-b_R\cdot(x+\eps\phi(\cdot/\eps))\|_{L^\infty(B_{R^\delta/2})}\leq  cE(u_{\eps}-a_R-b_R\cdot(x+\eps\phi(\cdot/\eps));B_{R^\delta})\leq cR^{-\delta_0},
\end{align}
where $c=c(n,s,\Lambda,\gamma)$. Since this is true for any large $R>1$, we are going to prove that there are sequences $(R_i)_i$, $(a_{i})_i$ and $(b_i)_i$ such that
\begin{align}\label{liou:seq1}
    \|u_\eps-a_i-b_i\cdot(x+\eps\phi(\cdot/\eps))\|_{L^\infty(B_{R_i^\delta/2})}\leq cR_i^{-\delta_0}.
\end{align}
with
\begin{align}\label{liou:seq2}
    \lim_{i\to\infty}R_i=\infty,\quad \lim_{i\to\infty}a_i=a_\infty,\quad \lim_{i\to\infty}\|(b_i-b_\infty)\cdot(x+\eps\phi(\cdot/\eps))\|_{L^\infty(B_{R_i^{\delta}/2})}=0
\end{align}
for some $a_\infty\in\bbR$ and $b_\infty\in\bbR^n$. To this end, we first observe that
as $\|\phi\|_{L^\infty}\leq c(n,s,\Lambda)$, for any large  $R>1$,
\begin{align*}
     \frac1c( |a|+R|b|)\leq \|a+b\cdot(x+\eps\phi(\cdot/\eps))\|_{L^\infty(B_R)}\leq c( |a|+R|b|)
\end{align*}
for some constant $c=c(n,s,\Lambda)$. Thus, using this and \eqref{liou:ineq3}, we have for large $R>1$ and any $N\in\N$,
\begin{align*}
    |a_R-a_{2^NR}|+R^{\delta}|b_R-b_{2^NR}|&\leq\sum_{i=1}^{N}|a_{2^iR}-a_{2^{i-1}R}|+(2^{i-1}R)^{\delta}|b_{2^iR}-b_{2^{i-1}R}|\\
    &\leq c\sum_{i=1}^N \|a_{2^iR}-a_{2^{i-1}R}+b_{2^iR}-b_{2^{i-1}R}\cdot(x+\eps\phi(\cdot/\eps))\|_{L^\infty(B_{(2^{i-1}R)^\delta/2})}\\
    &\leq c\sum_{i=1}^N c(2^iR)^{-\delta_0}\leq cR^{-\delta_0},
\end{align*}
where $c=c(n,s,\Lambda,\gamma)$. For fixed large $r>1$, we choose sequences 
\begin{equation}\label{liou:seq}
    R_i\coloneqq 2^ir,\quad a_i\coloneqq a_{2^ir},\quad b_i\coloneqq b_{2^ir}
\end{equation}
to see that
\begin{align*}
    |a_i-a_{i+N}|+R_i^{\delta}|b_i-b_{i+N}|\leq cR_i^{-\delta_0},
\end{align*}
which implies that there are $a_\infty\in \bbR$ and $b_{\infty}\in\bbR^n$ such that 
\begin{align*}
    \lim_{i\to\infty}a_i=a_\infty,\quad \lim_{i\to\infty}b_i=b_\infty\quad\text{and}\quad \lim_{i\to\infty} R_i^{\delta}|b_i-b_\infty|=0.
\end{align*} 
Moreover, we have
\begin{align*}
    \lim_{i\to\infty}\|(b_i-b_\infty)\cdot(x+\eps\phi(\cdot/\eps))\|_{L^\infty(B_{R_i^{\delta}/2})}\leq c\lim_{i\to\infty} R_i^{\delta}|b_i-b_\infty|=0.
\end{align*}
Thus, we have proved that sequences in \eqref{liou:seq} satisfy \eqref{liou:seq1} and \eqref{liou:seq2}. Finally, using such sequences, we obtain
\begin{align*}
    \|u_\eps-a_\infty-b_\infty\cdot(x+\eps\phi(\cdot/\eps))\|_{L^\infty(B_{R_i^\delta/2})}&\leq \|u_\eps-a_i-b_i\cdot(x+\eps\phi(\cdot/\eps))\|_{L^\infty(B_{R_i^\delta/2})} \\
    &\quad+|a_i-a_\infty|+\|(b_i-b_\infty)\cdot(x+\eps\phi(\cdot/\eps))\|_{L^\infty(B_{R_i^{\delta}/2})},
\end{align*}
which tends to 0 as $i\to\infty$. Therefore, we prove $u_\eps=a_\infty+b_\infty\cdot(x+\eps\phi(\cdot/\eps))$ for some $a_\infty\in\bbR$ and $b_\infty\in\bbR^n$ with $b_\infty=0$ if $s\leq\frac12$. This completes the proof.
\end{proof}

\subsection{Boundary estimates}
The main result of this subsection is the following large scale boundary regularity result of order $\beta < s$. Throughout this subsection we do not make any regularity assumption on $K$. 

\begin{proposition}\label{lem.large.bdry}
    Let $\alpha\in(0,1)$, $\eps\in(0,1)$, $\partial\Omega\in C^{1,\alpha}$ and let $u_\eps$ be a weak solution to 
    \begin{equation*}
\left\{
\begin{alignedat}{3}
L_\eps u_\eps&=f&&\qquad \mbox{in  $\Omega\cap B_1$}, \\
u_\eps&=0&&\qquad  \mbox{in $B_1\setminus \Omega$}
\end{alignedat} \right.
\end{equation*}
with $0\in \partial\Omega$, $C^{1,\alpha}$ radius 1 and $C^{1,\alpha}$ norm bounded by 1.
Fix $\beta\in(0,s)$. Then for any $r\in[\eps,1]$,
\begin{align}\label{large.bdry:main.est}
    E(u_\eps-(u_{\eps})_{B_r};B_r)\leq cr^{\beta}(E(u_\eps;B_1)+\|f\|_{L^\infty(B_1)}),
\end{align}
where $c=c(n,s,\Lambda,\alpha,\beta)$.
\end{proposition}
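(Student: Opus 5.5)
We follow the scheme of the proof of \autoref{lem.large.int}: first a one-step improvement of oscillation at the boundary via compactness, then an iteration down to scale $\eps$. The compactness step uses the first claim of \autoref{lem.conv.qual}, whose limit problem is a half-space Dirichlet problem for $\overline L$. It is therefore convenient to work with excesses relative to $0$ rather than to the mean: since $u_\eps=0$ on $B_1\setminus\Omega$, it suffices to prove $E(u_\eps;B_r)\le cr^\beta(E(u_\eps;B_1)+\|f\|_{L^\infty(B_1)})$. Indeed, $E(u-(u)_{B_r};B_r)\le cE(u;B_r)$ because $|(u)_{B_r}|\le \dashint_{B_r}|u|$, while the tail of a constant is bounded by $c(n,s)$ times that constant.

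\emph{One-step lemma.} Fix $\beta<s$. We claim there exist $\eps_0,\theta\in(0,1)$, depending only on $n,s,\Lambda,\alpha,\beta$, with the following property. Let $\Omega$ satisfy the flatness condition $B_1\cap\{x_n>\eta(x')\}\subset \Omega\cap B_1$, with $\eta$ given by the $C^{1,\alpha}$ graph normalized so that $\nabla\eta(0)=0$. If $\eps\le\eps_0$ and the graph deviates from $\{x_n=0\}$ in $B_1$ by at most $\eps_0$, then
\[
E(u_\eps;B_\theta)\le\theta^\beta\bigl(E(u_\eps;B_1)+\|f\|_{L^\infty(B_1)}\bigr).
\]

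To choose $\theta$, consider any half-space solution $\overline L u=g$ in $B_{1/2}\cap\{x_n>0\}$ with $u=0$ in $B_{1/2}\cap\{x_n\le 0\}$ and $E(u;B_1)+\|g\|_{L^\infty(B_1)}\le c_0$. By \autoref{lem.reg.bdd1} applied to the constant kernel, together with $u(0)=0$, we get $E(u;B_\rho)\le c\rho^{s}$. We pick $\theta$ with $c\theta^s\le\theta^\beta/2$ and $c_1\theta^{2s}\le\theta^\beta/4$, where $c_1$ comes from the cutoff estimate \eqref{exp.bdry:defn.c_1}.

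The contradiction argument then runs as in \autoref{lem.exp.sol}. We take sequences $\eps_l\to0$ and domains $\Omega_l$ whose boundaries flatten to the half-space. The first claim of \autoref{lem.conv.qual} requires $B_1\cap\{x_n>0\}\subset\Omega_l\cap B_1\subset\{x_n>-\eps_l\}$; we meet this by replacing $\eps_l$ with $\max(\eps_l,\text{flatness}_l)$ and, if needed, by a small translation in the $x_n$ direction. Since we argue only about the limit, a slight modification of that lemma allowing a domain that approaches the half-space from both sides should suffice. Passing to the limit in $E(u_l\psi;B_\theta)$ via $L^2$ convergence then gives the contradiction.

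\emph{Iteration.} With $0\in\partial\Omega$, a $C^{1,\alpha}$ radius of $1$ and a norm of at most $1$, the rescaled domains $\theta^{-i}\Omega$ have boundary deviating from the tangent plane in $B_1$ by at most $\theta^{i\alpha}$, after rotating so that $\nabla\eta(0)=0$. These domains are therefore flat enough for all $i\ge i_0$, where $i_0$ depends only on $\alpha$ and $\beta$. The first $i_0$ scales are absorbed into the constant using the trivial bound $E(u;B_\theta)\le c(\theta)E(u;B_1)$. The rescaled function $u_{\eps,i}(x)=u_\eps(\theta^ix)$ solves an equation with parameter $\eps/\theta^i$ and right-hand side $\theta^{2si}f(\theta^i\cdot)$. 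Here no mean is subtracted, because the vanishing of $u$ outside $\Omega$ forces the limit to have $u(0)=0$. Applying the lemma while $\eps/\theta^i\le\eps_0$ and summing as in \eqref{large.int:ineq1}, using $\beta<2s$, yields decay down to $r\approx\eps/\eps_0$. The range $r\in[\eps,\eps/\eps_0]$ is covered by comparison with a fixed number of scales. Finally we convert back to $E(u-(u)_{B_r};B_r)$.

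\emph{Main obstacle.} The delicate point is the compactness step with moving, not exactly flat, domains. We must check that the limit vanishes on $\{x_n<0\}$ and solves the equation on $\{x_n>0\}$. The first holds because the domains lie in $\{x_n>-\delta_l\}$ with $\delta_l\to0$. The second holds because test functions supported in $\{x_n>0\}$ are eventually admissible. A further check is that the excess of the limit about $0$, rather than about its mean, is controlled by the half-space boundary regularity of \autoref{lem.reg.bdd1}.
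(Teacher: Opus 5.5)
Your proposal is correct and follows the paper's architecture. Both arguments prove a compactness one-step improvement at the boundary using the first claim of \autoref{lem.conv.qual}, with the half-space limit controlled by \autoref{lem.reg.bdd1}. Both work with $E(u_\eps;B_r)$ rather than the mean-subtracted excess, and both iterate while $\eps/\theta^i\le\eps_0$.

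The only genuine difference is how the curvature of $\partial\Omega$ is handled. The paper keeps the one-sided inclusion $B_1\cap\{x_n>0\}\subset\Omega\cap B_1\subset\{x_n>-\eps\}$ exactly as required by \autoref{lem.conv.qual}. In the iteration it uses only that $\Omega$ is $\eps_1$-flat at every scale $\rho\le r_0$, where $\eps_1$ is a fixed small constant that does not improve with the scale. It then recenters each step at $R_i\eps_1 e_n$, so that the half-space lies inside the translated domain. This costs a factor $C_2^2$ from comparing excesses on shifted balls, which the paper absorbs by applying the one-step lemma with $\beta'=\frac{\beta+s}{2}$ and choosing $\theta^{\beta'-\beta}\le(4(1+C_2))^{-2}$, with step ratio $\theta/4$.

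You instead build two-sided approximate flatness directly into the compactness step and stay centred at $0$, using that the rescaled boundary flattens like $\theta^{i\alpha}$. That modification of \autoref{lem.conv.qual} is harmless for two reasons:
\begin{itemize}
\item Test functions compactly supported in $B_{1/2}\cap\{x_n>0\}$ are eventually admissible.
\item Since $u_l=0$ on $B_1\cap\{x_n\le-\delta_l\}$ with $\delta_l\to0$, the limit vanishes on the lower half-ball.
\end{itemize}
The energy and localization bounds in that lemma do not use the shape of the domain. Your route therefore avoids the shifted-ball bookkeeping and the exponent buffer $\beta'$, at the price of restating the compactness lemma. Your fallback of translating in the $x_n$ direction is unnecessary, and using it would bring back exactly the paper's shifted-ball comparison.

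Two small remarks:
\begin{itemize}
\item For the limit, the passage from $E(u-(u)_{B_\rho};B_\rho)$ to $E(u;B_\rho)$ is most directly justified by $u=0$ on $B_\rho\cap\{x_n\le0\}$. This gives $\dashint_{B_\rho}|u|\le 3\dashint_{B_\rho}|u-(u)_{B_\rho}|$, whereas $u(0)=0$ alone needs an extra Campanato step.
\item Your $i_0$ depends on $n,s,\Lambda$ as well as on $\alpha,\beta$, through $\eps_0$ and $\theta$.
\end{itemize}
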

The proof is similar to the interior case in the sense that we first prove a one step improvement and then derive an iterative scheme.

The following lemma is on proving the one step improvement at the boundary.
\begin{lemma}\label{lem.exp.bdry}
    Let $\eps\in(0,1)$ and let $u_\eps$ be a weak solution to 
    \begin{equation*}
\left\{
\begin{alignedat}{3}
L_\eps u_\eps&=f&&\qquad \mbox{in  $\Omega\cap B_1$}, \\
u_\eps&=0&&\qquad  \mbox{in $B_1\setminus \Omega$},
\end{alignedat} \right.
\end{equation*}
where 
\begin{align*}
       B_1\cap\{x_n>0\}\subset \Omega\cap B_1\subset B_1\cap\{x_n>-\eps\}.   
\end{align*} 
Fix $\beta\in(0,s)$. Then there are constants $\eps_0,\theta\in(0,1)$ depending only on $n,s,\Lambda,\beta$ such that for any $\eps\leq \eps_0$,
\begin{align*}
    E(u_\eps;B_\theta)\leq \theta^{\beta}(E(u_\eps;B_1)+\|f\|_{L^\infty(B_1)}),
\end{align*}
where $c=c(n,s,\Lambda,\beta)$.
\end{lemma}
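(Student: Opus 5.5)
We argue by compactness and contradiction, in the same way as \autoref{lem.exp.sol}, with the interior limit problem replaced by the half-space problem from the first claim of \autoref{lem.conv.qual}. Throughout, fix a cutoff $\psi\in C_c^\infty(B_{\frac34})$ with $\psi\equiv1$ on $B_{\frac12}$, and let $c_1=c_1(n,s)$ be the constant from \eqref{exp.bdry:defn.c_1}.

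The first step is to choose $\theta$. Note that \eqref{exp.bdry:defn.c_1} holds equally without subtracting averages: for $\theta\le\frac12$,
\[
|E(u\psi;B_\theta)-E(u;B_\theta)|\le c_1\theta^{2s}E(u;B_1).
\]
Let $c_0=c_0(n,s,\Lambda)$ be the constant in \eqref{conv.qual:goal1}. Consider any weak solution $u$ of $\overline L u=g$ in $B_{\frac12}\cap\{x_n>0\}$ with $u=0$ in $B_{\frac12}\cap\{x_n\le0\}$ and $E(u;B_1)+\|g\|_{L^\infty(B_1)}\le c_0$.

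The half-space is a $C^{1,\alpha}$ domain with universal radius and norm, and $\overline L$ is a constant multiple of the fractional Laplacian. Hence \autoref{lem.reg.bdd1} applies, with constant kernel $\overline K\in[\Lambda^{-1},\Lambda]$ and $[\overline K]_{C^\alpha}=0$. Since $u(0)=0$ and $u$ vanishes on a half-ball, we may replace the oscillation-type excess by $E(u;B_\rho)$ itself. This gives
\[
E(u;B_\rho)\le c\rho^{s}\quad\text{for }\rho\le\tfrac14,
\]
with $c=c(n,s,\Lambda)$. The estimate of \autoref{lem.reg.bdd1} is stated for $E(u-(u)_{B_\rho};B_\rho)$. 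Because $|(u)_{B_\rho}|\lesssim \dashint_{B_\rho}|u-(u)_{B_\rho}|$ (as $u$ vanishes on half of $B_\rho$), we get $E(u;B_\rho)\lesssim E(u-(u)_{B_\rho};B_\rho)+|(u)_{B_\rho}|$, which is controlled by the same bound. Since $\beta<s$, we can now pick $\theta$ small so that
\[
E(u;B_\theta)\le\tfrac12\theta^\beta\quad\text{and}\quad c_1\theta^{2s}\le\tfrac14\theta^\beta.
\]

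The second step is the contradiction argument. Suppose there are sequences $\eps_l\to0$, $K_l$, $\Omega_l$ satisfying $B_1\cap\{x_n>0\}\subset\Omega_l\cap B_1\subset B_1\cap\{x_n>-\eps_l\}$, and solutions $u_{\eps_l}$ with data $f_l$, normalized so that $E(u_{\eps_l};B_1)+\|f_l\|_{L^\infty(B_1)}=1$, but with $E(u_{\eps_l};B_\theta)>\theta^\beta$. The first claim of \autoref{lem.conv.qual} then gives, along a subsequence, $u_{\eps_l}\psi\to u$ in $L^2(\bbR^n)$, where $u$ is a limit of exactly the type considered in the first step. Hence $E(u;B_\theta)\le\frac12\theta^\beta$.

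On the other hand, by the cutoff estimate, $E(u_{\eps_l}\psi;B_\theta)\ge\frac34\theta^\beta$. Since $u_{\eps_l}\psi$ and $u$ are all supported in $B_{\frac34}$, $L^2$ convergence implies convergence of both the average and the tail parts of $E(\cdot\,;B_\theta)$. Indeed, the tail weight is bounded on $B_{\frac34}\setminus B_\theta$. Passing to the limit gives $E(u;B_\theta)\ge\frac34\theta^\beta$, a contradiction.

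The delicate point is the first step: we must verify that \autoref{lem.reg.bdd1}, or a direct half-space estimate for the fractional Laplacian, yields decay of the full excess $E(u;B_\rho)$ without subtracting means, with a constant depending only on $n,s,\Lambda$. This requires using $u=0$ on $\{x_n\le0\}\cap B_{\frac12}$ together with the tail bound from \cite[Lemma 2.4]{KNS22}. The rest is a routine adaptation of \autoref{lem.exp.sol}.
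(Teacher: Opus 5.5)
Your proposal is correct and follows the paper's proof essentially step for step. You choose $\theta$ via boundary regularity (\autoref{lem.reg.bdd1}) for the half-space limit problem, removing the mean because $u=0$ on half of $B_\rho$. You then reach a contradiction from the first claim of \autoref{lem.conv.qual} together with the cutoff estimate \eqref{exp.bdry:defn.c_1}. The only cosmetic difference is that the paper uses an intermediate exponent $\beta'=\frac{s+\beta}{2}$ instead of $s$, which changes nothing.
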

\begin{proof}
    Fix $\beta\in(0,s)$ and a cutoff function $\psi\in C^\infty_c(B_{\frac34})$ with $\psi\equiv 1$ on $B_{\frac12}$. Recall the constants $c_0=c_0(n,s,\Lambda)$ and $c_1=c_1(n,s)$ determined in the first claim \eqref{conv.qual:goal1} and \eqref{exp.bdry:defn.c_1}, respectively.
    Now, we choose $\theta=\theta(n,s,\Lambda,\beta)$. First, by \autoref{lem.reg.bdd1}, for any weak solution $u$ of
   \begin{equation*}
\left\{
\begin{alignedat}{3}
\overline{L} u&=g&&\qquad \mbox{in  $B_{\frac12}\cap \{x_n>0\}$}, \\
u&=0&&\qquad  \mbox{in $B_{\frac12}\setminus\{x_n>0\}$},\\
E(u;B_1)+\|g\|_{L^\infty(B_1)}&\leq c_0&&
\end{alignedat} \right.
\end{equation*}
    with 
    \begin{align}\label{defn.overlineL}
    \overline{L}u(x)\coloneqq\int_{\bbR^n}\frac{u(x)-u(x-z)}{|z|^{n+2s}}\overline{K}\,dz
\end{align}
    for some constant $\overline{K}\in[\Lambda^{-1},\Lambda]$,
    we have for any $\rho\in(0,\frac14]$,
\begin{align*}
    E(u-(u)_{B_\rho};B_\rho)\leq c\rho^{\beta'}(E({u};B_{1})+\|g\|_{L^\infty(B_{1})})\leq c\rho^{\beta'},
\end{align*}
where $\beta'=\frac12(s+\beta)\in(\beta,s)$ and $c=c(n,s,\Lambda,\beta)$, as $c_0$ depends only on $n,s,\Lambda$. On the other hand, since $u\equiv 0$ in $B_{\frac12}\cap\{x_n\leq0\}$, we have
\begin{align*}
        \dashint_{B_\rho}|u(x)|\,dx\leq c(n)\dashint_{B_\rho}|u(x)-(u)_{B_\rho}|\,dx.
    \end{align*}
Therefore, we get 
\begin{align}\label{exp.bdry:ineq.v}
    E(u;B_\rho)\leq cE(u-(u)_{B_\rho};B_\rho)\leq c\rho^{\beta'},
\end{align}
where $c=c(n,s,\Lambda,\beta)$. Thus, we fix $\theta=\theta(n,s,\Lambda,\beta)$ such that
\begin{align}\label{exp.bdry:thetachoice}
    E(u;B_\theta)\leq \frac{\theta^{\beta}}{2}\quad\text{and}\quad c_1\theta^{2s}\leq\frac{\theta^\beta}{4}.
\end{align}

With this choice of $\theta$, we now prove the desired result by contradiction. Assume that there are sequences $(\eps_l)_l$, $(K_l)_l$, $(v_{\eps_l})_l$, $(\Omega_l)_l$ and $(f_l)_l$ such that $u_{\eps_l}$ is a weak solution to
 \begin{equation*}
\left\{
\begin{alignedat}{3}
\int_{\bbR^n}\frac{u_{\eps_l}(x)-u_{\eps_l}(x-z)}{|z|^{n+2s}}K_l(x/\eps_l,(x-z)/\eps_l)\,dy&=f_l&&\qquad \mbox{in  $\Omega_l\cap B_1$}, \\
u_{\eps_l}&=0&&\qquad  \mbox{in $B_1\setminus \Omega_l$},
\end{alignedat} \right.
\end{equation*}
where $\lim\limits_{l\to\infty}\eps_l=0$,
\begin{align*}
       B_1\cap\{x_n>0\}\subset \Omega_l\cap B_1\subset B_1\cap\{x_n>-\eps_l\},
   \end{align*}
but
\begin{align*}
    E(u_{\eps_l};B_\theta)>\theta^{\beta}(E(u_{\eps_l};B_1)+\|f_l\|_{L^\infty(B_1)}).
\end{align*}
After a normalization, we may assume $E(u_{\eps_l};B_1)+\|f_l\|_{L^\infty(B_1)}= 1$ and 
\begin{align}\label{exp.bdry:cont}
     E(u_{\eps_l};B_{\theta})>\theta^{\beta}.
\end{align}
Then by the first claim in \autoref{lem.conv.qual}, there is a weak solution $u$ to
\begin{equation*}
\left\{
\begin{alignedat}{3}
\overline{L} u&=g&&\qquad \mbox{in  $B_{\frac12}\cap \{x_n>0\}$}, \\
u&=0&&\qquad  \mbox{in $B_{\frac12}\setminus\{x_n>0\}$},
\end{alignedat} \right.
\end{equation*}
where $u_{\eps_l}\psi\to u$ in $L^2({\bbR^n})$,
\begin{align*}
    \overline{L}u(x)\coloneqq\int_{\bbR^n}\frac{u(x)-u(x-z)}{|z|^{n+2s}}\overline{K}\,dz
\end{align*}
and
\begin{align*}
    E(u;B_1)+\|g\|_{L^\infty(B_1)}\leq c_0,
\end{align*}
where $c_0$ is determined in \eqref{conv.qual:goal1}. Thus, by \eqref{exp.bdry:ineq.v} and \eqref{exp.bdry:thetachoice}, we have
\begin{align*}
    E(u;B_\theta)\leq \frac{\theta^{\beta}}2.
\end{align*}
On the other hand, using \eqref{exp.bdry:defn.c_1}, the fact that $E(u_{\eps_l};B_1)\leq 1$ and \eqref{exp.bdry:thetachoice}, we have
\begin{align*}
     |E(u_{\eps_l}\psi;B_{\theta})-E(u_{\eps_l};B_\theta)|&\leq c_1\theta^{2s}E(v_{\eps_l};B_1)\leq \frac{\theta^{\beta}}4
\end{align*}
Together with this, \eqref{exp.bdry:cont} gives
\begin{align*}
    E(u_{\eps_l}\psi;B_\theta)> \frac{3\theta^{\beta}}4.
\end{align*}
However, this contradicts 
\begin{align*}
     \frac{3\theta^{\beta}}{4}<\lim_{l\to\infty}E(u_{\eps_l}\psi;B_\theta)=E(u;B_\theta)\leq \frac{\theta^\beta}{2},
\end{align*}
which completes the proof.
\end{proof}

Now, using the smoothness of the domain $\Omega$ and iterating \autoref{lem.exp.bdry}, we prove \autoref{lem.large.bdry}

\begin{proof}[Proof of \autoref{lem.large.bdry}]
First, note that there is a constant $C_2=C_2(n,s)\geq1$ such that for any $a\in(0,\frac12]$, $\rho>0$ and $u\in L^1_{2s}(\bbR^n)$,
\begin{align}\label{large.bdry:const.C_2}
    E(u;B_{\frac{\rho}2})\leq C_2 E(u;B_{\rho}(a\rho))\quad\text{and}\quad E(u;B_{r}(a\rho))\leq C_2 E(u;B_{2\rho}).
\end{align}
Next, fix $\beta\in(0,s)$ and write $\beta'\coloneqq \frac{\beta+s}{2}<s$. By \autoref{lem.exp.bdry} with $\beta$ replaced by $\beta'$, we obtain $\theta=\theta(n,s,\Lambda,\beta)$ and $\eps_0=\eps_0(n,s,\Lambda,\beta)$. We may assume 
\begin{align}\label{large.bdry:theta}
    \theta^{\beta'-\beta}\leq (4(1+C_2))^{-2}
\end{align}
by choosing $\theta$ small enough in the step \eqref{exp.bdry:thetachoice}, as $C_2$ depends only on $n,s$.

By the smoothness assumption of the domain and the fact that $\eps_0=\eps_0(n,s,\Lambda,\beta)$, there is a constant $r_0=r_0(n,s,\Lambda,\alpha,\beta)$ such that for any $\rho\leq r_0$,
\begin{align*}
    B_\rho\cap \{x_n>\rho\eps_1\}\subset \mathcal{R}({\Omega}\cap B_\rho)\subset B_\rho\cap \{x_n>-\rho\eps_1\},
\end{align*}
where $\eps_1\coloneqq\frac1{8}\min\{\eps_0,\theta\}$ and $\mathcal{R}$ is the composition of a translation and a rotation. Therefore, we may assume for any $\rho\leq r_0$,
\begin{align}\label{large.int:bdry}
     B_\rho\cap \{x_n>\rho\eps_1\}\subset {\Omega}\cap B_\rho\subset B_\rho\cap \{x_n>-\rho\eps_1\}.
\end{align}
With these choices, we are going to prove \eqref{large.bdry:main.est}. First, if $\eps\geq \frac{\eps_1\theta}8$, then $r\geq\frac{\eps_1\theta}8$ so that
\eqref{large.bdry:main.est} follows by taking the constant $c$ sufficiently large, as $\eps_1$ and $\theta$ depend only on $n,s,\Lambda,\beta$. Thus we may assume $\eps\leq \frac{\eps_1\theta}8$. 

Assume  $\frac{\eps_1}2< {2\eps}\left(\frac{4}\theta\right)^{k}\leq \eps_1$ for some positive integer $k$. We want prove for any $i\in[0,k]$,
\begin{align}\label{large.bdry:main0}
     E(u_\eps;B_{(\theta/4)^i})\leq c(\theta/4)^{i\beta}(E(u_\eps;B_1)+\|f\|_{L^\infty(B_1)})
\end{align}
for some constant $c=c(n,s,\Lambda,\alpha,\beta)$ independent of $i$. 

To this end, fix $i_0\in\N$ such that 
\begin{equation}\label{large.bdry:i0defn}
    (\theta/4)^{i_0}\leq r_0\leq (\theta/4)^{i_0-1}.
\end{equation}
As $\theta$ and $r_0$ depends only on $n,s,\Lambda,\alpha,\beta$, for any $i\leq i_0+1$, we have
\begin{align}\label{large.bdry:bdry0}
    E(u_\eps;B_{(\theta/4)^i})\leq c(\theta/4)^{i\beta}(E(u_\eps;B_1)+\|f\|_{L^\infty(B_1)})
\end{align}
for some constant $c=c(n,s,\Lambda,\alpha,\beta)$.

Thus, we may assume $i_0+1\leq k$. Let $i\in[i_0+1,k]$, $x_1\coloneqq (0,\ldots,0,\eps_1)$ and $R_i\coloneqq (\frac{\theta}{4})^i\frac12$. Then the function $\widetilde{u}_{\eps,i}(x)\coloneqq u_{\eps}(x+R_ix_1)$ solves
   \begin{equation*}
\left\{
\begin{alignedat}{3}
\widetilde{L}_{\eps} \widetilde{u}_{\eps,i}&=\widetilde{f}_i&&\qquad \mbox{in  $\widetilde{\Omega}_i\cap B_{R_i}$}, \\
\widetilde{u}_{\eps,i}&=0&&\qquad  \mbox{in $B_{R_i}\setminus \widetilde{\Omega}_i$},
\end{alignedat} \right.
\end{equation*}
    where $\widetilde{f}_i(x)\coloneqq R_i^{2s}f(x+R_ix_1)$, $\widetilde{\Omega}_i=\{ x-R_ix_1\,:\, x\in\Omega\}$ and $\widetilde{L}$ is defined as in \eqref{intro:operator} with $K(x,y)$ replaced by $\widetilde{K}(x,y)=K(x+R_ix_1,y+R_i x_1)$.
    In addition, by \eqref{large.int:bdry} and the fact that $R_i\leq r_0$, we have
    \begin{align*}
         B_{R_i}\cap \{x_n>0\}\subset {\Omega}\cap B_{R_i}\subset B_{R_i}\cap \{x_n>-2R_i\eps_1\}.
    \end{align*}
    Thus, for $u_{\eps,i}(x)\coloneqq \widetilde{u}_{\eps,i}(R_i x)$, $f_i(x)\coloneqq R_i^{2s} f(R_i x)$ and $\Omega_i\coloneqq \{x/R_i\,:\, x\in \widetilde{\Omega}_i\}$, we see
    \begin{equation*}
\left\{
\begin{alignedat}{3}
\widetilde{L}_{\eps/r_i} {u}_{\eps,i}&={f}_i&&\qquad \mbox{in  ${\Omega}_i\cap B_{1}$}, \\
{u}_{\eps,i}&=0&&\qquad  \mbox{in $B_{1}\setminus {\Omega}_i$}
\end{alignedat} \right.
\end{equation*}
with 
\begin{align*}
    B_1\cap \{x_n>0\}\subset \Omega_i\cap B_1\subset B_1\cap \{x_n>-2\eps_1\}.
\end{align*}
As $2\eps_1\leq \eps_0$ and $\eps/R_i\leq \eps_0$, we apply \autoref{lem.exp.bdry} with $\beta$ replaced by $\beta'$ to see that 
    \begin{equation}\label{large.bdry:ineq1}
    \begin{aligned}
        E(u_\eps;B_{R_i\theta}(R_ix_1))=E(u_{\eps,i};B_\theta)&\leq \theta^{\beta'}(E(u_{\eps,i};B_1)+\|f_i\|_{L^\infty(B_1)})\\
        &\leq \theta^{\beta'}(E(u_{\eps};B_{R_i}(R_ix_1))+R_i^{2s}\|f\|_{L^\infty(B_1)}).
    \end{aligned}
    \end{equation}
    As $|x_1|\leq \eps_1\leq \frac{\theta}{8}$, using \eqref{large.bdry:const.C_2}, \eqref{large.bdry:theta} and the definition of $R_i=\frac12(\frac\theta4)^i$, we further estimate
    \begin{align*}
        E(u_\eps;B_{(\theta/4)^{i+1}})=E(u_\eps;B_{R_i\theta/2})&\leq C_0^{2}\theta^{\beta'}(E(u_\eps;B_{2R_i})+R_i^{2s}\|f\|_{L^\infty(B_1)})\\
        &\leq (\theta/4)^{\beta}(E(u_\eps;B_{(\theta/4)^{i}})+(\theta/4)^{2si}\|f\|_{L^\infty(B_1)}).
    \end{align*}
    By induction and \eqref{large.bdry:bdry0}, we get for any $i\geq i_0+1$,
    \begin{align*}
        E(u_\eps;B_{(\theta/4)^{i}})\leq (\theta/4)^{\beta (i-i_0)}(E(u_\eps;B_{\theta^{i_0}})+c\|f\|_{L^\infty(B_1)})\leq c(\theta/4)^\beta(E(u_\eps;B_1)+\|f\|_{L^\infty(B_1)})
    \end{align*}
    for some constant $c=c(n,s,\Lambda,\alpha,\beta)$. Combining this and \eqref{large.bdry:bdry0},  we obtain \eqref{large.bdry:main0}.
    Therefore, using again the same argument used in the last part of the proof of \autoref{lem.large.int}, we deduce for any $r\in[\eps,1]$,
    \begin{align*}
        E(u_\eps;B_r)\leq cr^{\beta}(E(u_\eps;B_1)+c\|f\|_{L^\infty(B_1)}),
    \end{align*}
    where $c=c(n,s,\Lambda,\alpha,\beta)$. Lastly by \eqref{large.int:bdry}, we have $u_{\eps}\equiv 0$ in $B_{\rho}\cap\{x_n\leq -\rho \eps_1\}$ for any $\rho\leq r_0$, which implies for any $r\in[0,1]$,
    \begin{align*}
        \dashint_{B_{r}}|u_\eps|^2\,dx\leq c\dashint_{B_{r}}|u_\eps-(u_\eps)_{B_{r}}|^2\,dx
    \end{align*}
    for some constant $c=c(n,s,\Lambda,\alpha,\beta)$, as $r_0$ and $\eps_1$ depend only on $n,s,\Lambda,\alpha,\beta$. Thus, using this, we derive
    \begin{align*}
        E(u_\eps-(u_\eps)_{B_r};B_r)\leq cr^{\beta}(E(u_\eps;B_1)+c\|f\|_{L^\infty(B_1)}),
    \end{align*}
    which completes the proof.
\end{proof}

\subsection{Uniform estimates} In this subsection, we prove \autoref{thm.uni.est} by combining  \autoref{lem.large.int}, \autoref{lem.large.bdry} and regularity estimates for nonlocal equations with sufficiently smooth kernel coefficients.
\begin{proof}[Proof of \autoref{thm.uni.est}] 
We may assume $x_0=0$. First, we prove interior uniform estimates.  

 Let $\beta\in(0,\min\{2s,1\})$ and define $v_\eps(x)\coloneqq \frac{u_\eps(Rx)}{R^\beta}$ to see that
    \begin{align*}
        \int_{\bbR^n}\frac{v_\eps(x)-v_\eps(x-z)}{|z|^{n+2s}}K(Rx/\eps,R(x-z)/\eps)\,dy=g_\eps(x)\quad\text{in }B_1,
    \end{align*}
    where $g_\eps(x)\coloneqq R^{2s-\beta}f(Rx)$. If $\frac{\eps}{R}\leq r\leq 1$, by \autoref{lem.large.int} together with the fact that $v_\eps-(v_\eps)_{B_1}$ also solves the same equation, we have 
    \begin{align}\label{uni.est:ineq1}
        E(v_\eps-(v_\eps)_{B_r};B_r)\leq cr^{\beta}(E(v_\eps-(v_\eps)_{B_1};B_1)+\|g_\eps\|_{L^\infty})
    \end{align}
    for some constant $c=c(n,s,\Lambda,\beta)$. On the other hand, if $r\leq \frac{\eps}{R}$, then we consider a solution $w_\eps(x)\coloneqq \frac{v_\eps(\eps x/R)}{(\eps/R)^\beta}$ to see that 
    \begin{align*}
        L_1(w_{\eps}-(w_\eps)_{B_1})=(\eps/R)^{2s-\beta}g_\eps\quad\text{in }B_1.
    \end{align*}
    Thus using \autoref{lem.reg.int}, we have for $\rho=\frac{r R}{\eps}\leq1$,
    \begin{align*}
        E(w_\eps-(w_\eps)_{B_\rho};B_\rho)\leq c\rho^{\beta}(E(w_\eps-(w_\eps)_{B_{1}};B_{1})+(\eps/R)^{2s-\beta}\|g_\eps\|_{L^\infty(B_{1})})
    \end{align*}
    for some constant $c=c(n,s,\Lambda,\beta,\alpha,[K]_{C^{\alpha}})$. Scaling back, we have 
    \begin{align}\label{uni.est:ineq2}
        E(v_\eps-(v_\eps)_{B_r};B_r)\leq c\left(\frac{r R}{\eps}\right)^\beta (E(v_\eps-(v_\eps)_{B_{\eps/R}};B_{\eps/R})+(\eps/R)^{2s}\|g_\eps\|_{L^\infty(B_{1})}).
    \end{align}
    Thus combining \eqref{uni.est:ineq1} and \eqref{uni.est:ineq2}, we have for any $r\leq1$,
    \begin{equation}\label{uni.est:ineq3}
    \begin{aligned}
        E(u_\eps-(u_\eps)_{B_{rR}};B_{rR})=R^{\beta}E(v_\eps-(v_{\eps})_{B_r};B_R)&\leq c(rR)^\beta (E(v_\eps-(v_\eps)_{B_1};B_1)+\|g_\eps\|_{L^\infty(B_1)})\\
        &\leq cr^\beta (E(u_\eps-(u_\eps)_{B_R};B_R)+R^{2s}\|g\|_{L^\infty(B_R)}).
    \end{aligned}
    \end{equation}
    This gives the desired estimate. For the Lipschitz estimate, using \eqref{large.int:main.est} with $\beta=1$ and Lipschitz estimates for nonlocal equations with H\"older continuous kernel coefficients given in \autoref{lem.reg.int}, we follow the same lines as in the previous part with $\beta=1$. Thus, we have the desired Lipschitz estimate.

Next, we prove the desired boundary estimates. As in the interior case, using \autoref{lem.reg.bdd1} and \autoref{lem.large.bdry}, we deduce for any $r\leq \frac12$ and $\beta\in(0,s)$, if $z_0\in \partial\Omega\cap B_{\frac{R}2}$, then we have
 \begin{align*}
        E(u_\eps-(u_\eps)_{B_{rR}(z_0)};B_{rR}(z_0))
        \leq cr^\beta (E(u_\eps-(u_\eps)_{B_{R/2}};B_{R/2}(z_0))+R^{2s}\|g\|_{L^\infty(B_{R/2}(z_0))})
    \end{align*}
for some constant $c=c(n,s,\Lambda,\alpha,[K]_{C^\delta},\beta,r_0,M)$. In particular, we have used that for any $r\leq \frac{R}2$,
\begin{align*}
    \dashint_{B_r(z_0)}|u|\,dz\leq c(n,\alpha,r_0,M)\dashint_{B_r(z_0)}|u-(u)_{B_r(z_0)}|\,dz,
\end{align*}
as $u_\eps\equiv 0$ on $B_{r}(z_0)\setminus \Omega$.
Therefore, combining this and the interior estimate given in \eqref{uni.est:ineq3} together with a standard covering argument, we get the desired estimate \eqref{uni.est:main.est1}. This completes the proof.
\end{proof}

\section{Green function and Poisson kernel estimates}
\label{sec:green}
In this section, we establish pointwise estimates and regularity estimates of the Green function for the operator $L_\eps$ for any $\eps\in(0,1)$. 

The following theorem provides uniform estimates of Green functions and Poisson kernels. 
\begin{theorem}\label{lem.green}
   Let $\eps\in(0,1)$, $r_0,M,N>0$ $\alpha,\in(0,1]$, $K\in C^\alpha(\mathbb{T}^n)$ and $\partial\Omega\in C^{1,\alpha}$with $C^{1,\alpha}$ radius $r_0$, $C^{1,\alpha}$ norm bounded by $M$ and $\mathrm{diam}(\Omega)\leq N$. Let $G_\eps$ and $P_\eps$ be the Green function and the Poisson kernel associated to $L_\eps$ and $\Omega$. Then we have the following:
   \begin{itemize}
       \item For any $\beta\in(0,s)$, we have
       \begin{equation}
   \begin{aligned}\label{green:uppbdd}
       G_\eps(x,y)\leq c\min\left\{\frac{d_\Omega(x)}{|x-y|},1\right\}^{\beta}\min\left\{\frac{d_\Omega(y)}{|x-y|},1\right\}^{\beta}\frac{1}{|x-y|^{n-2s}},
   \end{aligned}
   \end{equation}
   where $c=c(n,s,\Lambda,[K]_{C^\alpha(\mathbb{T}^n)},\alpha,r_0,M,N,\beta)$.
   \item For any $x\in \Omega$, $\beta\in(0,s)$ and $\gamma\in(0,\min\{2s,1\})$, we have
   \begin{align}\label{green:hol}
       \rho^\gamma[G_\eps(\cdot,y)]_{C^{\gamma}(B_{\rho}(x))}\leq \frac{c}{|x-y|^{n-2s}}\min\left\{\frac{d_\Omega(x)}{|x-y|},1\right\}^{\beta}
   \end{align}
   where $\rho\coloneqq \frac18\min\{|x-y|,d_\Omega(x)\}$ and $c=c(n,s,\Lambda,[K]_{C^\alpha(\mathbb{T}^n)},\alpha,r_0,M,N,\beta,\gamma)$.
   \item For any $\beta\in(0,s)$, we have
   \begin{align}\label{green:pois}
       P_\eps(x,y)\leq \frac{cd_\Omega^{\beta}(x)}{d_\Omega^{2s-\beta}(y)(1+d_\Omega(y))^{\beta}}\frac{1}{|x-y|^n},
   \end{align}
    where $c=c(n,s,\Lambda,[K]_{C^{\alpha}(\mathbb{T}^n)},\alpha,r_0,M,N,\beta)$.
   \end{itemize}
\end{theorem}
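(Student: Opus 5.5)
The plan is to derive all three bounds from \autoref{thm.uni.est}, applied to $G_\eps(\cdot,y)$ away from its pole and on scales adapted to the geometry. The starting point is the global bound $G_\eps(x,y)\leq c|x-y|^{2s-n}$. Because $n>2s$ and $K$ is bounded between $\Lambda^{-1}$ and $\Lambda$, this is the standard estimate for symmetric Dirichlet forms comparable to the fractional Laplacian. It depends only on $n,s,\Lambda$ and can be found in \cite{KW25} or proved by the usual Moser/Nash argument. It holds uniformly in $\eps$ because it uses only ellipticity. As a by-product we get $\int_{B_r(x)} G_\eps(z,y)\,dz\leq c r^{2s}$ for any ball, uniformly in $y$.

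\textbf{Interior H\"older bound \eqref{green:hol} and the one-sided boundary factor.} Fix $x,y$ and set $r=|x-y|$. The function $u=G_\eps(\cdot,y)$ solves $L_\eps u=0$ in $\Omega\setminus\{y\}$ and vanishes outside $\Omega$. Take $R=r/4$. If $d_\Omega(x)\geq R$, apply the interior part of \autoref{thm.uni.est} in $B_{R}(x)$ with $\gamma<\min\{2s,1\}$. If $d_\Omega(x)<R$, let $z_0\in\partial\Omega$ be the closest boundary point and apply the boundary part in $B_{2R}(z_0)$ with exponent $\beta<s$. Since $u(z_0)=0$, this gives $u(x)\leq c(d_\Omega(x)/r)^\beta E(u;B_{2R}(z_0))$; combining it with the interior estimate at scale $\rho$ gives the H\"older bound. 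The excess must then be controlled by $c\,r^{2s-n}$. The average part follows from the pointwise bound. For the tail, split $\mathbb R^n\setminus B_R$ into the region $|z-y|\geq r/2$, where $u\lesssim r^{2s-n}$, and the ball $B_{r/2}(y)$. On that ball, bound $\int_{B_{r/2}(y)} u\,dz\lesssim r^{2s}$ and multiply by the kernel weight $R^{2s}r^{-n-2s}$, which again gives $\lesssim r^{2s-n}$. This proves \eqref{green:hol} and the one-sided version of \eqref{green:uppbdd}.

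\textbf{Two-sided bound.} By symmetry of $K$, $G_\eps(x,y)=G_\eps(y,x)$. Repeat the argument for $v=G_\eps(x,\cdot)$ near $y$. This time the excess is bounded using the one-sided bound already obtained, namely $v(z)\lesssim (d_\Omega(x)/|x-z|)^\beta|x-z|^{2s-n}$. The tail integral over the region near $x$ requires care. There we use $\int_{B_{r/2}(x)}G_\eps(z,y)\,dz$-type estimates, whose weighted version $\int_{B_{r/2}(x)} (d_\Omega(x)/|x-z|)^\beta |x-z|^{2s-n}dz$ should be handled by splitting $B_{r/2}(x)$ according to $|x-z|\lessgtr d_\Omega(x)$. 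This is the step I expect to be the main obstacle: the nonlocal tail has to be shown to preserve the factor $\min\{d_\Omega(x)/r,1\}^\beta$. If the direct computation loses a power, the remedy is to apply the one-sided result to $G_\eps(\cdot,y)$ with a slightly larger exponent $\beta'\in(\beta,s)$. The resulting gain absorbs the logarithmic or near-diagonal losses and yields \eqref{green:uppbdd}.

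\textbf{Poisson kernel \eqref{green:pois}.} For $y\in\mathbb R^n\setminus\overline\Omega$ we use the representation
\[
P_\eps(x,y)=\int_\Omega G_\eps(x,z)\frac{K(z/\eps,y/\eps)}{|z-y|^{n+2s}}\,dz,
\]
which comes from writing the solution with exterior datum $g$ as $g$ minus a Green potential. This gives $P_\eps(x,y)\leq c\int_\Omega G_\eps(x,z)|z-y|^{-n-2s}dz$. Insert \eqref{green:uppbdd} using only the $x$-factor $\min\{d_\Omega(x)/|x-z|,1\}^\beta$, and note $|z-y|\geq d_\Omega(y)$. Split the integral into $|z-x|<|x-y|/2$ and its complement. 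On the first region $|z-y|\simeq|x-y|$, and the integral of $G_\eps(x,\cdot)$ there is at most $\min\{d_\Omega(x)^\beta|x-y|^{2s-\beta},|x-y|^{2s}\}$. On the second region, integrate $|z-y|^{-n-2s}$ over $\{|z-y|\geq d_\Omega(y)\}\cap\Omega$ against $d_\Omega(x)^\beta|x-z|^{2s-n-\beta}$. These computations produce the factor $d_\Omega(x)^\beta d_\Omega(y)^{\beta-2s}|x-y|^{-n}$. For large $d_\Omega(y)$, the boundedness of $\Omega$ supplies the extra decay $(1+d_\Omega(y))^{-\beta}$.
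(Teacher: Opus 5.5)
Your overall route matches the paper's, which simply cites \cite[Lemma 9.3]{KW25}, the computations in \cite[Corollary 1.2]{KW25}, and \cite[Theorem 3.2]{CS98}. You use the global bound $G_\eps\lesssim|x-y|^{2s-n}$, the uniform boundary and interior estimates of \autoref{thm.uni.est} applied to $G_\eps(\cdot,y)$ away from the pole, symmetry, and the representation $P_\eps(x,y)=\int_\Omega G_\eps(x,z)K(z/\eps,y/\eps)|z-y|^{-n-2s}\,dz$. The Green function parts go through. The step you flag as the main obstacle is in fact harmless. Since $2s-\beta>0$,
\[
\int_{B_{r/2}(x)}\min\Bigl\{\frac{d_\Omega(x)}{|x-z|},1\Bigr\}^{\beta}|x-z|^{2s-n}\,dz\lesssim\min\Bigl\{\frac{d_\Omega(x)}{r},1\Bigr\}^{\beta}r^{2s},
\]
so the tail preserves the boundary factor directly, and no $\beta'>\beta$ is needed.

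There is, however, a genuine error in the Poisson kernel step: inserting \eqref{green:uppbdd} with only the $x$-factor cannot produce the power $d_\Omega(y)^{\beta-2s}$. Let $r=|x-y|$ and look at $z\in\Omega\cap B_{r/2}(y)$. There $|x-z|\simeq r$, so the $x$-factor only gives $G_\eps(x,z)\lesssim d_\Omega(x)^\beta r^{2s-n-\beta}$, with no decay as $z$ approaches $y$. On the other hand, $\int_{\Omega\cap B_{r/2}(y)}|z-y|^{-n-2s}\,dz\simeq d_\Omega(y)^{-2s}$ (in a half-space, integrate out the tangential variables). So your computation yields $d_\Omega(x)^\beta d_\Omega(y)^{-2s}r^{2s-n-\beta}$. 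This exceeds the target by a factor $d_\Omega(y)^{-\beta}$, for instance when $d_\Omega(x)\simeq r\simeq 1$ and $d_\Omega(y)\to0$.

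What you need is the factor $\min\{d_\Omega(z)/|x-z|,1\}^\beta$ in the integration variable, i.e.\ the two-sided bound you proved in the previous step. Combine it with $d_\Omega(z)\le|z-y|$ for $z\in\Omega$, $y\notin\overline\Omega$:
\[
\int_{\Omega}\frac{d_\Omega(z)^{\beta}}{|z-y|^{n+2s}}\,dz\le\int_{\{|z-y|\ge d_\Omega(y)\}}|z-y|^{\beta-n-2s}\,dz\lesssim d_\Omega(y)^{\beta-2s},
\]
which converges precisely because $\beta<2s$. This bounds the contribution of $\Omega\cap B_{r/2}(y)$ by $d_\Omega(x)^\beta d_\Omega(y)^{\beta-2s}r^{2s-2\beta-n}$. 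That is admissible because $2s-2\beta>0$ and $r\le N+1$ when $d_\Omega(y)\le1$.

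The other regions are indeed fine with the $x$-factor alone. They give $d_\Omega(x)^\beta r^{-n-\beta}$, and $r^{-\beta}\le d_\Omega(y)^{-\beta}\le d_\Omega(y)^{\beta-2s}$ for $d_\Omega(y)\le 1$. For $d_\Omega(y)\ge1$, your use of boundedness of $\Omega$ works, via $|z-y|\ge d_\Omega(y)$ and $\int_\Omega G_\eps(x,z)\,dz\lesssim d_\Omega(x)^\beta$. This is why the paper derives \eqref{green:pois} from the full two-sided estimate \eqref{green:uppbdd}, following \cite{CS98}.
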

Note that since we have proved regularity estimates for the corresponding nonlocal operator in \autoref{thm.uni.est}, pointwise estimates for the associated Green function and Poisson kernel follow by standard arguments. We therefore refer the reader to the appropriate references and omit the details.
\begin{proof}
    By \cite[Lemma 9.3]{KW25} together with \autoref{thm.uni.est}, we deduce the pointwise estimates of the Green function given in \eqref{green:uppbdd}. Next,  by following the same computations given in the proof of \cite[Corollary 1.2]{KW25} together with \eqref{green:uppbdd}, we derive
    \begin{align*}
        E(G_\eps(\cdot,y);B_{2\rho}(x))\leq \frac{c}{|x-y|^{n-2s}}\min\left\{\frac{d_\Omega(x)}{|x-y|},1\right\}^{\beta}
    \end{align*}
    for some constant $c=c(n,s,\Lambda,[K]_{C^\alpha(\mathbb{T}^n)},\alpha,r_0,M,N,\beta)$. 
    As $G_\eps(\cdot,y)$ solves $L_\eps G_\eps(\cdot,y)$ in $B_{2\rho}(x)$, using \autoref{thm.uni.est}, we have 
    \begin{align*}
        \rho^\gamma[G_\eps(\cdot,y)]_{C^\gamma(B_\rho(x))}\leq cE(G_\eps(\cdot,y);B_{2\rho}(x))\leq \frac{c}{|x-y|^{n-2s}}\min\left\{\frac{d_\Omega(x)}{|x-y|},1\right\}^{\beta}
    \end{align*}
    for some constant $c=c(n,s,\Lambda,[K]_{C^\alpha(\mathbb{T}^n)},\alpha,r_0,M,N,\beta,\gamma)$, which implies \eqref{green:hol}. 
    
    For \eqref{green:pois}, using the same argument given in \cite[Theorem 3.2,]{CS98} together with \eqref{green:uppbdd}, we obtain the desired Poisson kernel estimates. 
    This completes the proof.
\end{proof}

Next, using \eqref{green:uppbdd} and \eqref{green:hol}, we prove Sobolev type estimates of Green functions. 
\begin{lemma}\label{lem.fracsob.green}
   Let $\eps\in(0,1]$, $C_1>0$, $\alpha\in(0,s)$, $\beta\in(\alpha,s]$ and $\gamma\in(s,\min\{2s,1\})$. Let $G(x,y)$ satisfy \eqref{green:uppbdd} and \eqref{green:hol} with $c=C_1$. Assume $G(x,\cdot)=G(\cdot,y)=0$ on $\bbR^n\setminus \Omega$. Then we have for any $x\in \bbR^n$ and $y \in \Omega$,
    \begin{align}\label{fracsob.green:main.est}
        \int_{B_\eps(x)}\frac{|G(x,y)-G(z,y)|}{|x-z|^{n+\alpha}}\,dz\leq c\left(\frac{\eps^{\beta-\alpha}+d^{\beta-\alpha}_\Omega(x)}{|x-y|^{n-2s+\beta}}+\frac{d^\beta_\Omega(x)+\eps^\beta}{|x-y|^{n-2s+\alpha+\beta}}\right),
    \end{align}
    where $c=c(n,s,\beta,C_1,\gamma,\beta,\alpha)$.
\end{lemma}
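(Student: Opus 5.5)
We have to bound $I(x):=\int_{B_\eps(x)}|G(x,y)-G(z,y)|\,|x-z|^{-n-\alpha}\,dz$, using only the pointwise bound \eqref{green:uppbdd} and the Hölder bound \eqref{green:hol}. Write $r:=|x-y|$ and $\rho:=\frac18\min\{r,d_\Omega(x)\}$. The plan is to split the ball $B_\eps(x)$ into a near part, $B_{\min\{\rho,\eps\}}(x)$, where \eqref{green:hol} applies, and a far part, $B_\eps(x)\setminus B_\rho(x)$ (nonempty only when $\rho<\eps$), where the crude pointwise bound will suffice. The case $x\notin\Omega$ (so $G(x,y)=0$, $d_\Omega(x)$ interpreted as the distance to $\partial\Omega$) falls into the far part with $\rho=0$, i.e.\ every $z\in B_\eps(x)$ is treated crudely.

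\emph{Near part.} By \eqref{green:hol} with exponent $\gamma>s\ge\beta>\alpha$, for $|z-x|<\rho$ we have
\[
|G(x,y)-G(z,y)|\le C_1\Bigl(\tfrac{|x-z|}{\rho}\Bigr)^\gamma r^{2s-n}\min\{d_\Omega(x)/r,1\}^\beta .
\]
Integrating against $|x-z|^{-n-\alpha}$ over $B_{\min\{\rho,\eps\}}$ gives, since $\gamma>\alpha$, at most $c\,\rho^{-\gamma}\min\{\rho,\eps\}^{\gamma-\alpha}\, r^{2s-n-\beta}d_\Omega(x)^\beta$ (or with $\min\{\cdot\}=1$). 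Since $\min\{\rho,\eps\}^{\gamma-\alpha}\le \rho^{\gamma-\alpha}$, this is bounded by $c\rho^{-\alpha}r^{2s-n}\min\{d_\Omega(x)/r,1\}^\beta$.

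We then split according to which term realizes $\rho$:
\begin{itemize}
\item If $\rho\sim d_\Omega(x)\le r$, the bound is $c\,d_\Omega(x)^{\beta-\alpha}r^{2s-n-\beta}$, which is the first term of \eqref{fracsob.green:main.est}.
\item If $\rho\sim r\le d_\Omega(x)$, the bound is $c\,r^{2s-n-\alpha}$. This is $\le c\,d_\Omega^\beta(x)\, r^{2s-n-\alpha-\beta}$, which is the second term.
\end{itemize}

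\emph{Far part.} For $\rho\le|x-z|<\eps$ we estimate $|G(x,y)-G(z,y)|\le G(x,y)+G(z,y)$.
\begin{itemize}
\item The $G(x,y)$ contribution integrates to $c\rho^{-\alpha}G(x,y)$. Using \eqref{green:uppbdd}, this is handled exactly as in the near part.
\item The $G(z,y)$ contribution is the delicate one. Split $z$ into the region $|z-y|\ge r/2$ and the region $|z-y|<r/2$.
\begin{itemize}
\item On $|z-y|\ge r/2$ we use $G(z,y)\le c\,d_\Omega(z)^\beta r^{2s-n-\beta}$ together with $d_\Omega(z)\le d_\Omega(x)+\eps$. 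Integrating $|x-z|^{-n-\alpha}$ over $\{|x-z|\ge\rho\}$ then gives $c\rho^{-\alpha}(d_\Omega(x)+\eps)^\beta r^{2s-n-\beta}$. This is not yet in the target form; the fix is to integrate the factor $d_\Omega(z)^\beta\le d_\Omega(x)^\beta+|x-z|^\beta$ before bounding, since $\beta>\alpha$:
\[
\int_{\rho\le|x-z|<\eps}|x-z|^{\beta-n-\alpha}\,dz\le c\,\eps^{\beta-\alpha}
\qquad\text{and}\qquad
d_\Omega(x)^\beta\rho^{-\alpha}.
\]
The term $\eps^{\beta-\alpha}r^{2s-n-\beta}$ is admissible (first term). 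The term $d_\Omega(x)^\beta\rho^{-\alpha}r^{2s-n-\beta}$ reduces, as before, to $d_\Omega^{\beta-\alpha}(x)$ or $d_\Omega^\beta(x)\, r^{-\alpha}$ according to whether $\rho\sim d_\Omega(x)$ or $\rho\sim r$, which matches one of the two terms.
\item The region $|z-y|<r/2$ is nonempty only if $\eps>r/2$. There $|x-z|\ge r/2$, so the weight is $\le c r^{-n-\alpha}$. Using $\int_{B_{r/2}(y)}G(z,y)\,dz\le c\,r^{2s}\min\{(d_\Omega(y)+r)/r,1\}^\beta\le c r^{2s}$ gives $c\,r^{2s-n-\alpha}$. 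To convert this into the second term we need a factor $(\eps/r)^\beta\ge 2^{-\beta}$, which is available because $\eps>r/2$; this yields $c\,\eps^\beta r^{2s-n-\alpha-\beta}$.
\end{itemize}
\end{itemize}

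\emph{Main obstacle.} The main obstacle is the bookkeeping of the $G(z,y)$ far-field piece. One must carry the $\beta$-power of $d_\Omega(z)$ inside the integral rather than bounding it crudely, so that the exponents come out as $\eps^{\beta-\alpha}$ and not as $\eps^\beta\rho^{-\alpha}$. The case $\rho\ll\eps$ with $d_\Omega(x)$ tiny is the borderline case, and it is exactly where $\beta>\alpha$ is used. The other cases should be routine comparisons between $d_\Omega(x)$, $r$ and $\eps$.
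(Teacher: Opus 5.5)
Your proposal is essentially the paper's proof. It uses the same split of $B_\eps(x)$ at radius $\rho=\frac18\min\{|x-y|,d_\Omega(x)\}$, with \eqref{green:hol} inside and the crude bound $G(x,y)+G(z,y)$ outside, and the same splitting of the $G(z,y)$ piece according to whether $|z-y|\geq|x-y|/2$ or $|z-y|<|x-y|/2$. Your use of $d_\Omega(z)^\beta\le d_\Omega(x)^\beta+|x-z|^\beta$ for interior $x$ is in fact slightly more careful than the paper's reuse of its exterior estimate $J$.

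The one slip is the exterior case. Setting ``$\rho=0$'' makes the term $d_\Omega(x)^\beta\rho^{-\alpha}$ infinite. There you should instead use $d_\Omega(z)\le|x-z|$ for $z\in\Omega$, as the paper does (equivalently, note that the integrand vanishes on $B_{d_\Omega(x)}(x)$). This leaves only the $\eps^{\beta-\alpha}$ contribution from that region.
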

Note that pointwise estimates of $|(-\Delta)^{\frac{s}2}G(\cdot,y)|$ are already established in \cite[Proposition 3.3 and Theorem 1.10]{AFLY25}. Indeed, the technique used in the proof of \cite[Proposition 3.3 and Theorem 1.10]{AFLY25} can also be applied to our case. 
\begin{proof}
    We divide the proof into two parts depending on the location of the point $x$.
    \begin{itemize}
        \item Let $x\in \bbR^n\setminus \Omega$. As $G(x,\cdot)\equiv 0$, we get
        \begin{align}\label{fracsob.green:def.J}
            J=\int_{\Omega\cap B_\eps(x)}\frac{|G(z,y)|}{|x-z|^{n+\alpha}}\,dz=\int_{\Omega_1}(\cdot)+\int_{\Omega_2}(\cdot)\eqqcolon J_1+J_2,
        \end{align}
        where
        \begin{align}\label{fracsob.green:defn.Omega12}
            \Omega_1\coloneqq \{z\in\Omega\cap B_\eps(x)\,:\,|y-z|>|x-y|/2\}\quad\text{and}\quad\Omega_2\coloneqq \{z\in\Omega\cap B_\eps(x)\,:\,|y-z|\leq|x-y|/2\}.
        \end{align}
        For $J_1$, using \eqref{green:uppbdd} and the fact that $d_\Omega(z)\leq |x-z|$ for any $x\in \bbR^n\setminus \Omega$, we have 
        \begin{align*}
            J_1\leq c\int_{\Omega_1}\frac{d_\Omega(z)^{\beta}}{|y-z|^{n-2s+\beta}}\frac{\,dz}{|x-z|^{n+\alpha}}\leq c\int_{\Omega_1}\frac{1}{|x-y|^{n-2s+\beta}}\frac{\,dz}{|x-z|^{n+\alpha-\beta}}
        \end{align*}
        for some constant $c=c(n,s,\beta,C_1)$. Thus, we have 
        \begin{align}\label{fracsob.green:est.J_1}
            J_1\leq \frac{c}{|x-y|^{n-2s+\beta}}\int_{B_\eps(x)}\frac{\,dz}{|x-z|^{n+\alpha-\beta}}\leq c\frac{\eps^{\beta-\alpha}}{|x-y|^{n-2s+\beta}}
        \end{align}
        for some constant $c=c(n,s,\beta,C_1,\alpha)$. For $J_2$, we note $|x-z|\geq\frac12|x-y|$ and $d_\Omega(z)\leq d_\Omega(x)+\eps$ if $z\in \Omega_2$. Using this together with \eqref{green:uppbdd} gives
        \begin{align}\label{fracsob.green:est.J_2}
            J_2\leq \frac{c}{|x-y|^{n+\alpha}}\int_{|y-z|\leq \frac{|x-y|}2}\frac{d_\Omega^\beta(z)\,dz}{|y-z|^{n-2s+\beta}}\leq \frac{c(d^\beta_\Omega(x)+\eps^\beta)}{|x-y|^{n-2s+(\alpha+\beta)}}
        \end{align}
        for some constant $c=c(n,s,C_1,\beta,\alpha)$.
        Combining the estimates $J_1$ and $J_2$ gives \eqref{fracsob.green:main.est} when $x\in \bbR^n\setminus \Omega$.
        \item Let $x\in \Omega$. We observe
        \begin{align*}
            \int_{B_\eps(x)}\frac{|G(x,y)-G(z,y)|}{|x-z|^{n+\alpha}}\,dz&=\int_{B_\eps(x)\setminus \Omega}\frac{|G(x,y)|}{|x-z|^{n+\alpha}}\,dz+\int_{B_\eps(x)\cap\Omega}\frac{|G(x,y)-G(z,y)|}{|x-z|^{n+\alpha}}\,dz
        \end{align*}
        and
        \begin{align}\label{fracsob.green:ineq1}
            \int_{B_\eps(x)\setminus\Omega}\frac{|G(x,y)|}{|x-z|^{n+\alpha}}\,dz\leq \int_{B_\eps(x)\setminus B_{\frac{d_\Omega(x)}2}(x)}\frac{|G(x,y)|}{|x-z|^{n+\alpha}}\,dz \leq \frac{cd_\Omega^{\beta-\alpha}(x)}{|x-y|^{n-2s+\beta}}
        \end{align}
        for some constant $c=c(n,s,\beta,C_1)$, where we have used \eqref{green:uppbdd} for the last inequality.
        Next, we prove
        \begin{align}\label{fracsob.green:est.I}
            I\coloneqq \int_{B_\eps(x)\cap\Omega}\frac{|G(x,y)-G(z,y)|}{|x-z|^{n+\alpha}}\,dz\leq c\left(\frac{\eps^{\beta-\alpha}+d^{\beta-\alpha}_\Omega(x)}{|x-y|^{n-2s+\beta}}+\frac{d^\beta_\Omega(x)+\eps^\beta}{|x-y|^{n-2s+\alpha+\beta}}\right)
        \end{align}
        for some constant $c=c(n,s,\beta,C_1,\gamma)$. First, we write
        \begin{align*}
            I=\int_{(B_\eps(x)\cap\Omega)\setminus B_\rho(x)}\frac{|G(x,y)-G(z,y)|}{|x-z|^{n+\alpha}}\,dz+\int_{(B_\eps(x)\cap\Omega)\cap B_\rho(x)}\frac{|G(x,y)-G(z,y)|}{|x-z|^{n+\alpha}}\,dz\eqqcolon I_1+I_2
        \end{align*}
        for $\rho\coloneqq \frac18\min\{|x-y|,d_\Omega(x)\}$. Using \eqref{green:uppbdd}, we estimate $I_1$ as 
        \begin{align*}
            I_1\leq c\int_{(B_\eps(x)\cap\Omega)\setminus B_\rho(x)}\frac{\,dz}{|x-z|^{n+\alpha}}\frac{d_\Omega(x)^\beta}{|x-y|^{n-2s+\beta}}+\int_{(B_\eps(x)\cap\Omega)\setminus B_\rho(x)}\frac{|G(z,y)|}{|x-z|^{n+\alpha}}\,dz\eqqcolon I_{1,1}+I_{1,2},
        \end{align*}
        where $c=c(n,s,\beta,C_1)$. We further estimate $I_{1,1}$ as
    \begin{align*}
        I_{1,1}\leq \frac{c\rho^{-\alpha}d_\Omega^\beta(x)}{|x-y|^{n-2s+\beta}}\leq  c\left(\frac{d_\Omega^{\beta - \alpha}(x)}{|x-y|^{n-2s+\beta}}+\frac{d_\Omega^{\beta}(x)}{|x-y|^{n-2s+\alpha+\beta}}\right).
    \end{align*}
    For $I_{1,2}$, by the estimate of $J$,
    \begin{align*}
        I_{1,2}\leq J\leq c\left(\frac{\eps^{\beta-\alpha}}{|x-y|^{n-2s+\beta}}+\frac{d^\beta_\Omega(x)+\eps^\beta}{|x-y|^{n-2s+\alpha+\beta}}\right)
    \end{align*}
        Lastly, we use \eqref{green:hol} to estimate $I_2$ as 
        \begin{align*}
            I_2\leq \int_{B_\rho(x)}\frac{c\rho^{-\gamma}\,dz}{|x-z|^{n+\alpha-\gamma}}\frac{1}{|x-y|^{n-2s}}\min\left\{1,\frac{d_\Omega(x)}{|x-y|}\right\}^\beta\leq \frac{c\rho^{-\alpha}}{|x-y|^{n-2s}}\min\left\{1,\frac{d_\Omega(x)}{|x-y|}\right\}^\beta.
        \end{align*}
        Therefore, by this and the estimates of $I_{1,1}$ and $I_{1,2}$, we get \eqref{fracsob.green:est.I}. Combining \eqref{fracsob.green:est.I} and \eqref{fracsob.green:ineq1} yields the desired estimate when $x\in\Omega$.
      \end{itemize}  
        This completes the proof.
\end{proof}

\section{Sharp convergence rates}
In this section, we prove convergence rates for Dirichlet problems depending on regularity assumptions of the given data. The main result is \autoref{lem.conv}, which provides $L^2$-convergence rates when the given data are smooth, which are optimal, except for the case $s=\frac12$. Next, using this result together with a mollification argument, we derive an $L^1$-convergence rate when the boundary datum merely belongs to a global H\"older space. This result is given in \autoref{lem.conv.sub} and provides the main ingredient for proving the $C^s$ regularity in the following section, and which allows us to improve the convergence rate given in \autoref{lem.conv} when $s=\frac12$ (see Section \ref{sec6} for more details).

\subsection{Convergence rates in $L^2$}

We now present the main result of this section.
\begin{theorem}\label{lem.conv}
    Let $\alpha\in(0,1]$, $\eps\in(0,1]$, $r_0,M,N>0$, $h\in C^{1}(\Omega)$, $K\in C^\alpha(\mathbb{T}^n)$ and $\partial\Omega\in C^{1,\alpha}$ with $C^{1,\alpha}$ radius $r_0$, $C^{1,\alpha}$ norm bounded by $M$, $\mathrm{diam}(\Omega)\leq N$ and $0\in\Omega$.
    Let $u_\eps$ and $\overline{u}$ be weak solutions to 
    \begin{equation}\label{conv:eq.ueps}
\left\{
\begin{aligned}
L_\eps u_\eps &= h
&\qquad& \text{in }\Omega, \\
u_\eps &= g
&\qquad& \text{in }\mathbb{R}^n\setminus\Omega,
\end{aligned}
\right.
\qquad
\left\{
\begin{aligned}
\overline{L} \overline{u} &= h
&\qquad& \text{in }\Omega, \\
\overline{u} &= g
&\qquad& \text{in }\mathbb{R}^n\setminus\Omega,
\end{aligned}
\right.
\end{equation}
where $\overline{K}\coloneqq\int_{\mathbb{T}^n\times \mathbb{T}^n}K(x,x-z)\,dz\,dx$, $h\in C^{1+\alpha}(\Omega)$ and $g\in C^{1}(B_2(\Omega)\setminus \Omega)\cap L^1_{2s}(\bbR^n\setminus \Omega)$ with $\nabla g\in L^1_{2s}(\bbR^n\setminus \Omega)$, where the set $B_2(\Omega)$ is defined in \eqref{defn.B_2Omega}. 
Then for any $\gamma\in(0,1)$, we have
\begin{equation}\label{conv:main.est}
\begin{aligned}
    \|u_\eps-\overline{u}\|_{L^2(\Omega)}\leq 
        c&\left(\eps^{2s}\mbox{\large$\chi$}_{\{s<\frac12\}}+\eps^{\gamma}\mbox{\large$\chi$}_{\{s=\frac12\}}+\eps^{2-2s}\mbox{\large$\chi$}_{\{s>1/2\}}\right)\\
        &\times\left(\|h\|_{C^{1+\alpha}(\Omega)}+\|g\|_{C^1(B_2(\Omega)\setminus \Omega)}+\| g\|_{L^1_{2s}(\bbR^n\setminus \Omega)}+\|\nabla g\|_{L^1_{2s}(\bbR^n\setminus \Omega)}\right)
\end{aligned}
\end{equation}
for some constant $c=c(n,s,\Lambda,[K]_{C^\alpha(\mathbb{T}^n)},\alpha,r_0,M,N,\gamma)$. 
\end{theorem}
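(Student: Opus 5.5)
We follow the strategy outlined in Subsection \ref{subsec:idea}. Fix $\eps$ small (for $\eps$ bounded below the estimate is trivial by energy and $L^\infty$ bounds), let $\Omega^\eps:=B_{\eps}(\Omega)$, and let $\overline u_\eps$ solve \eqref{conv:eq.overbarueps} with $h$ extended to a $C^{1+\alpha}$ function on $B_2(\Omega)$ with comparable norm. We split
\[
u_\eps-\overline u = w_\eps + \bigl(\eps\phi(\cdot/\eps)\cdot\nabla\overline u_\eps+\eps^{2s}\psi(\cdot/\eps)h\bigr) - b_\eps,
\qquad b_\eps:=\overline u-\overline u_\eps ,
\]
where $\phi$ is the corrector from \autoref{lem.bdd.periodic} with $K_0=\overline K-K$ (truncated to $\eps^{-1}\mathbb T^n$ when $s\le\frac12$) and $\psi$ comes from \autoref{lem.bdd.periodic2}. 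The middle term is handled directly. Since $\phi,\psi$ are bounded and \autoref{lem.reg.bdd2} gives $|\nabla \overline u_\eps|\lesssim d_{\Omega^\eps}^{s-1}$, we have $\|\eps\phi\cdot\nabla\overline u_\eps\|_{L^2(\Omega)}\lesssim \eps(\int_\Omega (d_\Omega+\eps)^{2s-2})^{1/2}$. This is $\lesssim\eps^{s+1/2}$ or better, which is within the claimed rates (when $s<\frac12$ one only has the corrector on the truncated torus and $\eps^{2s}$ dominates; the $s\le \frac12$ bookkeeping is checked separately).

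\textbf{Estimate of $b_\eps$.} The function $b_\eps$ is $\overline L$-harmonic in $\Omega$, and on $\Omega^\eps\setminus\Omega$ it equals $g-\overline u_\eps$, which is $O(\eps^s)$ there by the $C^s$ bound of \autoref{lem.reg.bdd2} (since $\overline u_\eps=g$ on $\partial\Omega^\eps$ and $g$ is Lipschitz). It vanishes outside $\Omega^\eps$. Writing $b_\eps$ via the (explicit, fractional Laplacian) Poisson kernel of $\Omega$, which behaves like $d_\Omega^s(x)d_\Omega^{-s}(y)|x-y|^{-n}$, we integrate over the $\eps$-strip. A Schur-type bound gives $\|b_\eps\|_{L^2(\Omega)}\lesssim \eps^{s}\cdot\eps^{\min\{s,1/2\}}$-type bounds. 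One should refine by using that $g-\overline u_\eps$ is actually $O(\eps^{s}\, d_{\Omega^\eps}^{s}\cdots)$, so that it matches $\eps^{2s}$ for $s<\frac12$ and is $\le \eps^{2-2s}$ for $s>\frac12$. I expect this to need the precise boundary expansion $\overline u_\eps-g\approx c\,d_{\Omega^\eps}^s$ near $\partial\Omega^\eps$.

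\textbf{Estimate of $w_\eps$ (main step).} The function $w_\eps$ vanishes outside $\Omega^\eps$ up to the corrector terms, which are of size $O(\eps\,\eps^{s-1})$ on the strip and are absorbed with the Poisson kernel bound \eqref{green:pois}. Inside $\Omega$ we compute $L_\eps w_\eps$ weakly. Using the corrector equations, the terms of order $\eps^{0}$ cancel: the periodic mean $\overline K$ reproduces $h$, the oscillating part $\overline K(x/\eps)-\overline K$ is cancelled by $\psi$, and the first-order odd part is cancelled by $\phi$. What remains is a \emph{fractional divergence} remainder: a double integral $\iint F_\eps(x,y)(\varphi(x)-\varphi(y))|x-y|^{-n-2s}$ with $|F_\eps(x,y)|\lesssim \min\{|x-y|^2|\nabla^2\overline u_\eps|,\ |x-y|\,|\nabla\overline u_\eps|\}$ plus $\eps$ times differences of $\nabla\overline u_\eps$, together with far-range terms. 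We then represent $w_\eps(x)=\iint F_\eps(z,z')(G_\eps(x,z)-G_\eps(x,z'))\cdots$, split $|z-z'|<\eps$ and $\ge\eps$, and use \autoref{lem.fracsob.green} for the short-range part and \eqref{green:uppbdd} for the long range. The interior regularity from \autoref{lem.reg.bdd2} enters as $|\nabla^2\overline u_\eps|\lesssim (d_\Omega+\eps)^{s-2}$. Each term yields, after integrating the weights $d^{\beta}/|x-y|^{n-2s+\beta}$ (with $\beta<s$ close to $s$), the rate $\eps^{2-2s}$ for $s>\frac12$, $\eps^{2s}$ for $s<\frac12$, and $\eps^{\gamma}$ at $s=\frac12$, where the loss of the Green exponent ($\beta<s$) is absorbed. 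This last bookkeeping, controlling the boundary-layer integrals with only almost-sharp Green bounds so that the singular weights $d^{s-2}$ stay integrable, is where I expect the main difficulty. The plan there is to always trade a power $d^{\beta}$ from the Green function against the singularity of $\nabla^k\overline u_\eps$ and to cut at distance $\eps$ from $\partial\Omega$.
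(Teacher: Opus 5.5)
Your overall architecture (enlarged domain, correctors $\phi$ and $\psi$, boundary layer $b_\eps$ via the Poisson kernel, Green representation with almost sharp bounds, \autoref{lem.fracsob.green} for short-range differences) is the same as the paper's. The main step, however, has a genuine gap: nothing in your argument produces a power of $\eps$ in the long-range part of the second-order remainder.

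After the corrector equations have been used, what remains (the leading part of $F$ in \eqref{conv:defn.F}) is
\[
\int_{\mathbb R^n}\frac{\overline u_\eps(x)-\overline u_\eps(x-z)-z\cdot\nabla\overline u_\eps(x)}{|z|^{n+2s}}\Bigl(\overline K\bigl(\tfrac x\eps\bigr)-K\bigl(\tfrac x\eps,\tfrac{x-z}\eps\bigr)\Bigr)\,dz,
\]
with the linear term truncated to $\mathbb T^n$ when $s\le\frac12$. Suppose you use only your bound $\min\{|z|^2|\nabla^2\overline u_\eps|,\,|z|\,|\nabla\overline u_\eps|\}$ together with a merely bounded coefficient. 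Then the range $\eps\le|z|\lesssim d_\eps(x)$ contributes $\int_{\eps<|z|<d_\eps(x)}|z|^{2-n-2s}d_\eps^{s-2}(x)\,dz\sim d_\eps^{-s}(x)$, with no power of $\eps$. Its Green potential is therefore $O(1)$ in $L^2(\Omega)$, so splitting at $|z-z'|=\eps$ and using \eqref{green:uppbdd} for $|z-z'|\ge\eps$ cannot give any rate.

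The missing idea is that $z\mapsto K(x/\eps,(x-z)/\eps)-\overline K(x/\eps)$ has mean zero on every cell $\eps\mathbb T^n(p)$. This must be played against the smoothness in $z$ of the Taylor remainder $H(x,z)$:
\begin{itemize}
\item For $|z|\gtrsim d_\eps(x)$, compare $H(x,z)$ with its value at the cell centre; this gains a factor $\eps|\partial_zH|$.
\item For $\eps\ll|z|\ll d_\eps(x)$, write $K-\overline K=\eps^2(-\Delta_z)W_x(\cdot/\eps)$ with a bounded periodic $W_x$ and integrate by parts (twice if $s\ge\frac12$, once if $s<\frac12$). This gives $\eps^2\int_{|z|>\eps}|z|^{-n-2s}d_\eps^{s-2}\,dz\sim\eps^{2-2s}d_\eps^{s-2}$.
\end{itemize}
This is exactly where the rate $\eps^{2-2s}$ comes from. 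It yields the pointwise bound of \autoref{lem.est.F}, namely $|F|\lesssim\eps^{2-2s}d_\eps^{s-2}$ (respectively $\eps d_\eps^{-1-s}+\eps^{2s}$), which is then integrated against $G_\eps$ using \autoref{lem.singdist}.

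There are also some smaller points.
\begin{itemize}
\item The refinement you anticipate for $b_\eps$ is not needed. The bound $|g-\overline u_\eps|\lesssim\eps^s$ on the strip, combined with the Poisson kernel bound and \eqref{singdist:2}, already gives $\eps^{s+\frac12-\delta}$ for $s<\frac12$ and at least $\eps^{\min\{s,1-\delta\}}$ for $s>\frac12$, both within the claimed rates.
\item $B_\eps(\Omega)$ is in general not uniformly $C^{1,\alpha}$ when $\alpha<1$: points with two nearest boundary points create corners. So \autoref{lem.reg.bdd2} does not apply to it, which is why the paper uses a level set of a regularized distance instead.
\item On $\Omega^\eps\setminus\Omega$, the exterior value of $\eps\phi(\cdot/\eps)\cdot\nabla\overline u_\eps$ is only $\lesssim\eps\, d_\eps^{s-1}$. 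This is singular at $\partial\Omega^\eps$ rather than $O(\eps^s)$; it is integrable against the Poisson kernel but has to be treated as a singular weight.
\item A cutoff $\zeta$ in the corrector term is convenient, since $g$ is only assumed $C^1$ on $B_2(\Omega)\setminus\Omega$.
\end{itemize}
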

Before giving the proof, we briefly explain the main idea; see also Subsection \ref{subsec:idea}. From \cite[Section 2]{PZ17}, we deduce the following formal representation of the solution $u_\eps$:
\begin{align*}
    u_\eps(x)\coloneqq \overline{u}(x)+\eps \widetilde{\phi}(x/\eps)\cdot \nabla \overline{u}(x)+\cdots,
\end{align*}
where $\widetilde{\phi}=(\widetilde{\phi}_i)$ is a periodic solution to  $L_1 \widetilde{\phi}_i=-L_1(z_i)$ for each $i\in\{1,2,\ldots,n\}$. As in the local problem (see for instance \cite{AK24}), it would be natural to estimate $L^2$-norm of the function $ \widetilde{w}_\eps\coloneqq u_\eps(x)-(\overline{u}(x)+\eps \widetilde{\phi}(x/\eps)\cdot \nabla \overline{u}(x))$ to get the desired convergence rate. However, in nonlocal problems, the above representation presents several difficulties if we use it directly for Dirichlet problems.

\begin{itemize}
\item Although the given data are smooth, in general it holds $\nabla \overline{u}(x)\notin H^s(\Omega)$, which prevents us from applying any nonlocal operator of order $s$ to $\widetilde{w}_\eps$ in the weak sense. Instead of $\overline{u}$ itself, we consider the solution $\overline{u}_\eps$ to 
      \begin{equation*}
\left\{
\begin{alignedat}{3}
\overline{L}\overline{u}_\eps&=h&&\qquad \mbox{in  $\Omega^{\eps}$}, \\
\overline{u}_\eps&=g&&\qquad  \mbox{in $\bbR^n\setminus \Omega^{\eps}$},
\end{alignedat} \right.
\end{equation*}
where $\Omega^\eps$ is an $\eps$-tubular neighborhood of $\Omega$ (see \eqref{defn.omegaeps} below for the precise definition of $\Omega^\eps$). Thus, we have $\nabla\overline{u}_\eps(x)\in H^s(\Omega)\cap L^1_{2s}(\bbR^n)$ and we can apply any nonlocal operator of order $2s$ to $\nabla\overline{u}_\eps$ in the weak sense.
\item As $L_1(z_i)$ is not well defined when $s\leq\frac12$, we consider the first order corrector $\phi=(\phi_i)_i$ by 
\begin{equation*}
\begin{aligned}
    L_1\phi_i(x)&=\begin{cases}-\int_{\bbR^n}\frac{z_i}{|z|^{n+2s}}K(x,x-z)\,dz\quad\text{if }s>\frac12,\\ 
    -\int_{\eps^{-1}\mathbb{T}^n}\frac{z_i}{|z|^{n+2s}}K(x,x-z)\,dz\quad\text{if }s\leq \frac12.
    \end{cases}
\end{aligned}
\end{equation*}
\item Due to the long-range interaction induced by the nonlocal operator, we need to introduce an additional corrector function $\psi$, defined by 
\begin{equation*}
    L_1\psi(x)=-(\overline{K}(x)-\overline{K})\overline{K}^{-1},
\end{equation*}
where $\overline{K}(x)\coloneqq \dashint_{\mathbb{T}^n}K(x,x-z)\,dz$. Such a corrector has already been used in \cite{CCKW24}.
\end{itemize}
Based on the above discussion, we estimate $w_\eps\coloneqq u_\eps-(\overline{u}_\eps+\eps \phi(\cdot/\eps)\cdot\nabla(\overline{u}_\eps \zeta)+\eps^{2s}\psi(\cdot/\eps)h)$ instead of $\widetilde{w}_\eps(x)$, where $\zeta\in C_c^\infty(B_2(\Omega))$ is a cut-off function with $\zeta\equiv 1$ in $B_1(\Omega)$. We introduce $\zeta$ to get the estimate \eqref{conv:main.est} despite assuming only $g\in C^1(B_2( \Omega)\setminus\Omega)$, but not $ g\in C^1(\bbR^n\setminus \Omega)$. 

In light of the Poisson kernel estimates in \autoref{lem.green} and the regularity of $\overline{u}_\eps,\phi,\psi,h$, we can prove the convergence rates of $\overline{u}-\overline{u}_\eps$ and the remaining terms $\eps \phi(\cdot/\eps)\cdot\nabla(\overline{u}_\eps\zeta)+\eps^{2s}\psi(\cdot/\eps)h$. Thus, it remains to obtain the error estimate for $w_\eps$. To this end,  we observe that $w_\eps$ is the sum of two weak solutions $\theta_\eps$ and $z_\eps$ of 
\begin{equation*}
\left\{
\begin{aligned}
L_\eps \theta_\eps &= 0
&\qquad& \text{in }\Omega, \\
\theta_\eps &= w_\eps
&\qquad& \text{in }\mathbb{R}^n\setminus\Omega,
\end{aligned}
\right.
\qquad
\left\{
\begin{aligned}
L_\eps z_\eps &= F+(-\Delta)^{s}\widetilde{F}
&\qquad& \text{in }\Omega, \\
z_\eps &= 0
&\qquad& \text{in }\mathbb{R}^n\setminus\Omega,
\end{aligned}
\right.
\end{equation*}
for some $F\in L^\infty(\Omega)$ and $\widetilde{F}\in H^s(\bbR^n)$. Using the Poisson kernel estimates in \autoref{lem.green}, we get the error estimate of $\theta_\eps$ in $L^2$. In addition, we obtain pointwise estimates of $F$ and $(-\Delta)^s\widetilde{F}$ in a weak sense, and then apply \autoref{lem.green} and \autoref{lem.fracsob.green} to obtain $L^2$-estimates of $z_\eps$. Thus, combining all estimates yields the desired estimate.

Next, we give the definition of the $\eps$-tubular neighborhood of $C^{1,\alpha}$-domain $\Omega$. To this end, first we recall that there is a regularized distance function $\sigma_\Omega$ given by \autoref{lem.exi.reg.dist}. In addition, for $C^{1,\alpha}$ radius $r_0>0$, $C^{1,\alpha}$ norm bounded by $M>0$ and $\mathrm{diam}(\Omega)\leq N$, there is a constant $C_0=C_0(n,\alpha,r_0,M,N)\geq1$ such that 
    \begin{align}\label{defn.C_0}
       \|\nabla\sigma_\Omega\|_{L^{\infty}(\bbR^n)}\leq \frac{C_0}{16\sqrt{n}}\quad\text{and}\quad \frac{1}{C_0}d_\Omega(x)\leq\sigma_\Omega(x)\leq  C_0d_\Omega(x).
    \end{align}
With this constant $C_0$, we now define $\Omega^\eps$ and $\Omega_\eps$ by 
    \begin{align}\label{defn.omegaeps}
        \Omega^\eps\coloneqq \Omega\cup \{x\in\bbR^n\,:\, \sigma_\Omega(x)\leq C_0\eps\}\quad\text{and}\quad \Omega_\eps\coloneqq \{x\in\Omega\,:\, \sigma_\Omega(x)\geq C_0\eps\}.
    \end{align}

Now we give the proof of \autoref{lem.conv}. Note that some of the auxiliary estimates used in the proof are established separately; see \autoref{lem.rep.f_eps}, \autoref{lem.est.F} and \autoref{lem.est.fracdiv}

\begin{proof}
Throughout the proof, for the sake of convenience in notation, whenever no dependence is specified,  the constant $c$ always depends on the given data $n,s,\Lambda,[K]_{C^\alpha(\mathbb{T}^n)},\alpha,r_0,M,N,\gamma$. In addition, $c=c(\delta)$ means that the constant $c$ depends on the given data and $\delta$.

\textbf{Step 1. Approximate the domain}.  

We recall the $\eps$-tubular neighborhood $\Omega^\eps$ defined by \eqref{defn.omegaeps} for any $\eps\in(0,1]$. First, by \autoref{lem.c1alphadom} and \autoref{lem.dist.tailint}, there is a small constant $\eps_0=\eps_0(n,r_0,M,\alpha)$ such that for any $\eps\leq \eps_0$, $\Omega^\eps$ has $C^{1,\alpha}$ radius  $r_1$ and $C^{1,\alpha}$ norm bounded by $M_1$, where $r_1$ and $M_1$ depend only on $n,\alpha,r_0,M,N$. In addition, \autoref{lem.dist.tailint} is applicable. Since $\eps_0$ depends only on the given data, we may assume 
\begin{align}\label{conv:eps0.Omega}
    \eps\leq \eps_0\quad\text{and}\quad \Omega^{2\eps}\subset B_1(\Omega).
\end{align}
By the normalization, we assume 
\begin{align*}
    \|h\|_{C^{1+\alpha}(\Omega)}+\|g\|_{C^1(B_2(\Omega)\setminus \Omega)}+\|\nabla g\|_{L^1_{2s}(\bbR^n\setminus \Omega)}+\| g\|_{L^1_{2s}(\bbR^n\setminus \Omega)}\leq 1,
\end{align*}
and by properly extending the function $h$, we also assume $h\in C_c^{1+\alpha}(B_2(\Omega))$ with $h\equiv 0$ in $\bbR^n\setminus B_1(\Omega)$,
\begin{align}\label{conv:est.h}
    \| h\|_{C^{1+\alpha}(\bbR^n)}\leq c.
\end{align}
Next, let $\overline{u}_\eps$ be the weak solution to
 \begin{equation}\label{conv:eq.overueps}
\left\{
\begin{alignedat}{3}
\overline{L}\overline{u}_\eps&=h&&\qquad \mbox{in  $\Omega^{\eps}$}, \\
\overline{u}_\eps&=g&&\qquad  \mbox{in $\bbR^n\setminus \Omega^{\eps}$}.
\end{alignedat} \right.
\end{equation}
By \autoref{lem.reg.bdd2}, for $x,y\in \Omega^\eps$ and $\mu\in(0,\min\{2s+\frac{\alpha}2,1\}]$, we have 
\begin{equation}\label{conv:est.ubareps}
\begin{aligned}
&\|\overline{u}_\eps\|_{C^{s}(B_2(\Omega))}\leq c,\quad |\nabla\overline{u}_\eps(x)|\leq cd_\eps^{s-1}(x),\quad
\frac{|\nabla\overline{u}_\eps(x)-\nabla\overline{u}_\eps(y)|}{|x-y|^\mu}\leq c\left(d_\eps^{s-(1+\mu)}(x)+d_\eps^{s-(1+\mu)}(y)\right),\\
&\frac{|\nabla^2\overline{u}_\eps(x)-\nabla^2\overline{u}_\eps(y)|}{|x-y|^\alpha}\leq c\left(d_\eps^{s-(2+\alpha)}(x)+d_\eps^{s-(2+\alpha)}(y)\right)\quad\text{if }s=\frac12,
\end{aligned}
\end{equation}
where we write $ d_\eps(x)\coloneqq \mathrm{dist}(x,\partial\Omega^\eps)$. 
Using the pointwise gradient estimate in \eqref{conv:est.ubareps} together with the fact that $d_\eps(x)\geq \eps$ for any $x\in\Omega$ by \autoref{lem.dist.app}, we deduce
\begin{equation}\label{conv:est.ubareps.L1}
\begin{aligned}
  \|\nabla\overline{u}_\eps\|_{L^2(\Omega)}\leq c\left(1+\left(\int_{1\geq d_\eps(x)\geq \eps}d_\eps^{2s-2}(x)\,dx\right)^{\frac12}\right) \leq c\left(1+\eps^{\frac{2s-1}2}\mbox{\large$\chi$}_{\{s< \frac12\}}+|\ln \eps|^{\frac12}\mbox{\large$\chi$}_{\{s=\frac12\}}\right).
 \end{aligned}
 \end{equation}

Next from \autoref{lem.bdd.periodic} and \autoref{lem.bdd.periodic2}, there are two periodic solutions $\phi_i$ and $\psi$ by 
\begin{equation}\label{conv:eq.phipsi}
\begin{aligned}
    L_1\phi_i(x)&=\begin{cases}-\int_{\bbR^n}\frac{z_i}{|z|^{n+2s}}(K(x,x-z)-\overline{K})\,dz\quad\text{if }s>\frac12,\\ 
    -\int_{\eps^{-1}\mathbb{T}^n}\frac{z_i}{|z|^{n+2s}}(K(x,x-z)-\overline{K})\,dz\quad\text{if }s\leq \frac12,
    \end{cases}
    \\L_1\psi(x)&=-\left(\overline{K}\left({x}\right)-\overline{K}\right)\overline{K}^{-1},
\end{aligned}
\end{equation}
in the sense of \eqref{bdd.periodic:weak.for} and \eqref{bdd.periodic2:weak.for}, respectively, where $\overline{K}(x)\coloneqq\int_{\mathbb{T}^n}K(x,x-z)\,dz$. 
In addition, we have 
\begin{equation}\label{conv:est.phiipsi}
\begin{aligned}
   \|\phi_i\|_{C^{s}(\mathbb{R}^n)}\leq c\quad\text{and}\quad\|\psi\|_{C^{s}(\mathbb{R}^n)}\leq c.
\end{aligned}
\end{equation}
Now we define a function $w_\eps$ by 
\begin{equation*}
    w_\eps\coloneqq u_{\eps}-\overline{u}_\eps-\eps\phi(\cdot/\eps)\cdot \nabla (\overline{u}_\eps\zeta)-\eps^{2s}\psi(\cdot/\eps)h,
\end{equation*}
where $\zeta\in C_c^\infty(B_2(\Omega))$ with
\begin{align}\label{conv:defn.zeta}
    \zeta \equiv 1\quad \text{in } B_1(\Omega)\quad\text{and}\quad \zeta\equiv 0\quad\text{in }\bbR^n\setminus B_{\frac32}(\Omega).
\end{align} 
Our goal is to prove the following estimate 
\begin{align}\label{conv:main.est.weps}
    \|w_\eps-(u_\eps-\overline{u})\|_{L^2(\Omega)}+ \|w_\eps\|_{L^2(\Omega)}\leq c\eps^{s_0},
\end{align}
where 
\begin{align}\label{conv:defn.s_0}
    s_0\coloneqq \begin{cases}
        \min\{2s,2-2s\}&\quad\text{if }s\neq\frac12,\\
        \gamma &\quad\text{if }s=\frac12.
    \end{cases}
\end{align}
\textbf{Step 2. $L^2$-estimate of $w_\eps-(u_\eps-\overline{u})$.} First, we observe 
\begin{align*}
     \|w_\eps-(u_\eps-\overline{u})\|_{L^2(\Omega)}\leq \|\overline{u}_\eps-\overline{u}\|_{L^2(\Omega)}+\eps\| \phi(\cdot/\eps)\nabla \overline{u}_\eps\|_{L^2(\Omega)}+\eps^{2s}\|\psi(\cdot/\eps)h\|_{L^2(\Omega)}\eqqcolon \sum_{i=1}^3I_i.
\end{align*}
For $I_1$, we see $\overline{L}(\overline{u}_\eps-\overline{u})=0$ in $\Omega$, $\overline{u}_\eps-\overline{u}=g-\overline{u}$ in $\Omega^\eps\setminus\Omega$ and $\overline{u}_\eps-\overline{u}\equiv 0$ in $\bbR^n\setminus \Omega^\eps$. Thus, using the Poisson formula in \autoref{lem.green}, we get for any $\beta\in(0,s)$,
\begin{align*}
    I_1\leq  c \left(\int_{\Omega}\left(\int_{\Omega^\eps\setminus \Omega}\frac{|(\overline{u}_\eps-{g})(y)|d_\Omega^{\beta}(x)}{d_\Omega^{2s-\beta}(y)|x-y|^n}\,dy\right)^2\,dx\right)^{\frac12}\leq c\eps^{s}\left(\int_{\Omega}\left(\int_{\Omega^\eps\setminus \Omega}\frac{d_\Omega^{\beta}(x)}{d_\Omega^{2s-\beta}(y)|x-y|^n}\,dy\right)^2\,dx\right)^{\frac12},
\end{align*}
where $c=c(\beta)$ and we have also used
\begin{align*}
    |(\overline{u}_\eps-{g})(y)|\leq |\overline{u}_\eps(y)-g(y_0)|+|g(y_0)-{g}(y)|\leq c|y-y_0|^{s}\leq c\eps^{s}
\end{align*}
for some $y_0\in \partial\Omega^\eps$, by \eqref{conv:est.ubareps}.
Next, we observe for any $y\in\Omega^\eps\setminus \Omega$ and $x\in\Omega$, $d_\Omega(y)\leq |y-x|$, which gives for any $\lambda>0$,
\begin{align*}
    I_1\leq c\eps^{s}\left(\int_{\Omega}\left(\int_{\Omega^\eps\setminus \Omega}\frac{d_\Omega^{\beta}(x)}{d_\Omega^{2s-\beta+\lambda}(y)|x-y|^{n-\lambda}}\,dy\right)^2\,dx\right)^{\frac12}.
\end{align*}
Here, we introduce a parameter $\lambda$ to apply \autoref{lem.singdist}. Indeed, for small $\delta>0$,  by choosing $\beta=s-\frac\delta2$ and $\lambda=(\frac12-s)_++\delta$, we apply the second claim in \autoref{lem.singdist} for $s<\frac23$ and the first claim in \autoref{lem.singdist} for $s\geq\frac23$ to see that
\begin{equation*}
\begin{aligned}
    I_1
    &\leq \begin{cases}c\eps^{s+1-2s+\beta-\lambda}\left(\int_\Omega 1+d_\Omega^{2\beta+2\lambda-2}(x)\,dx\right)^{\frac12}\leq c\eps^{s_0}&\,\text{if }s<\frac23,\\
    c\eps^{s}\left(\int_\Omega 1+d_\Omega^{-4s+4\beta}(x)\,dx\right)^{\frac12}\leq c\eps^{s_0}&\,\text{if }s\geq\frac23.
    \end{cases}
\end{aligned}
\end{equation*}
Next, using \eqref{conv:est.ubareps.L1} and \eqref{conv:est.phiipsi} together with the fact that $|\ln\eps|\leq c(\gamma)\eps^{-1+\gamma}$ for any $\gamma\in(0,1)$,  we have 
\begin{align*}
    I_2+I_3\leq c(\eps\|\nabla \overline{u}_\eps\|_{L^2(\Omega)}+\eps^{2s})\leq c\eps^{s_0}.
\end{align*}
Thus, combining the estimates of $I_i$ for each $i$, we get
\begin{align}\label{conv:weps-error}
    \|w_\eps-(u_\eps-\overline{u})\|_{L^2(\Omega)}\leq c\eps^{s_0}.
\end{align}
\textbf{Step 3. $L^2$-estimate of $w_\eps$.} To do so, we first observe 
\begin{align}\label{conv:defn.f_eps}
    L_\eps w_\eps=L_{\eps}u_\eps-\overline{L}\overline{u}_\eps+(\overline{L}-L_\eps)\overline{u}_\eps-\eps L_\eps(\phi(\cdot/\eps)\cdot\nabla(\overline{u}_\eps\zeta))-\eps^{2s} L_\eps(\psi(\cdot/\eps)h) =: f_{\eps}
\end{align}
in the weak sense. As $w_\eps\in H^{s}(\Omega^{\frac\eps2})\cap L^1_{2s}(\bbR^n)$ from  \eqref{conv:est.ubareps} and the fact that $\phi,\psi\in C^{s+\delta}(\bbR^n)$ for some small $\delta$ by \autoref{lem.bdd.periodic} and \autoref{lem.bdd.periodic2}, we get that $f_\eps\in H^{-s}(\Omega)$. Next we consider two weak solutions $\theta_\eps$ and $z_\eps$ to
\begin{equation}\label{conv:eq.zeps,thetaeps}
\left\{
\begin{aligned}
L_\eps \theta_\eps &= 0
&\qquad& \text{in }\Omega, \\
\theta_\eps &= w_\eps
&\qquad& \text{in }\mathbb{R}^n\setminus\Omega,
\end{aligned}
\right.
\qquad
\left\{
\begin{aligned}
L_\eps z_\eps &= f_\eps
&\qquad& \text{in }\Omega, \\
z_\eps &= 0
&\qquad& \text{in }\mathbb{R}^n\setminus\Omega.
\end{aligned}
\right.
\end{equation}
The existence of the solution $\theta_\eps$ follows from the standard variational argument and the solution $z_\eps$ is obtained by taking $z_\eps=w_\eps-\theta_\eps$. Our aim is to estimate the $L^2$-norms of $\theta_\eps$ and $z_\eps$.

\textbf{Step 3-1. $L^2$-estimate of $\theta_\eps$.}
For $\theta_\eps$, we use the Poisson formula in \autoref{lem.green} together with the fact that $(u_\eps-\overline{u}_\eps)\equiv 0$ in $\bbR^n\setminus \Omega^{\eps}$, $h\equiv0$ in $\bbR^n\setminus B_2(\Omega)$ and \eqref{conv:est.phiipsi} to get that for any $\beta\in(0,s)$,
\begin{equation}\label{conv:thetaeps}
\begin{aligned}
    \|\theta_\eps\|^2_{L^2(\Omega)}&\leq c \left[\int_{\Omega}\left(\int_{\Omega^\eps\setminus \Omega}\frac{|(g-\overline{u}_\eps)(y)|d_\Omega^{\beta}(x)}{d_\Omega^{2s-\beta}(y)|x-y|^n}\,dy\right)^2\,dx+\eps^{4s}\int_{\Omega}\left(\int_{B_2(\Omega)\setminus \Omega}\frac{|h(y)|d_\Omega^{\beta}(x)}{d_\Omega^{2s-\beta}(y)|x-y|^n}\,dy\right)^2\,dx\right]\\
    &\quad+c\eps^2\int_{\Omega}\left(\int_{\bbR^n\setminus \Omega}\frac{|\nabla(\overline{u}_\eps\zeta)(y)|d_\Omega^{\beta}(x)}{d_\Omega^{2s-\beta}(y)(1+d_\Omega(y))^\beta|x-y|^n}\,dy\right)^2\,dx\eqqcolon \sum_{i=1}^3J_i.
\end{aligned}
\end{equation}
By choosing $\beta$ sufficiently close to $s$ and using the fact that $h\equiv 0$ in $\bbR^n\setminus B_2(\Omega)$, as in the estimate of $I_1$, we deduce $J_1+J_2\leq c\eps^{2s_0}$.
Next, using the second estimate in \eqref{conv:est.ubareps}, we obtain
\begin{equation}\label{conv:est.J_3}
\begin{aligned}
    J_3&\leq c\eps^2\left[\eps^{-2+2s}\int_{\Omega}\left(\int_{\Omega^{\frac\eps2}\setminus \Omega}\frac{d_\Omega^{\beta}(x)}{d_\Omega^{2s-\beta}(y)|x-y|^n}\,dy\right)^2\,dx+\int_{\Omega}\left(\int_{\Omega^\eps\setminus \Omega^{\frac\eps2}}\frac{d_\eps^{s-1}(y)d_\Omega^{\beta}(x)}{d_\Omega^{2s-\beta}(y)|x-y|^n}\,dy\right)^2\,dx\right]\\
    &\quad+c\eps^2\int_{\Omega}\left(\int_{\bbR^n\setminus \Omega^\eps}\frac{|\nabla (g\zeta)(y)|d_\Omega^{\beta}(x)}{d_\Omega^{2s-\beta}(y)(1+d_\Omega(y))^\beta|x-y|^n}\,dy\right)^2\,dx \eqqcolon J_{3,1}+J_{3,2}+J_{3,3},
\end{aligned}
\end{equation}
where we have also used the fact that $d_\eps(y)\geq \eps$ in $y\in \Omega^{\frac\eps2}\setminus \Omega$ by \autoref{lem.dist.app}. As in the estimate of $I_1$, by taking $\beta$ close to $s$, we have $J_{3,1}\leq c\eps^{2s_0}$.  
Next, with the choice of $\beta=s-\frac\delta2$  for small $\delta$ and the fact that $d_\eps(y) \leq c\eps\leq cd_\Omega(y)\leq c|x-y|$ and $d_\eps(x)\geq \eps$ for any $x\in \Omega$, $y\in \Omega^{\eps}\setminus\Omega^{\frac\eps2}$ by \autoref{lem.dist.app}, we employ the first claim in \autoref{lem.singdist} to see that
\begin{align*}
    J_{3,2}\leq c\eps^{2-3\delta}\int_\Omega\left(\int_{\Omega^{\eps}\setminus \Omega^{\frac\eps2}}\frac{d_\eps^{-1+\frac\delta2}(y)d_\Omega^{\beta}(x)}{|x-y|^{n-\frac\delta2}}\,dy\right)^2\,dx\leq c\eps^{2-3\delta}\int_\Omega d_\eps^{-2+2\delta}(x)d_\Omega^{2\beta}(x)\,dx\leq c\eps^{2s_0}.
\end{align*}
For $J_{3,3}$, we recall the constant $\eps_0$ and the fact that $\Omega^{\eps_0}\subset B_2(\Omega)$ by \eqref{conv:eps0.Omega} to see that
\begin{align}\label{conv:est.J23}
   J_{3,3}\leq c\eps^2\left(\int_{B_2(\Omega)\cap\{ |y|>1/c\}}\frac{\| g\|_{W^{1,\infty}(B_2(\Omega)\setminus \Omega)}\,dy}{|y|^{n+2s}}+\int_{\Omega^{\eps_0}\setminus \Omega}\frac{\| g\|_{W^{1,\infty}(B_2(\Omega)\setminus \Omega)}d_\Omega^{\beta}(x)}{d_\Omega^{2s-\beta}(y)|x-y|^{n}}\,dy\right),
\end{align}
where we have used the fact that $1+d_\Omega(y),|x-y|\geq 1/c$ for any $x\in\Omega$ and $y\in \bbR^n\setminus \Omega^{\eps_0}$. As in the estimate of $I_1$ with $\eps=\eps_0$, taking $\beta$ close to $s$, we have $J_{3,3}\leq c\eps^2$.
Thus, combining the estimates $J_{3,1},J_{3,2},J_{3,3}$, we have
\begin{align*}
    J_2\leq c\eps^{2s_0}.
\end{align*}
Thus, we collect estimates $J_1,J_2,J_3$ and conclude that 
\begin{align}\label{conv.theta_eps}
    \|\theta_\eps\|_{L^2(\Omega)}\leq c\eps^{s_0}.
\end{align}
\textbf{Step 3-2. $L^2$-estimate of $z_\eps$.}
For $z_\eps$, we observe from \autoref{lem.rep.f_eps} that for any $\varphi\in H^{s}(\bbR^n)$ with $\varphi\equiv 0$ on $\bbR^n\setminus \Omega$,
\begin{align*}
    \langle f_\eps,\varphi\rangle=\langle F,\varphi\rangle+\langle(-\Delta)^s\widetilde{F},\varphi\rangle,
\end{align*}
where $F$ is given by \eqref{conv:defn.F} and $\widetilde{F}$ is given by \eqref{conv:rep.tildeF}. Before applying \autoref{lem.est.F}, we observe from \eqref{conv:est.ubareps} and $\|\nabla g\|_{L^1_{2s}(\bbR^n\setminus\Omega)}\leq 1$ that 
\begin{align*}
    \|\nabla\overline{u}_\eps\|_{L_{2s}^1(\bbR^n)}\leq c.
\end{align*}
Thus, using this and \eqref{conv:est.ubareps}, we apply \autoref{lem.est.F} to see that 
\begin{align*}
    |F(x)|\leq \begin{cases}
       c(\eps d_\eps^{-1-\eps}(x)+\eps^{2s})&\quad\text{if }s<\frac12,\\
        c\eps^{2-2s} d_\eps^{-2+s}(x)+\eps(1+|\ln\eps|)\mbox{\large$\chi$}_{\{s=\frac12\}}&\quad\text{if }s\geq\frac12.
    \end{cases} 
\end{align*}
Let $z_{1,\eps}$ be the solution to
\begin{equation*}
\left\{
\begin{alignedat}{3}
L_\eps z_{1,\eps}&=F&&\qquad \mbox{in  $\Omega$}, \\
z_{1,\eps}&=0&&\qquad  \mbox{in $\bbR^n\setminus \Omega$}.
\end{alignedat} \right.
\end{equation*}
Then, using the pointwise estimate of $F$, \eqref{green:uppbdd} and the first claim in \autoref{lem.singdist} and taking $\beta$ close to $s$ together with the fact that $d_\eps(x)\geq \eps$, we derive
\begin{align}\label{conv:est.z_{1,eps}}
    \|z_{1,\eps}\|^2_{L^2(\Omega)}\leq \left(\int_{\Omega}\int_\Omega G_{\eps}(x,y)F(y)\,dy)\right)^2\,dx \leq c\int_{\Omega}\left(\int_{\Omega}\frac{d^\beta_\Omega(x)d_\Omega^\beta(y)|F(y)|}{|x-y|^{n-2s+2\beta}}\right)^2\,dx\leq c\eps^{2s_0}.
\end{align}
Furthermore, we see
\begin{equation}\label{conv:eq.z1}
\left\{
\begin{alignedat}{3}
L_\eps (z_{\eps}-z_{1,\eps})&=(-\Delta)^{s}\widetilde{F}&&\qquad \mbox{in  $\Omega$}, \\
(z_{\eps}-z_{1,\eps})&=0&&\qquad  \mbox{in $\bbR^n\setminus \Omega$}
\end{alignedat} \right.
\end{equation}
and $\overline{u}_\eps\in C^{1+2s+\sigma/2}_{\mathrm{loc}}(\Omega^\eps)$ for some small $\sigma$ by \autoref{lem.reg.bdd2} together with the fact that $h\in C^{1+\alpha}(B_2(\Omega))$.
 By \autoref{lem.est.fracdiv}, we have  $ \|z_\eps-z_{1,\eps}\|_{L^2(\Omega)}\leq c\eps^{s_0}$. Thus, we obtain $\|z_\eps\|_{L^2(\Omega)}\leq c\eps^{s_0}$.
Combining all estimates $\theta_\eps,z_\eps$ and \eqref{conv:weps-error} yields \eqref{conv:main.est.weps}, which implies the desired result. 
\end{proof}

We now give three lemmas that were used in the proof of \autoref{lem.conv} to estimate the terms given in the representation of $f_\eps$; see  \eqref{conv:defn.f_eps}. 

\begin{lemma}\label{lem.rep.f_eps}
Let $h$, $u_\eps$, $\overline{u}_\eps$, $\phi$, $\psi$, $\zeta$ be given in \eqref{conv:est.h}, \eqref{conv:eq.ueps}, \eqref{conv:eq.overueps}, \eqref{conv:eq.phipsi} and \eqref{conv:defn.zeta}, respectively. 
Let $f_\eps$ be given by \eqref{conv:defn.f_eps}. Then we have for any $\varphi\in H^s(\bbR^n)$ with $\varphi\equiv 0$ in $\bbR^n\setminus \Omega$,
\begin{align*}
    \langle f_\eps,\varphi\rangle=\langle F,\varphi\rangle+\langle(-\Delta)^s\widetilde{F},\varphi\rangle,
\end{align*}
where \begin{equation}\label{conv:defn.F}
\begin{aligned}
    F(x)&=\int_{\mathbb{T}^n}\frac{(\overline{u}_\eps(x)-\overline{u}_\eps(x-z)-z\cdot\nabla\overline{u}_\eps(x))}{|z|^{n+2s}}\left(\overline{K}\left(\frac{x}\eps\right)-K\left(\frac{x}\eps,\frac{x-z}\eps\right)\right)\,dz\\
    &\quad+\int_{\bbR^n\setminus \mathbb{T}^n}\frac{(\overline{u}_\eps(x)-\overline{u}_\eps(x-z)-z\mbox{\large$\chi$}_{\{s>1/2\}}\cdot\nabla\overline{u}_\eps(x))}{|z|^{n+2s}}\left(\overline{K}\left(\frac{x}\eps\right)-K\left(\frac{x}\eps,\frac{x-z}\eps\right)\right)\,dz\\
    &\quad+\eps \int_{\bbR^n}\frac{(\phi_\eps(x)-\phi_\eps(x-z))\cdot (\nabla\overline{u}_\eps(x)-\nabla(\overline{u}_\eps\zeta)(x-z))}{|z|^{n+2s}}K\left(\frac{x}{\eps},\frac{x-z}{\eps}\right)\,dz\\
    &\quad+\eps^{2s}\int_{\bbR^n}\frac{(\psi_\eps(x)-\psi_\eps(x-z))(h(x)-h(x-z))}{|z|^{n+2s}}K\left(\frac{x}{\eps},\frac{x-z}{\eps}\right)\,dz
\end{aligned}
\end{equation}
for $x\in\Omega$, and
\begin{align}\label{conv:rep.tildeF}
   &\langle(-\Delta)^s\widetilde{F},\varphi\rangle\\
    &=-\frac\eps2\iint_{\bbR^n\times\bbR^n}\frac{(\nabla(\overline{u}_\eps\zeta)(x)-\nabla(\overline{u}_\eps\zeta)(x-z))\cdot (\phi_\eps(x)\varphi(x)-\phi_\eps(x-z)\varphi(x-z))}{|z|^{n+2s}}K\left(\frac{x}{\eps},\frac{x-z}{\eps}\right)\,dz\,dx\nonumber\\
    &\quad-\frac{\eps^{2s}}2\iint_{\bbR^n\times\bbR^n}\frac{(h(x)-h(x-z)) (\psi_\eps(x)\varphi(x)-\psi_\eps(x-z)\varphi(x-z))}{|z|^{n+2s}}K\left(\frac{x}{\eps},\frac{x-z}{\eps}\right)\,dz\,dx\nonumber
\end{align}
for $\phi_\eps(x)\coloneqq \phi(x/\eps)$  and $\psi_\eps(x)\coloneqq \psi(x/\eps)$.
\end{lemma}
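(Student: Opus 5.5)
The plan is to compute $\langle f_\eps,\varphi\rangle$ term by term for a fixed test function $\varphi\in H^s(\bbR^n)$ with $\varphi\equiv0$ off $\Omega$. I will work throughout in the variables $(x,x-z)$ and use the symmetry of $K$ to pass between the symmetric double-integral form and the one-sided form.

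\textbf{Step 1: the first two terms cancel.} Since $\Omega\subset\Omega^\eps$, both $L_\eps u_\eps=h$ and $\overline L\,\overline u_\eps=h$ hold when tested against $\varphi$. Hence $\langle L_\eps u_\eps-\overline L\,\overline u_\eps,\varphi\rangle=0$, and only three terms remain:
\[
(\overline L-L_\eps)\overline u_\eps,\qquad -\eps L_\eps\bigl(\phi_\eps\cdot\nabla(\overline u_\eps\zeta)\bigr),\qquad -\eps^{2s}L_\eps(\psi_\eps h).
\]

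\textbf{Step 2: expand the two corrector terms with a discrete product rule.} Put $b=\nabla(\overline u_\eps\zeta)$. I use the identity
\[
(\phi_\eps b)(x)-(\phi_\eps b)(x-z)=b(x)\bigl(\phi_\eps(x)-\phi_\eps(x-z)\bigr)+\phi_\eps(x-z)\bigl(b(x)-b(x-z)\bigr).
\]
The piece $\phi_\eps(x-z)(b(x)-b(x-z))(\varphi(x)-\varphi(x-z))$ is rewritten as the first line of \eqref{conv:rep.tildeF} plus a commutator term. The first line of \eqref{conv:rep.tildeF} is exactly the pairing $\langle(-\Delta)^s\widetilde F,\varphi\rangle$ with $\widetilde F$ of fractional-divergence type, namely $\eps\,\phi_\eps\varphi$ tested against $b$.

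The piece $b(x)(\phi_\eps(x)-\phi_\eps(x-z))(\varphi(x)-\varphi(x-z))$ is handled differently. I write $b(x)(\varphi(x)-\varphi(x-z))=(b\varphi)(x)-(b\varphi)(x-z)-\varphi(x-z)(b(x)-b(x-z))$. Then I test the corrector equation \eqref{conv:eq.phipsi} with the compactly supported function $b\varphi$. Here I need $\zeta\equiv1$ near $\Omega$, so that $b\varphi=\nabla\overline u_\eps\,\varphi$. Scaling gives $L_\eps(\eps\phi_\eps)=\eps^{1-2s}(L_1\phi)(\cdot/\eps)$. After the change of variables $z\mapsto z/\eps$, the truncation $\eps^{-1}\mathbb T^n$ used when $s\le\frac12$ becomes $\mathbb T^n$. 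So this contribution equals $\int z\cdot\nabla\overline u_\eps(x)\,\bigl(K(\tfrac x\eps,\tfrac{x-z}\eps)-\overline K\bigr)|z|^{-n-2s}\,dz$ tested against $\varphi$, with the integral over $\mathbb T^n$ if $s\le\frac12$ and over all of $\bbR^n$ if $s>\frac12$.

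The same argument applies to $\eps^{2s}\psi_\eps h$. Testing the $\psi$-equation with $h\varphi$ gives the term $-(\overline K(x/\eps)-\overline K)\overline K^{-1}h\varphi$, and the leftover commutator produces the last lines of \eqref{conv:defn.F} and \eqref{conv:rep.tildeF}.

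\textbf{Step 3: match against $(\overline L-L_\eps)\overline u_\eps$.} I split $\overline u_\eps(x)-\overline u_\eps(x-z)$ into the Taylor remainder $\overline u_\eps(x)-\overline u_\eps(x-z)-z\cdot\nabla\overline u_\eps(x)$ plus the linear part $z\cdot\nabla\overline u_\eps(x)$. For $s\le\frac12$ the linear part is kept only on $\mathbb T^n$. The linear part cancels the corrector contribution from Step 2 exactly.

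For the remainder, I replace $\overline K$ by $\overline K(x/\eps)$. The error is $(\overline K-\overline K(x/\eps))$ times the symmetric difference integral, and by oddness of $z$ on $\mathbb T^n$ this equals $(\overline K-\overline K(x/\eps))\overline K^{-1}\,\overline L\,\overline u_\eps=(\overline K-\overline K(x/\eps))\overline K^{-1}h$ on $\Omega$. This cancels the $\psi$ term from Step 2. What remains is the first two lines of $F$ together with the commutator lines collected in Step 2.

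\textbf{Main obstacle.} The difficulty is justifying each manipulation in the weak sense. The function $\nabla\overline u_\eps$ is singular near $\partial\Omega^\eps$, the truncated $z$-integrals are only conditionally convergent, and the principal-value and oddness arguments must be done with care. I would first check absolute convergence of every double integral using \eqref{conv:est.ubareps}, $d_\eps\ge\eps$ on $\Omega$, $\|\nabla\overline u_\eps\|_{L^1_{2s}}\le c$ and the $C^{s+\delta}$ bounds on $\phi,\psi$. Only then would I perform the cancellations, truncating first at $|z|>\delta$ and letting $\delta\to0$.
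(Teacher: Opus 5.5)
Your proposal is correct and follows the paper's proof closely. The common steps are:
\begin{itemize}
\item the cancellation of $L_\eps u_\eps-\overline L\,\overline u_\eps$;
\item the same three-way product-rule splitting of each corrector term (the paper's $f_{2,1},f_{2,2},f_{2,3}$ and $f_{3,1},f_{3,2},f_{3,3}$), with the symmetrized commutator giving the last two lines of $F$;
\item the cancellation of the linear part against the $\phi$-equation tested with $\nabla(\overline u_\eps\zeta)\varphi$;
\item the strong-solution identity for $\overline u_\eps$, combined with the $\psi$-equation, to replace $\overline K$ by $\overline K(x/\eps)$.
\end{itemize}
The only difference is bookkeeping. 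The paper adds and subtracts the linear term in symmetric double-integral form ($f_{1,2}$ through $f_{1,5}$), so the cancellation is literally the weak corrector equation, and only the absolutely convergent Taylor-remainder combination is converted to one-sided form. This handles the conditional-convergence issue you flagged for $s\ge\frac12$ without any $\delta$-truncation.
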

\begin{proof}
    The proof is based on elementary computations. First we observe
\begin{align*}
   \langle f_\eps,\varphi\rangle= \langle(\overline{L}-L_\eps)\overline{u}_\eps,\varphi\rangle-\langle\eps L_\eps(\phi(\cdot/\eps)\cdot\nabla(\overline{u}_\eps\zeta)),\varphi\rangle-\langle\eps^{2s}L_\eps(\psi(\cdot/\eps)h),\varphi\rangle\eqqcolon \sum_{i=1}^3f_i,
\end{align*}
where we have used $\langle L_\eps u_\eps,\varphi\rangle=\langle \overline{L}\overline{u}_\eps,\varphi\rangle$.  For $f_1$, we can rewrite it as
\begin{align*}
   & \langle f_1,\varphi\rangle\\
   &=\frac12\iint_{\bbR^{d}\times \bbR^n}\frac{(\overline{u}_\eps(x)-\overline{u}_\eps(x-z))(\varphi(x)-\varphi(x-z))}{|z|^{n+2s}}\left(\overline{K}-K\left(\frac{x}\eps,\frac{x-z}\eps\right)\right)\,dz\,dx\\
    &-\frac12\iint_{\bbR^n\times\mathbb{T}^n}\frac{z\cdot(\nabla(\overline{u}_\eps\zeta)(x)\varphi(x)-\nabla(\overline{u}_\eps\zeta)(x-z)\varphi(x-z))}{|z|^{n+2s}}\left(\overline{K}-K\left(\frac{x}\eps,\frac{x-z}\eps\right)\right)\,dz\,dx\\
    &+\frac12\iint_{\bbR^n\times\mathbb{T}^n}\frac{z\cdot(\nabla(\overline{u}_\eps\zeta)(x)\varphi(x)-\nabla(\overline{u}_\eps\zeta)(x-z)\varphi(x-z))}{|z|^{n+2s}}\left(\overline{K}-K\left(\frac{x}\eps,\frac{x-z}\eps\right)\right)\,dz\,dx\\
    &-\frac{\mbox{\large$\chi$}_{\{s>\frac12\}}}2\iint_{\bbR^n\times(\bbR^n\setminus \mathbb{T}^n)}\frac{z\cdot(\nabla(\overline{u}_\eps\zeta)(x)\varphi(x)-\nabla(\overline{u}_\eps\zeta)(x-z)\varphi(x-z))}{|z|^{n+2s}}\left(\overline{K}-K\left(\frac{x}\eps,\frac{x-z}\eps\right)\right)\,dz\,dx\\
    &+\frac{\mbox{\large$\chi$}_{\{s>\frac12\}}}2\iint_{\bbR^n\times(\bbR^n\setminus \mathbb{T}^n)}\frac{z\cdot(\nabla(\overline{u}_\eps\zeta)(x)\varphi(x)-\nabla(\overline{u}_\eps\zeta)(x-z)\varphi(x-z))}{|z|^{n+2s}}\left(\overline{K}-K\left(\frac{x}\eps,\frac{x-z}\eps\right)\right)\,dz\,dx\\
    &\eqqcolon f_{1,1}+f_{1,2}+f_{1,3}+f_{1,4}+f_{1,5}.
\end{align*}
As $\varphi\equiv0$ in $\bbR^n\setminus \Omega$ and \eqref{conv:est.ubareps}, each $f_{1,i}$ is well defined. For $f_2$, we get
\begin{align*}
    &\langle f_2,\varphi\rangle\\
    &=-\frac\eps2\iint_{\bbR^n\times\bbR^n}\frac{(\phi_\eps(x)-\phi_\eps(x-z))\cdot (\nabla(\overline{u}_\eps\zeta)(x)\varphi(x)-\nabla(\overline{u}_\eps\zeta)(x-z)\varphi(x-z))}{|z|^{n+2s}}K\left(\frac{x}{\eps},\frac{x-z}{\eps}\right)\,dz\,dx\\
    &\quad-\frac\eps2\iint_{\bbR^n\times\bbR^n}\frac{(\nabla(\overline{u}_\eps\zeta)(x)-\nabla(\overline{u}_\eps\zeta)(x-z))\cdot (\phi_\eps(x)\varphi(x)-\phi_\eps(x-z)\varphi(x-z))}{|z|^{n+2s}}K\left(\frac{x}{\eps},\frac{x-z}{\eps}\right)\,dz\,dx\\
    &\quad+\eps\iint_{\bbR^n\times\bbR^n}\frac{(\phi_\eps(x)-\phi_\eps(x-z))\cdot (\nabla(\overline{u}_\eps\zeta)(x)-\nabla(\overline{u}_\eps\zeta)(x-z))\varphi(x)}{|z|^{n+2s}}K\left(\frac{x}{\eps},\frac{x-z}{\eps}\right)\,dz\,dx\\
    &\eqqcolon f_{2,1}+f_{2,2}+f_{2,3},
\end{align*}
where we write $\phi_\eps(x)\coloneqq\phi(x/\eps)$.
For $f_3$, we have 
\begin{align*}
    f_3&=-\frac{\eps^{2s}}{2}\iint_{\bbR^n\times\bbR^n}\frac{(\psi_\eps(x)-\psi_\eps(x-z)) (h(x)\varphi(x)-h(x-z)\varphi(x-z))}{|z|^{n+2s}}K\left(\frac{x}{\eps},\frac{x-z}{\eps}\right)\,dz\,dx\\
    &-\frac{\eps^{2s}}{2}\iint_{\bbR^n\times\bbR^n}\frac{(h(x)-h(x-z)) (\psi_\eps(x)\varphi(x)-\psi_\eps(x-z)\varphi(x-z))}{|z|^{n+2s}}K\left(\frac{x}{\eps},\frac{x-z}{\eps}\right)\,dz\,dx\\
    &+{\eps^{2s}}\iint_{\bbR^n\times\bbR^n}\frac{(\psi_\eps(x)-\psi_\eps(x-z))(h(x)-h(x-z))\varphi(x)}{|z|^{n+2s}}K\left(\frac{x}{\eps},\frac{x-z}{\eps}\right)\,dz\,dx\\
    &\eqqcolon f_{3,1}+f_{3,2}+f_{3,3}.
\end{align*}
Now we group the terms $f_{1,1},f_{1,2}\ldots, f_{3,3}$ in a suitable way so that each group can be estimated appropriately. 

First, we observe from \eqref{conv:eq.phipsi} that
\begin{align*}
    f_{1,3}+f_{1,5}+f_{2,1}=0.
\end{align*}
Using the symmetry of the kernel $K(x,y)=K(y,x)$, we get
\begin{align*}
    &f_{1,1}+f_{1,2}+f_{1,4}\\
    &=\iint_{\bbR^n\times \mathbb{T}^n}\frac{(\overline{u}_\eps(x)-\overline{u}_\eps(x-z)-z\cdot\nabla\overline{u}_\eps(x))\varphi(x)}{|z|^{n+2s}}\left(\overline{K}-K\left(\frac{x}\eps,\frac{x-z}\eps\right)\right)\,dz\,dx\\
    &+\iint_{\bbR^n\times (\mathbb{T}^n)^c}\frac{(\overline{u}_\eps(x)-\overline{u}_\eps(x-z)-z\mbox{\large$\chi$}_{\{s>1/2\}}\cdot\nabla\overline{u}_\eps(x))\varphi(x)}{|z|^{n+2s}}\left(\overline{K}-K\left(\frac{x}\eps,\frac{x-z}\eps\right)\right)\,dz\,dx.
\end{align*}
In addition, $\overline{u}_\eps\in C^{2s+\delta}(\Omega^{\frac\eps2})\cap L^\infty(\bbR^n)$ for any $\delta<1$, by \cite[Corollary 2.2.29]{FRRO24}  we see that $\overline{u}_\eps$ is a strong solution. This implies for any $\varphi\in L^1(\Omega)$ with $\varphi\equiv 0$ in $\bbR^n\setminus \Omega$,
\begin{align*}
    &\iint_{\bbR^n\times \mathbb{T}^n}\frac{(\overline{u}_\eps(x)-\overline{u}_\eps(x-z)-z\cdot\nabla\overline{u}_\eps(x))\varphi(x)}{|z|^{n+2s}}\,dz\,dx\\
    &+\iint_{\bbR^n\times (\mathbb{T}^n)^c}\frac{(\overline{u}_\eps(x)-\overline{u}_\eps(x-z)-z\mbox{\large$\chi$}_{\{s>1/2\}}\cdot\nabla\overline{u}_\eps(x))\varphi(x)}{|z|^{n+2s}}\,dz\,dx=\overline{K}^{-1}\int_{\bbR^n}h(x)\varphi(x)\,dx.
\end{align*}
Therefore, using this and \eqref{conv:eq.phipsi}, we have
\begin{align*}
    &f_{1,1}+f_{1,2}+f_{1,4}+f_{3,1}\\
    &=f_{1,1}+f_{1,2}+f_{1,4}+\overline{K}^{-1}\int_{\bbR^n}h(x)\left[(\overline{K}(x/\eps)-\overline{K})\varphi(x)\right]\,dx\\
    &=\iint_{\bbR^n\times \mathbb{T}^n}\frac{(\overline{u}_\eps(x)-\overline{u}_\eps(x-z)-z\cdot\nabla\overline{u}_\eps(x))\varphi(x)}{|z|^{n+2s}}\left(\overline{K}\left(\frac{x}\eps\right)-K\left(\frac{x}\eps,\frac{x-z}\eps\right)\right)\,dz\,dx\\
    &\quad+\iint_{\bbR^n\times \mathbb{T}^n}\frac{(\overline{u}_\eps(x)-\overline{u}_\eps(x-z)-z\mbox{\large$\chi$}_{\{s>1/2\}}\cdot\nabla\overline{u}_\eps(x))\varphi(x)}{|z|^{n+2s}}\left(\overline{K}\left(\frac{x}\eps\right)-K\left(\frac{x}\eps,\frac{x-z}\eps\right)\right)\,dz\,dx.
\end{align*}
So far, we have proved
\begin{align*}
    (f_{1,1}+f_{1,2}+f_{1,4}+f_{3,1})+(f_{1,3}+f_{1,5}+f_{2,1})+f_{2,3}+f_{3,3}=\int_{\bbR^n}F(x)\varphi(x)\,dx,
\end{align*}
where $F$ is determined in \eqref{conv:defn.F}. Next, as $\varphi\equiv 0$ in $\bbR^n\setminus\Omega$, using \eqref{conv:est.ubareps}, \eqref{conv:est.ubareps.L1}, \eqref{conv:est.phiipsi} and \eqref{conv:est.h}, we deduce
\begin{align*}
    |f_{2,2}+f_{3,2}|\leq c(\eps)\|\varphi\|_{H^{s}(\bbR^n)}.
\end{align*}
This implies by the representation formula that 
\begin{align*}
    f_{2,2}+f_{3,2}=\langle (-\Delta)^{s}\widetilde{F},\varphi\rangle,
\end{align*}
for some $\widetilde{F}\in H^{s}(\bbR^n)$ such that \eqref{conv:rep.tildeF} holds. This completes the proof.
\end{proof}

Next we prove the pointwise estimate of $F$ defined in \eqref{conv:defn.F}.

\begin{lemma}\label{lem.est.F} 
Let $r_0,M,N,C_1>0$, $\alpha\in(0,1]$ and let $\Omega$ be a $C^{1,\alpha}$ domain with $C^{1,\alpha}$ radius $r_0$, $C^{1,\alpha}$ norm bounded by $M$, $\mathrm{diam}(\Omega)\leq N$ and $0\in\Omega$. Let $\Omega^\eps$ be the $\eps$-tubular neighborhood by \eqref{defn.omegaeps} for $\eps\leq \eps_0$, where $\eps_0$ is determined in \eqref{conv:eps0.Omega}. Let $\phi_i$ and $\psi$ be periodic functions, and $h \in C_c^{1+\alpha}(B_2(\Omega))$ with
\begin{align}\label{est.F:ass2}
    \|\phi_i\|_{C^s(\mathbb{T}^n)}\leq C_1\quad\text{and}\quad \|\psi\|_{C^s(\mathbb{T}^n)}+\|h\|_{C^{1+\alpha}(\bbR^n)}\leq C_1.
\end{align}
Let $\delta\in(0,1)$ and let $\overline{u}_\eps$ be a function satisfying for any $x,y\in\Omega^\eps$ and $\mu\in (0,\min\{1,2s+\frac{\alpha}2\}]$,
\begin{equation}\label{est.F:ass1}
\begin{aligned}
&\|\overline{u}_\eps\|_{C^s(B_2(\Omega))}+\||\overline{u}_\eps|+|\nabla\overline{u}_\eps|\|_{L^1_{2s}(\bbR^n)}+\|\nabla\overline{u}_\eps\|_{L^\infty(B_2(\Omega)\setminus \Omega^\eps)}\leq C_1,\quad |\nabla\overline{u}_\eps(x)|\leq C_1d_\eps^{s-1}(x),\\
&\frac{|\nabla\overline{u}_\eps(x)-\nabla\overline{u}_\eps(y)|}{|x-y|^\mu}\leq C_1\left(d_\eps^{s-(1+\mu)}(x)+d_\eps^{s-(1+\mu)}(y)\right),\\
&\frac{|\nabla^2\overline{u}_\eps(x)-\nabla^2\overline{u}_\eps(y)|}{|x-y|^\alpha}\leq C_1\left(d_\eps^{s-(2+\alpha)}(x)+d_\eps^{s-(2+\alpha)}(y)\right)\quad\text{if }s=\frac12.
\end{aligned}
\end{equation}
Let $F$ be the function given in \eqref{conv:defn.F}. Then for $x\in\Omega$, we have
\begin{equation}\label{est.F:main}
    \begin{aligned}
    |F(x)|\leq \begin{cases}
        c\left(\eps d_\eps^{-1-s}(x)+\eps^{2s}\right)&\quad\text{if }s<\frac12,\\
        c\eps^{2-2s}d_\eps^{-2+s}(x)+\eps (1+|\ln\eps|)\mbox{\large$\chi$}_{\{s=\frac12\}}&\quad\text{if }s\geq\frac12
    \end{cases}
\end{aligned}
\end{equation}
for some constant $c=c(n,s,\alpha,r_0,M,N,C_1)$.
\end{lemma}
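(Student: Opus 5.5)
We estimate the four terms in \eqref{conv:defn.F} separately; write $F=F_1+F_2+F_3+F_4$ in the order they appear. Fix $x\in\Omega$ and set $d\coloneqq d_\eps(x)\geq\eps$ (by \autoref{lem.dist.app}). The basic device is a cancellation. By periodicity, the factor $\overline K(x/\eps)-K(x/\eps,(x-z)/\eps)$ has zero mean in $z$ over every cell $\eps\mathbb T^n(\eps p)$, $p\in\Z^n$. Hence $F_1+F_2$ is small whenever the other factor of the integrand varies slowly on $\eps$-cells.

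\textbf{The term $F_1+F_2$.} Set $T(x,z)\coloneqq\overline u_\eps(x)-\overline u_\eps(x-z)-z\cdot\nabla\overline u_\eps(x)$, with the gradient term dropped on $|z|\geq 1$ if $s\leq\frac12$, and split the $z$-integral into three regions.
\begin{itemize}[leftmargin=*]
\item \emph{Small jumps, $|z|\leq c\eps$.} Use $|T|\leq |z|^{1+\mu}d^{s-1-\mu}$ from \eqref{est.F:ass1}, with $\mu$ slightly above $2s-1$ when $s\geq\frac12$ (and $\mu=1$ is not needed when $s<\frac12$). This gives contributions of order $\eps^{1+\mu-2s}d^{s-1-\mu}$, which fall below the right-hand side of \eqref{est.F:main} since $d\geq\eps$.
\item \emph{Intermediate jumps, $c\eps\leq|z|\leq d/2$.} Decompose into the cells $\eps\mathbb T^n(\eps p)$ and subtract on each cell the value of $T(x,\cdot)/|\cdot|^{n+2s}$ at the cell centre. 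The mean-zero property then leaves $\eps$ times the $z$-gradient of $T/|z|^{n+2s}$. That gradient is bounded by $|\nabla\overline u_\eps(x-z)-\nabla\overline u_\eps(x)|\,|z|^{-n-2s}+|T|\,|z|^{-n-2s-1}\lesssim |z|^{\mu-n-2s}d^{s-1-\mu}$. Integrating over the annulus gives $\eps\,d^{s-1-\mu}\int_{\eps}^{d}r^{\mu-2s-1}\,dr$. For $s<\frac12$ I choose $\mu>2s$, which yields $\eps d^{-s-1}$ (this is the first case of \eqref{est.F:main}, the exponent being read as $-1-s$). For $s>\frac12$ I choose $\mu<2s-1$ if it is admissible; otherwise I integrate in from the lower endpoint, which gives $\eps\cdot\eps^{\mu-2s}d^{s-1-\mu}\leq \eps^{2-2s}d^{s-2}$ once $\mu=1$. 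For $s=\frac12$ the second-derivative bound in \eqref{est.F:ass1} is used and produces the logarithm.
\item \emph{Far jumps, $|z|\geq d/2$.} Here I bound the pieces $\overline u_\eps(x)$, $z\cdot\nabla\overline u_\eps(x)$ and $\overline u_\eps(x-z)$ separately, always exploiting the cell cancellation. For $\overline u_\eps(x-z)$ I use its $C^s$ bound on $B_2(\Omega)$, the $L^\infty$ bound on $\nabla\overline u_\eps$ outside $\Omega^\eps$, the pointwise gradient bound $d_\eps^{s-1}$ near $\partial\Omega^\eps$ (integrable in the normal direction), and the $L^1_{2s}$ bounds for the tail at infinity. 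This gives $\eps\, d^{s-1-2s}$ together with $\eps^{2s}$ or $\eps|\ln\eps|$ coming from the $|z|\sim1$ transition and from the truncation $z\chi_{\{s>1/2\}}$.
\end{itemize}

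\textbf{The terms $F_3$ and $F_4$.} For $F_4$ use $|\psi_\eps(x)-\psi_\eps(x-z)|\leq c\min\{1,(|z|/\eps)^s\}$ and $|h(x)-h(x-z)|\leq c\min\{|z|,1\}$. A direct integration gives $\eps^{2s}\bigl(\eps^{-s}\eps^{1+s-2s}+\cdots\bigr)$, which is at most $c\eps^{\min\{1,2s\}}$ (up to a logarithm when $s=\frac12$) and is therefore admissible. For $F_3$ similarly use $|\phi_\eps(x)-\phi_\eps(x-z)|\leq c\min\{1,(|z|/\eps)^s\}$. Bound $|\nabla\overline u_\eps(x)-\nabla(\overline u_\eps\zeta)(x-z)|$ by $|z|^\mu d^{s-1-\mu}$ for $|z|<d/2$; for larger $|z|$ bound it by the pointwise estimates and the $L^1_{2s}$ norms. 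Since $\zeta=1$ on $B_1(\Omega)$, the cutoff only matters at distance $\gtrsim1$. Splitting at $|z|=\eps$ and $|z|=d/2$ yields $\eps\,\eps^{\mu-2s}d^{s-1-\mu}$ for $s>\frac12$, and, for $s<\frac12$, $\eps\,d^{s-1-2s}$ plus tail terms.

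\textbf{Main obstacle.} I expect the hardest step to be the far-jump cancellation in $F_1+F_2$ when $|z|\gtrsim d$. There $\overline u_\eps(x-z)$ is only $C^s$ across $\partial\Omega^\eps$, so the cell-averaging argument has to use the weighted gradient bound $d_\eps^{s-1}$. The singular layer must be integrated carefully in the normal direction, and the constants must remain uniform in $\eps$.
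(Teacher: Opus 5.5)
\textbf{There is a genuine gap at $s=\frac12$.}

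In the intermediate range $\eps\lesssim|z|\le d/2$ you use only first-order information. For the $\overline u_\eps$-terms, after subtracting cell-centre values, you bound the $z$-gradient of $T(x,z)/|z|^{n+1}$. In the $\phi$-term you bound $|\phi_\eps(x)-\phi_\eps(x-z)|\,|\nabla\overline u_\eps(x)-\nabla\overline u_\eps(x-z)|\,|z|^{-n-1}$ in absolute value.

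Both quantities are controlled only by $c\,d^{-3/2}|z|^{-n}$. This bound is attained when $|\nabla^2\overline u_\eps(x)|\sim d^{-3/2}$, which is the typical behaviour near $\partial\Omega^\eps$. Integrating over the annulus gives $\eps d^{-3/2}\ln(d/\eps)$. This is not bounded by $c\eps d^{-3/2}+c\eps(1+|\ln\eps|)$: take $d=\eps^{1/2}$, where the ratio grows like $|\ln\eps|$.

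The second-derivative H\"older hypothesis cannot repair this inside a first-order argument. The obstruction is precisely the Taylor term carrying $\nabla^2\overline u_\eps(x)$: the quadratic part of $T$, respectively the linear part $\nabla^2\overline u_\eps(x)z$ of the gradient difference. Note also that the additive $\eps(1+|\ln\eps|)$ in \eqref{est.F:main} comes from the $h$-term, not from these terms.

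\textbf{What is missing is a second-order cancellation.}

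For the $\overline u_\eps$-terms, the paper takes the periodic solution $W_x$ of $-\Delta_zW_x=K(\frac x\eps,\frac x\eps-z)-\overline K(\frac x\eps)$. Then $K(\frac x\eps,\frac{x-z}\eps)-\overline K(\frac x\eps)=\eps^2(-\Delta_z)[W_x(\cdot/\eps)](z)$. It inserts a smooth cutoff $\varphi$ supported in $\{\frac32\eps<\|z\|_\infty<k\eps\}$ with $k\eps\sim d$, and integrates by parts twice. This gives $\eps^2\|D_z^2(H\varphi)\|_{L^1}\lesssim\eps^{2-2s}d^{s-2}$ from the Lipschitz gradient bound alone, with no logarithm.

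For the $\phi$-term at $s=\frac12$, the factor $(\phi_\eps(x)-\phi_\eps(x-z))K(\frac x\eps,\frac{x-z}\eps)$ is $\eps\Z^n$-periodic in $z$. Its cell mean is paired with $\nabla\overline u_\eps(x)-\nabla\overline u_\eps(x-z)-\nabla^2\overline u_\eps(x)z$. The subtracted odd term integrates to zero over the annulus, and this is where the $C^{2,\alpha}$-type hypothesis is actually used. The mean-zero part is handled with a corrector $\mathcal W$ satisfying $\|\mathcal W\|_{W^{1,\infty}}\le c\eps$ and one integration by parts.

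\textbf{For $s\neq\frac12$ your route works.} For $s>\frac12$ it is even simpler than the paper's: one cell cancellation with $\mu=1$ already gives $\eps d^{s-2}\int_\eps^d r^{-2s}\,dr\lesssim\eps^{2-2s}d^{s-2}$, and the $\phi$-term can be bounded in absolute value.

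You do need to fix the choice of $\mu$. Since $d\ge\eps$, the quantity $\eps^{1+\mu-2s}d^{s-1-\mu}$ is decreasing in $\mu$. For $\mu<1$ it exceeds $\eps^{2-2s}d^{s-2}$ by the factor $(d/\eps)^{1-\mu}$, so your claim that it "falls below" the target is backwards. In the small and intermediate ranges, take $\mu=1$ when $s\ge\frac12$ and $\mu\in(2s,\min\{1,2s+\frac\alpha2\}]$ when $s<\frac12$, not $\mu$ near $2s-1$.

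Finally, in the far range keep $\overline u_\eps(x)-\overline u_\eps(x-z)$ together and use the $C^s$ bound. If you pair $\overline u_\eps(x)$ alone with the kernel truncated at $|z|=d/2$, the cells straddling that sphere cost $\eps d^{-1-2s}$, which exceeds $\eps d^{-1-s}$.
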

\begin{proof}
Throughout the proof, the constant $c$ depends on $n,s,\Lambda,\alpha,r_0,M,N,C_1$. Thus, for simplicity, we omit this dependence of the constant. Moreover, we write $a\eqsim b$ if there is a constant $c$ such that $\frac{a}{c}\leq b\leq ca$.

First, we estimate the functions $F_1(x)$, $F_2(x)$ and $F_3(x)$ defined by for any $x\in\Omega$,
\begin{align*}
    F_1(x)&\coloneqq \int_{\mathbb{T}^n}\frac{(\overline{u}_\eps(x)-\overline{u}_\eps(x-z)+z\cdot\nabla\overline{u}_\eps(x))}{|z|^{n+2s}}\left(\overline{K}\left(\frac{x}\eps\right)-K\left(\frac{x}\eps,\frac{x-z}\eps\right)\right)\,dz\\
    &\qquad\qquad+\int_{\bbR^n\setminus \mathbb{T}^n}\frac{(\overline{u}_\eps(x)-\overline{u}_\eps(x-z)+z\mbox{\large$\chi$}_{\{s>1/2\}}\cdot\nabla\overline{u}_\eps(x))}{|z|^{n+2s}}\left(\overline{K}\left(\frac{x}\eps\right)-K\left(\frac{x}\eps,\frac{x-z}\eps\right)\right)\,dz,\\
    F_2(x)&\coloneqq   \eps \int_{\bbR^n}\frac{(\phi_\eps(x)-\phi_\eps(x-z))\cdot (\nabla\overline{u}_\eps(x)-\nabla(\overline{u}_\eps\zeta)(x-z))}{|z|^{n+2s}}K\left(\frac{x}{\eps},\frac{x-z}{\eps}\right)\,dz,\\
    F_3(x)&\coloneqq\eps^{2s}\int_{\bbR^n}\frac{(\psi_\eps(x)-\psi_\eps(x-z))(h(x)-h(x-z))}{|z|^{n+2s}}K\left(\frac{x}{\eps},\frac{x-z}{\eps}\right)\,dz.
\end{align*}
Before we estimate each $F_i(x)$ for $i=1,2,3$, we briefly explain the strategy for the estimates.  

We estimate each integrand by considering the two regions $\|z\|_\infty\leq \frac32\eps$ and $\|z\|_\infty> \frac32\eps$. In the former case, both $x$ and $x-z$ lie inside the domain, so we directly apply the fundamental theorem of calculus together with the regularity assumptions of the given functions. In the latter case, $x-z$ can be outside of the domain $\Omega^\eps$. We therefore divide the integration domain into three regions, namely, the region far away from the domain, the region near the boundary, the region inside the domain (see \eqref{conv:defn.A_i}), and then  apply the regularity estimates of the given functions. Furthermore, when estimating the integral for the case $\|z\|_\infty>\frac32\eps$ in $F_1(x)$, we estimate the integrand on each small cube $\eps\mathbb{T}^n(p)$ for $p\in\eps\Z^n$ with $\|p\|_\infty>\eps$. Together with the periodicity of $K$, this allows us to apply the regularity estimates of $\overline{u}_\eps$ appropriately.

Now we estimate all terms $F_i$ for $i=1,2,3$.

\textbf{Estimate of $F_1$.} We divide $F_1(x)$ into two parts by
    \begin{align*}
        F_1(x)\coloneqq \int_{\|z\|_\infty\leq\frac32\eps}(\cdot)\,dz+\int_{\|z\|_\infty>\frac32\eps}(\cdot)\,dz\eqqcolon F_{1,1}(x)+F_{1,2}(x).
    \end{align*}
    For the estimate of $F_{1,1}$, we observe for any $x\in \Omega$, $\|z\|_\infty\leq \frac32\eps$ and $t\in[0,1]$, $x-tz\in \Omega^{\frac\eps2}$ by \autoref{lem.dist.app}. This implies by the fundamental theorem of calculus, \eqref{est.F:ass1} and \eqref{est.F:ass1} that
    \begin{align*}
        |F_{1,1}(x)|
        &\leq c\int_{\|z\|_\infty\leq \frac32\eps}\int_{0}^1\frac{d_\eps^{s-(1+s_1)}(x-tz)+d_\eps^{s-(1+s_1)}(x)}{|z|^{n+2s-(1+s_1)}}\,dt\,d\xi\,dz,
    \end{align*}
    where we write 
    \begin{equation*}
        s_1\coloneqq \min\{2s,1\}.
    \end{equation*}
    By \autoref{lem.dist.app}, we have $d_\eps(x)\geq 8\sqrt{n}\eps$, which gives $|z|<\frac32\eps\sqrt{n}< \frac{d_\eps(x)}{2}$. With the aid of \eqref{dist.aux2:main1}, we estimate $F_{1,1}$ as
    \begin{align}\label{conv:est.F11}
        |F_{1,1}(x)|\leq c\int_{|z|<{\frac32\eps\sqrt{n}}}\frac{d_\eps^{s-(1+s_1)}(x)}{|z|^{n+2s-(1+s_1)}}\,dz \leq c\nu(x),
    \end{align}
    where $\nu(x)$ is defined by
    \begin{equation}\label{est.F:defn.nu}
    \nu(x)=\eps^{2-2s}d_\eps^{-2+s}(x)\mbox{\large$\chi$}_{\{s\geq\frac12\}}+\eps d_\eps^{-1-s}(x)\mbox{\large$\chi$}_{\{s<\frac12\}}.
\end{equation}
    For the estimate of $F_{1,2}(x)$, we observe that
    \begin{align*}
         \bbR^n\setminus \{\|z\|_\infty>3\eps/2\}=\bigcup_{p\in \eps\Z^n,\,\|p\|_\infty>\eps}\eps\mathbb{T}^n(p)\quad\text{and}\quad\int_{\eps\mathbb{T}^n(p)}K\left(\frac{x}{\eps},\frac{x-z}{\eps}\right)\,dz=\int_{\eps\mathbb{T}^n(p)}\overline{K}\left(\frac{x}{\eps}\right)\,dz
    \end{align*} 
    for all $p\in \eps\Z^n$, where $\eps\Z^n\coloneqq\{\eps q\,:\, q\in \Z^n\}$.
    Using these, we have
    \begin{equation}\label{est.F:F12.rep}
    \begin{aligned}
        F_{1,2}(x)&=\sum_{p\in \eps\Z^n,\|p\|_\infty>\eps}\int_{\eps\mathbb{T}^n(p)}H(x,z)\left[K\left(\frac{x}{\eps},\frac{x-z}{\eps}\right)-\overline{K}\left(\frac{x}{\eps}\right)\right]\,dz\\
        &=\sum_{p\in \eps\Z^n,\|p\|_\infty>\eps}\int_{\eps\mathbb{T}^n(p)}\left[(H(x,z)-\widetilde{H}(x,p))K\left(\frac{x}{\eps},\frac{x-z}{\eps}\right)+(\widetilde{H}(x,p)-H(x,z))\overline{K}\left(\frac{x}{\eps}\right)\right]\,dz,    
        \end{aligned}
        \end{equation}
where 
\begin{align*}
        H(x,z)\coloneqq\begin{cases}\frac{\overline{u}_\eps(x)-\overline{u}_\eps(x-z)-z\cdot \nabla\overline{u}_\eps(x)}{|z|^{n+2s}}&\quad\text{if }s>\frac12,\\
        \frac{\overline{u}_\eps(x)-\overline{u}_\eps(x-z)-z\mbox{\large$\chi$}_{\{z\in \mathbb{T}^n\}}\cdot\nabla\overline{u}_\eps(x)}{|z|^{n+2s}}&\quad\text{if }s\leq\frac12
        \end{cases}
    \end{align*} 
     and
    \begin{align*}
        \widetilde{H}(x,p)\coloneqq\begin{cases}\frac{\overline{u}_\eps(x)-(\overline{u}_\eps)_{\eps\mathbb{T}^n(x-p)}-p\cdot \nabla\overline{u}_\eps(x)}{|p|^{n+2s}}&\quad\text{if }s>\frac12,\\
        \frac{\overline{u}_\eps(x)-(\overline{u}_\eps)_{\eps\mathbb{T}^n(x-p)}-p\mbox{\large$\chi$}_{\{z\in \mathbb{T}^n\}}\cdot\nabla\overline{u}_\eps(x)}{|p|^{n+2s}}&\quad\text{if }s\leq\frac12.
        \end{cases}
    \end{align*} 
We now consider three sets $A_1$, $A_2$ and $A_3$ defined by 
\begin{equation}\label{conv:defn.A_i}
    \begin{aligned}
        A_1&\coloneqq \{p\,:\, x-p\in \bbR^n\setminus \Omega^{\frac32\eps}\},\\
        A_2&\coloneqq \{p\,:\, x-p\in \Omega^{\frac32\eps}\setminus \Omega_{\frac32\eps}\},\\
        A_3&\coloneqq \{p\,:\, x-p\in  \Omega_{\frac32\eps}\}.
    \end{aligned}
    \end{equation}
    Then we observe the following: 
    \begin{equation}\label{conv:obs.A_i}
    \begin{aligned}
        &\mbox{If $z\in \eps\mathbb{T}^n(p)\cap A_1$ for some $p\in \eps\mathbb{Z}^n$, then $\eps\mathbb{T}^n(x-p)\subset\bbR^n\setminus \Omega^\eps$, $|z|\geq\frac{d_\eps(x)}{2}$.}\\
        &\mbox{If $z\in \eps\mathbb{T}^n(p)\cap A_2$ for some $p\in \eps\mathbb{Z}^n$, then $\eps\mathbb{T}^n(x-p)\subset \Omega^{2\eps}\setminus\Omega_{2\eps}$ and $d_\eps(x-z)+d_\eps(x-p)\leq c\eps$.}\\
        &\mbox{If $x-z\in \Omega^{2\eps}\setminus \Omega_{2\eps}$ and $|z|\leq \frac{d_\eps(x)}{4}$, then $d_\eps(x)\eqsim \eps$.}
    \end{aligned}
    \end{equation}
The first observation follows from \autoref{lem.dist.app} and \eqref{dist.aux2:main2}. Next, by the definition of the set $A_2$ and \autoref{lem.dist.app}, we can verify the second observation. For the third one, as $|z|\leq \frac{d_\eps(x)}{4}$, by \eqref{dist.aux2:main1} and \autoref{lem.dist.app}, we observe $\frac12d_\eps(x)\leq d_\eps(x-z)\leq \frac32 d_\eps(x)$ and $8\sqrt{n}\eps\leq d_\eps(x)$. Thus, combining these observations together with the second observation yields the third one in \eqref{conv:obs.A_i}.

Now we write $F_{1,2}(x)$ as
\begin{align*}
    F_{1,2}(x)=\sum_{i=1}^3\sum_{p\in \eps\Z^n\cap A_i,\|p\|_\infty>\eps}\int_{\eps\mathbb{T}^n(p)}(\cdot)\eqqcolon \sum_{i=1}^3 F_{1,2,1}
\end{align*}
and estimate $F_{1,2,i}$ for each $i=1,2,3$.
\begin{itemize}[leftmargin=*]
    \item Estimate of $F_{1,2,1}$ and $F_{1,2,2}$. First, using the fact that $z\in \eps\mathbb{T}^n(p)$ and $p\in \eps\Z^n$ with $\|p\|_{\infty}
    >\eps$,
    \begin{align}\label{est.F:pandxi}
        |p|\eqsim |z|\quad\text{and}\quad\left|\frac1{|p|^{n+2s}}-\frac1{|z|^{n+2s}}\right|\leq \frac{c\eps}{|z|^{n+2s+1}},
    \end{align}
    we estimate $F_{1,2,i}$ as 
    \begin{equation}\label{est.F:F121}
    \begin{aligned}
        |F_{1,2,i}|&\leq c\sum_{p\in\eps\Z^n\cap A_i,\|p\|_\infty>\eps}\int_{\eps\mathbb{T}^n(p)}\frac{|\overline{u}_\eps(x-z)-(\overline{u}_\eps)_{\eps\mathbb{T}^n(x-p)}|+\eps|\nabla\overline{u}_\eps(x)|}{|z|^{n+2s}}\,dz\\
        &+c\sum_{p\in\eps\Z^n\cap A_i,\|p\|_\infty>\eps}\eps\int_{\eps\mathbb{T}^n(p)}\frac{|\overline{u}_\eps(x)-\overline{u}_\eps(x-z)|+|z||\nabla\overline{u}_\eps(x)|}{|z|^{n+2s+1}}\,dz.
    \end{aligned}
    \end{equation}
    For $i=1$, by the first observation, \eqref{est.F:ass1}, we further estimate
    \begin{align*}
        |F_{1,2,1}|&\leq c\sum_{p\in\eps\Z^n\cap A_1,\|p\|_\infty>\eps}\int_{\eps\mathbb{T}^n(p)}\frac{|\overline{u}_\eps(x-z)-(\overline{u}_\eps)_{\eps\mathbb{T}^n(x-p)}|}{|z|^{n+2s}}\,dz(\eqqcolon H_{1})\\
        &\quad+c\eps\left[\int_{|z|>\frac{d_\eps(x)}2}\frac{d_\eps^{s-1}(x)}{|z|^{n+2s}}+\int_{1>|z|>\frac{d_\eps(x)}2}\frac{\,dz}{|z|^{n+s+1}}+\int_{|z|>1}\frac{|\overline{u}_\eps(x)-\overline{u}_\eps(x-z)|}{|z|^{n+2s+1}}\,dz\right]\\
        &\leq c(H_1+\eps d_\eps^{-s-1}(x)).
    \end{align*}
     Using the fact that $\eps\mathbb{T}^n(p)\subset \bbR^n\setminus \Omega^\eps$ by \eqref{conv:obs.A_i}, we estimate $H_1$ as
    \begin{align*}
        H_{1}&\leq  c\left(\sum_{\substack{p\in \eps\Z^n,\\\|p\|_\infty\geq1/2}}\int_{\eps\mathbb{T}^n(p)\cap A_1}\frac{|\overline{u}_\eps(x-z)-(\overline{u}_\eps)_{\eps\mathbb{T}^n(x-p)}|}{|z|^{n+2s}}\,dz+\int_{|z|\geq\frac{d_\eps(x)}2}\frac{\eps\|\nabla u_\eps\|_{L^\infty(B_1(\Omega)\setminus \Omega^\eps)}}{|z|^{n+2s}}\,dz\right).
    \end{align*}
     By  the fact that $|p|\eqsim |z|$ by $|p|>\frac12$ and $z\in \eps\mathbb{T}^n(p)$, \eqref{conv:obs.A_i} and the Poincar\'e inequality, we have 
    \begin{equation}\label{est.F:h111}
    \begin{aligned}
        H_{1}&\leq c\left(\sum_{\substack{p\in \eps\Z^n,\|p\|_\infty>1/2,\\
        \eps\mathbb{T}^n(x-p)\subset \bbR^n\setminus \Omega^\eps}}\int_{\eps\mathbb{T}^n(p)}\frac{|\overline{u}_\eps(x-z)-(\overline{u}_\eps)_{\eps\mathbb{T}^n(x-p)}|}{|p|^{n+2s}}\,dz+\eps d_\eps^{-2s}(x)\right)\\
        &\leq c\eps\left(\sum_{\substack{p\in \eps\Z^n,\|p\|_\infty>1/2,\\
        \eps\mathbb{T}^n(x-p)\subset \bbR^n\setminus \Omega^\eps}}\int_{\eps\mathbb{T}^n(p)}\frac{|\nabla \overline{u}_\eps(x-z)|}{|p|^{n+2s}}\,dz+d_\eps^{-2s}(x)\right).
    \end{aligned}
    \end{equation}
    
    Moreover, we have for any $|p|>\frac12$, $(|x-z|+1)\leq |x|+|p|+|z-p|+1\leq c|p|$, which gives
    \begin{align}\label{est.F:tail.h111}
        \sum_{\substack{p\in \eps\Z^n,\|p\|_\infty>1/2,\\
        \eps\mathbb{T}^n(x-p)\subset \bbR^n\setminus \Omega^\eps}}\int_{\eps\mathbb{T}^n(p)}\frac{|\nabla \overline{u}_\eps(x-z)|}{|p|^{n+2s}}\,dz\leq c\int_{x-z\in\bbR^n\setminus \Omega^\eps}\frac{|\nabla \overline{u}_\eps(x-z)|}{(1+|x-z|)^{n+2s}}\,dz\leq c\|\nabla \overline{u}_\eps\|_{L^1_{2s}(\bbR^n)}.
    \end{align} 
Plugging this into \eqref{est.F:h111}, we deduce
\begin{equation*}
    |F_{1,2,1}|\leq c\eps d_\eps^{-s-1}(x).
\end{equation*}
For $i=2$, by the second observation in \eqref{conv:obs.A_i}, $C^s$-regularity estimates and pointwise gradient estimates in \eqref{est.F:ass1} and the second and third observations in \eqref{conv:obs.A_i}, we deduce
\begin{align*}
    |F_{1,2,2}|&\leq c\left(\int_{\eps<|z|}\frac{\eps^s+\eps d_\eps^{s-1}(x)}{|z|^{n+2s}}\mbox{\large$\chi$}_{\{x-z\in \Omega^{2\eps}\setminus \Omega_{2\eps}\}}\,dz+\eps\int_{\eps<|z|}\frac{|z|^s+|z| d_\eps^{s-1}(x)}{|z|^{n+2s+1}}\mbox{\large$\chi$}_{\{x-z\in \Omega^{2\eps}\setminus \Omega_{2\eps}\}}\,dz\right)\\
   & \eqqcolon H_2+H_3.
\end{align*}
First, we further estimate $H_2$ as
\begin{align*}
         H_2&\leq c\left(\int_{\eps<|z|<\frac{d_\eps(x)}4}\frac{\eps^s+\eps d_\eps^{s-1}(x)}{|z|^{n+2s}}\mbox{\large$\chi$}_{\{x-z\in \Omega^{2\eps}\setminus \Omega_{2\eps}\}}\,dz+\int_{|z|>\frac{d_\eps(x)}4}\frac{\eps^{s}+\eps d_\eps^{s-1}(x)}{|z|^{n+2s}}\mbox{\large$\chi$}_{\{x-z\in \Omega^{2\eps}\setminus \Omega_{2\eps}\}}\,dz\right)\\
        &\leq c\left(\int_{\frac{d_\eps(x)}{c}<|z|<\frac{d_\eps(x)}4}\frac{\eps d_\eps^{s-1}(x)}{|z|^{n+2s}}\,dz+\int_{|z|>\frac{d_\eps(x)}4}\frac{\eps^{s}}{|z|^{n+2s}}\mbox{\large$\chi$}_{\{x-z\in \Omega^{2\eps}\setminus \Omega_{{2\eps}}\}}\,dz+\eps d_\eps^{-s-1}(x)\right)\\
        &\leq c\eps d^{-1-s}_\eps(x),
    \end{align*}
    where we have also used \eqref{dist.tailint:main4} for the last inequality. 
    Similarly, we estimate $H_3$ as 
    \begin{align*}
       H_{3}&\leq c\eps\left(\int_{\eps<|z|<\frac{d_\eps(x)}4}\frac{|z|^s+|z| d^{s-1}_\eps(x)}{|z|^{n+2s+1}}\mbox{\large$\chi$}_{\{x-z\in \Omega^{2\eps}\setminus \Omega_{2\eps}\}}\,dz+\int_{|z|>\frac{d_\eps(x)}4}\frac{|z|^s+|z| d^{s-1}_\eps(x)}{|z|^{n+2s+1}}\,dz\right)\\
        &\leq c\eps \left(\int_{\frac{d_\eps(x)}{c}<|z|<\frac{d_\eps(x)}4}\frac{|z|^s+|z| d^{s-1}_\eps(x)}{|z|^{n+2s+1}}\,dz+d^{-1-s}_\eps(x)\right)\\
        &\leq c\eps d^{-1-s}_\eps(x).
    \end{align*}
    Using the estimates $H_2$ and $H_3$, we have
    \begin{equation*}
        |F_{1,2,2}|\leq c\eps d_\eps^{-1-s}(x).
    \end{equation*}
    \item Estimate of $F_{1,2,3}$. To this end, we write 
    \begin{align*}
        |F_{1,2,3}|&\leq \left|\sum_{p\in\eps\Z^n\cap A_3,\eps<\|p\|_\infty<\frac{d_\eps(x)}4}\int_{\eps\mathbb{T}^n(p)}H(x,z)\left[K\left(\frac{x}{\eps},\frac{x-z}{\eps}\right)-\overline{K}\left(\frac{x}{\eps}\right)\right]\,dz\right|\\
        &\quad+\left|\sum_{p\in\eps\Z^n\cap A_3,\|p\|_\infty\geq\frac{d_\eps(x)}4}\int_{\eps\mathbb{T}^n(p)}H(x,z)\left[K\left(\frac{x}{\eps},\frac{x-z}{\eps}\right)-\overline{K}\left(\frac{x}{\eps}\right)\right]\,dz\right|\eqqcolon H_4+H_5.
    \end{align*}
    To estimate $H_4$, we observe that if $d_\eps(x)<100{n}(1+C)^2\eps$ for some large constant $C\geq C_0$, where the constant $C_0$ is determined in \eqref{defn.C_0}, then as in the estimate of $F_{1,1}$, we have
    \begin{equation*}
        H_4\leq c\int_{|z|<c\eps}|H(x,z)|\,dz\leq  c\eps d_\eps^{-1-s}(x),
    \end{equation*}
    where the constant $c$ depends only on the given data and $C$.
    Thus, we assume $d_\eps(x)\geq 100{n}(1+C)^2\eps$. Then, by taking $C$ sufficiently large depending only on the given data, we have 
    \begin{equation}\label{est.F:inc.H_4}
       \left\{\frac{3\eps}2<\|z\|_\infty<\frac{2k+1}{2}\eps\right\}\subset \bigcup_{p\in\eps\Z^n\cap A_3,\eps<\|p\|_\infty<\frac{d_\eps(x)}4}\eps\mathbb{T}^n(p)
    \end{equation}
    for some integer $k\in \N$ such that 
    \begin{equation}\label{est.F:defn.k}
        \frac{d_\eps(x)}{8(1+C)^2{n}}\leq \frac{2k+1}{2}\eps\leq \frac{d_\eps(x)}{4(1+C)^2{n}}.
    \end{equation}
    Now, we choose a cutoff function $\varphi\in C_c^\infty$ such that
    \begin{equation}\label{est.F:varphi}
        \varphi\equiv 1 \text{ in }\frac{5\eps}2<\|z\|_\infty<\frac{k+1}{2}\eps,\quad \varphi\equiv 0 \text{ in }\|z\|_\infty<\frac{3\eps}2\text{ and }\|z\|_\infty>k\eps.
    \end{equation}
    Using this function, we estimate $H_4$ as
    \begin{align*}
        H_4&\leq \left|\sum_{p\in\eps\Z^n\cap A_3,\eps<\|p\|_\infty<\frac{d_\eps(x)}4}\int_{\eps\mathbb{T}^n(p)}H(x,z)\varphi(z)\left[K\left(\frac{x}{\eps},\frac{x-z}{\eps}\right)-\overline{K}\left(\frac{x}{\eps}\right)\right]\,dz\right|\\
        &\quad+\left|\sum_{p\in\eps\Z^n\cap A_3,\eps<\|p\|_\infty<\frac{d_\eps(x)}4}\int_{\eps\mathbb{T}^n(p)}H(x,z)(1-\varphi(z))\left[K\left(\frac{x}{\eps},\frac{x-z}{\eps}\right)-\overline{K}\left(\frac{x}{\eps}\right)\right]\,dz\right|\eqqcolon H_{4,1}+H_{4,2}.
    \end{align*}
    Next, in light of \eqref{est.F:varphi}, we get
    \begin{align*}
        H_{4,1}=\left|\int_{\bbR^n}H(x,z)\varphi(z)\left[K\left(\frac{x}{\eps},\frac{x-z}{\eps}\right)-\overline{K}\left(\frac{x}{\eps}\right)\right]\,dz\right|.
    \end{align*}
    To further estimate $H_{4,1}$, we consider a periodic function $W_x(z)$ such that
    \begin{equation*}
        (-\Delta_z)W_x(z)=K\left(\frac{x}\eps,\frac{x}\eps-z\right)-\overline{K}\left(\frac{x}\eps\right)
    \end{equation*}
    with
    \begin{equation}\label{est.F:Wxest}
        \|W_x\|_{W^{1,\infty}(\bbR^n)}\leq c.
    \end{equation}
    Then we have $\eps^2((-\Delta_z)W_{x}(\cdot/\eps))(z)=K\left(\frac{x}{\eps},\frac{x-z}{\eps}\right)-\overline{K}\left(\frac{x}{\eps}\right)$. Thus, we have
    \begin{align*}
        H_{4,1}\leq \eps^2\left|\int_{\bbR^n}H(x,z)\varphi(z)((-\Delta_z)W_{x}(\cdot/\eps))(z)\,dz\right|\leq \begin{cases}
            c\eps^2\|D_z^2(H\varphi)(x,z)\|_{L^1(\bbR^n)}&\quad\text{if }s\geq \frac12,\\
            c\eps\|D_z(H\varphi)(z,x)\|_{L^1(\bbR^n)}&\quad\text{if }s<\frac12,
        \end{cases}
    \end{align*}
    where we have used integration by parts together with \eqref{est.F:varphi} and \eqref{est.F:Wxest}. Note from \eqref{est.F:varphi}, \eqref{est.F:ass1} and \eqref{dist.aux2:main1} that for any $j\in\{1,2\}$,
    \begin{align*}
        |D^j\varphi(z)|\leq c\eps^{-j}\mbox{\large$\chi$}_{\{(3\eps)/2<\|z\|_\infty<(5\eps)/2\}}+cd_\eps^{-j}(x)\mbox{\large$\chi$}_{\{(k+1)\eps/2<\|z\|_\infty<k\eps\}}
    \end{align*}
    and 
    \begin{align*}
    |H(x,z)|&\leq 
        c\left(|z|^{-n-2s+(1+\min\{2s+\frac\alpha2,1\})}d_\eps(x)^{s-(1+\min\{2s+\frac\alpha2,1\})}+d_\eps^{s-1}(x)\right)
     \\
        |D_zH(x,z)|&\leq c\left(|z|^{-n-2s+\min\{2s+\frac\alpha2,1\}}d^{s-(1+\min\{2s+\frac\alpha2,1\})}_\eps(x)++d_\eps^{s-1}(x)\right),\\
        |D^2_zH(x,z)|&\leq c\left(|z|^{-n-2s}d^{s-2}_\eps(x)+d_\eps^{s-1}(x)\right)\quad\text{if }s\geq\frac12.
    \end{align*}
    Now using the above two inequalities together with \eqref{est.F:varphi}, we derive
    \begin{align*}
        H_{4,1}\leq c\eps^{2-2s}d_\eps^{-2+s}(x)\mbox{\large$\chi$}_{\{s\geq\frac12\}}+c\eps d_\eps^{-1-s}(x)\mbox{\large$\chi$}_{\{s<\frac12\}}.
    \end{align*}
\end{itemize}
    Next, using \eqref{est.F:varphi}, we have
    \begin{equation*}
    \begin{aligned}
        H_{4,2}&\leq c\int_{\|z\|_\infty\leq 3\eps}|H(x,z)|\,dz\\
        &\quad+\left|\sum_{p\in\eps\Z^n\cap A_3,\frac{d_\eps(x)}{c}<\|p\|_\infty<\frac{d_\eps(x)}4}\int_{\eps\mathbb{T}^n(p)}H(x,z)(1-\varphi(z))\left[K\left(\frac{x}{\eps},\frac{x-z}{\eps}\right)-\overline{K}\left(\frac{x}{\eps}\right)\right]\,dz\right|,
    \end{aligned}
    \end{equation*}
    where $\frac{d_\eps(x)}{c}>4\eps\sqrt{n}$. 
    In addition, as in the estimate of $F_{1,1}(x)$ together with \eqref{est.F:F12.rep}, we get
    \begin{align*}
        H_{4,2}\leq c\nu(x)+c\sum_{p\in\eps\Z^n,\frac{d_\eps(x)}{c}<\|p\|_\infty<\frac{d_\eps(x)}4}\int_{\eps\mathbb{T}^n(p)}|H(x,z)(1-\varphi(z))-H(x,p)(1-\varphi(p))|\,dz.
    \end{align*}
    Using \eqref{est.F:varphi}, \eqref{est.F:pandxi}, \eqref{est.F:ass1} and \autoref{lem.dist.aux2}, we obtain for any $z\in \eps\mathbb{T}^n(p)$ with $p\in\eps\Z^n$ and $\frac{d_\eps(x)}{c}<\|p\|_\infty<\frac{d_\eps(x)}2$,  $\frac{d_\eps(x)}{2c}<|z|<\frac{d_\eps(x)}2$ and 
    \begin{equation}\label{est.F:diffH(x,z)}
    \begin{aligned}
     &|H(x,z)(1-\varphi(z))-H(x,p)(1-\varphi(p))|\\
     &\leq |H(x,z)-H(x,p)|(1-\varphi(z))|+|H(x,p)||\varphi(z)-\varphi(p)|\\
     &\leq c\left(\eps d_\eps^{s-1}(x)|z|^{-(n+2s)}+\eps (|z|^s+|z|d_\eps^{s-1}(x))|z|^{-n-2s-1}+\eps d_\eps^{-1}(x)(|z|^s+|z|d_\eps^{s-1}(x))|z|^{-(n+2s)}\right).
    \end{aligned}
    \end{equation}
    Therefore, we have 
    \begin{align*}
        H_{4,2}\leq c\nu(x).
    \end{align*}
    For the estimate of $H_5$, using  \eqref{est.F:diffH(x,z)} with $\varphi=0$, \eqref{est.F:ass1} and the last claim in \autoref{lem.dist.app}, we have 
    \begin{align*}
        H_5&\leq c\left|\sum_{p\in\eps\Z^n\cap A_3,\|p\|_\infty\geq\frac{d_\eps(x)}4}\int_{\eps\mathbb{T}^n(p)}|H(x,z)-H(x,p)|\,dz\right|\\
        &\leq c \int_{|z|>\frac{d_\eps(x)}4}\frac{\eps d_\eps^{s-1}(x-z)+\eps d_\eps^{s-1}(x)\mbox{\large$\chi$}_{\{x-z\in \Omega_{\eps}\}}}{|z|^{n+2s}}+\eps\frac{|z|^s+|z|d_\eps^{s-1}(x)}{|z|^{n+2s+1}}\,dz.
    \end{align*}
    Now using \eqref{dist.tailint:main3}, we further estimate $H_5$ as 
    \begin{align*}
        H_5\leq c\eps d_\eps^{-1-s}(x).
    \end{align*}
    Combining the estimates $H_4$ and $H_5$, we have
    \begin{equation*}
        |F_{1,2,3}|\leq c\nu(x),
    \end{equation*}
    where $\nu(x)$ is determined in \eqref{est.F:defn.nu}.
As a result, using the estimate $F_{1,2}(x)$ and \eqref{conv:est.F11}, we have 
\begin{equation*}
    F_1(x)\leq c\nu(x).
\end{equation*}

\noindent
\textbf{Estimate of $F_2$.} First, we write
\begin{align*}
    F_2(x)=\eps\int_{\|z\|_\infty\leq\frac32\eps}(\cdot)\,dz+\eps\int_{\|z\|_\infty>\frac32\eps}(\cdot)\,dz\eqqcolon F_{2,1}(x)+F_{2,2}(x).
\end{align*}
Using \eqref{est.F:ass2} and \eqref{est.F:ass1}, 
we have
\begin{align*}
    F_{2,1}(x)&\leq c\eps\int_{|z|<\frac{3}2\eps\sqrt{n}}[\phi_\eps]_{C^{s}}\frac{d_\eps^{s-(1+s_1)}(x)+d^{s-(1+s_1)}_\eps(x-z)}{|z|^{n+s-s_1}}\,dz\leq c\nu(x),
\end{align*}
where we have used the fact that $d_\eps(x-z)\geq \frac{d_\eps(x)}2$ by \eqref{dist.aux2:main1} together with the fact that $|z|\leq 2\eps\sqrt{n}\leq \frac{d_\eps(x)}{2} $.

We estimate the term $F_{2,2}(x)$ by dividing the proof into two cases $s\neq\frac12$ and $s=\frac12$.
\begin{enumerate}
\item Let $s\neq\frac12$. We split it into the three cases as in the estimate of $F_{1,2}$:
\begin{align*}
    F_{2,2}(x)=\eps\sum_{i=1}^3\int_{z\in \widetilde{A}_i,\|z\|_\infty\geq\frac{3}2\eps}(\cdot)\,dz\eqqcolon \sum_{i=1}^3 F_{2,2,i}(x),
\end{align*}
where the sets $\widetilde{A}_i$ are defined as in the set $A_i$ given by \eqref{conv:defn.A_i} with $p$ replaced by $z$. 
\begin{itemize}
    \item Estimate of $F_{2,2,1}(x)$. As in the first observation in \eqref{conv:obs.A_i}, we have $z\in \widetilde{A}_1$, then $|z|>\frac{d_\eps(x)}{4}$. Using this, \eqref{est.F:ass1} and the boundedness of $\phi_i$ by \eqref{est.F:ass2}, we have
    \begin{align*}
       F_{2,2,1}(x)&\leq c\eps\left(\int_{{|z|>\frac{d_\eps(x)}4}}\frac{d_\eps(x)^{s-1}}{|z|^{n+2s}}\,dz+\int_{|z|>\frac{d_\eps(x)}4}\frac{\|\nabla(\overline{u}_\eps\zeta)\|_{L^\infty(B_2(\Omega)\setminus\Omega^\eps)}\,dz}{|z|^{n+2s}}\right)\leq c\eps d_\eps(x)^{-1-s}.
    \end{align*}
    \item Estimate of $F_{2,2,2}(x)$. Similarly, as in the second and third observations given by \eqref{conv:obs.A_i}, we observe if $z\in A_2$ and $|z|\leq\frac{d_\eps(x)}{4}$, then $|z|\eqsim d_\eps(x-z)\eqsim d_\eps(x)\eqsim \eps$. Thus, together with \eqref{est.F:ass1} and \eqref{dist.tailint:main4}, the above observation yields
    \begin{align*}
         F_{2,2,2}(x)&\leq c\eps\int_{\{\frac{d_\eps(x)}{c}<|z|<\frac{d_\eps(x)}4\}\cap A_2}\frac{d_\eps^{s-1}(x)+d_\eps^{s-1}(x-z)\mbox{\large$\chi$}_{\{x-z\in \Omega^{\frac32\eps}\setminus \Omega_{\frac32\eps}\}}+1}{|z|^{n+2s}} \,dz\\
         &\quad +c\eps\int_{\{|z|>\frac{d_\eps(x)}4\}\cap A_2}\frac{d_\eps^{s-1}(x)+d_\eps^{s-1}(x-z)\mbox{\large$\chi$}_{\{x-z\in \Omega^{\frac32\eps}\setminus \Omega_{\frac32\eps}\}}+1}{|z|^{n+2s}} \,dz\\
         &\leq c\eps d_\eps^{-1-s}(x).
    \end{align*}
    In particular, we have used $|\nabla(\overline{u}_\eps\zeta)(x-z)|\leq c$ when $x-z\in \Omega^{\frac32\eps}\setminus \Omega^\eps\subset B_2(\Omega)\setminus \Omega^\eps$ by \eqref{est.F:ass1}.
    \item Estimate of $F_{2,2,3}(x)$. Using the fact that $x-z\in \Omega_\eps$, \eqref{est.F:ass1}, we have 
    \begin{align*}
        F_{2,2,3}(x)&\leq c\eps\left(\int_{\eps<|z|<\frac{d_\eps(x)}2}\frac{|\nabla\overline{u}_\eps(x)-\nabla\overline{u}_\eps(x-z)|}{|z|^{n+2s}}\,dz+\int_{|z|\geq\frac{d_\eps(x)}2}\frac{d_\eps^{s-1}(x)+d_\eps^{s-1}(x-z)\mbox{\large$\chi$}_{\{x-z\in\Omega_\eps\}}}{|z|^{n+2s}}\,dz\right)\\
        &\leq c\eps\left(\int_{\eps<|z|<\frac{d_\eps(x)}2}\frac{d_\eps^{s-(1+\min\{2s+\frac\alpha2,1\})}(x)+d_\eps^{s-(1+\min\{2s+\frac\alpha2,1\})}(x-z)}{|z|^{n+2s-\min\{2s+\frac\alpha2,1\}}}\,dz+d_\eps^{-1-s}(x)\right)\\
        &\leq c\nu(x)
    \end{align*}
    where we have used \eqref{dist.tailint:main3}, \eqref{dist.aux2:main1} and $\nu(x)$ is determined in \eqref{est.F:defn.nu}.
    
\end{itemize}
    \item The case $s=\frac12$. We write
    \begin{equation*}
        \mathcal{H}(x,x-z)=(\nabla\overline{u}_\eps(x)-\nabla(\overline{u}_\eps\zeta)(x-z))\quad\text{and}\quad \mathcal{K}(x,x-z)\coloneqq (\phi_\eps(x)-\phi_\eps(x-z))K\left(\frac{x}\eps,\frac{x-z}\eps\right)
    \end{equation*}
    to see that $\mathcal{K}$ is $\eps\Z^n$-periodic and 
    \begin{align*}
        F_{2,2}(x)&=\eps\int_{\frac{3}2\eps<|z|<\frac{d_\eps(x)}4}\mathcal{H}(x,x-z) \mathcal{K}(x,x-z)\,dz+\eps\int_{\frac{d_\eps(x)}4\leq|z|}\mathcal{H}(x,x-z) \mathcal{K}(x,x-z)\,dz+\widetilde{F}_{2,2}(x)\\
        &\eqqcolon F_{2,2,1}(x)+F_{2,2,2}(x)+\widetilde{F}_{2,2}(x),
    \end{align*}
    where $|\widetilde{F}_{2,2}(x)|\leq c\int_{|z|<\frac{3\sqrt{n}}2\eps}|\mathcal{H}(x,x-z)\mathcal{K}(x,x-z)|\leq c\nu(x)$ by the estimate of $F_{2,1}(x)$.
    Next, by \eqref{est.F:ass1} and \eqref{est.F:ass2}, we estimate $F_{2,2,2}(x)$ as 
    \begin{align*}
        |F_{2,2,2}(x)|&\leq c\eps\int_{\frac{d_\eps(x)}4\leq|z|}\frac{d_\eps(x)^{s-1}+d_\eps(x-z)^{s-1}\mbox{\large$\chi$}_{\{x-z\in\Omega^\eps\}}+\|\nabla \overline{u}\|_{L^\infty(B_2(\Omega)\setminus \Omega^\eps)}}{|z|^{n+2s}}\,dz\\
        &\leq c\eps d_\eps^{-s-1}(x)+c\eps\int_{\frac{d_\eps(x)}{4}<|z|}\frac{d_\eps(x-z)^{s-1}\mbox{\large$\chi$}_{\{x-z\in\Omega_\eps\}}+d_\eps(x-z)^{s-1}\mbox{\large$\chi$}_{\{x-z\in\Omega^\eps\setminus \Omega_\eps\}}}{|z|^{n+2s}}\,dz\\
        &\leq c\eps d_\eps^{-s-1}(x),
    \end{align*}
    where we have used \eqref{dist.tailint:main3} and \eqref{dist.tailint:main4} for the last inequality.

    To estimate $F_{2,2,1}(x)$, we first write $\mathcal{K}(x)=\int_{\eps\mathbb{T}^n}\mathcal{K}(x,x-z)\,dz$ and see that
    \begin{align*}
        F_{2,2,1}(x)&=\eps\int_{\frac{3}2\eps<|z|<\frac{d_\eps(x)}4}\mathcal{H}(x,x-z) \mathcal{K}(x)\,dz+\eps\int_{\frac{3}2\eps<|z|<\frac{d_\eps(x)}4}\mathcal{H}(x,x-z) (\mathcal{K}(x,x-z)-\mathcal{K}(x))\,dz\\
        &\eqqcolon \mathcal{F}_{1}(x)+\mathcal{F}_2(x).
    \end{align*}
    First, using the fact that $x-z\in \Omega^\eps$ when $|z|<\frac{d_\eps(x)}{2}$, \eqref{dist.aux2:main1}, \eqref{est.F:ass1} and \eqref{est.F:ass2}, we get
    \begin{align*}
        \mathcal{F}_1(x)&=\eps\int_{\frac{3}2\eps<|z|<\frac{d_\eps(x)}4}\left(\mathcal{H}(x,x-z)-\frac{z\nabla^2\overline{u}_\eps(x)}{|z|^{n+2s}}\right) \mathcal{K}(x)\,dz\leq c\eps\int_{|z|<\frac{d_\eps(x)}4}\frac{d^{-1-s-\alpha}_\eps(x)}{|z|^{n-\alpha}}\,dz
        \leq c\eps d_\eps^{-1-s}(x).
    \end{align*}
     For $\mathcal{F}_2(x)$, we are going to use the same argument as in the estimate of $H_4$. To this end, we write
    \begin{align*}
        \mathcal{F}_2(x)&=\eps\int_{\frac{3}2\eps<|z|<\frac{d_\eps(x)}2}\widetilde{\varphi}(z)\mathcal{H}(x,x-z) (\mathcal{K}(x,x-z)-\mathcal{K}(x))\,dz\\
        &\quad+\eps\int_{\frac{3}2\eps<|z|<\frac{d_\eps(x)}2}(1-\widetilde{\varphi}(z))\mathcal{H}(x,x-z) (\mathcal{K}(x,x-z)-\mathcal{K}(x))\,dz\eqqcolon \mathcal{F}_{2,1}(x)+\mathcal{F}_{2,2}(x),
    \end{align*}
    where $\widetilde{\varphi}$ is determined as in \eqref{est.F:varphi} with $\|\cdot\|_\infty$ replaced by $|\cdot|$. Let $\mathcal{W}$ be a solution to $(-\Delta)\mathcal{W}(z)=\mathcal{K}(x,x-z)-\mathcal{K}(x)$. As $\mathcal{K}$ is $\eps\Z^n$-periodic, we have 
    \begin{align*}
        \|\mathcal{W}\|_{W^{1,\infty}(\bbR^n)}\leq c\eps.
    \end{align*}
    In addition, for any $|z|<\frac{d_\eps(x)}{2}$, using \eqref{est.F:ass1} and \eqref{dist.aux2:main1}, we have 
    \begin{align*}
        |\mathcal{H}(x,x-z)|\leq c|z|^{-n}d_\eps^{s-2}(x),\quad |D_z\mathcal{H}(x,x-z)|\leq cd_\eps^{s-2}(x)|z|^{-n-1}.
    \end{align*}
    Therefore, as in the estimate of $H_{4,1}$ together with the fact that $s=\frac12$, we have 
    \begin{align*}
        |F_{2,1}(x)|\leq c\eps d_\eps^{-1-s}(x).
    \end{align*}
    Next, by \eqref{est.F:varphi}, we have 
    \begin{align*}
        |\mathcal{F}_{2,2}(x)|\leq c\eps\left(\int_{\frac32\eps<|z|<c\eps}\frac{d^{s-2}_\eps(x)}{|z|^{n}}\,dz+\int_{\frac{d_\eps(x)}c<|z|<\frac{d_\eps(x)}2}\frac{d^{s-2}_\eps(x)}{|z|^{n}}\,dz\right)\leq c\eps d_\eps^{-1-s}(x).
    \end{align*}
\end{enumerate}
Altogether, we have proved
\begin{align*}
    |F_2(x)|\leq  c\nu(x).
\end{align*}
It remains to estimate $F_3$. We follow the same argument as in the estimate of $F_2$ with $\phi_\eps$ and $\nabla\overline{u}_\eps$ replaced by $\psi_\eps$ and $h$, respectively.

\textbf{Estimate of $F_3$.}  
First, we write
\begin{align*}
    F_3(x)=\int_{\|z\|_\infty\leq\frac32\eps}(\cdot)\,dz+\int_{\|z\|_\infty>\frac32\eps}(\cdot)\,dz\eqqcolon F_{3,1}(x)+F_{3,2}(x).
\end{align*}
As in the estimate of $F_{2,1}$ together with \eqref{est.F:ass2}, we derive
\begin{align*}
    F_{3,1}(x)\leq  c\eps^{2s}\int_{\frac32\eps\mathbb{T}^n}[\psi_\eps]_{C^{s}}\frac{1}{|z|^{n+2s-(1+s)}}\,dz\leq c\eps.
\end{align*}
For $F_{3,2}$, using the boundedness of $\psi$ by \eqref{est.F:ass2}, we have
\begin{align*}
    F_{3,2}(x)\leq c\eps^{2s}\left(\int_{\eps<|z|<1}\frac{\|h\|_{C^1}\,dz}{|z|^{n+2s-1}}+\int_{|z|>1}\frac{\|h\|_{L^\infty}\,dz}{|z|^{n+2s}} \right)\leq c\left(\eps^{2-2s}\mbox{\large$\chi$}_{\{s>\frac12\}}+\eps^{2s} \mbox{\large$\chi$}_{\{s<\frac12\}}+\eps|\ln\eps|\mbox{\large$\chi$}_{\{s=\frac12\}}\right).
\end{align*}
Therefore, we have
\begin{align*}
    |F_3(x)|\leq c\left(\eps^{2-2s}\mbox{\large$\chi$}_{\{s>\frac12\}}+\eps^{2s} \mbox{\large$\chi$}_{\{s<\frac12\}}+\eps|\ln\eps|\mbox{\large$\chi$}_{\{s=\frac12\}}\right).
\end{align*}
Finally, combining all estimates of $F_1,F_2,F_3$ yields the desired estimate. This completes the proof.
\end{proof}

Next, we prove $L^2$ error estimates of solutions to Dirichlet problems with fractional divergence type data that was used to estimate $L^2$-norm of $z_\eps-z_{1,\eps}$ given in \eqref{conv:eq.z1}.

\begin{lemma}\label{lem.est.fracdiv}
Let $r_0,M,N,C_1>0$, $\gamma\in(\frac12,1)$, $\alpha\in(0,1]$ and let $K\in C^\alpha(\mathbb{T}^n)$ and $\Omega$ be a $C^{1,\alpha}$ domain with $C^{1,\alpha}$ radius $r_0$, $C^{1,\alpha}$ norm bounded by $M$, $\mathrm{diam}(\Omega)\leq N$ and $0\in\Omega$. Let $\Omega^\eps$ be the $\eps$-tubular neighborhood given in \eqref{defn.omegaeps} for $\eps\leq \eps_0$, where $\eps_0$ is determined in \eqref{conv:eps0.Omega}. Let $\overline{u}_\eps $ satisfy \eqref{est.F:ass1} and let $\phi_i, \psi$ be periodic and $h\in C_c^{1+\alpha}(B_2(\Omega))$ with \eqref{est.F:ass2}. Recall the function $(-\Delta)^s\widetilde{F}\in H^{-s}(\Omega)$ given in \eqref{conv:rep.tildeF}. 
Then for the weak solution $v$ of
    \begin{equation*}
\left\{
\begin{alignedat}{3}
L_\eps v&=(-\Delta)^s\widetilde{F}&&\qquad \mbox{in  $\Omega$}, \\
v&=0&&\qquad  \mbox{in $\bbR^n\setminus \Omega$},
\end{alignedat} \right.
\end{equation*}
there is a constant $c=c(n,s,\Lambda,[K]_{C^\alpha(\mathbb{T}^n)},\alpha,r_0,M,N,C_1,\gamma)$ such that
\begin{align}\label{fracdiv.mainres3}
        \|v\|_{L^2(\Omega)}\leq c\eps^{s_0},
    \end{align}
    where the constant $s_0$ is determined in \eqref{conv:defn.s_0}.
\end{lemma}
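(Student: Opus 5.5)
We will argue by duality with the Green function of $L_\eps$. By symmetry of $K$, $G_\eps(x,y)=G_\eps(y,x)$, and formally
\[
v(x)=\langle (-\Delta)^s\widetilde F, G_\eps(x,\cdot)\rangle ,
\]
where the pairing is the sum of the two double integrals in \eqref{conv:rep.tildeF} with $\varphi=G_\eps(x,\cdot)$. To justify this rigorously, we test the equation for $v$ against the weak solution of $L_\eps \zeta_x = \rho_\delta(\cdot-x)$, a mollified Dirac mass, and then let $\delta\to0$ using \eqref{green:uppbdd} and \eqref{green:hol}.

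The plan is to bound $|v(x)|$ pointwise for each $x\in\Omega$ and then integrate $|v|^2$ over $\Omega$. In the first double integral we split $\phi_\eps(y)\varphi(y)-\phi_\eps(y-z)\varphi(y-z)$ as
\[
\phi_\eps(y)\bigl(\varphi(y)-\varphi(y-z)\bigr)+\bigl(\phi_\eps(y)-\phi_\eps(y-z)\bigr)\varphi(y-z).
\]
We split the second double integral with $\psi_\eps$ in the same way.

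\textbf{Terms with $\varphi(y)-\varphi(y-z)$.} For these we divide the $z$-integration into $|z|<\eps$ and $|z|\ge\eps$.
\begin{itemize}
\item For $|z|<\eps$ we use \autoref{lem.fracsob.green} with some $\alpha'<s$ in place of $\alpha$. The factor $|\nabla(\overline u_\eps\zeta)(y)-\nabla(\overline u_\eps\zeta)(y-z)|$ is at most $|z|^{\mu}d_\eps^{s-1-\mu}(y)$ by \eqref{est.F:ass1}, and we take $\mu=2s-\alpha'$ (or $\mu=1$ when $2s>1$). This reduces the $z$-integral to the quantity estimated in \eqref{fracsob.green:main.est}.
\item For $|z|\ge\eps$ we bound $|\varphi(y)|+|\varphi(y-z)|$ directly by \eqref{green:uppbdd}, together with the decay $|z|^{-n-2s}$.
\end{itemize}
In both cases the prefactor $\eps$ (respectively $\eps^{2s}$ for the $h$-terms) is kept outside, and the resulting kernels in $y$ are weighted by $d_\eps^{s-1-\mu}(y)$ or by bounds on $h$.

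\textbf{Terms with $\phi_\eps(y)-\phi_\eps(y-z)$.} Here $\varphi$ is not differenced. We use only $[\phi_\eps]_{C^s}\le c\eps^{-s}$ for $|z|<\eps$ and $|\phi_\eps|\le c$ for $|z|\ge\eps$. This gives a right-hand side of size at most
\[
\eps^{1-s}\!\int_{|z|<\eps}\!|z|^{s+\mu-n-2s}\,dz\; d_\eps^{s-1-\mu}(y)\;+\;\eps\int_{|z|\ge\eps}\cdots ,
\]
which is of the same order as $|F(y)|$ in \autoref{lem.est.F}. Integrated against $G_\eps(x,y)$, it is handled exactly as in \eqref{conv:est.z_{1,eps}}, where we choose $\beta$ close to $s$ and apply \autoref{lem.singdist}.

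For the first group of terms, inserting \eqref{fracsob.green:main.est} produces kernels of the form
\[
\frac{\eps^{\beta-\alpha'}+d_\Omega^{\beta-\alpha'}(x)}{|x-y|^{n-2s+\beta}}
\qquad\text{and}\qquad
\frac{d_\Omega^\beta(x)+\eps^\beta}{|x-y|^{n-2s+\alpha'+\beta}} ,
\]
multiplied by weights such as $\eps\, d_\eps^{s-1-\mu}(y)$. We then apply Minkowski's inequality (or Schur's test) in $L^2_x$ and \autoref{lem.singdist}, using $d_\eps\ge\eps$ on $\Omega$ and $d_\eps\eqsim d_\Omega+\eps$. Whenever a logarithm appears at $s=\tfrac12$, we absorb it into $\eps^\gamma$; this is possible because $\gamma<1$. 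Exponent bookkeeping should give $\eps^{\min\{2s,2-2s\}}$ for $s\neq\tfrac12$.

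The main obstacle is the first group of terms near the boundary. There $\nabla\overline u_\eps$ blows up like $d_\eps^{s-1}$, so we only gain from $\varphi$ a fractional difference of order $\alpha'<s$. The exponents $\alpha'$, $\mu$ and $\beta$ must therefore be balanced so that the singular integral $\int_\Omega d_\eps^{s-1-\mu}(y)\,d_\Omega^{\beta}(y)\,|x-y|^{-(n-2s+\alpha'+\beta)}\,dy$ stays within $\eps^{s_0-1}$ in $L^2$. I would first try $\alpha'=s-\delta$ and $\beta=s-\delta/2$, which mirrors the choices made in the proof of \autoref{lem.conv}, and adjust $\mu$ separately in the cases $s<\tfrac12$ and $s\ge\tfrac12$.
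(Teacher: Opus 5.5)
Your plan is correct and is essentially the paper's argument: represent $v$ through the Green function, split the product difference of $\phi_\eps\varphi$ into a $\phi_\eps$-difference term (bounded pointwise by $\nu$, as for $F_2$ in \autoref{lem.est.F}) and a Green-difference term, treat the latter on $|z|<\eps$ with \autoref{lem.fracsob.green}, control the $|z|\ge\eps$ part by $\nu$-type bounds, and integrate everything with \autoref{lem.singdist}. The one real difference is how the Green representation is justified: the paper mollifies $K$ to obtain a pointwise right-hand side $f_i+F_i$ and then approximates $v$ by $v_i$ in $L^2$, whereas you test against solutions with mollified Dirac data and apply Tonelli to the split terms, which reaches the same point more directly and avoids the kernel mollification.
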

\begin{proof}
In the proof, since the constant $c$ always depends only on $n,s,\Lambda,[K]_{C^\delta},\alpha,r_0,M,N,C_1,\gamma$, we omit the dependence of the generic constant $c$ for the convenience of notation.

Our strategy is to treat the function $(-\Delta)^s\widetilde{F}(x)$ in a pointwise way and apply the Green formula to represent the solution $v$. Recall that
\begin{align*}
    &\langle(-\Delta)^s\widetilde{F},\varphi\rangle\\
    &=-\frac\eps2\iint_{\bbR^n\times\bbR^n}\frac{(\nabla(\overline{u}_\eps\zeta)(x)-\nabla(\overline{u}_\eps\zeta)(x-z))\cdot (\phi_\eps(x)\varphi(x)-\phi_\eps(x-z)\varphi(x-z))}{|z|^{n+2s}}K\left(\frac{x}{\eps},\frac{x-z}{\eps}\right)\,dz\,dx\nonumber\\
    &\quad-\frac{\eps^{2s}}2\iint_{\bbR^n\times\bbR^n}\frac{(h(x)-h(x-z)) (\psi_\eps(x)\varphi(x)-\psi_\eps(x-z)\varphi(x-z))}{|z|^{n+2s}}K\left(\frac{x}{\eps},\frac{x-z}{\eps}\right)\,dz\,dx \\
    &=: \langle \widetilde{f}_{1},\varphi\rangle + \langle \widetilde{f}_{2},\varphi\rangle
\end{align*}

However, when $s\geq\frac12$, $(-\Delta)^s\widetilde{F}(x)$ is not well defined. To avoid this issue, we mollify the associated kernel to ensure Lipschitz regularity in both variables, i.e. to use
\begin{align*}
    |K(x,y)-K(x+\xi,y+\zeta)|\lesssim (|\xi|+|\zeta|).
\end{align*}

First, we mollify the kernel $K$. As $\Lambda^{-1}\leq K\leq \Lambda$, there is a sequence of smooth, symmetric, and periodic functions $(K_i)_i$  such that $\Lambda^{-1}\leq K_i\leq \Lambda$ and 
\begin{align}\label{conv:K_i.moll}
    \lim_{i\to\infty}\|K_i-K\|_{L^p(\mathbb{T}^n\times \mathbb{T}^n)}=0
\end{align}
for any $p\in(1,\infty)$. Note that such sequences are obtained via a standard mollification argument using convolution.
Define $f_i(x)$ by
\begin{align*}
    f_i(x)\coloneqq& {-\eps\phi_\eps(x)\cdot\int_{|z|<\eps}\frac{(\nabla \overline{u}_\eps(x)-\nabla(\overline{u}_\eps\zeta)(x-z))}{|z|^{n+2s}}K_i\left(\frac{x}{\eps},\frac{x-z}{\eps}\right)\,dz}\\
    &-\eps\phi_\eps(x)\cdot\int_{|z|\geq\eps}\frac{(\nabla \overline{u}_\eps(x)-\nabla (\overline{u}_\eps\zeta)(x-z))}{|z|^{n+2s}}K\left(\frac{x}\eps,\frac{x-z}\eps\right)\,dz\eqqcolon f_{i,1}(x)+f_{i,2}(x).
\end{align*}
We observe from the estimates of $F_{2,2}(x)$ and $F_{3,2}(x)$ given in the proof of \autoref{lem.est.F} together with the boundedness of $\phi_\eps$ by \eqref{est.F:ass2} that
\begin{align}\label{est.fracdiv:est.I_2}
  {{ |f_{i,2}(x)|}}\leq c\nu(x),
\end{align}
where $\nu(x)$ is given in \eqref{est.F:defn.nu}. 
Next, to see that $f_{i,1}$ is also bounded in $\Omega$, we rewrite $f_{i,1}(x)$ as
\begin{align*}
    f_{i,1}(x)&=-\eps\phi_\eps(x)\cdot\int_{|z|<\eps}\frac{\nabla\overline{u}_\eps(x)-\nabla(\overline{u}_\eps\zeta)(x-z)}{|z|^{n+2s}}\left(K_i\left({x}/{\eps},{(x-z)}/{\eps}\right)-K_i\left({x}/{\eps},{x}/{\eps}\right)\right)\,dz\\
    &\quad-\eps\phi_\eps(x)\cdot\int_{|z|<\eps}\frac{\nabla\overline{u}_\eps(x)-\nabla(\overline{u}_\eps\zeta)(x-z)-z\nabla^2 \overline{u}_\eps(x)}{|z|^{n+2s}}K_i\left({x}/{\eps},{x}/{\eps}\right)\,dz
\end{align*}
by the fact that $\int_{|z|<\eps}z/|z|^{n+2s}\,dz=0$. Note that for any $x\in \Omega$ and $|z|<\eps$, $x-z\in \Omega^{\frac\eps2}$. Thus, using this and the fact that $\nabla\overline{u}_\eps\in C^{2s+\alpha/2}_{\mathrm{loc}}(\Omega^\eps)$ and that the kernels $K_i$ are smooth, we have $f_{i,1}\in L^\infty(\Omega)$. Similarly, we mollify $h$ to see that there are smooth functions $h_i$ such that
\begin{equation*}
    \lim_{i\to\infty}\|h_i-h\|_{C^{1}(\bbR^n)}=0.
\end{equation*}
Then we get that for 
\begin{align*}
    F_i(x)&\coloneqq -\eps^{2s}\psi_\eps(x)\int_{|z|<\eps}\frac{(h_i(x)-h_i(x-z))}{|z|^{n+2s}}K_i\left(\frac{x}{\eps},\frac{x-z}{\eps}\right)\,dz\\
    &\quad -{\eps^{2s}}\psi_\eps(x)\int_{|z|\geq\eps}\frac{(h(x)-h(x-z))}{|z|^{n+2s}}K\left(\frac{x}\eps,\frac{x-z}\eps\right)\,dz\eqqcolon F_{i,1}+F_{i,2}(x),
\end{align*}
the function $F_i$ is bounded in $\Omega$ with $|F_{i,2}(x)|\leq c\eps^{\min\{2s,1\}}(1+|\ln \eps|)$.
Therefore, there is a sequence of weak solutions $(v_{i})_i$ to
\begin{equation*}
\left\{
\begin{alignedat}{3}
L_\eps v_{i}&=f_i+F_i&&\qquad \mbox{in  $\Omega$}, \\
v_i&=0&&\qquad  \mbox{in $\bbR^n\setminus \Omega$}.
\end{alignedat} \right.
\end{equation*}
Moreover, using the symmetry of the kernel, we observe for any $\varphi\in L^1(\bbR^n)$ with $\varphi\equiv 0$ on $\bbR^n\setminus \Omega$ 
\begin{equation}\label{conv:fi.rep}
\begin{aligned}
    \int_\Omega f_{i,1}\varphi&=-\frac\eps2\iint_{\bbR^n\times\{|z|<\eps\}}\frac{(\nabla(\overline{u}_\eps \zeta)(x)-\nabla(\overline{u}_\eps\zeta)(x-z))\cdot ((\phi_\eps\varphi)(x)-(\phi_\eps\varphi)(x-z))}{|z|^{n+2s}}{K}_i\left(\frac{x}{\eps},\frac{x-z}{\eps}\right) \,d z \, d x.
\end{aligned}
\end{equation}

Hence, we obtain  by the Cauchy--Schwarz inequality and the symmetry of the kernels 
\begin{align*}
\big|\langle\widetilde{f}_1-f_{i},\varphi\rangle \big|
&=\frac\eps2 \Bigg| \iint_{\bbR^n \times \{|z|<\eps \} }
\frac{\bigl(\nabla(\overline u_\eps\zeta)(x)-\nabla(\overline u_\eps\zeta)(x-z)\bigr)}{|z|^{n+2s}} \cdot\bigl((\phi_\eps\varphi)(x)-(\phi_\eps\varphi)(x-z)\bigr) \\
&\qquad\qquad\qquad \times\left(K_i\left(\frac{x}{\eps},\frac{x-z}{\eps}\right)
-K\left(\frac{x}{\eps},\frac{x-z}{\eps}\right)\right)\,dz\,dx \Bigg| \\
&\leq c(\eps) \Bigg( \iint_{ \Omega \times \{|z|<\eps\} }
\frac{|\nabla(\overline u_\eps\zeta)(x)-\nabla(\overline u_\eps\zeta)(x-z)|^2}{|z|^{n+2s}} \\
&\qquad\qquad\qquad \times \left|K_i\left(\frac{x}{\eps},\frac{x-z}{\eps}\right) -K\left(\frac{x}{\eps},\frac{x-z}{\eps}\right)\right|^2\,dz\,dx
\Bigg)^{1/2}\Vert\varphi\Vert_{H^s(\bbR^n)}\\
&\leq c(\eps,p)\Vert K_i-K \Vert_{L^p(\mathbb T^n\times\mathbb T^n)} \Vert \varphi \Vert_{H^s(\bbR^n)}
\end{align*}
for sufficiently large $p>2$, where the last inequality follows from the fact that $\nabla\overline{u}_\eps\in C^{2s+\alpha/2}_{\mathrm{loc}}(\Omega^\eps)$, H\"older's inequality, the periodicity of $K_i-K$, and the fact that $\phi_\eps \in C^{\sigma}(\mathbb T^n)$ for some $\sigma > s$; see \autoref{lem.bdd.periodic}.

Similarly, we have 
\begin{equation*}
\begin{aligned}
    \int_\Omega F_{i,1}\varphi&=-\frac{\eps^{2s}}{2}\iint_{\bbR^n\times\{|z|<\eps\}}\frac{(h_i(x)-h_i(x-z))\cdot ((\psi_\eps\varphi)(x)-(\psi_\eps\varphi)(x-z))}{|z|^{n+2s}}{K}_i\left(\frac{x}{\eps},\frac{x-z}{\eps}\right) \,d z \, d x
\end{aligned}
\end{equation*}
and estimate
\begin{align*}
|\langle\widetilde f_{2}-F_{i},\varphi\rangle|
\leq c(\eps,p)\left(
\Vert K_i-K \Vert_{L^p(\mathbb T^n\times\mathbb T^n)}
+\Vert h_i-h\Vert_{C^1(\bbR^n)}
\right)\Vert\varphi\Vert_{H^s(\bbR^n)}.
\end{align*}

Altogether, and using also that by construction
\begin{align*}
(-\Delta)^s\widetilde F-(f_i+F_i)=(\widetilde f_1-f_{i})+(\widetilde f_2-F_{i}),
\end{align*}
in the weak sense, we deduce for every $\varphi\in H^s(\bbR^n)$ vanishing outside $\Omega$ that
\begin{align*}
\left|\left\langle(-\Delta)^s\widetilde F-(f_i+F_i),\varphi\right\rangle\right| \leq c(\eps,p)\left(
\Vert K_i-K\Vert_{L^p(\mathbb T^n\times\mathbb T^n)}
+\Vert h_i-h\Vert_{C^1(\bbR^n)}
\right)\Vert\varphi\Vert_{H^s(\bbR^n)}.
\end{align*}

Therefore, testing the equation $L_\eps(v-v_{i})=(-\Delta)^s\widetilde{F}-(f_i+F_i)$ in $\Omega$ with $v-v_{i}$ and using ellipticity and the fractional Poincar\'e inequality, we deduce for sufficiently large $p>2$,
\begin{align*}
    \|v-v_i\|_{L^2(\Omega)} \leq c [ (-\Delta)^s\widetilde F-(f_i+F_i)]_{H^{-s}(\Omega)} \leq c(\eps,p)(\|K_i-K\|_{L^p(\mathbb{T}^n\times \mathbb{T}^n)}+\|h_i-h\|_{C^1(\bbR^n)}),
\end{align*}
where $c$ is independent of $i$. Moreover, we have also used the fact that if $x\notin \Omega^{\frac\eps2}$ and $|z|<\eps$, then $x-z\notin \Omega$ by \autoref{lem.dist.app}.
Thus, taking $i$ sufficiently large, by \eqref{conv:K_i.moll}, we have
\begin{align}\label{est.z2i.seq}
    \|v-v_i\|_{L^2(\Omega)}\leq \eps.
\end{align}
Next, using the Green representation of $v_{i}$, we are going prove the following $L^2$ estimate:
\begin{align*}
    \|v_{i}\|_{L^2(\Omega)}\leq c\eps^{s_0}.
\end{align*}
To this end, we estimate $J_i(x)$ given by
\begin{align*}
    v_{i}(x)&=\int_{\Omega}G_\eps(x,y)f_{i,1}(y)\,dy+\int_{\Omega}G_\eps(x,y)f_{i,2}(y)\,dy+\int_{\Omega}G_\eps(x,y)F_{i,1}(y)\,dy+\int_{\Omega}G_\eps(x,y)F_{i,2}(y)\,dy\\
    &\eqqcolon \sum_{i=1}^4J_i(x).
\end{align*}
\begin{itemize}[leftmargin=*]
    \item Estimate of $J_1(x)$. Plugging \eqref{conv:fi.rep} with $\varphi=G_\eps$ and rewriting this, we get
\begin{align*}
J_1(x)&=-\frac\eps2\int_{\Omega}G_\eps(x,y)\int_{|z|<\eps}\frac{(\nabla\overline{u}_\eps(y)-\nabla(\overline{u}_\eps\zeta)(y-z))\cdot(\phi_\eps(y)-\phi_\eps(y-z))}{|z|^{n+2s}}K_i\left(\frac{y}{\eps},\frac{y-z}{\eps}\right)\,dz\,dy\\
    &-\frac\eps2\int_{\bbR^n}\int_{|z|<\eps}\frac{(\nabla\overline{u}_\eps(y)-\nabla(\overline{u}_\eps\zeta)(y-z))(G_\eps(x,y)-G_\eps(x,y-z))}{|z|^{n+2s}}K_i\left(\frac{y}{\eps},\frac{y-z}{\eps}\right)\cdot\phi_\eps(y-z)\,dz\,dy\\
    &\eqqcolon J_{1,1}(x)+J_{2,1}(x).
\end{align*}
Following the same lines as in the estimate of $F_{2,1}(x)$ given in the proof of \autoref{lem.est.F} together with \eqref{est.F:ass1} and \eqref{est.F:ass2}, we have 
\begin{align}\label{conv:est.J_{1,1}}
J_{1,1}(x)\leq c\int_\Omega G_{\eps}(x,y)\nu(y)\,dy,
\end{align}
where $\nu(y)$ is determined in \eqref{est.F:defn.nu}.
Next, by the boundedness of $\phi$, \eqref{est.F:ass1} and \eqref{est.F:ass2}, we have
\begin{align*}
    J_{1,2}(x)\leq c\eps\int_{0}^1\int_{\Omega^{\frac\eps2}}\int_{|z|<\eps}\frac{(d_\eps^{s-(1+\min\{\frac{3s}2,1\})}(y)+d_\eps^{s-(1+\min\{\frac{3s}2,1\})}(y-tz))|(G_\eps(x,y)-G_{\eps}(x,y-z)|}{|z|^{n+2s-\min\{\frac{3s}2,1\}}}\,dz\,dy,
\end{align*}
where we have also used the fact that $G_\eps(x,y)=G_\eps(x,y-z)\equiv0$ when $y\notin \bbR^n\setminus \Omega^{\frac\eps2}$ and $|z|<\eps$, as $y-z\in \bbR^n\setminus \Omega^{\frac\eps4}$ by \autoref{lem.dist.app}. Next we observe $d_\eps(y)\geq 4\sqrt{n}\eps$ for any $y\in \Omega^{\frac\eps2}$ by \autoref{lem.dist.app}, which implies for $|z|<\eps$, $t\in[0,1]$, $|z|<\frac{d_\eps(y)}{2}$ and  $d_\eps(y-tz)\geq \frac{d_\eps(y)}{2}$ by \eqref{dist.aux2:main1}. With the observation, we apply \autoref{lem.fracsob.green} to further estimate $J_{1,2}$ as
\begin{align*}
    J_{1,2}(x)&\leq c\eps\int_{\Omega^{\frac\eps2}}d_\eps^{s-(1+\min\{\frac{3s}2,1\})}(y)\int_{B_\eps(y)}\frac{|(G_\eps(y,x)-G_{\eps}(z,x)|}{|y-z|^{n+\max\{\frac{s}2,2s-1\}}}\,dz\,dy\\
    &\leq c\eps\int_{\Omega^{\frac\eps2}}d_\eps^{s-(1+\min\{\frac{3s}2,1\})}(y)\left(\frac{\eps^{\beta-\sigma_0}+d^{\beta-\sigma_0}_\Omega(x)}{|x-y|^{n-2s+\beta}}+\frac{d^\beta_\Omega(x)+\eps^\beta}{|x-y|^{n-2s+\sigma_0+\beta}}\right)\,dy
\end{align*}
for any $\beta\in(\max\{\frac{s}{2},2s-1\},s)$, where we write $\sigma_0\coloneqq \max\{\frac{s}{2},2s-1\}$. Using the fact that $d_\eps(y)\geq d_\Omega(y)/c$, $d_\eps(y)\geq \eps$ for $y\in \Omega^{\frac\eps2}$ and applying the first claim in \autoref{lem.singdist}, we further estimate $J_{1,2}$ as 
\begin{align*}
    J_{1,2}(x)\leq c\eps^{s_0}\int_{\Omega^{\frac\eps2}}\left(\frac{d_\eps^{-s_0+\beta-s}(y)} {|y-x|^{n-2s+\beta}}+\frac{d_\eps(y)^{-s_0-s+\beta+\sigma_0}}{|y-x|^{n-2s+\sigma_0+\beta}}\right)\,dy
    \leq c\eps^{s_0}(1+d_\Omega^{s-s_0}(x))
\end{align*}
by taking $\beta$ close to $s$. As in \eqref{conv:est.z_{1,eps}}, using \eqref{conv:est.J_{1,1}} and the estimate $J_{1,2}$, we have
\begin{align*}
    \|J_1(x)\|_{L^2(\Omega)}\leq c\eps^{s_0}.
\end{align*}

\item Estimate of $J_2(x)$. Using \eqref{est.fracdiv:est.I_2} and \eqref{conv:est.z_{1,eps}}, we have
\begin{align*}
    \|J_2(x)\|_{L^2(\Omega)}\leq c\eps^{s_0}.
\end{align*}
\item Estimate of $J_3(x)$. We follow the same lines as in the estimate of $J_1(x)$ with $\nabla\overline{u}_\eps$ replaced by $h$ and use the fact that $\|h_i\|_{C^{1+\alpha}(\bbR^n)}\leq c$, $h_i\in C_c^{1+\alpha}(B_2(\Omega))$, in order to see that
\begin{align*}
    \|J_3(x)\|_{L^2(\Omega)}\leq c\eps^{2s}.
\end{align*}

\item Estimate of $J_4(x)$. As in the estimate of $J_2(x)$ with $\nabla\overline{u}_\eps$ replaced by $h$, by the estimate of $F_{3,2}(x)$ in the proof of \autoref{lem.est.F}, we deduce 
\begin{align*}
    \|J_4(x)\|_{L^2(\Omega)}\leq c(\eps^{2s}_{\mbox{\large$\chi$}_{\{s<\frac12\}}}+\eps|\ln\eps|_{\mbox{\large$\chi$}_{\{s=\frac12\}}}+\eps^{2-2s}_{\mbox{\large$\chi$}_{\{s>\frac12\}}}).
\end{align*}

\end{itemize}
Combining all estimates $J_1,J_2,J_3,J_4$ yields
\begin{align*}
    \|v_{i}\|_{L^2(\Omega)}\leq c\eps^{s_0}.
\end{align*}
Together with \eqref{est.z2i.seq}, this proves \eqref{fracdiv.mainres3}. This completes the proof.
\end{proof}

\subsection{Convergence rates in $L^1$}

We end this section by providing a convergence rate for the Dirichlet problem when the boundary data is given by non-smooth data $g\in L^\infty(B_3(\Omega))\cap L^1_{2s}(\bbR^n)$.
\begin{lemma}\label{lem.conv.sub}
     Let $\alpha\in(0,1]$, $\eps\in(0,1]$, $r_0,M,N>0$, $K\in C^\alpha(\mathbb{T}^n)$ and $\partial\Omega\in C^{1,\alpha}$ with $C^{1,\alpha}$ radius $r_0$, $C^{1,\alpha}$ norm bounded by $M$, $\mathrm{diam}(\Omega)\leq N$ and $0\in\Omega$.
    Let $u_\eps$ and $\overline{u}$ be weak solutions to 
    \begin{equation*}
\left\{
\begin{aligned}
L_\eps u_\eps &= f
&\qquad& \text{in }\Omega, \\
u_\eps &= g
&\qquad& \text{in }\mathbb{R}^n\setminus\Omega,
\end{aligned}
\right.
\qquad
\left\{
\begin{aligned}
\overline{L} \overline{u} &= f
&\qquad& \text{in }\Omega, \\
\overline{u} &= g
&\qquad& \text{in }\mathbb{R}^n\setminus\Omega,
\end{aligned}
\right.
\end{equation*}
where $\overline{K}\coloneqq\int_{\mathbb{T}^n\times \mathbb{T}^n}K(x,x-z)\,dz\,dx$, $f\in L^2(\Omega)$ and $g\in C^{\gamma}(\bbR^n)\cap H^s(B_2(\Omega))$ for some $\gamma\in(0,1)$. Then there is a small constant $\sigma=\sigma(s,\gamma)$ such that
\begin{equation}\label{conv.sub:main.est}
\begin{aligned}
    \|u_\eps-\overline{u}\|_{L^1(\Omega)}\leq c\eps^\sigma(\|f\|_{L^\infty(\Omega)}+\|g\|_{C^{\gamma}(\bbR^n)})
\end{aligned}
\end{equation}
for some constant $c=c(n,s,\Lambda,[K]_{C^\alpha(\mathbb{T}^n)},\alpha,r_0,M,N,\gamma)$. 
\end{lemma}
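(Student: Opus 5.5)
The plan is to reduce \autoref{lem.conv.sub} to \autoref{lem.conv} by mollifying the data and splitting the error into three parts. Fix a small parameter $\delta=\eps^{\kappa}$, with $\kappa\in(0,1)$ to be chosen at the end. Let $g_\delta\coloneqq g*\eta_\delta$ be a standard mollification. Then $\|g-g_\delta\|_{L^\infty(\bbR^n)}\leq c\delta^\gamma$, $\|g_\delta\|_{C^1(B_2(\Omega))}\leq c\delta^{\gamma-1}$, and $\|g_\delta\|_{L^1_{2s}}+\|\nabla g_\delta\|_{L^1_{2s}}\leq c\delta^{\gamma-1}\|g\|_{C^\gamma}$, since $|\nabla g_\delta|\leq c\delta^{\gamma-1}[g]_{C^\gamma}$ and $|g(y)|\leq c(1+|y|)^\gamma$ is integrable against the weight. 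Mollify $f$ as well, but only after first reducing to the case $f\equiv 0$ in the following way. Write $u_\eps=u^1_\eps+u^2_\eps$ and $\overline u=\overline u^1+\overline u^2$, where the first pieces carry the datum $f$ with zero exterior value and the second pieces solve the homogeneous problems with exterior datum $g$. The first pieces are handled by approximating $f$ by $f_\delta\in C_c^{\infty}(\Omega)$ with $\|f_\delta\|_{C^{1+\alpha}}\leq c\delta^{-2}\|f\|_{L^\infty}$ and $f_\delta=f*\eta_\delta$ away from a $\delta$-neighbourhood of $\partial\Omega$. Then $\|f-f_\delta\|_{L^1}$ is small in an averaged sense, though not in $L^\infty$.

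The $L^1$ error then splits into three groups. The terms $u_\eps(f-f_\delta,g-g_\delta)$ and $\overline u(f-f_\delta,g-g_\delta)$ are controlled via the Green function and Poisson kernel bounds of \autoref{lem.green}, which are uniform in $\eps$, together with their obvious analogues for $\overline L$. The exterior part is bounded by the maximum principle, giving $\|\cdot\|_{L^\infty}\leq \|g-g_\delta\|_{L^\infty}\leq c\delta^\gamma$; this is where the $L^\infty$ control of $g$ (rather than an $L^2$ bound) is needed. The interior part is bounded by
\begin{align*}
\int_\Omega\int_\Omega G_\eps(x,y)|f-f_\delta|(y)\,dy\,dx\leq \sup_y\int_\Omega G_\eps(x,y)\,dx\,\|f-f_\delta\|_{L^1(\Omega)}.
\end{align*}
Here \eqref{green:uppbdd} bounds the supremum uniformly. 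The factor $\|f-f_\delta\|_{L^1}$ is not quantitatively small for a general $L^\infty$ function $f$, however. I would therefore replace $f_\delta$ by a cut-off $f\chi_{\Omega_\delta}$ mollified at a much smaller scale only if $f$ has additional regularity. Failing that, I would use instead that $u_\eps-\overline u$ for $f$-data can be written as $(\overline L^{-1}-L_\eps^{-1})f$ and apply duality: test against $\mathrm{sgn}(u_\eps-\overline u)$, write the error as $\int f(\overline v-v_\eps)$ for adjoint solutions with bounded right-hand side, and note that these adjoint solutions (the operators are symmetric) can themselves be mollified. I expect this duality step to be the main obstacle, since \autoref{lem.conv} requires $C^{1+\alpha}$ data and the adjoint right-hand side is only bounded. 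The way to close it is again interpolation: compare $L_\eps^{-1}\mathrm{sgn}$ with $L_\eps^{-1}(\mathrm{sgn}*\eta_\delta\cdot\chi_{\Omega_{\delta}})$, using the uniform $C^\beta$ bounds of \autoref{thm.uni.est} to get an $L^\infty$ difference of order $\delta^{\beta}$ plus a boundary-layer contribution $c\delta^{s}$ from \eqref{green:uppbdd}.

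For the smooth pieces I apply \autoref{lem.conv} with data $(f_\delta,g_\delta)$. This gives $L^2$, and hence $L^1$, error of size $c\eps^{s_0}\delta^{-2}$, where $s_0>0$ is as in \eqref{conv:defn.s_0}. For $g_\delta$ one also needs it to lie in $C^1(B_2(\Omega)\setminus\Omega)$ with the growth conditions in $L^1_{2s}$, which was verified above. Collecting the three groups yields
\begin{align*}
\|u_\eps-\overline u\|_{L^1(\Omega)}\leq c\left(\delta^{\min\{\gamma,\beta\}}+\eps^{s_0}\delta^{-2}\right)\left(\|f\|_{L^\infty}+\|g\|_{C^\gamma}\right),
\end{align*}
and choosing $\delta=\eps^{s_0/4}$ gives \eqref{conv.sub:main.est} with $\sigma=\frac{s_0}{4}\min\{\gamma,\beta\}$. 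Strictly, this $\sigma$ depends on $s,\gamma$ and on $\beta<s$ fixed, e.g. $\beta=s/2$. The assumption $g\in H^s(B_2(\Omega))$ is used only to guarantee that the weak solutions exist and that the maximum principle applies to the difference $u_\eps(g)-u_\eps(g_\delta)$.
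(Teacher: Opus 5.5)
Your treatment of the exterior datum is fine. The maximum principle bounds the solutions with exterior datum $g-g_\delta$ by $\|g-g_\delta\|_{L^\infty}\le c\delta^\gamma$ uniformly in $\Omega$, which is slightly cleaner than the paper's Poisson-kernel bound via \eqref{green:pois}. Applying \autoref{lem.conv} to mollified data and balancing $\delta$ against $\eps$ is also fine.

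The gap is the rough part of the right-hand side: your proposal never actually controls $L_\eps^{-1}(f-f_\delta)$ and $\overline L^{-1}(f-f_\delta)$, where $L_\eps^{-1}$ and $\overline L^{-1}$ denote solution operators with zero exterior data. You rightly note that $\|f-f_\delta\|_{L^1(\Omega)}$ has no rate for general bounded $f$. The duality step, however, is circular. Since $L_\eps$ and $\overline L$ are symmetric, the $L^\infty\to L^1$ norm of $L_\eps^{-1}-\overline L^{-1}$ equals that of its adjoint. So bounding $\|L_\eps^{-1}S-\overline L^{-1}S\|_{L^1(\Omega)}$ for $|S|\le 1$ is exactly the statement you are proving, with $g=0$.

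Your closing step does not break the circle as written. \autoref{thm.uni.est} applied to $L_\eps^{-1}(S-S_\delta)$ only gives bounds in terms of $\|S-S_\delta\|_{L^\infty}$, which is of order one. Because $L_\eps$ does not commute with convolution, H\"older continuity of $L_\eps^{-1}S$ says nothing about $L_\eps^{-1}(S*\eta_\delta)$. Smallness can only come from cancellation in $S-S*\eta_\delta$. That would require moving the mollifier onto $y\mapsto G_\eps(x,y)$ and using \eqref{green:hol} together with $G_\eps(x,y)=G_\eps(y,x)$, with separate estimates near the diagonal and in the layer $\{d_\Omega<\delta\}$. This could be carried out, but it would then apply directly to $f-f_\delta$, making the duality unnecessary. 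In any case, this argument is missing from your proposal.

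The missing idea, which is what the paper uses, is to measure the mollification error of $f$ in a negative norm where it is quantitatively small. Extend $f$ by zero. Plancherel and $|\hat\eta(t\xi)-1|\le c\min\{1,t|\xi|\}$ give $\|f-f*\eta_t\|_{H^{-s}(\bbR^n)}\le ct^{s}\|f\|_{L^2(\Omega)}$. Testing the equation with the solution gives $\|L_\eps^{-1}F\|_{L^2(\Omega)}+\|\overline L^{-1}F\|_{L^2(\Omega)}\le c\|F\|_{H^{-s}(\bbR^n)}$ with $c=c(n,s,\Lambda,N)$ independent of $\eps$; only ellipticity and the fractional Poincar\'e inequality are used. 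With $t=\eps^{\kappa}$, the rough part therefore costs $c\eps^{\kappa s}\|f\|_{L^2(\Omega)}$; compare \eqref{conv.sub:mol0} and \eqref{conv.sub:RHS}. The smooth part $f*\eta_t$ satisfies $\|f*\eta_t\|_{C^{1+\alpha}}\le ct^{-1-\alpha}\|f\|_{L^\infty}$ and is fed into \autoref{lem.conv} together with $g_\delta$. No cutoff near $\partial\Omega$ and no duality are needed.
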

\begin{proof}
We may assume $f=0$ in $\bbR^n\setminus \Omega$. Now, we mollify the data $f,g$. Let $\eta\in C_c^\infty(B_1)$ with $\int_{B_1}\eta=1$ and write $\eta_\eps(x)\coloneqq \eps^{-n}\eta(x/\eps)$. Let $\delta\coloneqq \frac{\min\{s,2-2s\}}4$. Define $f_\eps\coloneqq f*\eta_{\eps^\delta}$ and $g_\eps\coloneqq g*\eta_{\eps^{\delta}}$ to see that 
\begin{align}\label{conv.sub:mol0}
    \|f_\eps-f\|_{H^{-s}(\bbR^n)}\leq c\eps^{\delta s}\|f\|_{L^2(\Omega)},\quad \|f_\eps\|_{C^{1+\alpha}(\Omega)}\leq c\eps^{-\delta(1+\alpha)}\|f\|_{L^\infty(\Omega)}
\end{align}
and
\begin{align}\label{conv.sub:mol}
    &\|\nabla g_\eps\|_{L^1_{2s}(\bbR^n)}+\| g_\eps\|_{C^1(B_{2}(\Omega))}\leq c\eps^{-\delta}\|g\|_{L^\infty(\bbR^n)},\quad \|g_\eps\|_{C^\gamma(\bbR^n)}\leq c\|g\|_{C^\gamma(\bbR^n)}
\end{align}
for some constant $c=c(n,s,\gamma)$. Let $w_\eps$ and $\overline{w}$ be weak solutions to
\begin{equation*}
\left\{
\begin{aligned}
L_\eps w_\eps &= f_\eps
&\qquad& \text{in }\Omega, \\
w_\eps &= g
&\qquad& \text{in }\mathbb{R}^n\setminus\Omega,
\end{aligned}
\right.
\qquad
\left\{
\begin{aligned}
\overline{L} \overline{w} &= f_\eps
&\qquad& \text{in }\Omega, \\
\overline{w} &= g
&\qquad& \text{in }\mathbb{R}^n\setminus\Omega.
\end{aligned}
\right.
\end{equation*}
Then by standard energy estimates, we have
\begin{align}\label{conv.sub:RHS}
    \|w_\eps-u_\eps\|_{L^2(\Omega)}+\|\overline{w}-\overline{u}\|_{L^2(\Omega)}\leq c(n,s,\Lambda,N)\|f_\eps-f\|_{H^{-s}(\bbR^n)}\leq c(n,s,\Lambda,N)\eps^{\delta s}\|f\|_{L^2(\Omega)}.
\end{align}
Next, let $v_\eps$ and $\overline{v}$ be weak solutions to 
\begin{equation*}
\left\{
\begin{aligned}
L_\eps v_\eps &= f_\eps
&\qquad& \text{in }\Omega, \\
v_\eps &= g_\eps
&\qquad& \text{in }\mathbb{R}^n\setminus\Omega,
\end{aligned}
\right.
\qquad
\left\{
\begin{aligned}
\overline{L} \overline{v} &= f_\eps
&\qquad& \text{in }\Omega, \\
\overline{v} &= g_\eps
&\qquad& \text{in }\mathbb{R}^n\setminus\Omega.
\end{aligned}
\right.
\end{equation*}
Then, by \autoref{lem.conv} and \eqref{conv.sub:mol}, we have 
\begin{equation}\label{conv.sub:ineq1}
\begin{aligned}
    \|v_\eps-\overline{v}\|_{L^2(\Omega)}&\leq c\left(\max\{\eps^{s},\eps^{2-2s}\}\right)\left(\|g_\eps\|_{C^1(B_2(\Omega)\setminus \Omega)}+\| g_\eps\|_{L^1_{2s}(\bbR^n\setminus \Omega)}+\| \nabla g_\eps\|_{L^1_{2s}(\bbR^n\setminus \Omega)}+\|f_\eps\|_{C^{1+\alpha}(\Omega)}\right)\\
        &\leq  c\eps^{\frac{\min\{s,2-2s\}}2}(\|g\|_{L^\infty(\bbR^n)}+\|f\|_{L^\infty(\Omega)})
\end{aligned}
\end{equation}
for some constant $c=c(n,s,\Lambda,\alpha,[K]_{C^\alpha(\mathbb{T}^n)},r_0,M,N,\gamma)$. Now we estimate the $L^1$ norms of $v_\eps-u_\eps$ and $\overline{v}-\overline{u}$. As $L_\eps(v_\eps-u_\eps)=0$, by the Poisson formula, we get
\begin{align*}
    (v_\eps-w_\eps)(x)=\int_{\bbR^n\setminus \Omega}({g_\eps(y)-g(y)})P_\eps(x,y)\,dy.
\end{align*}
By the definition of $g_\eps$ and using H\"older regularity of $g$, we have 
\begin{align*}
    |(v_\eps-w_\eps)(x)|&=\left|\int_{\bbR^n\setminus \Omega}\left(\int_{B_{\eps^\delta}}(g(y-z)-g(y))\eta_{\eps^\delta}(z)\,dz\right)P_\eps(x,y)\,dy\right|\leq c\eps^{\delta\gamma}[g]_{C^\gamma(\bbR^n)}\int_{\bbR^n\setminus \Omega}P_\eps(x,y)\,dy.
\end{align*}
Thus, using \eqref{green:pois}, we further estimate the above quantity as follows
\begin{align*}
    |(v_\eps-w_\eps)(x)|\leq c\eps^{\delta\gamma}[g]_{C^\gamma(\bbR^n)}\left(\underbrace{\int_{B_2(\Omega)\setminus \Omega}\frac{d_\Omega^\beta(x)}{d_\Omega^{2s-\beta}(y)|x-y|^{n}}\,dy}_{\eqqcolon J}+\int_{\bbR^n\setminus B_2(\Omega)}\frac{\,dy}{(1+|x-y|)^{n+2s}}\,dy\right),
\end{align*}
where $\beta\in(0,s)$ and $c=c(n,s,\Lambda,[K]_{C^\alpha(\mathbb{T}^n)},\alpha,r_0,M,N,\gamma,\beta)$. By choosing $\beta$ close to $s$, $\lambda$ close to $0$ and applying the first claim in \autoref{lem.singdist}, we estimate $J$ by
\begin{align*}
    J\leq c\int_{B_2(\Omega)\setminus B_{4\eps^\delta}(\Omega)}\frac{d_\Omega^{\beta-\lambda}(x)}{d_\Omega^{2s-\beta}(y)|x-y|^{n-\lambda}}\,dy\leq cd_\Omega^{-2s+2\beta}(x),
\end{align*}
where $c=c(n,s,\Lambda,[K]_{C^\alpha(\mathbb{T}^n)},\alpha,r_0,M,N,\gamma,\beta)$. Note that although the first claim in \autoref{lem.singdist} is stated in the domain $\Omega^1$, we can also get the same estimates when we replace it with $B_2(\Omega)$ by following the same lines as in the proof of \autoref{lem.singdist}. Thus, we have 
\begin{align}\label{conv.sub:estI_21}
    |(v_\eps-w_\eps)(x)|\leq c\eps^{\delta\gamma}[g]_{C^\gamma(\bbR^n)}(d_\Omega^{-2s+2\beta}(x)+1),
\end{align}
where $c=c(n,s,\Lambda,[K]_{C^\alpha(\mathbb{T}^n)},\alpha,r_0,M,N,\gamma,\beta)$. Choosing $\beta$ close to $s$, we deduce
\begin{align*}
    \|w_\eps-v_\eps\|_{L^1(\Omega)}\leq c\eps^{\delta\gamma}[g]_{C^\gamma(\bbR^n)},
\end{align*}
where $c=c(n,s,\Lambda,[K]_{C^\alpha(\mathbb{T}^n)},\alpha,r_0,M,N,\gamma)$.
Similarly, we have 
\begin{align*}
    \|\overline{w}-\overline{v}\|_{L^1(\Omega)}\leq c\eps^{\delta\gamma}[g]_{C^\gamma(\bbR^n)},
\end{align*}
where $c=c(n,s,\Lambda,[K]_{C^\alpha(\mathbb{T}^n)},\alpha,r_0,M,N,\gamma)$. 
Combining the above two inequalities, \eqref{conv.sub:RHS} and \eqref{conv.sub:ineq1} and choosing $\sigma=\sigma(s,\gamma)$ sufficiently small, we have the desired estimate.
\end{proof}

\section{Large scale boundary regularity of optimal order}
\label{sec6}
In this section, we establish the optimal boundary regularity, namely $C^s$ regularity, for nonlocal homogenization problems. Using this result, we then derive sharp estimates for the Green function and the Poisson kernel. In the last subsection, using the sharp Green function estimates, we improve the convergence rate given in \autoref{lem.conv} when $s=\frac12$.
\subsection{Optimal H\"older regularity}
The following theorem is the main result of this subsection.
\begin{theorem}\label{thm.bdry.C^s}
Let $\alpha\in(0,1]$, $\eps\in(0,1]$, $r_0,M$, $K\in C^\alpha(\mathbb{T}^n)$ and $\partial\Omega\in C^{1,\alpha}$ with $C^{1,\alpha}$ radius $r_0$, $C^{1,\alpha}$ norm bounded by $M$. Let $u_\eps$ be a weak solution to 
\begin{equation*}
\left\{
\begin{alignedat}{3}
L_\eps u_\eps&=f&&\qquad \mbox{in  $\Omega\cap B_R(x_0)$}, \\
u_\eps&=0&&\qquad  \mbox{in $B_R(x_0)\setminus \Omega$},
\end{alignedat} \right.
\end{equation*}
where $f\in L^\infty(B_R(x_0))$. Then we have 
\begin{align*}
    [u_\eps]_{C^s(B_{R/2}(x_0))}\leq cR^{-s}(E(u_\eps;B_R(x_0))+\|R^{2s}f\|_{L^\infty(B_R(x_0))})
\end{align*}
for some constant $c=c(n,s,\Lambda,\alpha,[K]_{C^\alpha(\mathbb{T}^n)},r_0,M)$.
\end{theorem}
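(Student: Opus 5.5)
Our plan is to prove the $C^s$ estimate by an approximation argument that uses the homogenized operator, in the spirit of the large scale argument, but with the rigid compactness step replaced by the quantitative convergence rate. Since \autoref{lem.reg.bdd1} already yields $C^s$ decay on scales $r\le \eps$ (after rescaling, $L_\eps$ on $B_\eps$ becomes $L_1$ with a fixed Hölder kernel), it suffices to prove decay of the form
\[
\sup_{B_r(z_0)}|u_\eps|\le c\, r^s R^{-s}\bigl(E(u_\eps;B_R(z_0))+R^{2s}\|f\|_{L^\infty}\bigr), \qquad z_0\in\partial\Omega,\ r\in[\eps,R/2].
\]
Combining this with the interior estimates of \autoref{thm.uni.est} (which give order $\min\{2s,1\}>s$), via the usual covering argument and $d_\Omega$-weighted interior scaling, then produces the $C^s$ seminorm. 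We normalize $R=1$ and $z_0=0$.

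\textbf{Step 1: one-step approximation with a rate.} Fix a scale $\rho\in[\eps,1]$ and rescale to $B_1$, so that the operator becomes $L_{\eps/\rho}$. We cut off $u_\eps$ outside $B_{3/4}$ and compare it with the solution $\overline v$ of $\overline L\,\overline v=f$ in $\Omega\cap B_{1/2}$ (a slightly smoothed $C^{1,\alpha}$ subdomain), with $\overline v=u_\eps\psi$ outside. The localized function $u_\eps\psi$ is $C^\beta$ for every $\beta<s$ by \autoref{thm.uni.est}, and the right-hand side after localization is bounded. Hence \autoref{lem.conv.sub} applies and gives
\[
\|u_\eps-\overline v\|_{L^1(B_{1/2}\cap\Omega)}\le c\,(\eps/\rho)^{\sigma}E_1,
\]
where $E_1:=E(u_\eps;B_1)+\|f\|_{L^\infty}$ after rescaling. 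The sharp boundary regularity of the fractional Laplacian (\autoref{lem.reg.bdd2} or \autoref{lem.reg.bdd1} with constant kernel) gives $\sup_{B_\theta}|\overline v|\le c\theta^s E_1$. Combining this with the $L^1$ estimate, upgraded to $L^\infty$ by local boundedness plus the $C^\beta$ bound (interpolation), we obtain
\[
E(u_\eps;B_\theta)\le \bigl(c\theta^s+C_\theta(\eps/\rho)^{\sigma'}\bigr)E_1 .
\]

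\textbf{Step 2: iteration with summable errors.} Iterating over the scales $\rho_k=\theta^k$ with $\rho_k\ge\eps$, the error factors form the products $\prod_k\bigl(1+C(\eps/\theta^k)^{\sigma'}\theta^{-s}\bigr)$. This product is a geometric series in reversed order, dominated by its last terms where $\eps/\theta^k\sim1$, so it is bounded uniformly in $\eps$. This is the standard Avellaneda--Lin mechanism with an algebraic rate. A subtlety is that the constant $c\theta^s$ must be written as exactly $\theta^s$ times a fixed constant. To handle this we use the decay $\overline v\sim d^s$ more precisely: we compare with $\theta^s$ times the excess in a form closed under iteration, for instance $\sup_{B_r}|u|/r^s$ together with the tail. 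Controlling the tail term by the previous scales is done as in the proof of \autoref{lem.large.int}.

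\textbf{Main obstacle.} The delicate point is Step 1's closure at the exact exponent $s$. The excess must contract by a factor $\theta^s(1+\text{small})$ with no multiplicative constant, otherwise iteration gives only $\beta<s$. We would first try to track the quantity $\Phi(r)=\sup_{B_r\cap\Omega}|u_\eps|$ (plus tail) and use the fact that for the homogenized solution $\Phi(\theta r)\le \theta^s\Phi(r)+C\theta^{2s}E$, via the barrier comparison with $d^s$ in $C^{1,\alpha}$ domains. The constant $1$ in front of $\theta^s\Phi(r)$ comes from the maximum principle with the explicit supersolution $d_\Omega^s$. The remaining errors are either of higher order $\theta^{2s}$ or carry the factor $(\eps/r)^{\sigma}$, and both are summable. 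Once the decay holds at the boundary point, the $C^s$ seminorm follows from interior estimates in balls $B_{d(x)/2}(x)$, each of which touches the already established boundary decay.
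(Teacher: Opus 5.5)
Your localization, the comparison with a homogenized solution through \autoref{lem.conv.sub} on a smoothed subdomain, the use of \autoref{lem.reg.bdd1} below scale $\eps$, and the final covering argument all match the paper. You are also right that the factors $1+C(\eps/\theta^k)^{\sigma'}$ have a product bounded uniformly in $\eps$. The gap is exactly where you flag it, and the proposed fix fails for two reasons.

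First, no inequality $\Phi(\theta r)\le\theta^s\Phi(r)+C\theta^{2s}E$ holds with constant exactly $1$. The ratio $\sup_{B_r}|\overline v|/r^s$ is not monotone for solutions vanishing on $B_r\setminus\Omega$. Already in the local analogue (Laplacian, exponent $1$), let $\omega$ be the harmonic measure of the upper semicircle in the upper half-disc. Then $\omega(it)=\frac4\pi\arctan t$, so $\sup\omega=1$ while $\sup_{B_\theta}\omega\ge\frac4\pi\arctan\theta>\theta+C\theta^2$ for small $\theta$. The maximum principle cannot give the constant $1$ either: $\Phi(r)\,d_\Omega^s/r^s$ need not dominate $\overline v$ on $\Omega\cap(B_r\setminus B_{r/2})$ near $\partial\Omega$, and $d_\Omega^s$ need not be a supersolution in a $C^{1,\alpha}$ domain. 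Any barrier therefore yields $\sup_{B_{\theta r}}|\overline v|\le C\theta^s\Phi(r)$ with some $C>1$.

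Second, even granting your inequality, the error $C\theta^{2s}E$ is proportional to the excess at the current scale. For instance, the part of $\mathrm{Tail}(u_\eps;B_{\theta r})$ coming from outside $B_r$ equals $\theta^{2s}\mathrm{Tail}(u_\eps;B_r)$, so in the worst case the error is $C\theta^{2s}\Phi(r)$. That is not summable: it gives the factor $\theta^s(1+C\theta^s)$ at every step, hence only the exponent $s-\log(1+C\theta^s)/\log(1/\theta)<s$. This is just \autoref{lem.large.bdry} again. Only errors carrying $(\eps/r)^\sigma$ or $r^{2s}\|f\|_{L^\infty}$ sum up.

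The missing idea is to gain an exponent strictly above $s$ for the limit problem by subtracting the boundary profile. This is the same mechanism as the interior Lipschitz bound, which approximates by $a+b\cdot(x+\eps\phi(x/\eps))$ at a rate $\gamma>1$. The paper fixes an $\overline L$-harmonic $\varphi\asymp d_\Omega^s$ near $0$ and proves
\[
\inf_{a\in\bbR}E(u_\eps-a\varphi;B_{\rho r})\le c\Bigl[\rho^{s+\alpha}\inf_{a\in\bbR}E(u_\eps-a\varphi;B_r)+\rho^{-n}(\eps/r)^{\sigma}E(u_\eps;B_r)+\rho^{-n}r^{2s}\|f\|_{L^\infty(B_r)}\Bigr].
\]
It does this in two comparisons via \autoref{lem.conv.sub}: the $L_\eps$-harmonic replacement $\varphi_\eps$ against $\varphi$, and $(u_\eps-c_r\varphi_\eps)\xi$ against the homogenized solution $\overline v$. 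It then uses that $\overline v/\varphi$ is $C^\alpha$ up to the boundary.

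Since $s+\alpha>s$, the constant is absorbed by fixing $\rho$ with $c\rho^\alpha\le\frac18$, and the $(\eps/\rho^i)^\sigma$ errors are geometric. The increments $|c_{\rho^i}-c_{\rho^{i-1}}|$ are bounded by the normalized excesses $r^{-s}E(u_\eps-c_r\varphi;B_r)$, whose sum is controlled. So the coefficients stay bounded, and $E(u_\eps;B_r)\le cr^s\bigl(E(u_\eps;B_1)+\|f\|_{L^\infty(B_1)}\bigr)$ for $r\ge\eps$. Without such an expansion, or some other gain of exponent beyond $s$ for the homogenized problem, your iteration cannot close at the exponent $s$.
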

To prove this, we first obtain suitable decay estimates at the boundary via some perturbation argument together with \autoref{lem.conv.sub}.
\begin{lemma}\label{lem.decay.C^s}
   Let $\eps\in(0,1]$ and let $\alpha\in(0,s)$, $r_0,M>0$. Let $\Omega$ be a $C^{1,\alpha}$ domain with $C^{1,\alpha}$ radius $r_0$ and $C^{1,\alpha}$ norm bounded by $M$. Let $ f\in L^\infty(B_1)$ and $u_\eps$ be a weak solution to 
    \begin{equation*}
\left\{
\begin{alignedat}{3}
L_\eps u_\eps&=f&&\qquad \mbox{in  $\Omega\cap B_1$}, \\
u_\eps&=0&&\qquad  \mbox{in $B_1\setminus \Omega$}
\end{alignedat} \right.
\end{equation*}
with $0\in\partial\Omega$. Then we have 
\begin{align}\label{decay.C^s:main.goal}
    E(u_\eps-(u_\eps)_{B_r};B_r)\leq cr^s(E(u_\eps;B_1)+\|f\|_{L^\infty(B_1)})
\end{align}
for some constant $c=c(n,s,\Lambda,\alpha,[K]_{C^\alpha(\mathbb{T}^n)},r_0,M)$.
\end{lemma}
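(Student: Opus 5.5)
Since \autoref{lem.large.bdry} only reaches exponents $\beta<s$, the plan is a perturbation argument: compare $u_\eps$ at each scale with a solution of the homogenized problem $\overline{L}$ and use the rate from \autoref{lem.conv.sub} to pay for the approximation. The whole argument runs at the boundary point $0\in\partial\Omega$, for scales $r\ge \eps$. For $r\le\eps$ we rescale by $\eps$, so that $L_\eps$ becomes $L_1$ with a fixed $C^\alpha$ kernel, and apply \autoref{lem.reg.bdd1} (optimal $C^s$ decay) directly. Interior points are handled through \autoref{thm.uni.est} together with a covering argument, so it suffices to prove \eqref{decay.C^s:main.goal} at $0$ for $r\in[\eps,1]$.

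\textbf{Step 1 (comparison at scale $\rho$).} Fix $\rho\in[\eps,r_0]$. By scaling and \autoref{lem.large.bdry} with some fixed $\beta<s$, $u_\eps$ is $C^\beta$ down to scale $\eps$. After multiplying by a cutoff $\eta$ supported in $B_{\rho}$, the localized function $u_\eps\eta$ still solves an equation with bounded right-hand side in $\Omega\cap B_{\rho/2}$ (as in \cite[Lemma 2.3]{BKK25}). Let $\overline v$ solve $\overline L\overline v=f_\rho$ in $\Omega\cap B_{\rho/2}$, where we smooth the domain to a $C^{1,\alpha}$ domain $D_\rho\subset\Omega\cap B_{\rho/2}$ of comparable size, with $\overline v=u_\eps\eta$ outside $D_\rho$. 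Rescaled to unit size, the ratio becomes $\eps/\rho\le1$, and the exterior datum is $C^\beta$ with norm $\lesssim E(u_\eps;B_\rho)+\rho^{2s}\|f\|$. Then \autoref{lem.conv.sub} gives
\[
\dashint_{B_{\rho/2}}|u_\eps-\overline v|\le c(\eps/\rho)^{\sigma}\bigl(E(u_\eps;B_\rho)+\rho^{2s}\|f\|_{L^\infty}\bigr).
\]
The tail parts of the two functions agree outside $B_{\rho/2}$ up to the cutoff error controlled as in \eqref{exp.bdry:defn.c_1}.

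\textbf{Step 2 (decay of $\overline v$ and iteration).} For the homogenized (translation invariant) operator, \autoref{lem.reg.bdd1} gives $E(\overline v;B_{\theta\rho})\le c\theta^{s}E(\overline v;B_{\rho/2})$. Here we use that $\overline v$ vanishes on the flat-ish complement, so the excess without subtracting the mean is controlled as in \eqref{exp.bdry:ineq.v}. Combining the two bounds yields
\[
E(u_\eps;B_{\theta\rho})\le \bigl(c_*\theta^{s}+c\theta^{-n}(\eps/\rho)^{\sigma}\bigr)E(u_\eps;B_\rho)+c\rho^{2s}\|f\|_{L^\infty}.
\]
Iterating over $\rho_k=\theta^k$ down to $\rho_k\approx\eps$ gives a product $\prod_k(c_*\theta^s+c\theta^{-n}(\eps/\theta^k)^\sigma)$. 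Since $\sum_k(\eps/\theta^k)^\sigma$ is a geometric series bounded independently of $\eps$, this product is bounded by $C\,(c_*\theta^s)^{k}$.

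\textbf{Main obstacle.} The key difficulty is that $c_*>1$ spoils the pure power $\theta^{ks}$. To fix this, I would iterate instead the quantity $E(u_\eps;B_\rho)/\rho^s$, comparing it against the sharp boundary profile $d_\Omega^s$. Equivalently, at each scale I would use the \emph{Lipschitz-in-data} estimate $\sup_{B_{\theta\rho}}|\overline v|\le c(\theta\rho)^s\rho^{-s}\sup_{B_{\rho/2}}|\overline v|$ with a constant that does not compound. This is achieved by taking the comparison once from scale $\rho$ all the way down: apply \autoref{lem.reg.bdd1} for $\overline v$ on every smaller scale $t\in[\eps,\rho]$, and choose the dyadic comparison scales $\rho_j=\eps^{1-2^{-j}}$-type, i.e.\ geometrically shrinking in logarithmic terms. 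With this choice the errors $(\eps/\rho_j)^\sigma$ are summable and only finitely many constants $c_*$ are multiplied, effectively a bounded number per decade of $|\ln\eps|$ weighted by summable errors. I expect this bookkeeping, ensuring the constant loss in each comparison is multiplicative by $1+O((\eps/\rho_j)^{\sigma})$ rather than by $c_*$, to be the delicate point. Finally, subtracting the mean $(u_\eps)_{B_r}$ only helps, which gives \eqref{decay.C^s:main.goal}.
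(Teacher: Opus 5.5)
Your proposal has a genuine gap in Step 2. You correctly spot the obstacle, but your remedy does not remove it, so the argument as written only recovers an exponent $\beta<s$, which is just \autoref{lem.large.bdry}.

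\textbf{Why the fix fails.} A plain $C^s$ decay estimate for $\overline v$ (all that \autoref{lem.reg.bdd1} gives) always carries a constant $c_*>1$. No choice of comparison scales turns the loss per comparison into $1+O((\eps/\rho_j)^\sigma)$. With $\rho_j=\eps^{1-2^{-j}}$, two things break.
\begin{enumerate}
\item Going from scale $1$ to scale $\eps$ needs about $\log_2|\ln\eps|$ comparisons. The accumulated constant is then $c_*^{\log_2|\ln\eps|}=|\ln\eps|^{\log_2 c_*}$, which is not uniform in $\eps$. Your phrase ``a bounded number per decade of $|\ln\eps|$'' already concedes this growth.
\item One $L^1$ comparison at scale $\rho_j$ cannot be carried down to $\rho_{j+1}\ll\rho_j$. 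Passing to averages on $B_{\rho_{j+1}}$ multiplies the error by $(\rho_j/\rho_{j+1})^n=\eps^{-n2^{-j-1}}$. The gain from \autoref{lem.conv.sub} is only $(\eps/\rho_j)^\sigma=\eps^{\sigma 2^{-j}}$, with $\sigma$ small. The resulting error $\eps^{2^{-j}(\sigma-n/2)}E(u_\eps;B_{\rho_j})$ is not even small, let alone $\lesssim(\rho_{j+1}/\rho_j)^sE(u_\eps;B_{\rho_j})$.
\end{enumerate}
Your ``Lipschitz-in-data'' bound for $\overline v$ does not help either, since every application reintroduces $c_*$.

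\textbf{The missing idea.} You need an improvement-of-flatness step at an order strictly above $s$, in which the quantity carried from scale to scale is exactly preserved by the homogenized problem. The paper does this as follows.
\begin{itemize}
\item It fixes an $\overline L$-harmonic barrier $\varphi$ with $\varphi\asymp d_\Omega^s$ near $0$ and works with the excess $\inf_a E(u_\eps-a\varphi;B_r)$.
\item At scale $r$ it replaces $\varphi$ by its $L_\eps$-harmonic replacement $\varphi_\eps$ in a smoothed domain $D_r$. It then compares $(u_\eps-c_r\varphi_\eps)\xi$ with a homogenized solution $\overline v$. Both errors are controlled by \autoref{lem.conv.sub}.
\item It then uses a boundary-Harnack-type input that is strictly stronger than \autoref{lem.reg.bdd1}: $C^\alpha$ regularity of the quotient $\overline v/\varphi$ up to the boundary (via \cite[Lemma 8.1]{KW25} and \cite[Proposition 2.7.8]{FRRO24}). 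This gives the one-step estimate \eqref{decay.C^s:first.goal} with gain $\rho^{s+\alpha}$.
\item Because of that gain, $\rho$ can be fixed with $c\rho^\alpha\le\frac18$, so the constant is absorbed.
\item Iterating $\mathcal E(u_\eps;B_r)=r^{-s}E(u_\eps-c_r\varphi;B_r)$, the only non-decaying quantities are the coefficients $c_{\rho^i}$. These are controlled by telescoping, $|c_{\rho^i}-c_{\rho^{i-1}}|\lesssim\mathcal E(u_\eps;B_{\rho^i})+\mathcal E(u_\eps;B_{\rho^{i-1}})$, together with summability of $(\eps/\rho^i)^\sigma$.
\item Boundedness of $c_r$ then yields $E(u_\eps;B_r)\lesssim r^s$.
\end{itemize}
Your remark about ``comparing against $d_\Omega^s$'' points in this direction. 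Without the $s+\alpha$ expansion around $c_r\varphi$ and the quotient regularity of $\overline v/\varphi$, however, nothing absorbs $c_*$.
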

\begin{proof}
 By the assumption on the domain, there is a constant $c_1=c_1(n,\alpha,r_0,M)$ such that the measure density condition given in \cite[(2.1)]{BKK25} holds. Thus, as in the proof of \cite[Lemma 2.7]{BKK25}, we have for any $r\in(0,\frac14]$,
    \begin{align}\label{decay.C^s:loc.bdd}
        r^{-\gamma}[u_\eps]_{C^\gamma(B_{\frac{r}{2}})}+\|u_\eps\|_{L^\infty(B_{\frac{r}{2}})}\leq c(n,s,\Lambda,\alpha,r_0,M)(E_1(u_\eps;B_{r})+\|r^{2s}f\|_{L^\infty(B_r)}),
    \end{align}
    where $\gamma=\gamma(n,s,\Lambda,\alpha,r_0,M)$.
    We now divide the proof into two steps.
    
\textbf{Step 1. Expansion estimates at the boundary point.} First, we are going to prove the following: For any $\rho\in(0,\frac1{32}]$ and $r\in(0,\frac14]$, we have 
\begin{equation}\label{decay.C^s:first.goal}
\begin{aligned}
   \inf_{a\in\bbR} E(u_\eps-a\varphi;B_{\rho r})&\leq c\left[\rho^{s+\alpha}\inf_{a\in\bbR}E(u_\eps-a\varphi;B_r)+
    (\eps/r)^{\sigma}\rho^{-n}E(u_\eps;B_r)+\rho^{-n}\|r^{2s}f\|_{L^\infty(B_r)}\right]
\end{aligned}
\end{equation}
for some constant $c=c(n,s,\Lambda,\alpha,[K]_{C^\alpha(\mathbb{T}^n)},r_0,M)$, where $\varphi$ is a weak solution to 
\begin{equation*}
\left\{
\begin{alignedat}{3}
\overline{L} \varphi&=0&&\qquad \mbox{in  $\Omega\cap B_{1}$}, \\
\varphi&=g&&\qquad  \mbox{in $\bbR^n\setminus ( \Omega\cap B_1)$}
\end{alignedat} \right.
\end{equation*}
for some $g\in C_c^\infty(B_{4}\setminus B_{2})$ with $g\equiv 1$ in $B_{3}\setminus B_{\frac52}$. By \cite[Proposition 2.6.4 and Proposition 2.6.6]{FRRO24}, we have 
\begin{align}\label{decay.C^s:reg.varphi}
   \|\varphi\|_{C^s(\bbR^n)}\leq c\quad\text{and}\quad  d_\Omega^s/c\leq \varphi \leq cd_\Omega^s\quad\text{in }\Omega\cap B_{\frac34}
\end{align}
for some constant $c=c(n,s,\Lambda,\alpha,r_0,M)$. Moreover, by modifying the argument given in the proof of \autoref{lem.funct}, we can prove that for any $R\leq \frac14$, there is a constant $c_R\in\bbR$ such that
\begin{align*}
    \inf_{a\in\bbR}E(u_\eps-a\varphi;B_R)= E(u_\eps-c_R\varphi;B_R).
\end{align*}
In addition, for any $R\leq\frac14$, by \eqref{decay.C^s:reg.varphi}, we have 
\begin{align}\label{decay.C^s:c_r}
    R^s|c_R|\leq c\dashint_{B_R}{c_R\varphi }\leq c(E(u_\eps-c_R\varphi;B_R)+E(u_\eps;B_R))\leq cE(u_\eps;B_R)
\end{align}
for some constant $c=c(n,s,\alpha,r_0,M)$.

To prove \eqref{decay.C^s:first.goal}, fix $r\in(0,\frac14]$. Then there is a $C^{1,\alpha}$-domain $D_r$ such that
     \begin{align*}
        \Omega\cap B_{\frac{r}{16}}\subset D_r\subset \Omega\cap B_{\frac{r}8}
    \end{align*}
    and such that the $C^{1,\alpha}$ radius of the domain $\frac{8}rD_r$ is $r_D$ and the $C^{1,\alpha}$ norm of $\frac{8}rD_r$ is bounded by $M_D$, where $r_D=r_D(n,\alpha,r_0,M)$ and $M_D=M_D(n,\alpha,r_0,M)$. Let $\varphi_\eps$ be the weak solution to 
    \begin{equation*}
\left\{
\begin{aligned}
{L}_\eps \varphi_\eps&=0&&\qquad \mbox{in  $D_r$}, \\
\varphi_\eps&=\varphi&&\qquad  \mbox{in $\bbR^n\setminus D_r$}
\end{aligned}
\right.
\end{equation*}
and note that for $\varphi_{\eps,r}(x)\coloneqq \varphi_\eps(rx/8)$ and $\varphi_r(x)\coloneqq \varphi(rx/8)$, $\overline{L}\varphi_r=L_{\eps/r}\varphi_{\eps,r}=0$ in $8D_r/r$. Thus, applying \autoref{lem.conv.sub} and \eqref{decay.C^s:reg.varphi}, we have
\begin{equation}\label{decay.C^s:varphieps2}
    \dashint_{B_r}|\varphi_\eps-\varphi|\,dx\leq cr^s(\eps/r)^{\sigma}
\end{equation}
for some $\sigma=\sigma(n,s)$. In addition, as in \eqref{decay.C^s:loc.bdd}, we deduce
\begin{equation}\label{decay.C^s:varphieps}
\begin{aligned}
    r^{\gamma}[u_\eps-c_r\varphi_\eps]_{C^\gamma(B_{\frac{3r}{64}})}+\|u_\eps-c_r\varphi_\eps\|_{L^\infty(B_{\frac{3r}{64}})}&\leq c(E(u_\eps-c_r\varphi_\eps;B_r)+r^{2s}\|f\|_{L^\infty(B_r)})\\
    &\leq c(E(u_\eps-c_r\varphi;B_r)+(r/\eps)^\sigma E(u_\eps;B_r)+r^{2s}\|f\|_{L^\infty(B_r)}),
\end{aligned}
\end{equation}
where we have used \eqref{decay.C^s:varphieps2} and \eqref{decay.C^s:c_r} in the last step.

Let $\xi\equiv 1$ in $B_{\frac{5r}{128}}$ with $\xi\in C_c^\infty(B_{\frac{3r}{64}})$ and $v_{\eps}\coloneqq (u_{\eps}-c_r\varphi_\eps)\xi$ to see that 
\begin{equation*}
    L_{{\eps}}v_{\eps}=h_{\eps}\quad\text{in }\frac12D_r
\end{equation*}
 and 
 \begin{equation}\label{decay.C^s:h_eps}
 \begin{aligned}
     \|h_{\eps}\|_{L^\infty(B_{\frac{3r}{64}})}&\leq cr^{-2s}E(v_{\eps};B_r)+\|f\|_{L^\infty(B_r)}\\
     &\leq c\left(r^{-2s}(E(u_\eps-c_r\varphi;B_r)+(\eps/r)^\sigma E(u_\eps;B_r))+\|f\|_{L^\infty(B_r)}\right),
     \end{aligned}
 \end{equation}
 where we have used \eqref{decay.C^s:c_r} and \eqref{decay.C^s:varphieps2}. Next, let $\overline{v}$ be a weak solution to 
\begin{equation*}
\left\{
\begin{aligned}
\overline{L} \overline{v}&=h_\eps&&\qquad \mbox{in  $\frac12D_r$}, \\
\overline{v}&=v_{\eps}&&\qquad  \mbox{in $\bbR^n\setminus \frac12D_r$}.
\end{aligned}
\right.
\end{equation*}
Then we have for $v_{\eps,r}(x)=v_\eps(rx/8)$ and $\overline{v}_r(x)=\overline{v}(rx/8)$, $L_\eps v_{\eps,r}(x)=\overline{L} \overline{v}_r(x)=r^{2s}h_\eps(rx/8)$ in $\frac{4}rD_r$ and $v_{\eps,r}=\overline{v}_r$ in $\bbR^n\setminus \frac{4}rD_r$.
Applying \autoref{lem.conv.sub} together with  \eqref{decay.C^s:reg.varphi}, \eqref{decay.C^s:varphieps} and \eqref{decay.C^s:c_r}, we have 
\begin{align*}
    \int_{\frac{8}{r}D_r}|v_{\eps,r}-\overline{v}_r|\,dx&\leq c({\eps}/{r})^{\sigma}\left(\|v_{\eps,r}\|_{C^\gamma(\bbR^n)}+\|r^{2s}h_{\eps}(rx/8)\|_{L^\infty(\frac{4}{r}D_r)}\right)\\
    &\leq c((\eps/r)^\sigma(E(u_\eps;B_r))+r^{2s}\|f\|_{L^\infty(B_r)})
\end{align*}
for some constant $c=c(n,s,\Lambda,\alpha,[K]_{C^\alpha(\mathbb{T}^n)},r_0,M)$. As $\frac{r}8\left(B_3(\frac{8}{r}D_r)\right)\subset B_{\frac{r}2}$, scaling back, we have 
\begin{align}\label{decay.C^2:l1comp}
    \dashint_{B_r}|v_\eps-\overline{v}|\leq c((\eps/r)^\sigma(E(u_\eps;B_r))+r^{2s}\|f\|_{L^\infty(B_r)}),
\end{align}
where $c=c(n,s,\Lambda,\alpha,[K]_{C^\alpha(\mathbb{T}^n)},r_0,M)$. By following the proof in \cite[Lemma 8.1]{KW25} together with \cite[Proposition 2.7.8]{FRRO24} and a careful inspection of the dependence of the constant on the domain, we deduce for any $\rho\leq \frac{1}{16}$,
\begin{align}\label{decay.C^s:vbar.reg}
 \|\overline{v}/\varphi\|_{L^\infty(\Omega_{\frac{r}{32}})}+  \rho^{-\alpha}\dashint_{\Omega_{\rho r}}|\overline{v}/\varphi-(\overline{v}/\varphi)_{\Omega_{\rho r}}|\,dx \leq c\left(\frac{1}{r^{s}}E(\overline{v};B_{\frac{r}{16}})+r^{s}\|h_\eps\|_{L^\infty(B_r)}\right)
\end{align}
for some constant $c=c(n,s,\Lambda,\alpha,r_0,M)$, where we write $\Omega_\rho\coloneqq \Omega\cap B_\rho$. 
Using this, we want to prove
\begin{align}\label{decay.C^s:est.I}
   I\coloneqq E(\overline{v}-\varphi(\overline{v}/\varphi)_{\Omega_{\rho r}};B_{\rho r})\leq c\rho^{s+\alpha}\left(E(\overline{v};B_{\frac{r}{16}})+r^{2s}\|h_\eps\|_{L^\infty(B_r)}\right)
\end{align}
for some constant $c=c(n,s,\Lambda,\alpha,[K]_{C^\alpha(\mathbb{T}^n)},r_0,M)$. To this end, from the tail estimate in \cite[Lemma 2.10]{KNS22} and the fact that $\|\varphi\|_{L^\infty(B_{\rho })}\leq c\rho^{s}$ for any $\rho\leq \frac12$ by \eqref{decay.C^s:reg.varphi} and $0\in\partial\Omega$, we deduce
\begin{equation*}
\begin{aligned}
    I&\leq c\left[\sum_{i=1}^k 2^{-si}(\rho r)^s\sum_{j=0}^i\dashint_{\Omega_{2^j\rho r}}|\overline{v}/\varphi-(\overline{v}/\varphi)_{\Omega_{2^j\rho r}}|+2^{-2sk}r^{s}\dashint_{\Omega_{\frac{r}{32}}}|\overline{v}/\varphi-(\overline{v}/\varphi)_{\Omega_{ \frac{r}{32}}}|\right]\\
    &\quad+ c(\rho r)^{2s}\int_{{\bbR^n\setminus B_{\frac{r}{32}}}}\frac{|\overline{v}-\varphi(\overline{v}/\varphi)_{\Omega_{\frac{r}{32}}}|}{|y|^{n+2s}}\,dy,
    \end{aligned}
\end{equation*}
for some positive integer $k$ such that $\frac1{64}<2^k\rho\leq \frac{1}{32}$.
Therefore, using \eqref{decay.C^s:reg.varphi} and \eqref{decay.C^s:vbar.reg}, we further estimate $I$ as 
\begin{align*}
    I&\leq c\left[\sum_{i=1}^k\sum_{j=0}^i2^{-si}\rho^s (2^j\rho)^{\alpha}\left(E(\overline{v};B_{\frac{r}{16}})+r^{2s}\|h_\eps\|_{L^\infty(B_r)}\right)+\rho^{2s}(E(\overline{v};B_{\frac{r}{16}})+r^{2s}\|h_\eps\|_{L^\infty(B_r)}\right]\\
    &\leq c\rho^{s+\alpha}(E(\overline{v};B_{\frac{r}{16}})+r^{2s}\|h_\eps\|_{L^\infty(B_r)})
\end{align*}
for some constant $c=c(n,s,\Lambda,\alpha,[K]_{C^\alpha(\mathbb{T}^n)},r_0,M)$, which proves \eqref{decay.C^s:est.I}.

Therefore, using \eqref{decay.C^s:est.I}, \eqref{decay.C^s:varphieps2} and \eqref{decay.C^2:l1comp}, we derive 
\begin{align*}
    E(u_\eps-\varphi((\overline{v}/\varphi)_{\Omega_{\rho r}}+c_r);B_{\rho r})&\leq E(u_\eps-c_r\varphi-v_\eps;B_{\rho r})+E(v_\eps-\overline{v};B_{\rho r})+E(\overline{v}-\varphi(\overline{v}/\varphi)_{\Omega_{\rho r}};B_{\rho r})\\
    &\leq c(\rho^{2s}E(u_\eps-c_r\varphi;B_r)+\rho^{-n}(\eps/r)^\sigma E(u_\eps;B_r))\\
    &\quad+cr^{2s}\rho^{-n}\|f\|_{L^\infty(B_r)}+\rho^{s+\alpha}E(\overline{v};B_{\frac{r}{16}})\\
    &\leq c(\rho^{2s}E(u_\eps-c_r\varphi;B_r)+\rho^{-n}((\eps/r)^\sigma E(u_\eps;B_r)+r^{2s}\|f\|_{L^\infty(B_r)}.
\end{align*}
Now using the definition of $c_r$ in \eqref{decay.C^s:c_r}, we have \eqref{decay.C^s:first.goal}.

\textbf{Step 2. Decay estimates at the boundary.} Now using \eqref{decay.C^s:first.goal}, we prove \eqref{decay.C^s:main.goal}. First, for each $r\leq\frac14$, we define a new excess functional $\mathcal{E}$ by
\begin{align*}
    \mathcal{E}(u;B_r)\coloneqq \frac{E(u_\eps-c_r\varphi;B_r)}{r^s}.
\end{align*}
For any $\rho\leq \frac1{16}$ and $i\in\N$, by \eqref{decay.C^s:first.goal} with $r=\rho^i$, we have 
\begin{align*}
    \mathcal{E}(u_\eps;B_{\rho^{i+1}})\leq c\left[\rho^{\alpha}\mathcal{E}(u_\eps;B_{\rho^i})+(\eps/\rho^i)^\sigma\rho^{-n}(\mathcal{E}(u_\eps;B_{\rho^i})+|c_{\rho^i}|)+\rho^{-n}\|\rho^{si}f\|_{L^\infty(B_{\rho^i})}\right].
\end{align*}
Now we choose a small constant $\rho=\rho(n,s,\Lambda,\alpha,[K]_{C^\alpha(\mathbb{T}^n)},r_0,M)$ such that
\begin{equation*}
    c\rho^{\alpha}\leq \frac{1}{8}.
\end{equation*}
Next, we observe 
\begin{align}\label{decay.C^s:cii}
    |c_{\rho ^i}-c_{\rho^{i-1} }|\leq c\rho^{-si}\dashint_{B_{\rho^{i}}}|(c_{\rho^i}-c_{\rho^{i-1}})\varphi|\leq c(\mathcal{E}(u_\eps;B_{\rho^i})+\mathcal{E}(u_\eps;B_{\rho^{i-1}}))
\end{align}
for some constant $c=c(n,s,\Lambda,\alpha,[K]_{C^\alpha(\mathbb{T}^n)},r_0,M)$. 
Therefore, we deduce
\begin{align}\label{decay.C^s:ineq1}
    \sum_{i=1}^l\mathcal{E}(u_\eps;B_{\rho^{i+1}})\leq \sum_{i=1}^l\left(\frac{1}{8}+c(\eps/\rho^i)^\sigma\right)\mathcal{E}(u_\eps;B_{\rho^{i}})+c\sum_{i=1}^l(\eps/\rho^i)^\sigma\sum_{j=1}^{i}\mathcal{E}(u_\eps;B_{\rho^{j}})+c\sum^l_{i=1}\rho^{si}\|f\|_{L^\infty(B_1)}
\end{align}
for some constant $c=c(n,s,\Lambda,\alpha,[K]_{C^\alpha(\mathbb{T}^n)},r_0,M)$, where we have used $|c_\rho|\leq cE(u_\eps;B_\rho)$ by \eqref{decay.C^s:c_r}. Now we choose a small constant $\eps_0$ depending only on $n,s,\Lambda,\alpha,[K]_{C^\alpha(\mathbb{T}^n)},r_0,M$ such that 
\begin{align*}
    \frac{c\eps_0^\sigma}{1-\rho^{\sigma}}\leq \frac18.
\end{align*}

Next, suppose that there is a positive integer $l\in\N$ such that $\eps/\eps_0\leq \rho^l$. Then for any $i\leq l$,
\begin{align*}
    c(\eps/\rho^i)^\sigma\leq c(\eps/\rho^l)^\sigma\leq \frac18\quad\text{and}\quad \sum_{i=1}^{l}(\eps/\rho^i)^\sigma\leq \frac{(\eps/\rho^l)^{\sigma}}{1-\rho^\sigma}\leq \frac{\eps_0^{\sigma}}{1-\rho^\sigma}\leq \frac18.
\end{align*}
Now combining this and \eqref{decay.C^s:ineq1}, we get
\begin{align*}
    \sum_{i=1}^l\mathcal{E}(u_\eps;B_{\rho^{i+1}})&\leq \frac14\sum_{i=1}^l\mathcal{E}(u_\eps;B_{\rho^{i}})+\sum_{j=1}^l\mathcal{E}(u_\eps;B_{\rho^j})\sum_{i=j}^l(\eps/\rho^i)^\sigma+c\|f\|_{L^\infty(B_1)}\\
    &\leq \frac12\sum_{i=1}^l\mathcal{E}(u_\eps;B_{\rho^{i}})+c\|f\|_{L^\infty(B_1)}.
\end{align*}
Therefore, using this and \eqref{decay.C^s:c_r} and \eqref{decay.C^s:cii}, if $\eps\leq \eps_0 \rho^l$ for some $l\in\N$, then 
\begin{align*}
   \sum_{i=1}^l \mathcal{E}(u_\eps;B_{\rho^{i+1}})\leq c(\mathcal{E}(u_\eps;B_\rho)+\|f\|_{L^\infty(B_1)})\leq c(E(u_\eps;B_1)+\|f\|_{L^\infty(B_1)})
\end{align*}
and 
\begin{align*}
    |c_{\rho^{l+1}}|\leq \sum_{j=1}^{l}|c_{\rho^{j+1}}-c_{\rho^{j}}|+|c_\rho|\leq c\sum_{j=1}^{l+1}\mathcal{E}(u_\eps;B_{\rho^i})\leq c(E(u_\eps;B_1)+\|f\|_{L^\infty(B_1)}),
\end{align*}
where $c=c(n,s,\Lambda,\alpha,[K]_{C^\alpha(\mathbb{T}^n)},r_0,M)$. Thus, we have $\eps\leq \eps_0\rho^l$ for some $l$, then 
\begin{align*}
    \rho^{-s(l+1)}E(u_\eps;B_{\rho^{l+1}})\leq \mathcal{E}(u_\eps;B_{\rho_{l+1}})+c|\rho_{l+1}|\leq c(E(u_\eps;B_1)+\|f\|_{L^\infty(B_1)}).
\end{align*}
Since $\eps_0$ and $\rho$ depend only on the given data, we deduce for any $\eps\leq r$,
\begin{align*}
    E(u_\eps;B_r)\leq cr^{s}(E(u_\eps;B_1)+\|f\|_{L^\infty(B_1)}),
\end{align*}
where $c=c(n,s,\Lambda,\alpha,[K]_{C^\alpha(\mathbb{T}^n)},r_0,M)$.
On the other hand, when $r\leq\eps$, we deduce from \autoref{lem.reg.bdd1} that 
\begin{align*}
   E(u_\eps;B_r)\leq c(r/\eps)^{s}(E(u_\eps;B_\eps)+\eps^{2s}\|f\|_{L^\infty(B_\eps)}), 
\end{align*}
where $c=c(n,s,\Lambda,\alpha,[K]_{C^\alpha(\mathbb{T}^n)},r_0,M)$. Thus, combining the above two inequalities yields the desired estimate. This completes the proof.
\end{proof}

Now we are ready to prove \autoref{thm.bdry.C^s}. 
\begin{proof}[Proof of \autoref{thm.bdry.C^s}]
    Combining the boundary estimate and interior estimate given in \autoref{lem.decay.C^s} and \autoref{lem.exp.sol} with $\beta=s$, respectively, together with a standard covering argument, we derive the desired result.
\end{proof}
Therefore, combining \autoref{thm.uni.est} and \autoref{thm.bdry.C^s}, we get \autoref{thm:intro:regularity}.
\subsection{Sharp Green function estimate}
Using \autoref{thm.bdry.C^s} and following the argument provided in \cite{KW25}, we have the following Green function and Poisson kernel estimates.
\begin{theorem}\label{thm.green.C^s}
     Let $\eps\in(0,1)$, $r_0,M,N>0$ $\alpha\in(0,1]$, $K\in C^\alpha(\mathbb{T}^n)$ and $\partial\Omega\in C^{1,\alpha}$with $C^{1,\alpha}$ radius $r_0$, $C^{1,\alpha}$ norm bounded by $M$ and $\mathrm{diam}(\Omega)\leq N$. Let $G_\eps$ and $P_\eps$ be the Green function and the Poisson kernel associated to $L_\eps$ and $\Omega$. Then we have \begin{equation}\label{green.C^s:green}
   \begin{aligned}
       G_\eps(x,y)\leq c\min\left\{\frac{d_\Omega(x)}{|x-y|},1\right\}^{s}\min\left\{\frac{d_\Omega(y)}{|x-y|},1\right\}^{s}\frac{1}{|x-y|^{n-2s}},
   \end{aligned}
   \end{equation}
   In addition, we have
   \begin{align}\label{green.C^s:pois}
       P_\eps(x,y)\leq c\frac{d_\Omega^{s}(x)}{d_\Omega^{s}(y)(1+d_\Omega(y))^{s}}\frac{1}{|x-y|^n},
   \end{align}
    where $c=c(n,s,\Lambda,,[K]_{C^\alpha(\mathbb{T}^n)},\alpha,r_0,M,N)$. 
\end{theorem}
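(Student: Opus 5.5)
The plan is to follow the scheme already used for \autoref{lem.green}, now with the optimal exponent $s$. The input is the uniform boundary $C^s$ estimate of \autoref{thm.bdry.C^s} together with the interior estimates of \autoref{thm.uni.est}. Both hold with constants independent of $\eps$, which is exactly what the abstract Green function machinery in \cite{KW25} needs.

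First, I will record the two standard ingredients, which are uniform in $\eps$.
\begin{itemize}
\item \emph{Existence and symmetry.} The Green function $G_\eps$ exists and satisfies $G_\eps(x,y)=G_\eps(y,x)$ by the symmetry of $K$.
\item \emph{Global bound.} The De Giorgi--Nash--Moser theory for measurable kernels gives
\begin{align*}
G_\eps(x,y)\leq c|x-y|^{2s-n},
\end{align*}
with $c=c(n,s,\Lambda)$.
\end{itemize}

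Next, fix $x,y\in\Omega$ with $r\coloneqq|x-y|$, and assume $d_\Omega(x)<r/8$, since otherwise the global bound already suffices. Choose $x_0\in\partial\Omega$ with $|x-x_0|=d_\Omega(x)$. The function $u\coloneqq G_\eps(\cdot,y)$ then solves $L_\eps u=0$ in $\Omega\cap B_{r/2}(x_0)$ and vanishes in $B_{r/2}(x_0)\setminus\Omega$. Applying \autoref{thm.bdry.C^s} on $B_{r/2}(x_0)$ and using $u(x_0)=0$ gives
\begin{align*}
G_\eps(x,y)\leq c\,\bigl(d_\Omega(x)/r\bigr)^s E\bigl(u;B_{r/2}(x_0)\bigr).
\end{align*}
It remains to bound the excess by $c\,r^{2s-n}$. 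The average part follows from the global bound. For the tail, I will split $\bbR^n\setminus B_{r/2}(x_0)$ into the region near $y$, where the integrability of $|\cdot-y|^{2s-n}$ controls it, and the region far from $y$, where the global bound applies. This gives the one-sided factor $\min\{d_\Omega(x)/|x-y|,1\}^s$.

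For the two-sided factor, I will use symmetry and iterate, exactly as in \cite[Lemma 9.3]{KW25}. Apply the same argument in the $y$-variable to $G_\eps(x,\cdot)$, but now use the one-sided bound already obtained as the input for the excess estimate. This requires showing
\begin{align*}
E\bigl(G_\eps(x,\cdot);B_{r/2}(y_0)\bigr)\leq c\,\min\{d_\Omega(x)/r,1\}^s\, r^{2s-n}.
\end{align*}
I expect this to be the main obstacle. The tail integral of $G_\eps(x,z)$ over $z$ far from $y_0$, weighted by $\min\{d_\Omega(x)/|x-z|,1\}^s|x-z|^{2s-n}$ against $|z-y_0|^{-n-2s}$, must not lose the factor $d_\Omega(x)^s$. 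Near $z\approx x$ the weight $\min\{\cdot\}^s$ degenerates, and the integral has to be computed carefully. The plan is to split into $|z-x|<r/4$ and $|z-x|\ge r/4$. In the first region I use the one-sided bound with the extra factor $d_\Omega(z)^s\le (d_\Omega(x)+|z-x|)^s$ and integrate directly. In the second region the weight is comparable to $(d_\Omega(x)/r)^s$. Because the exponent $s$ is strictly less than $2s$, these integrals converge without logarithmic loss. This yields \eqref{green.C^s:green}.

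Finally, the Poisson kernel estimate \eqref{green.C^s:pois} follows from the representation
\begin{align*}
P_\eps(x,y)=\int_\Omega G_\eps(x,z)\,\frac{K(z/\eps,y/\eps)}{|z-y|^{n+2s}}\,dz,\qquad y\in\bbR^n\setminus\Omega,
\end{align*}
as in \cite[Theorem 3.2]{CS98}. I will insert \eqref{green.C^s:green} and split $\Omega$ into the region $|z-x|<|x-y|/2$ and its complement. The factor $d_\Omega(z)^s$ paired with $|z-y|^{-n-2s}\lesssim d_\Omega(y)^{-s}|z-y|^{-n-s}$ then produces $d_\Omega^s(x)d_\Omega^{-s}(y)|x-y|^{-n}$. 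The factor $(1+d_\Omega(y))^{-s}$ comes from the decay of $|z-y|^{-n-2s}$ for $y$ far from the bounded set $\Omega$.
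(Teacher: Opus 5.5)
Your route is the paper's: the paper proves this exactly as \autoref{lem.green}, via \cite[Lemma 9.3]{KW25} and \cite[Theorem 3.2]{CS98}, with \autoref{thm.bdry.C^s} in place of \autoref{thm.uni.est}. Your first step (the one-sided bound from the $C^s$ estimate at $x_0$, together with $E(G_\eps(\cdot,y);B_{r/2}(x_0))\le c\,r^{2s-n}$) is correct. However, two of the integral estimates you sketch would not close as written.

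\textbf{The iteration step.} Consider the tail of $E(G_\eps(x,\cdot);B_{r/2}(y_0))$ over $\{|z-x|<r/4\}$. You propose to use $G_\eps(x,z)=G_\eps(z,x)\le c\,d_\Omega(z)^s|x-z|^{s-n}$ together with $d_\Omega(z)^s\le (d_\Omega(x)+|z-x|)^s$. This produces the term $\int_{B_{r/4}(x)}|x-z|^{2s-n}\,dz\simeq r^{2s}$, which contributes $r^{2s-n}$ to the excess with no factor $(d_\Omega(x)/r)^s$ — precisely the loss you wanted to avoid.

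The $d_\Omega(z)$-factor is the wrong one here. Use instead the one-sided bound with the factor on the first variable. By your first step combined with the global bound (when $|x-z|\le 8d_\Omega(x)$), it holds for all $z$:
\[
G_\eps(x,z)\le c\min\{d_\Omega(x)/|x-z|,1\}^s|x-z|^{2s-n}.
\]
Then
\[
\int_{B_{r/4}(x)}G_\eps(x,z)\,dz\le c\Bigl(\int_{|z-x|<d_\Omega(x)}|x-z|^{2s-n}\,dz+d_\Omega(x)^s\int_{d_\Omega(x)<|z-x|<r/4}|x-z|^{s-n}\,dz\Bigr)\le c\,d_\Omega(x)^s r^s .
\]
Since $|z-y_0|\ge 5r/8$ on this region, this part of the tail is at most $c\,(d_\Omega(x)/r)^s r^{2s-n}$. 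So there is no real obstacle at $z\approx x$: the degeneration of the weight only costs $d_\Omega(x)^{2s}\le d_\Omega(x)^s r^s$.

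\textbf{The Poisson kernel.} On $\{|z-x|\ge|x-y|/2\}$ the two-sided bound gives $G_\eps(x,z)\le c\,d_\Omega^s(x)d_\Omega^s(z)|x-y|^{-n}$. Your inequality $|z-y|^{-n-2s}\le d_\Omega(y)^{-s}|z-y|^{-n-s}$ then leaves $\int_\Omega d_\Omega^s(z)|z-y|^{-n-s}\,dz$. This is not uniformly bounded: in a flat picture with $d_\Omega(y)=\delta$ it is $\simeq\int_0^1 t^s(t+\delta)^{-1-s}\,dt\simeq\log(1/\delta)$, so you would lose a logarithm.

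The correct pairing uses $d_\Omega(z)\le|z-y|$ for $z\in\Omega$, $y\notin\Omega$:
\[
\int_\Omega\frac{d_\Omega^s(z)}{|z-y|^{n+2s}}\,dz\le\int_{|z-y|\ge d_\Omega(y)}\frac{dz}{|z-y|^{n+s}}=c\,d_\Omega(y)^{-s}.
\]
For $d_\Omega(y)\ge1$, the trivial bound $c\,N^s|\Omega|\,d_\Omega(y)^{-n-2s}$ gives the extra factor $(1+d_\Omega(y))^{-s}$.

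On $\{|z-x|<|x-y|/2\}$ you also need the boundedness of $\Omega$:
\[
\int_{\Omega\cap B_{|x-y|/2}(x)}G_\eps(x,z)\,dz\le c\,d_\Omega^s(x)\min\{|x-y|,N\}^s .
\]
Combined with $|z-y|\ge|x-y|/2$ and $|x-y|\ge d_\Omega(y)$, this yields $c\,d_\Omega^s(x)d_\Omega^{-s}(y)(1+d_\Omega(y))^{-s}|x-y|^{-n}$.

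With these two corrections your argument proves the theorem.
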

The proof of \eqref{green.C^s:green} and \eqref{green.C^s:pois} is exactly the same as in the proof of \autoref{lem.green} by applying \autoref{thm.bdry.C^s} instead of \autoref{thm.uni.est}.

\subsection{Sharp convergence rate for $s=\frac12$}
In this subsection, we obtain sharp convergence rates when $s=\frac12$ by using the improved Green function estimates from \autoref{thm.green.C^s} instead of \autoref{lem.green}. Here is the main result of this subsection.
\begin{theorem}\label{lem.con.s=1/2}
     Let $s=\frac12$, $\alpha\in(0,1]$, $\eps\in(0,1]$, $r_0,M,N>0$, $h\in C^{1+\alpha}(\Omega)$, $K\in C^\alpha(\mathbb{T}^n)$ and $\partial\Omega\in C^{1,\alpha}$ with $C^{1,\alpha}$ radius $r_0$, $C^{1,\alpha}$ norm bounded by $M$, $\mathrm{diam}(\Omega)\leq N$ and $0\in\Omega$.
    Let $u_\eps$ and $\overline{u}$ be weak solutions to 
    \begin{equation*}
\left\{
\begin{aligned}
L_\eps u_\eps &= h
&\qquad& \text{in }\Omega, \\
u_\eps &= g
&\qquad& \text{in }\mathbb{R}^n\setminus\Omega,
\end{aligned}
\right.
\qquad
\left\{
\begin{aligned}
\overline{L} \overline{u} &= h
&\qquad& \text{in }\Omega, \\
\overline{u} &= g
&\qquad& \text{in }\mathbb{R}^n\setminus\Omega,
\end{aligned}
\right.
\end{equation*}
where $\overline{K}\coloneqq\int_{\mathbb{T}^n\times \mathbb{T}^n}K(x,x-z)\,dz\,dx$ and $g\in C^{1}(B_2(\Omega)\setminus \Omega)\cap L^1_{2s}(\bbR^n\setminus \Omega)$ with $\nabla g\in L^1_{2s}(\bbR^n\setminus \Omega)$. 
Then we have
\begin{equation*}
\begin{aligned}
    \|u_\eps-\overline{u}\|_{L^2(\Omega)}\leq 
        c\eps(1+|\ln\eps|)^{\frac32}\left(\|h\|_{C^{1+\alpha}(\Omega)}+\|g\|_{C^{1}(B_2(\Omega)\setminus \Omega)}+\| \nabla g\|_{L^1_{2s}(\bbR^n\setminus \Omega)}+\| g\|_{L^1_{2s}(\bbR^n\setminus \Omega)}\right)
\end{aligned}
\end{equation*}
for some constant $c=c(n,\Lambda,[K]_{C^\alpha(\mathbb{T}^n)},\alpha,r_0,M,N)$. 
\end{theorem}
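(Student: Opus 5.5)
We rerun the proof of \autoref{lem.conv} with $s=\frac12$, keeping the same decomposition. We take $\Omega^\eps$, $\overline u_\eps$, the correctors $\phi,\psi$ from \eqref{conv:eq.phipsi}, and
\[
w_\eps=u_\eps-\overline u_\eps-\eps\phi(\cdot/\eps)\cdot\nabla(\overline u_\eps\zeta)-\eps\psi(\cdot/\eps)h,
\]
and split $w_\eps=\theta_\eps+z_\eps$ as in \eqref{conv:eq.zeps,thetaeps}. The only change is that every application of \eqref{green:uppbdd}, \eqref{green:pois} with an exponent $\beta<s$ is replaced by the sharp bounds \eqref{green.C^s:green}, \eqref{green.C^s:pois} with $\beta=s=\frac12$. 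The $\eps^\gamma$ loss in \eqref{conv:main.est} comes from $\beta<s$: the factors $d_\Omega^{-2s+2\beta}$ are not quite integrable at the endpoint, and the products $\eps\,d_\eps^{-1-s}$ only produce $\eps^{1-\delta}$. With $\beta=s$, each of these integrals turns into a logarithmic integral, and we count the logarithms.

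The first block of terms is Step 2 and Step 3-1, that is, $I_1,I_2,I_3$ and $J_1,J_2,J_3$.
\begin{itemize}
\item For $I_1$ we use \eqref{green.C^s:pois} and $|\overline u_\eps-g|\le c\eps^{1/2}$ on $\Omega^\eps\setminus\Omega$. This reduces to bounding
\[
\int_{\Omega^\eps\setminus\Omega}\frac{d_\Omega^{1/2}(x)}{d_\Omega^{1/2}(y)\,|x-y|^{n}}\,dy .
\]
The $y$-integral over a slab of width $\eps$ should give at most $c\,\eps^{1/2}\min\{1,\eps/d_\Omega(x)\}^{\cdot}\,(1+|\ln(\eps/d_\Omega(x))|)$, up to the appropriate power. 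Squaring and integrating in $x$ then gives at most $c\,\eps^2(1+|\ln\eps|)^{2}$.
\item By \eqref{conv:est.ubareps.L1}, $I_2\le c\,\eps(1+|\ln\eps|)^{1/2}$, and $I_3\le c\eps$.
\item $J_{3,1}$, $J_{3,2}$ and $J_{3,3}$ are treated in the same way as $I_1$.
\end{itemize}
I would state the needed endpoint version of \autoref{lem.singdist} for $\beta=s$, $\lambda=0$ with an explicit logarithm and check it by integrating in normal and tangential coordinates near $\partial\Omega$.

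The second block is $z_\eps$. At $s=\frac12$, \autoref{lem.est.F} gives $|F|\le c(\eps d_\eps^{-3/2}+\eps|\ln\eps|)$. We plug this into \eqref{green.C^s:green}, which gives
\[
|z_{1,\eps}(x)|\le c\int_\Omega\frac{d_\Omega^{1/2}(x)\,d_\Omega^{1/2}(y)}{|x-y|^{n+1-1}\,|x-y|}\,|F(y)|\,dy .
\]
The bulk term $\eps|\ln\eps|$ contributes $c\,\eps|\ln\eps|$, using that the kernel is integrable. In the boundary term $\eps\, d_\eps^{-3/2}(y)\,d_\Omega^{1/2}(y)\approx\eps\, d_\eps^{-1}(y)$, and with $d_\eps\ge\eps$ the $y$-integral contributes one factor $|\ln\eps|$ near $x$ and a factor $\ln(d_\Omega(x)/\eps)$ from the region $d_\Omega(y)\ll|x-y|$. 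The $L^2(\Omega)$ norm should then be $c\,\eps(1+|\ln\eps|)^{3/2}$. Carrying out this bookkeeping, with an $L^2$ Schur-type bound in place of a pointwise one so that only $(\ln)^{3/2}$ and not $(\ln)^2$ survives, is the step I expect to be the main obstacle.

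For the fractional-divergence part we repeat \autoref{lem.est.fracdiv}. We need a version of \autoref{lem.fracsob.green} with $\beta=s$; its proof uses only \eqref{green:uppbdd} and \eqref{green:hol}, which now hold with exponent $s$ by \autoref{thm.bdry.C^s} and the argument of \autoref{lem.green}. With $\sigma_0=\max\{s/2,2s-1\}=\frac14$, the terms $J_{1,1}$, $J_{1,2}$, $J_2$ reduce to the same logarithmic integrals as above. The terms $J_3$, $J_4$ are $O(\eps|\ln\eps|)$ by the $F_{3,2}$ bound. Collecting everything yields $\|w_\eps\|_{L^2}+\|w_\eps-(u_\eps-\overline u)\|_{L^2}\le c\,\eps(1+|\ln\eps|)^{3/2}$, after the same normalization of the data as in \autoref{lem.conv}.
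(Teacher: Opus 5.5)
Your plan follows the paper's route. You rerun \autoref{lem.conv} at $s=\frac12$ with \autoref{thm.green.C^s} in place of \autoref{lem.green}. You keep the same $w_\eps$ and the same splitting into $\theta_\eps$, $z_{1,\eps}$ and $z_\eps-z_{1,\eps}$. You apply \autoref{lem.fracsob.green} with $\beta=s=\frac12$ and exponent $\frac14$; that lemma already allows $\beta=s$, so no new version is needed. Your first block is fine; the paper even gets $\eps(1+|\ln\eps|)^{1/2}$ there. The gap is the step that fixes the exponent $\frac32$: you leave it open, and as you set it up it does not work.

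\textbf{The displayed Green-function bound.} You replace both factors $\min\{d_\Omega/|x-y|,1\}^{1/2}$ in \eqref{green.C^s:green} by $(d_\Omega/|x-y|)^{1/2}$. This gives a kernel of order $|x-y|^{-n}$ (as typed, even $|x-y|^{-n-1}$), which is not integrable at $y=x$. So the claim that ``the kernel is integrable'' is false for it, and neither the bulk nor the boundary term can be estimated from it. The paper keeps only the $y$-factor, $G_\eps(x,y)\le c\,d_\Omega^{1/2}(y)|x-y|^{-(n-\frac12)}$. On $\Omega$ one has $d_\Omega\le d_\eps$, and $d_\eps(y)\ge c^{-1}(d_\Omega(y)+\eps)$ by \autoref{lem.dist.app}. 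Together these give
\[
|z_{1,\eps}(x)|\le c\,\eps(1+|\ln\eps|)+c\,\eps\int_\Omega\frac{dy}{(d_\Omega(y)+\eps)\,|x-y|^{n-\frac12}}.
\]
By \eqref{singdist:5} with $\lambda=\frac12$, the integral is at most $c(1+|\ln\eps|)d_\Omega^{-1/2}(x)$ when $d_\Omega(x)\ge c\eps$, and at most $c(1+|\ln d_\Omega(x)|)\eps^{-1/2}$ otherwise.

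\textbf{No Schur-type bound is needed.} A pointwise bound carrying a single logarithm is enough. The third logarithm appears only when you square and integrate: $\int_{\{d_\Omega\ge c\eps\}}d_\Omega^{-1}\,dx\le c(1+|\ln\eps|)$. This gives $\|z_{1,\eps}\|_{L^2(\Omega)}^2\le c\,\eps^2(1+|\ln\eps|)^3$; the layer $\{d_\Omega<c\eps\}$ contributes only $c\,\eps^2(1+|\ln\eps|)^2$. Your heuristic is what makes you fear a worse power: you posit a factor $|\ln\eps|$ ``near $x$'' multiplying $\ln(d_\Omega(x)/\eps)$. In fact the region $|x-y|<d_\Omega(x)/2$ contributes only $c\,\eps\, d_\Omega^{-1/2}(x)$, with no logarithm, and the far-field logarithms add rather than multiply. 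After \autoref{lem.fracsob.green}, the same two integrals, \eqref{singdist:4} and \eqref{singdist:5} with $\lambda=\frac12$, close the estimate for $z_\eps-z_{1,\eps}$ at the same order $\eps(1+|\ln\eps|)^{3/2}$.
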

\begin{proof}
The proof is exactly the same as that of \autoref{lem.conv} except that we use \autoref{thm.green.C^s}, instead of \autoref{lem.green}. First, for simplicity, we omit the dependence of the constant, when it depends only on $n,\Lambda,[K]_{C^\alpha(\mathbb{T}^n)},\alpha,r_0,M,N$. 
 
First, we assume
 \begin{align*}
     \|h\|_{C^{1,\alpha}(\Omega)}+\|g\|_{C^1(B_2(\Omega)\setminus \Omega)}+\|g\|_{L^1_{2s}(\bbR^n\setminus \Omega)}+\|\nabla g\|_{L^1_{2s}(\bbR^n\setminus \Omega)}\leq 1.
 \end{align*}
We may assume $h\in C_c^{1,\alpha}(B_2(\Omega))$ with $\|h\|_{C^{1,\alpha}(\bbR^n)}\leq c$. Let $\overline{u}_\eps$ be a weak solution to
  \begin{equation*}
\left\{
\begin{aligned}
\overline{L} \overline{u}_\eps &= h
&\qquad& \text{in }\Omega^\eps, \\
\overline{u} &= g
&\qquad& \text{in }\mathbb{R}^n\setminus\Omega^\eps,
\end{aligned}
\right.
\end{equation*}
where $\Omega^\eps$ is the $\eps$-neighborhood defined by \eqref{defn.omegaeps}. We may assume \eqref{conv:eps0.Omega} by taking $\eps$ sufficiently small. By \autoref{lem.reg.bdd2}, we have for any $x,y\in \Omega^\eps$ and $\mu\in(0,1]$,
\begin{align}\label{conv.s=1/2:ineq0}
    &\|\overline{u}_\eps\|_{C^s(B_2(\Omega)}\leq c, \quad|\nabla{u}_\eps(x)|\leq cd_\eps^{-\frac12}(x),\quad \frac{|\nabla\overline{u}_\eps(x)-\nabla\overline{u}_\eps(y)|}{|x-y|^\mu}\leq c(d_\eps^{-\frac12-\mu}(x)+d_\eps^{-\frac12-\mu}(y)),\\
    &\frac{|\nabla^2\overline{u}_\eps(x)-\nabla^2\overline{u}_\eps(y)|}{|x-y|^{\alpha}}\leq c(d_\eps^{-\frac32-\alpha}(x)+d_\eps^{-\frac32-\alpha}(y)).
\end{align}
Next, we consider the function $w_\eps\coloneqq u_\eps-(\overline{u}_\eps+\eps\phi(\cdot/\eps)\cdot\nabla(\overline{u}_\eps\zeta)+\eps \psi(\cdot/\eps)h)$, where $\phi=(\phi_i)_i$, $\psi$ and $\zeta$ are determined in \eqref{conv:eq.phipsi} and \eqref{conv:defn.zeta}. As in \textbf{Step 2} given in the proof of \autoref{lem.conv}, using \autoref{thm.green.C^s} in place of \autoref{lem.green}, we have 
\begin{align}\label{conv.s=1/2:defn.I}
    \|w_\eps-(u_\eps-\overline{u}_\eps)\|^2_{L^2(\Omega)}\leq c\eps\underbrace{\int_\Omega\left(\int_{\Omega^\eps\setminus \Omega}\frac{d_\Omega^{1/2}(x)}{d_\Omega^{1/2}(y)|x-y|^n}\,dy\right)^2\,dx}_{\eqqcolon I}+c\eps^2(1+|\ln\eps|).
\end{align}
We further estimate $I$ as 
\begin{align*}
    I&\leq c\left[\int_{\Omega_\eps}\left(\int_{\Omega^\eps\setminus \Omega}\frac{d_\Omega^{1/2}(x)}{d_\Omega^{1/2}(y)|x-y|^n}\,dy\right)^2\,dx+\int_{\Omega\setminus \Omega_\eps}\left(\int_{\Omega^\eps\setminus \Omega}\frac{d_\Omega^{1/4}(x)}{d_\Omega^{1/2}(y)|x-y|^{n-1/4}}\,dy\right)^2\,dx\right]\eqqcolon I_1+I_2,
\end{align*}
where we have used the fact that $d_\Omega(x)\leq |x-y|$ for $x\in\Omega$ and $y\in\Omega^c$.
For $I_1$, we use \eqref{singdist:2} to see that
\begin{align*}
    I_1\leq c\int_{\eps/c<d_\Omega(x)<c}\eps d_\Omega^{-1}(x)\,dx\leq c\eps(1+|\ln\eps|).
\end{align*}
For $I_2$, we use \eqref{singdist:3} to deduce 
\begin{align*}
    I_2\leq \int_{\Omega\setminus \Omega_\eps}\left(\int_{\Omega^\eps\setminus \Omega}\frac{d_\Omega^{1/4}(x)}{d_\Omega(y)^{1/2}|x-y|^{n-1/4}}\,dy\right)^2\,dx\leq c\int_{d_\Omega(x)\leq c\eps}d^{1/2}_\Omega(x)(\eps^{-1/2}+d_\Omega(x)^{-1/2})\,dx\leq c\eps.
\end{align*}
Combining all estimates yields $I\leq c\eps^2(1+|\ln\eps|)$, which implies
\begin{align}\label{con.s=1/2:weps-uubar}
     \|w_\eps-(u_\eps-\overline{u}_\eps)\|_{L^2(\Omega)}\leq c\eps(1+|\ln\eps|)^{1/2}.
\end{align}
Next, we consider two solutions $\theta_\eps$ and $z_\eps$ to
\begin{equation*}
\left\{
\begin{aligned}
L_\eps \theta_\eps &= 0
&\qquad& \text{in }\Omega, \\
\theta_\eps &= w_\eps
&\qquad& \text{in }\mathbb{R}^n\setminus\Omega,
\end{aligned}
\right.
\qquad
\left\{
\begin{aligned}
L_\eps z_\eps &= f_\eps
&\qquad& \text{in }\Omega, \\
z_\eps &= 0
&\qquad& \text{in }\mathbb{R}^n\setminus\Omega,
\end{aligned}
\right.
\end{equation*}
where $f_\eps $ is defined by \eqref{conv:defn.f_eps} with $u_\eps$ and $\overline{u}_\eps$ replaced by $v_\eps$ and $\overline{u}_\eps$, respectively. Using \eqref{green.C^s:pois}, we have \eqref{conv:thetaeps} with $\beta=s=1/2$. Moreover, for the estimate of terms $J_1,J_2$ and $J_{3,1},J_{3,3}$ given in \eqref{conv:thetaeps} and \eqref{conv:est.J_3} with $s=\beta=1/2$, we use the same argument as in the estimate of $I$ given by \eqref{conv.s=1/2:defn.I}. Then we have
\begin{align*}
    \|\theta_\eps\|^2_{L^2(\Omega)}\leq c\left(\eps^{2}(1+|\ln\eps|)+\eps^2\int_{\Omega}\left(\int_{\Omega^\eps\setminus \Omega^{\frac\eps2}}\frac{d_\eps^{-1/2}(y)d_\Omega^{1/2}(x)}{d_\Omega^{1/2}(y)|x-y|^n}\,dy\right)^2\,dx\right).
\end{align*}
 we have used the estimate of $I$. To further estimate, we observe
\begin{align*}
    L\coloneqq\int_{\Omega}\left(\int_{\Omega^\eps\setminus \Omega^{\frac\eps2}}\frac{d_\eps^{-1/2}(y)d_\Omega^{1/2}(x)}{d_\Omega^{1/2}(y)|x-y|^n}\,dy\right)^2\leq c\eps^{-1}  \int_{\Omega}\left(\int_{\Omega^\eps\setminus \Omega^{\frac\eps2}}\frac{\,dy}{d_\eps^{1/2}(y)(|x-y|+\eps)^{n-\frac12}}\right)^2 \, dx,
\end{align*}
where we have used the fact that $d_\Omega(x)\leq |x-y|$ and $|x-y|>\eps/c$ for any $x\in\Omega$ and $y\in\Omega^\eps\setminus \Omega^{\frac\eps2}$. Therefore,  by following the same flattening argument as in the estimate of $J$ given by \eqref{singdist:defn.J}, we further estimate $L$ as
\begin{equation*}
    L\leq c\int_{\Omega}\left(1+ \frac{1}{d_\Omega(x) + \eps} \right)\,dx\leq c(1 + |\ln \eps|).
\end{equation*}
Thus, we derive
\begin{align}\label{con.s=1/2:thetaeps}
    \|\theta_\eps\|_{L^2(\Omega)}\leq c\eps(1+|\ln\eps|)^{\frac12}.
\end{align}
Now we use \autoref{lem.rep.f_eps} to see that there is a function $z_{1,\eps}$ such that
\begin{equation}\label{conv.s=1/2:eqz1eps}
\left\{
\begin{aligned}
L_\eps z_{1,\eps} &= F
&\qquad& \text{in }\Omega, \\
z_{1,\eps} &= 0
&\qquad& \text{in }\mathbb{R}^n\setminus\Omega,
\end{aligned}
\right.
\qquad
\left\{
\begin{aligned}
L_\eps (z_\eps-z_{1,\eps}) &= (-\Delta)^s\widetilde{F}
&\qquad& \text{in }\Omega, \\
(z_\eps-z_{1,\eps}) &= 0
&\qquad& \text{in }\mathbb{R}^n\setminus\Omega,
\end{aligned}
\right.
\end{equation}
where $F$ and $\widetilde{F}$ are determined in \autoref{lem.rep.f_eps}. Moreover, by \eqref{conv.s=1/2:ineq0} and \autoref{lem.est.F}, we have
\begin{align*}
    |F(x)|\leq c\eps d_\eps^{-\frac32}(x).
\end{align*}
By \autoref{thm.green.C^s} together with the fact that $d_\Omega(y)\leq d_\eps(y)$ for any $y\in\Omega$, we have
\begin{align}\label{con.s=1/2:ineq1}
    \|z_{1,\eps}\|_{L^2(\Omega)}^2\leq c\eps^2{\int_\Omega\left(\int_{\Omega}\frac{d_\eps^{-1}(y)}{|x-y|^{n-\frac12}}\,dy\right)^2\,dx}.
\end{align}
Using \eqref{singdist:5} with $\lambda=1/2$, we further estimate
\begin{equation}\label{conv.s=1/2:ineq11}
\begin{aligned}
    \|z_{1,\eps}\|_{L^2(\Omega)}^2&\leq c\eps^2\int_\Omega\left(\int_\Omega \frac{\,dy}{(d_\Omega(y)+\eps)|x-y|^{n-\frac12}}\right)^2\,dx\\
    &\leq c\eps^2\left(\int_{d_\Omega(x)<c\eps} \eps^{-1}(1+|\ln d_\Omega(x)|)^2\,dx+\int_{d_\Omega(x)\geq c\eps}  d_\Omega^{-1}(x) (1+|\ln \eps|)^2\,dx\right)\\
    &\leq c \eps^2(1+|\ln\eps|)^3,
\end{aligned}
\end{equation}
where we have used the fact that $\int_{0<t<\eps}|\ln t|^a\,dt\leq c(a)\eps(1+|\ln \eps|)^a$ for $a\geq1$. Thus, plugging this into \eqref{con.s=1/2:ineq1} yields
\begin{align}\label{conv.s=1/2:z_{1,eps}}
    \|z_{1,\eps}\|_{L^2(\Omega)}\leq c\eps (1+|\ln\eps|)^{3/2}.
\end{align}
Next, note that $\overline{u}_\eps\in C^{2+\alpha}_{\mathrm{loc}}(\Omega^\eps)$ and $h\in C^{1+\alpha}(B_2(\Omega))$ by \autoref{lem.reg.bdd2}. Thus, following the same lines as in the proof of \autoref{lem.est.fracdiv}, but using \eqref{green.C^s:green} in place of \eqref{green:uppbdd}, respectively, we derive
\begin{align*}
    \|z_{\eps}-z_{1,\eps}\|^2_{L^2(\Omega)}&\leq c\eps^2\int_\Omega\left(\int_{\Omega}d_{\eps}^{-\frac32}(y)G_{\eps}(x,y)\,dy\right)^2\,dx\\
    &\quad+c\eps^2\int_{\Omega}\left(\int_{\Omega^{\frac\eps2}} d_\eps^{-1-\frac{1}4}(y)\left(\frac{\eps^{\frac{1}4}+d^{\frac{1}4}_\Omega(y)}{|x-y|^{n-\frac12}}+\frac{d^{\frac12}_\Omega(y)+\eps^{\frac12}}{|x-y|^{n-\frac{1}4}}\right)\,dy\right)^2\,dx+c\eps^2(1+|\ln\eps|)^2,
\end{align*}
where we have used \autoref{lem.fracsob.green} with $s=\beta=\frac12$ and $\alpha=\frac14$.
As in \eqref{conv.s=1/2:ineq11} and the fact that $d_\eps(y)\geq \eps/c$ for $y\in\Omega^{\frac\eps2}\setminus \Omega$, we have 
\begin{align*}
     \|z_{\eps}-z_{1,\eps}\|^2_{L^2(\Omega)}&\leq c\eps^2(1+|\ln\eps|)^3+c\eps^2\int_{\Omega}\left(\int_{\Omega} \frac{(d_\Omega(y)+\eps)^{-1}\,dy}{|x-y|^{n-\frac12}} \right)^2\,dx\\
    &\quad+c\int_\Omega \left(\int_{\Omega^{\frac\eps2}\setminus \Omega}\frac{\,dy}{|x-y|^{n-\frac12}}+\frac{\eps^{\frac14}\,dy}{|x-y|^{n-\frac14}}\right)^2\,dx\eqqcolon L_1+L_2+L_3.
\end{align*}
For $L_2$, we apply \eqref{singdist:5} with $k=1$ and $\lambda=\frac12$ to get
\begin{align*}
    L_2\leq c\eps^2\left(\int_{\{d_\Omega\leq c\eps\}}(1+|\ln d_\Omega(x)|)^{2}\eps^{-1}\,dx+\int_{\{d_\Omega>c\eps\}}(1+|\ln\eps|)^{2}d_\Omega(x)^{-1}\,dx\right)\leq c\eps^{2}(1+|\ln\eps|)^3.
\end{align*}
Next we use \eqref{singdist:4} $\lambda=\frac12$ to estimate $L_3$ as
\begin{align*}
    L_3\leq c\left(\int_{\{d_\Omega\leq c\eps\}}(1+|\ln\eps|)^{2}\eps\,dx+\int_{\{d_\Omega>c\eps\}}\eps^2(1+|\ln\eps|)^{2}d_\Omega(x)^{-1}\,dx\right)\leq c\eps^{2}(1+|\ln\eps|)^3.
\end{align*}
Combining the estimates $L_i$ yields
\begin{align*}
     \|z_{\eps}-z_{1,\eps}\|^2_{L^2(\Omega)}\leq \eps^2(1+|\ln\eps|)^3.
\end{align*}
Therefore, using this, \eqref{con.s=1/2:weps-uubar}, \eqref{con.s=1/2:thetaeps} and \eqref{conv.s=1/2:z_{1,eps}}, we have the desired estimate. This completes the proof.
\end{proof}
Therefore, combining \autoref{lem.conv} and \autoref{lem.con.s=1/2}, we get \autoref{thm:intro:homogenization}.

Lastly, we prove an almost linear convergence rate in $L^1$ when $s\leq\frac12$ and the right-hand side is zero. 
\begin{theorem}\label{conv.sleq12}
   Let $s\in(0,\frac12]$.  Let $\alpha\in(0,1]$, $\eps\in(0,1]$, $r_0,M,N>0$, $K\in C^\alpha(\mathbb{T}^n)$ and $\partial\Omega\in C^{1,\alpha}$ with $C^{1,\alpha}$ radius $r_0$, $C^{1,\alpha}$ norm bounded by $M$, $\mathrm{diam}(\Omega)\leq N$ and $0\in\Omega$.
    Let $u_\eps$ and $\overline{u}$ be weak solutions to 
    \begin{equation*}
\left\{
\begin{aligned}
L_\eps u_\eps &= 0
&\qquad& \text{in }\Omega, \\
u_\eps &= g
&\qquad& \text{in }\mathbb{R}^n\setminus\Omega,
\end{aligned}
\right.
\qquad
\left\{
\begin{aligned}
\overline{L} \overline{u} &= 0
&\qquad& \text{in }\Omega, \\
\overline{u} &= g
&\qquad& \text{in }\mathbb{R}^n\setminus\Omega,
\end{aligned}
\right.
\end{equation*}
where $\overline{K}\coloneqq\int_{\mathbb{T}^n\times \mathbb{T}^n}K(x,x-z)\,dz\,dx$ and $g\in C^{1}(B_2(\Omega))\cap L^1_{2s}(\bbR^n)$.
Then, we have
\begin{equation}\label{conv.sleq12:main}
\begin{aligned}
    \|u_\eps-\overline{u}\|_{L^1(\Omega)}\leq 
        c\eps(1+|\ln\eps|)\left(\|g\|_{C^1(B_2(\Omega)\setminus \Omega)}+\| g\|_{L^1_{2s}(\bbR^n\setminus \Omega)}+\|\nabla g\|_{L^1_{2s}(\bbR^n\setminus \Omega)}\right)
\end{aligned}
\end{equation}
for some constant $c=c(n,s,\Lambda,[K]_{C^\alpha(\mathbb{T}^n)},\alpha,r_0,M,N)$. 
\end{theorem}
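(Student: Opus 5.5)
We reuse the two-scale scheme from the proof of \autoref{lem.conv}, simplified because $h\equiv0$: the corrector $\psi$ drops out, and we measure in $L^1(\Omega)$ with the sharp kernel bounds of \autoref{thm.green.C^s}. After normalizing the data norm to $1$, we let $\overline u_\eps$ solve $\overline L\overline u_\eps=0$ in $\Omega^\eps$ with $\overline u_\eps=g$ outside $\Omega^\eps$. We then set
\[
w_\eps\coloneqq u_\eps-\overline u_\eps-\eps\phi(\cdot/\eps)\cdot\nabla(\overline u_\eps\zeta).
\]
The corrector $\phi$ is the truncated one from \eqref{conv:eq.phipsi}, built on $\eps^{-1}\mathbb T^n$ since $s\le\frac12$. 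The pieces to estimate are
\[
\overline u-\overline u_\eps,\qquad \eps\phi(\cdot/\eps)\cdot\nabla(\overline u_\eps\zeta),\qquad \theta_\eps,\qquad z_{1,\eps},\qquad z_\eps-z_{1,\eps}.
\]

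First, \eqref{conv:est.ubareps} gives $|\nabla\overline u_\eps|\le c d_\eps^{s-1}$, hence
\[
\eps\|\nabla\overline u_\eps\|_{L^1(\Omega)}\le c\eps\int_{d_\eps\ge\eps}d_\eps^{s-1}\le c\eps .
\]
For $\overline u-\overline u_\eps$ and for the harmonic part $\theta_\eps$ with exterior datum $w_\eps$, we use the Poisson bound \eqref{green.C^s:pois}, now with the exact exponent $s$. The datum is at most $c\eps^s$ on $\Omega^\eps\setminus\Omega$, and at most $c\eps d_\eps^{s-1}$ or $c\eps$ for the corrector term. After Fubini, the $L^1$ norm reduces to integrals such as
\[
\eps^s\int_{\Omega^\eps\setminus\Omega}d_\Omega^{-s}(y)\int_\Omega\frac{d_\Omega^s(x)}{|x-y|^n}\,dx\,dy .
\]
The inner integral is bounded by $c(1+|\ln d_\Omega(y)|)$, giving $c\eps^s\cdot\eps^{1-s}(1+|\ln\eps|)=c\eps(1+|\ln\eps|)$. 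The $L^1$ norm avoids the squaring that cost powers of $\eps$ in the $L^2$ estimates of \autoref{lem.conv}.

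For the interior source $F$ from \autoref{lem.rep.f_eps}, \autoref{lem.est.F} with $h=0$ gives $|F|\le c\eps d_\eps^{-1-s}$ when $s<\frac12$. The bound $F_3$ vanishes, and for $s=\frac12$ the term $\eps(1+|\ln\eps|)$ came only from $h$, so the same bound $c\eps d_\eps^{-3/2}$ holds. By \eqref{green.C^s:green}, $\int_\Omega G_\eps(x,y)\,dx\le c d_\Omega^s(y)$. Therefore
\[
\|z_{1,\eps}\|_{L^1}\le c\eps\int_\Omega d_\Omega^s\,d_\eps^{-1-s}\le c\eps\int_{\Omega}(d_\Omega+\eps)^{-1}\le c\eps(1+|\ln\eps|).
\]

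For $z_\eps-z_{1,\eps}$ we follow \autoref{lem.est.fracdiv}: mollify $K$, and split the source into a bounded part controlled by $\nu\sim\eps d_\eps^{-1-s}$ and a short-range part $J_{1,2}$. We handle $J_{1,2}$ by moving the difference quotient onto the Green function via \autoref{lem.fracsob.green}, now with $\beta=s$, which is admissible since \eqref{green.C^s:green} holds. Integrating in $x$ first, we need
\[
\int_\Omega\int_{B_\eps(y)}\frac{|G_\eps(x,y)-G_\eps(x,z)|}{|y-z|^{n+\sigma_0}}\,dz\,dx\lesssim \eps^{s-\sigma_0}+d_\Omega^{s-\sigma_0}(y),
\]
multiplied by $\eps d_\eps^{s-1-\min\{3s/2,1\}}(y)$ and integrated in $y$.

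We expect this last step to be the main obstacle. Exponents must be balanced so that the $y$-integral produces only $\eps(1+|\ln\eps|)$, which is possible only because the sharp exponent $s$ in $G_\eps$ exactly cancels the $d^{-1-s}$ singularity. If the direct exponent count fails, we would first try choosing $\sigma_0$ closer to $s$, which is possible since $s\le\frac12$ allows $\sigma_0<s$ freely. Summing all pieces then gives \eqref{conv.sleq12:main}.
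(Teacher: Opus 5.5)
Your proposal is correct and follows the paper's proof. It uses the same $w_\eps$, the same splitting into $\overline u-\overline u_\eps$, the corrector term, $\theta_\eps$, $z_{1,\eps}$ and $z_\eps-z_{1,\eps}$, and the same tools: the sharp bounds of \autoref{thm.green.C^s}, \autoref{lem.est.F} with $h\equiv0$, and \autoref{lem.fracsob.green} with $\beta=s$. The one difference is that you integrate in $x$ first, for instance via $\int_\Omega G_\eps(x,y)\,dx\le c\,d_\Omega^s(y)$, which is a slightly cleaner way to avoid the paper's use of \eqref{singdist:2}, \eqref{singdist:4} and \eqref{singdist:5}.

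The step you flag as the main obstacle closes directly with the paper's choice $\sigma_0=s/2$, so no tuning of $\sigma_0$ is needed. On $\Omega^{\eps/2}$ one has $\eps\, d_\eps^{-1-s/2}(y)\bigl(\eps^{s/2}+d_\Omega^{s/2}(y)\bigr)\le c\,\eps\,(d_\Omega(y)+\eps)^{-1}$, and the integral of the right-hand side is $c\,\eps(1+|\ln\eps|)$.
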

\begin{proof}
We may assume 
\begin{equation*}
    \left(\|g\|_{C^1(B_2(\Omega)\setminus \Omega)}+\| g\|_{L^1_{2s}(\bbR^n\setminus \Omega)}+\|\nabla g\|_{L^1_{2s}(\bbR^n\setminus \Omega)}\right)\leq 1
\end{equation*}
and that the constant $\eps$ is sufficiently small. Moreover, we omit the dependence of the constant, as it always depends only on $n,\Lambda,[K]_{C^\alpha(\mathbb{T}^n)},\alpha,r_0,M,N$.  

The proof is essentially the same as in the proof of \autoref{lem.conv} except that we employ \autoref{thm.green.C^s} in place of \autoref{lem.green}, and prove an $L^1$ convergence rate.

Let $\overline{u}_\eps$ be a weak solution to 
    \begin{equation*}
\left\{
\begin{aligned}
\overline{L} \overline{u}_\eps &= 0
&\qquad& \text{in }\Omega^\eps, \\
\overline{u} &= g
&\qquad& \text{in }\mathbb{R}^n\setminus\Omega^\eps.
\end{aligned}
\right.
\end{equation*}
Then we have \eqref{conv:est.ubareps}. Let $w_\eps\coloneqq u_\eps-\overline{u}_\eps-\eps\nabla\overline{u}_\eps\cdot\phi(\cdot/\eps)$, where $\phi$ is determined in \eqref{conv:eq.phipsi}. Then by \autoref{thm.green.C^s} and \eqref{singdist:1}, we have 
\begin{align}\label{conv.sleq12:ineq1}
    \|\overline{u}_\eps-\overline{u}\|_{L^1(\Omega)}\leq c\eps^s\int_\Omega\int_{\Omega^\eps\setminus\Omega}\frac{d_\Omega^{s}(x)}{d^s_\Omega(y)|x-y|^n}\,dy\,dx\leq c\eps\int_\Omega d_\Omega^{-1+s}(x)\,dx\leq c\eps.
\end{align}
Moreover, we have 
\begin{align}\label{conv.sleq12:ineq2}
    \|\eps\nabla\overline{u}_\eps\cdot\phi(\cdot/\eps)\|_{L^1(\Omega)}\leq c\eps.
\end{align}
Next, we consider the weak solutions $\theta_\eps$ and $z_\eps$ given by \eqref{conv:eq.zeps,thetaeps}. As in \textbf{Step 3.1} in the proof of \autoref{lem.conv} with $\beta=s$, we have
\begin{equation*}
\begin{aligned}
    \|\theta_\eps\|_{L^1(\Omega)}\leq c\left(\eps^{s}\int_\Omega\int_{\Omega^\eps\setminus\Omega}\frac{d_\Omega^{s}(x)}{d^s_\Omega(y)|x-y|^n}\,dy\,dx+\eps\int_\Omega\int_{\Omega^{\eps}\setminus \Omega^{\frac\eps2}}\frac{d_\eps^{s-1}(y)d^s_\Omega(x)}{d_\Omega^s(y)|x-y|^n}\,dy\,dx+\eps\right).
\end{aligned}
\end{equation*}
By \eqref{conv.sleq12:ineq1}, the fact that $d_\Omega(y)\geq \eps/c$ in $y\in \bbR^n\setminus\Omega^{\frac\eps2}$ and \eqref{singdist:2}, we have 
\begin{align*}
     \|\theta_\eps\|_{L^1(\Omega)}\leq c\left(\eps+\eps^{1-s}\int_\Omega\int_{\Omega^{\eps}\setminus \Omega^{\frac\eps2}}\frac{d_\eps^{s-1}(y)d_\Omega^s(x)}{d_\Omega^s(y)|x-y|^n}\,dy\,dx\right)\leq c\eps\left(1+\int_{\Omega}d_\Omega^{-1+s}(x)\,dx\right)\leq c\eps.
\end{align*}
 Next, we consider two solutions $z_{1,\eps}$ and $z_{\eps}-z_{1,\eps}$ given by \eqref{conv.s=1/2:eqz1eps}, where $F$ and $\widetilde{F}$ are defined in \autoref{lem.rep.f_eps} with $h=0$. Then, using only the estimate $F_1$ and $F_2$ given in the proof of \autoref{lem.est.F}, we deduce
\begin{align*}
    |F(x)|\leq c\eps d_\eps^{-1-s}(x),
\end{align*}
which implies, by \autoref{thm.green.C^s} and \eqref{singdist:5} together with the fact that $d_\eps(y)\leq c (\eps+d_\Omega(y))$ for any $y\in\Omega$,
\begin{equation}\label{conv.sleq12:ineq3}
\begin{aligned}
    \|z_{1,\eps}\|_{L^1(\Omega)}&\leq c\eps\int_{\Omega}\int_\Omega \frac{\,dy}{(d_\Omega(y)+\eps)|x-y|^{n-s}}\,dy\,dx\\
    &\leq c\eps \left[\int_{d_\Omega(x)\leq c\eps}(1+|\ln d_\Omega(x)|)\eps^{-1+s} \,dx+\int_{d_\Omega(x)>c\eps}(1+|\ln \eps|)d_\Omega^{-1+s}(x)\,dx\right]\\
    &\leq c\eps(1+|\ln\eps|).
\end{aligned}
\end{equation}
Next, as in the proof of \autoref{lem.fracsob.green}, we have 
\begin{align*}
    \|z_{\eps}-z_{1,\eps}\|_{L^1(\Omega)}&\leq c\int_{\Omega}G_\eps(x,y)\eps d_\eps^{-1-s}(x)\,dx\\
    &\quad+c\eps\int_{\Omega}\int_{\Omega^{\frac\eps2}} d_\eps^{-1-\frac{s}2}(y)\left(\frac{\eps^{\frac{s}2}+d^{\frac{s}2}_\Omega(x)}{|x-y|^{n-s}}+\frac{d^s_\Omega(x)+\eps^s}{|x-y|^{n-\frac{s}2}}\right)\,dy \,dx\eqqcolon I_1+I_2.
\end{align*}
As in the estimate of $z_{1,\eps}$, we have $I_1\leq c\eps(1+|\ln\eps|)$. Next, we observe 
\begin{align*}
    I_2&\leq c\left(\int_\Omega\int_{\Omega^{\frac\eps2}\setminus \Omega}\frac{1}{|x-y|^{n-\frac{s}2}}\,dy\,dx+\eps \int_{\Omega}\int_\Omega\frac{1}{(d_\Omega(y)+\eps)|x-y|^{n-\frac{s}2}}\,dy\,dx\right)\eqqcolon I_{2,1}+I_{2,2}.
\end{align*}
By applying \eqref{singdist:4} and \eqref{singdist:5} to $I_{2,1}$ and $I_{2,2}$, respectively, we derive $I_2\leq c\eps(1+|\ln\eps|)$. Thus, we have 
\begin{equation*}
    \|z_\eps-z_{1,\eps}\|_{L^1(\Omega)}\leq c\eps(1+|\ln\eps|).
\end{equation*}
Combining this, \eqref{conv.sleq12:ineq1}, \eqref{conv.sleq12:ineq2} and \eqref{conv.sleq12:ineq3}  yields the desired estimate. This completes the proof.
\end{proof}

\appendix

\section{Auxiliary estimates}\label{sec.appendix}
In this appendix, we recall several useful definitions, introduce the regularized distance function, and provide several technical lemmas. 
\begin{definition}
Let $\alpha\in(0,1)$, $r_0,M>0$. We say that $\Omega$ has $C^{1,\alpha}$ radius $r_0$ and $C^{1,\alpha}$ norm bounded by $M$, if for any $x_0\in \partial\Omega$, there is a function $\varphi_0$ such that 
\begin{align}\label{graph}
    \mathcal{R}(B_{r_0}(x_0)\cap \Omega)=\{z\in B_{r_0}(x_0',0)\,:\, z_n>\varphi_0(z')\}
\end{align}
where $\mathcal{R}$ is a linear function given by a composition of a translation and a rotation, and $\|\nabla\varphi\|_{C^{\alpha}}\leq M$.
\end{definition}

\begin{lemma}\label{lem.exi.reg.dist}
Let $\alpha\in(0,1]$. Let $r_0,M,N>0$ and let $\partial\Omega\in C^{1,\alpha}$-domain with $C^{1,\alpha}$ radius $r_0$, $C^{1,\alpha}$ norm bounded by $M$ and $\mathrm{diam}(\Omega)\leq N$. There is a regularized distance function $\mathrm{\sigma}_\Omega:\bbR^n\to\bbR\geq0$ such that 
\begin{align*}
    \frac{d_\Omega(x)}c\leq \sigma_\Omega(x)\leq cd_\Omega(x)\quad\text{and}\quad \|\nabla\sigma_\Omega\|_{L^{\infty}({\bbR^n})}+\|\sigma_\Omega\|_{C^{1,\alpha}({\Omega})}\leq c
\end{align*}
for some constant $c=c(n,\alpha,r_0,M,N)$. 
\end{lemma}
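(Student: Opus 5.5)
The plan is the standard regularized distance construction for $C^{1,\alpha}$ domains, glued together with a partition of unity at scale $r_0$. Two ingredients are combined: a Whitney-type regularized distance $\delta$, defined on all of $\bbR^n$, which is Lipschitz and comparable to $d_\Omega$; and, near $\partial\Omega$, the vertical distance to the graph in the local charts \eqref{graph}, which has the $C^{1,\alpha}$ regularity inside $\Omega$ that $\delta$ lacks.

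\textbf{Global Lipschitz part.} First I fix a Lipschitz regularized distance. Either mollify $d_\Omega$ at a scale proportional to $d_\Omega(x)$ itself, or use Stein's regularized distance built from a Whitney decomposition of $\bbR^n\setminus\partial\Omega$. This gives $\delta\in C^\infty(\bbR^n\setminus\partial\Omega)$ with
\[
\delta\eqsim d_\Omega,\qquad |\nabla\delta|\le c,\qquad |\nabla^2\delta|\le c\,d_\Omega^{-1},
\]
where $c=c(n)$. This function already handles everything away from $\partial\Omega$. In particular, on $\{d_\Omega\ge r_0/8\}\cap\overline\Omega$ all its derivatives are bounded in terms of $n$, $r_0$ and $N$.

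\textbf{Local $C^{1,\alpha}$ part.} Cover $\partial\Omega$ by balls $B_{r_0/4}(x_k)$ with $x_k\in\partial\Omega$. Because $\mathrm{diam}(\Omega)\le N$, the number of balls is at most $c(n,r_0,N)$. In the rotated coordinates of \eqref{graph} set $\rho_k(z)=z_n-\varphi_k(z')$. Then $\rho_k\in C^{1,\alpha}$ with norm bounded by $1+M$, and $\rho_k>0$ exactly in $\Omega$. Since $|\nabla\varphi_k|$ is bounded (after shrinking $r_0$ depending on $M$), $|\rho_k|\eqsim d_\Omega$ in $B_{r_0/2}(x_k)$, and the comparability constant depends only on $M$. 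Next take a partition of unity $\{\eta_k\}\cup\{\eta_0\}$ subordinate to $\{B_{r_0/2}(x_k)\}$ together with $\{d_\Omega>r_0/8\}$, with $|\nabla\eta|\le c/r_0$, and define
\[
\sigma_\Omega=\sum_k\eta_k|\rho_k|+\eta_0\,\delta .
\]

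\textbf{Verification.} Comparability $d_\Omega/c\le\sigma_\Omega\le c\,d_\Omega$ follows at once, since every summand is nonnegative and comparable to $d_\Omega$ on the support of its cutoff. The global Lipschitz bound holds because each of $|\rho_k|$, $\delta$ and the $\eta$'s is Lipschitz with the stated dependence. For the bound on $\|\sigma_\Omega\|_{C^{1,\alpha}(\Omega)}$, note that inside $\Omega$ we have $|\rho_k|=\rho_k$, which is $C^{1,\alpha}$; the cutoffs are smooth; and $\eta_0\delta$ is smooth, with support at distance at least $r_0/16$ from $\partial\Omega$.

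The main obstacle is the transition region where $\eta_0$ and the $\eta_k$ overlap. I will arrange that this overlap occurs only where $d_\Omega\eqsim r_0$, so that $\delta$ is $C^2$ there with bounds depending on $r_0$. I also have to check that the charts in \eqref{graph} can be chosen so that $|\rho_k|\eqsim d_\Omega$ uniformly, which requires a possibly smaller radius $r_0'=r_0'(r_0,M)$. Finally, the resulting $\sigma_\Omega$ is only Lipschitz across $\partial\Omega$, but that is exactly what the statement asks for: a $C^{1,\alpha}$ bound on $\Omega$ and a Lipschitz bound on all of $\bbR^n$.
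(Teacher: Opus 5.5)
Your construction is correct, but it takes a different route from the paper.

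\textbf{The paper's route.} It uses a PDE shortcut: take the torsion function $u$ of $\Omega$ ($-\Delta u=1$ in $\Omega$, $u=0$ on $\partial\Omega$). By \cite[Lemma A.2]{AR20} this gives $u\in C^{1,\alpha}(\overline\Omega)$ and $u\eqsim d_\Omega$, via boundary Schauder estimates and the Hopf lemma. The paper then extends $u$ across $\partial\Omega$ by flattening so that $-u\eqsim d_\Omega$ outside, and sets $\sigma_\Omega=|u|$.

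\textbf{Your route.} You build $\sigma_\Omega$ by hand from the chart functions $\rho_k$, glued at scale $r_0$ and patched with Stein's regularized distance away from $\partial\Omega$. This is elementary and makes the dependence on $n,\alpha,r_0,M,N$ explicit. It also handles the far field explicitly. The paper's flattening extension is only local near $\partial\Omega$, so a far-field patch like your $\eta_0\delta$ is tacitly needed there as well.

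\textbf{What the paper's route gives for free, and what you should record.} The paper obtains a \emph{signed} function that is $C^{1,\alpha}$ across $\partial\Omega$ with $|\nabla u|\ge 1/c$ on $\partial\Omega$ (by Hopf). This is exactly what Lemma~\ref{lem.c1alphadom} uses later, to make $\partial\Omega^\eps$ a $C^{1,\alpha}$ graph via the implicit function theorem. Your construction gives this too. Replace $|\rho_k|$ by $\rho_k$, and $\delta$ by $-\delta$ outside $\Omega$. The resulting function is $C^{1,\alpha}$ near $\partial\Omega$, since only the $\rho_k$ terms are active there. At $x\in\partial\Omega$ its gradient is $\sum_k\eta_k(x)\nabla\rho_k(x)$, with $\sum_k\eta_k(x)=1$. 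This is a convex combination of positive multiples of the inner normal, each of length $\sqrt{1+|\nabla\varphi_k|^2}\ge 1$, so the gradient has length at least $1$.
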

\begin{proof}
    From \cite[Lemma A.2]{AR20}, there is the weak solution $u$ to 
    \begin{equation*}
\left\{
\begin{alignedat}{3}
(-\Delta)u&=1&&\qquad \mbox{in  $\Omega$}, \\
u&=0&&\qquad  \mbox{in $\partial\Omega$}
\end{alignedat} \right.
\end{equation*}
with 
\begin{align*}
\frac{d_\Omega(x)}c    \leq u(x)\leq cd_\Omega(x)
\end{align*}
for some constant $c=c(n,\alpha,r_0,M,N)$. Since $\Omega$ has the $C^{1,\alpha}$-radius $r_0$, by using the standard flattening argument, we can extend the function $u:\bbR^n\to \bbR$ such that $\|u\|_{C^{1,\alpha}}\leq c$ and 
\begin{align*}
\frac{d_\Omega(x)}c    \leq -u(x)\leq cd_\Omega(x)\quad\text{for any }x\in \bbR^n\setminus \Omega
\end{align*}
for some constant $c=c(n,\alpha,r_0,M,N)$. Thus by taking $\sigma_\Omega(x)\coloneqq |u(x)|$, we get the desired result.
\end{proof}

Using the definition, if the domain $\Omega$ has $C^{1,\alpha}$-radius $r_0>0$ and $C^{1,\alpha}$ norm bounded by $M>0$, then there is a homeomorphism $\Psi_0:B_{r_0}(x_0)\to \Psi_0(B_{r_0}(x_0))\subset B_{Cr_0}(y_0)$ such that 
\begin{equation}\label{defn.hom}
\begin{aligned}
&y_0=\Psi_0(x_0)\quad\text{with } y_{0,n}=0\quad\text{and}\quad \|\nabla\psi_0\|_{C^{\alpha}(B_{r_0}(x_0))}\leq M\\
      & \Psi_0(B_{r_0}(x_0)\cap\Omega)\subset B_{Cr_0}(y_0)\cap \{z_n>0\},\quad \Psi_0(B_{r_0}(x_0)\setminus \Omega)\subset B_{Cr_0}(y_0)\cap \{z_n\leq 0\},\\
        & \frac{|y_1-y_2|}C\leq |\Psi_0(y_1)-\Psi_0(y_2)|\leq C|y_1-y_2|\quad\text{for any }y_1,y_2\in\Psi_0(B_{r_0}(x_0)),\\
        &\frac{|y_n|}{C}\leq d_\Omega(x)\leq C| y_n|\quad\text{for any }x\in B_{r_0}(x_0)\text{ with } y=\Psi_0(x),
\end{aligned}
\end{equation}
for some constant $C=C(n,\alpha,r_0,M,N)$. 

Recall the $\eps$-tubular neighborhood $\Omega^\eps$ by 
\begin{align}\label{defn.app.omegaeps}
    \Omega^\eps\coloneqq \Omega\cup \{x\in\bbR^n\,:\,\sigma_\Omega(x)\leq C_0\eps\}
\end{align}
for some $C_0=C_0(n,\alpha,r_0,M,N)$ such that
\begin{align}\label{app.const.C_0}
    \|\nabla\sigma_\Omega\|_{L^{\infty}(\bbR^n)}\leq \frac{C_0}{16\sqrt{n}}.
\end{align}

Then we have two basic lemmas on the domain $\Omega^\eps$.
\begin{lemma}\label{lem.c1alphadom}
    Let $\alpha\in(0,1]$, $r_0,M,N>0$ and let $\partial\Omega\in C^{1,\alpha}$-domain with $C^{1,\alpha}$ radius $r_0$, $C^{1,\alpha}$ norm bounded by $M$ and $\mathrm{diam}(\Omega)\leq N$. Then there is a small constant $\eps_0=\eps_0(n,\alpha,r_0,M,N)$ such that for any $\eps\leq \eps_0$,  $\Omega^\eps$ has $C^{1,\alpha}$-radius $r_1$ and $C^{1,\alpha}$-norm bounded by $M_1$, where $r_1=r_1(n,\alpha,r_0,M,N)$ and $ M_1=M_1(n,\alpha,r_0,M,N)$.
\end{lemma}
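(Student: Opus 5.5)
We will work locally near each boundary point and straighten $\partial\Omega^\eps$ using the regularized distance $\sigma_\Omega$ from \autoref{lem.exi.reg.dist}. By \eqref{defn.app.omegaeps}, near $\partial\Omega$ we have
\[
\partial\Omega^\eps=\{x:\sigma_\Omega(x)=C_0\eps\}\cap(\bbR^n\setminus\Omega),
\]
so $\partial\Omega^\eps$ is a level set of the extended function $u$ from the proof of \autoref{lem.exi.reg.dist}. Since $\sigma_\Omega=|u|$, outside $\Omega$ it is the level set $\{-u=C_0\eps\}$ of a $C^{1,\alpha}$ function on $\bbR^n$ with $\|u\|_{C^{1,\alpha}}\le c(n,\alpha,r_0,M,N)$. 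The plan is to show that $\nabla u$ is uniformly nondegenerate in a fixed neighborhood of $\partial\Omega$, and then apply the implicit function theorem with quantitative control.

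\textbf{Step 1: nondegeneracy of $\nabla u$ near $\partial\Omega$.} At $x_0\in\partial\Omega$, the two-sided bounds $d_\Omega/c\le |u|\le c\,d_\Omega$, with $u>0$ in $\Omega$ and $u<0$ outside, imply that the normal derivative satisfies $|\partial_\nu u(x_0)|\ge 1/c$. Indeed, along the inner normal we have $u(x_0+t\nu)\ge t/c$, while $|u(x_0+t\nu)-u(x_0)-t\,\partial_\nu u(x_0)|\le c\,t^{1+\alpha}$. Letting $t\to0$ gives $\partial_\nu u(x_0)\ge 1/c$; this is the Hopf-type bound encoded in the comparison with $d_\Omega$. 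The $C^\alpha$ continuity of $\nabla u$ then gives $\nabla u\cdot\nu(x_0)\ge 1/(2c)$ on $B_{\rho_0}(x_0)$ for some $\rho_0=\rho_0(n,\alpha,r_0,M,N)$.

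\textbf{Step 2: graph representation.} Rotate so that $\nu(x_0)=e_n$ and write $\partial\Omega$ locally as $\{z_n=\varphi_0(z')\}$. Choose $\eps_0$ so that $c\,C_0\eps_0\ll\rho_0$. Then the set $\{u=-C_0\eps\}$ lies within distance $c\eps$ of $\partial\Omega$ and hence inside the cylinders where $\partial_n u\ge 1/(2c)$. The implicit function theorem yields $\{u=-C_0\eps\}\cap B_{\rho_0/2}(x_0)=\{z_n=\varphi_\eps(z')\}$ with $\nabla\varphi_\eps=-\nabla'u/\partial_n u$. This quotient is $C^\alpha$ with norm bounded by $c\,\|u\|_{C^{1,\alpha}}$ together with the lower bound on $\partial_n u$, giving $M_1$. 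Since $\partial_n u>0$, the region $\Omega^\eps=\{u\ge -C_0\eps\}$ is locally the supergraph $\{z_n>\varphi_\eps\}$ up to the closed/open convention. For centers $y_0\in\partial\Omega^\eps$ we take the nearest point $x_0\in\partial\Omega$, at distance at most $c\eps$, and this gives the radius $r_1=\rho_0/4$, provided $\eps_0$ is small.

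\textbf{Main obstacle.} The delicate point is Step 1: \autoref{lem.exi.reg.dist} only asserts $|u|\simeq d_\Omega$ with $u$ extended by flattening. One must make sure the extension is chosen so that $u<0$ outside $\Omega$ and is $C^{1,\alpha}$ across $\partial\Omega$ (for example an odd-type reflection in flattened coordinates), so that the level set is genuinely $\{u=-C_0\eps\}$. Once this is fixed, the rest is quantitative bookkeeping with constants depending only on $n,\alpha,r_0,M,N$.
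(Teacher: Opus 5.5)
Your proposal is correct and follows essentially the same route as the paper. The paper also treats $\partial\Omega^\eps$ as the level set $\{\widetilde\sigma_\Omega=-C_0\eps\}$ of the signed regularized distance, which is exactly your $u$. It uses that $|\nabla\widetilde\sigma_\Omega|\ge 1/c$ on $\partial\Omega$, and hence, by $C^{1,\alpha}$ continuity, on $\Omega^{2\eps_0}\setminus\Omega_{2\eps_0}$, and then concludes by the implicit function theorem. Your Hopf-type argument along the inner normal, from $u\ge d_\Omega/c$ and the $C^{1,\alpha}$ Taylor expansion, supplies the justification of this gradient lower bound, which the paper only asserts.
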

\begin{proof}
We consider the signed regularized distance function $\widetilde{\sigma}_\Omega$ defined by $\widetilde{\sigma}_\Omega(x)={\sigma}_\Omega(x)$ if $x\in\Omega$ and $\widetilde{\sigma}_\Omega(x)=-{\sigma}_\Omega(x)$ if $x\in\bbR^n\setminus \Omega$ to ensure that $\widetilde{\sigma}_\Omega$ is $C^{1,\alpha}$ in $\bbR^n$, where $\sigma_\Omega$ is constructed in \autoref{lem.c1alphadom}. Then, we observe for any $x\in \partial\Omega$, $|\nabla\widetilde{\sigma}_\Omega(x)|\geq \frac1c$ for some constant $c=c(n,\alpha,r_0,M,N)$. In addition, as $\widetilde{\sigma}_\Omega\in C^{1,\alpha}$, there is a small constant $\eps_0=\eps_0(n,\alpha,r_0,M,N)$ such that $|\nabla\widetilde{\sigma}_\Omega(x)|\geq \frac1c$ on $\Omega^{2\eps_0}\setminus \Omega_{2\eps_0}$. Then using the implicit function theorem, there is a constant $r_1=r_1(n,\alpha,r_0,M,N)$ such that for any $\eps\leq \eps_0$ and for each $x_0\in \{x\in\bbR^n\setminus \Omega\,:\,\widetilde{\sigma}_\Omega(x)=-C_0\eps\}=\{x\in\bbR^n\setminus \Omega\,:\,{\sigma}_\Omega(x)=C_0\eps\}=\partial\Omega^{\eps}$, there exists a map $\varphi_0$ satisfying \eqref{graph} with $r_0$ replaced by $r_1$. Moreover, we get $\|\nabla\varphi_0\|_{C^{\alpha}}\leq M_1$ for some constant $M_1=M_1(n,\alpha,r_0,M,N)$. This completes the proof. 
\end{proof}

\begin{lemma}\label{lem.dist.app}
    Let $\alpha\in(0,1]$, $r_0,M,N>0$ and let $\partial\Omega\in C^{1,\alpha}$-domain with $C^{1,\alpha}$ radius $r_0$, $C^{1,\alpha}$ norm bounded by $M$ and $\mathrm{diam}(\Omega)\leq N$. Let $\eps\in(0,1]$ and $\Omega^\eps$ be the domain defined by \eqref{defn.app.omegaeps}. Fix the constant $C_0$ in \eqref{app.const.C_0}. Then we have the following:
    \begin{itemize}
        \item Let $x\in \Omega$. Then $d_\eps(x)\geq 8\sqrt{n}\eps$ and $x-z\in\Omega^{\frac\eps2}$ for any $|z|<8\sqrt{n}\eps$.
        \item Let $x\in \Omega^{\frac\eps2}$. Then $d_\eps(x)\geq 4\sqrt{n}\eps$.
        \item Let $x\in \bbR^n\setminus \Omega^{\frac\eps2}$. Then $d_\Omega(x)\geq 4\sqrt{n}\eps$ and $x-z\in \bbR^n\setminus \Omega^{\frac\eps4}$ for any $z\in2\eps\mathbb{T}^n$.
        \item Let $x-p\in \bbR^n\setminus \Omega^{\frac32\eps}$ for some $p\in\bbR^n$. Then $\eps\mathbb{T}^n(x-p)\subset \bbR^n\setminus \Omega^\eps$.
        \item Let $x-p\in \Omega^{\frac32\eps}\setminus \Omega_{\frac{3}2\eps}$ for some $p\in\bbR^n$. Then $\eps\mathbb{T}^n(x-p)\subset \Omega^{2\eps}\setminus\Omega_{2\eps}$.
        \item Let $x-p\in \Omega_{\frac32\eps}$ for some $p\in\bbR^n$. Then $\eps\mathbb{T}^n(x-p)\subset \Omega_\eps$.

    \end{itemize}
\end{lemma}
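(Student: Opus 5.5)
The plan is to work entirely with the regularized distance $\sigma_\Omega$ from \autoref{lem.exi.reg.dist} and the bounds \eqref{app.const.C_0}, namely
\[
|\sigma_\Omega(x)-\sigma_\Omega(y)|\le \frac{C_0}{16\sqrt n}|x-y|,\qquad \frac{d_\Omega}{C_0}\le\sigma_\Omega\le C_0d_\Omega .
\]
Since $\Omega^\eps=\Omega\cup\{\sigma_\Omega\le C_0\eps\}$ and $\Omega_\eps=\{x\in\Omega:\sigma_\Omega\ge C_0\eps\}$, every item reduces to comparing $\sigma_\Omega$ at two nearby points. Whenever the two points are at distance at most $c\sqrt n\,\eps$, the Lipschitz bound gives a change in $\sigma_\Omega$ of at most $\frac{c}{16}C_0\eps$. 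The levels $C_0\eps/4$, $C_0\eps/2$, $C_0\eps$, $\frac32C_0\eps$, $2C_0\eps$ are separated by multiples of $C_0\eps/4$, so each claim should follow once $c/16$ is below the relevant gap.

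For the first two items I first check that $\partial\Omega^\eps\subset\{x\notin\Omega:\sigma_\Omega(x)=C_0\eps\}$, assuming $\eps\le\eps_0$ so that \autoref{lem.c1alphadom} applies. Then $d_\eps(x)$ is the distance to this level set. Take $x\in\Omega^{\eps/2}$ and $y\in\partial\Omega^\eps$.
\begin{itemize}
\item If $x\notin\Omega$, then $\sigma_\Omega(y)-\sigma_\Omega(x)\ge C_0\eps/2$, so the Lipschitz bound gives $|x-y|\ge 8\sqrt n\,\eps$.
\item If $x\in\Omega$, any segment from $x$ to $y$ crosses $\partial\Omega$, where $\sigma_\Omega=0$. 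The same Lipschitz argument applied from the crossing point yields $|x-y|\ge 16\sqrt n\,\eps$.
\end{itemize}
This gives the second item, with room to spare, and the first bound $d_\eps\ge 8\sqrt n\,\eps$ on $\Omega$. For the inclusion $x-z\in\Omega^{\eps/2}$ when $|z|<8\sqrt n\,\eps$: either $x-z\in\Omega$, or the segment from $x$ hits $\partial\Omega$ at a point $w$ with $|w-(x-z)|<8\sqrt n\,\eps$. In the second case $\sigma_\Omega(x-z)<C_0\eps/2$.

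For the third item, $\sigma_\Omega(x)>C_0\eps/2$ gives $d_\Omega(x)\ge \sigma_\Omega/C_0>\eps/2$. This is weaker than the stated $4\sqrt n\,\eps$, so here I would have to enlarge $C_0$ (replace $16\sqrt n$ by a larger multiple in \eqref{app.const.C_0}). Alternatively, I would argue with the Lipschitz bound from the nearest boundary point $w$: $C_0\eps/2<\sigma_\Omega(x)\le \frac{C_0}{16\sqrt n}d_\Omega(x)$, hence $d_\Omega(x)>8\sqrt n\,\eps$. For $z\in2\eps\mathbb T^n$ we have $|z|\le\sqrt n\,\eps$, so $\sigma_\Omega$ drops by at most $C_0\eps/16<C_0\eps/4$. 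Also $x-z\notin\Omega$, since $d_\Omega(x)>|z|$.

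The last three items use $\eps\mathbb T^n(x-p)\subset B_{\sqrt n\,\eps/2}(x-p)$, on which $\sigma_\Omega$ varies by less than $C_0\eps/32$. Shifting the thresholds $\frac32C_0\eps$ by this amount keeps them on the correct side of $C_0\eps$ and $2C_0\eps$. The membership in or out of $\Omega$ is preserved by the same distance-to-$\partial\Omega$ argument as above.

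The main obstacle is bookkeeping for points on opposite sides of $\partial\Omega$, where $\sigma_\Omega$ is not the signed quantity defining $\Omega^\eps$. I will handle this uniformly by inserting the boundary crossing point, where $\sigma_\Omega=0$, and applying the Lipschitz bound on each half-segment.
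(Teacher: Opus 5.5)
Your proposal is correct and follows essentially the same route as the paper. The paper likewise derives every item from the Lipschitz bound $\|\nabla\sigma_\Omega\|_{L^\infty}\le \frac{C_0}{16\sqrt n}$ and the triangle inequality, passing through a point of $\partial\Omega$ (where $\sigma_\Omega=0$) whenever the two points lie on opposite sides of $\partial\Omega$. Your explicit checks that the cubes stay inside or outside $\Omega$ in the third, fourth and sixth items fill in details the paper leaves implicit. In the fifth item membership in $\Omega$ is not actually preserved, but it is not needed either, since $\sigma_\Omega<2C_0\eps$ on the whole cube suffices.

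Two small corrections. The restriction $\eps\le\eps_0$ is unnecessary: $\partial\Omega^\eps\subset\{\sigma_\Omega=C_0\eps\}\setminus\Omega$ holds for every $\eps>0$, because $\Omega$ is open, $\Omega^\eps=\overline\Omega\cup\{\sigma_\Omega\le C_0\eps\}$ is closed and $\sigma_\Omega$ is continuous. In the third item, your suggestion to enlarge $C_0$ is also unnecessary (and $C_0$ is fixed), since the Lipschitz argument from the nearest boundary point already gives $d_\Omega(x)>8\sqrt n\,\eps$.
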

\begin{proof}
    We only use \eqref{app.const.C_0} and the triangle inequality.
\begin{itemize}[leftmargin=*]
    \item Let $x\in \Omega$. First, assume $d_\eps(x)<8\sqrt{n}\eps$. Let $x_0\in \partial\Omega^\eps$ such that $d_\eps(x)=|x-x_0|$. Then there is $x_1\in\partial\Omega$ such that $|x-x_1|\leq |x-x_0|$. Thus, we have 
    \begin{align*}
        \sigma_\Omega(x_0)=\sigma_\Omega(x_0)-\sigma_\Omega(x_1)+\sigma_\Omega(x_1)\leq |x_0-x_1|\|\nabla\sigma_\Omega\|_{L^\infty}\leq \frac{C_0\eps}2,
    \end{align*}
    which contradicts $x_0\in \partial\Omega^\eps$, that is, $\sigma_\Omega(x_0)\geq C_0\eps$. Therefore, we get $d_\eps(x)\geq 8\sqrt{n}\eps$.
    
    Let $|z|<8\sqrt{n}\eps$. If $x-z\in \Omega$, it is done. So we may assume $x-z\in\bbR^n\setminus \Omega$. Then there is $x_0\in\partial\Omega$ such that $|x-z-x_0|\leq |z|$ and
    \begin{align*}
        \sigma_\Omega(x-z)=\sigma_\Omega(x-z)-\sigma_\Omega(x_0)+\sigma_\Omega(x_0)\leq \|\nabla\sigma_\Omega\|_{L^\infty}|x-z-x_0|\leq \frac{C_0}{16\sqrt{n}}|z|\leq \frac{C_0}{2}.
    \end{align*}
    This gives $x-z\in \Omega^{\frac\eps2}$.
    \item The second claim follows as in the proof of the first claim.
    \item Let $x\in \bbR^n\setminus \Omega^{\frac\eps2}$. Assume $d_\Omega(x)< 4\sqrt{n}\eps$. Let $x_0\in\partial\Omega$ such that $d_\Omega(x)=|x-x_0|$. Then we have 
    \begin{align*}
        \sigma_\Omega(x)=\sigma_\Omega(x)-\sigma_\Omega(x_0)\leq |x-x_0|\|\nabla\sigma_\Omega\|_{L^\infty}\leq \frac{C_0\eps}{4},
    \end{align*}
    which contradicts $x\in \bbR^n\setminus \Omega^{\frac\eps2}$. In addition, for any $z\in 2\eps\mathbb{T}^n$, 
    \begin{align*}
        \sigma_\Omega(x-z)= \sigma_\Omega(x-z)-\sigma_\Omega(x)+\sigma_\Omega(x)\geq \frac{C_0\eps}{2}- |z|\|\nabla\sigma_\Omega\|_{L^\infty}>\frac{C_0\eps}{4},
    \end{align*}
    which gives $x-z\in\bbR^n\setminus \Omega^{\frac\eps4}$.
    \item Let $x-p\in \bbR^n\setminus \Omega^{\frac32\eps}$ for some $p\in\bbR^n$. Then for any $y\in \eps\mathbb{T}^n(x-p)$,
    \begin{align*}
        \sigma_\Omega(y)=\sigma_\Omega(y)-\sigma_\Omega(x-p)+\sigma_\Omega(x-p)\geq-(\|\nabla\sigma_\Omega\|_{L^\infty})|y-(x-p)|+\frac{3C_0\eps}{2}> C_0\eps,
    \end{align*}
    which implies $y\in \bbR^n\setminus \Omega^\eps$. Thus $\eps\mathbb{T}^n(x-p)\subset \bbR^n\setminus \Omega^\eps$.
    \item The fifth claim follows from the fact that for any $|\xi|\leq \eps\sqrt{n}$
    \begin{align*}
        \sigma_\Omega(x-p-\xi)=\sigma_\Omega(x-p-\xi)-\sigma_\Omega(x-p)+\sigma_\Omega(x-p)\leq 2C_0\eps.
    \end{align*}
    \item The last claim follows from that fact that for $\sigma_\Omega(x-p)\geq\frac32\eps$ and $|\xi|\leq \eps\sqrt{n}$, 
    \begin{align*}
        \sigma_\Omega(x-p-\xi)\geq \sigma_\Omega(x-p)-\frac{C_0}{16\sqrt{n}}|\xi|\geq C_0\eps.
    \end{align*}
\end{itemize}
This completes the proof.
\end{proof}

Next, we provide three auxiliary lemmas concerning distance functions. 
\begin{lemma}\label{lem.singdist}
Let $\alpha\in(0,1]$, $r_0,M,N>0$ and let $\lambda\in[0,1)$, $\beta\in[0,1)$ with $\beta+\lambda>0$. Let $\partial\Omega\in C^{1,\alpha}$ with $C^{1,\alpha}$ radius $r_0$, $C^{1,\alpha}$ norm bounded by $M$ and $\mathrm{diam}(\Omega)\leq N$. Fix $\eps\in(0,1]$ and recall the $\eps$-tubular neighborhood $\Omega^\eps$ defined in \eqref{defn.app.omegaeps}. Then we have the following:
    \begin{itemize}
        \item Let $\lambda\in(0,1)$. For any $x\in\Omega$, we get
    \begin{align}\label{singdist:1}
        \int_{\Omega^1}\frac{\,dy}{d_\Omega^\beta(y)|x-y|^{n-\lambda}}\leq c(1+d_\Omega^{\lambda-\beta}(x)+\mbox{\large$\chi$}_{\{\lambda=\beta\}}|\ln d_\Omega(x)|)
    \end{align}
    where $c=c(n,\alpha,r_0,M,N,\lambda,\beta)$.
    \item For any $x\in\Omega$, we have
    \begin{align}\label{singdist:2}
        \int_{\Omega^{\eps}\setminus \Omega}\frac{\,dy}{d_\Omega^\beta(y)|x-y|^{n-\lambda}}\leq c\eps^{1-\beta}(1+d_\Omega^{-1+\lambda}(x)),
    \end{align}
    where $c=c(n,\alpha,r_0,M,N,\lambda,\beta)$. 
    \item Let $\lambda<\beta$. For any $x\in\Omega$, we have 
 \begin{align}\label{singdist:3}
      \int_{\Omega^{\eps}\setminus \Omega}\frac{\,dy}{d_\Omega^\beta(y)|x-y|^{n-\lambda}}\leq c(\eps^{-\beta+\lambda}+d_\Omega^{-\beta+\lambda}(x)),
 \end{align}
 where $c=c(n,\alpha,r_0,M,N,\lambda,\beta)$
 \item Let $\lambda\in(0,1)$. For any $x\in\Omega$, we have
 \begin{align}\label{singdist:4}
     \int_{\Omega^\eps\setminus \Omega}\frac{\,dy}{|x-y|^{n-\lambda}}\leq\begin{cases}
        c \eps^{\lambda}(1+|\ln \eps|) &\quad\text{if }d_\Omega(x)\leq c\eps,\\
      c \eps(1+|\ln\eps|)d^{-1+\lambda}_\Omega(x) &\quad\text{if }d_\Omega(x)> c\eps,
     \end{cases} 
 \end{align}
 for some constant $c=c(n,\alpha,r_0,M,N,\lambda,\beta)$.
    \item Let $\lambda\in(0,1)$. For any $x\in\Omega$, we have
    \begin{equation}\label{singdist:5}
        \int_\Omega\frac{\,dy}{(d_\Omega(y)+\eps)|x-y|^{n-\lambda}}\leq \begin{cases}
            c(1+|\ln \eps|)d_\Omega^{-1+\lambda}(x)&\quad\text{if }d_\Omega(x)\geq c\eps,\\
            c(1+|\ln d_\Omega(x)|)\eps^{-1+\lambda}&\quad\text{if }d_\Omega(x)<c\eps
        \end{cases} 
    \end{equation}
    for some constant $c=c(n,\alpha,r_0,M,N,\lambda,\beta)$.
    \end{itemize} 
\end{lemma}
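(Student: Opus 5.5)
The plan is to reduce everything to a model computation in the half-space by localizing with the flattening maps $\Psi_0$ from \eqref{defn.hom}, and to treat the part of the integration region away from $\partial\Omega$ trivially. Fix $x\in\Omega$ and set $d\coloneqq d_\Omega(x)$. Cover the boundary layer $\{y:d_\Omega(y)<r_0/C\}$ by finitely many balls $B_{r_0}(x_0)$, $x_0\in\partial\Omega$; their number depends only on $n,r_0,N$. On the complement of this layer all the weights $d_\Omega^{-\beta}(y)$ and $(d_\Omega(y)+\eps)^{-1}$ are bounded. The kernel $|x-y|^{\lambda-n}$ is integrable over the bounded set $\Omega^1$, so that part contributes a constant (or $\eps$, $\eps^{1-\beta}$ where the region is $\Omega^\eps\setminus\Omega$). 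In each ball, change variables $y\mapsto\Psi_0(y)=(y',y_n)$. This is bi-Lipschitz, so $|x-y|\eqsim|\Psi_0(x)-\Psi_0(y)|$, $d_\Omega(y)\eqsim|y_n|$, and the region $\Omega^\eps\setminus\Omega$ is mapped into a slab $\{-c\eps<y_n\le0\}$. The last point uses $\sigma_\Omega\eqsim d_\Omega$ from \autoref{lem.exi.reg.dist}. Write $\Psi_0(x)=(x',t)$ with $t\eqsim d$.

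In the flat picture I integrate out $y'$ first, using the elementary bound
\[
\int_{\bbR^{n-1}}\frac{dy'}{(|y'|+|t-y_n|)^{n-\lambda}}\le c|t-y_n|^{\lambda-1}\qquad(\lambda<1).
\]
When $\lambda=0$ this becomes a logarithm of the cutoff, which is the source of the $\ln$ terms. Each claim then reduces to a one-dimensional integral in $y_n$:

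\begin{itemize}
\item For \eqref{singdist:1}, I bound $\int_{-c}^{c}|y_n|^{-\beta}|t-y_n|^{\lambda-1}\,dy_n$ by splitting at $|y_n|\lessgtr t/2$. This gives $t^{\lambda-\beta}$, a constant, or $|\ln t|$ in the case $\lambda=\beta$.
\item For \eqref{singdist:2}, I use $|t-y_n|\ge t$ together with $\int_{-c\eps}^0|y_n|^{-\beta}\,dy_n\eqsim\eps^{1-\beta}$.
\item For \eqref{singdist:3}, I bound $|t-y_n|\ge|y_n|$ and also $|t-y_n|\ge t$, then split the slab at $|y_n|=t$. Since $\lambda<\beta$, this yields $\min\{\eps,t\}^{\lambda-\beta}\le\eps^{\lambda-\beta}+t^{\lambda-\beta}$.
\end{itemize}

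For \eqref{singdist:4} and \eqref{singdist:5}, the $y'$-integral can no longer be discarded cheaply. Here I do not integrate $y'$ over all of $\bbR^{n-1}$. Instead I keep $|y'|\le c$ and use
\[
\int_{|y'|\le c}\frac{dy'}{(|y'|+a)^{n-\lambda}}\le c\,a^{\lambda-1},
\]
combined with the cruder bound $\int_{|y'|\le c}(|y'|+a)^{-(n-1)}\,dy'\le c(1+|\ln a|)$ wherever a factor $\eps$ or $d$ is pulled out. For \eqref{singdist:5} the one-dimensional integral is $\int_0^c(y_n+\eps)^{-1}|t-y_n|^{\lambda-1}\,dy_n$, which I split into three ranges:

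\begin{itemize}
\item $y_n<t/2$: this contributes $t^{\lambda-1}\ln(1+t/\eps)$.
\item $|y_n-t|<t/2$: this contributes $(t+\eps)^{-1}t^{\lambda}$.
\item $y_n>2t$: this contributes $\int_{2t}^{c}(y_n+\eps)^{-1}y_n^{\lambda-1}\,dy_n$.
\end{itemize}

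These give $(1+|\ln\eps|)t^{\lambda-1}$ when $t\gtrsim\eps$ and $\eps^{\lambda-1}$-type bounds when $t\lesssim\eps$. The logarithm $|\ln d|$ in the small-$d$ regime is where I expect the bookkeeping to be delicate. I would first try to obtain it from the $|\ln a|$ bound above at scale $a\sim t$. \eqref{singdist:4} is handled analogously, with $(y_n+\eps)^{-1}$ replaced by the indicator of the slab of width $c\eps$.

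The main obstacle is getting the logarithmic factors precisely as stated rather than as extra $\eps^{-\delta}$ losses. The places at risk are the borderline exponents, namely $\lambda=\beta$ in \eqref{singdist:1}, and the $(d_\Omega+\eps)^{-1}$ weight in \eqref{singdist:5}, which is exactly the non-integrable power in the normal direction. The remaining steps are routine once the flattening and the reduction to one dimension are in place.
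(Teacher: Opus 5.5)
Your proposal is correct. For \eqref{singdist:2}--\eqref{singdist:4}, and for the exterior part of \eqref{singdist:1}, it is the paper's argument. You flatten with the chart $\Psi_0$ at the boundary point $x_0$ nearest to $x$ (the case $x\notin B_{r_0/2}(x_0)$ is trivial), integrate out $z'$ to get $(|X_n|+|z_n|)^{\lambda-1}$, and estimate a one-dimensional integral over $\{-C\eps\le z_n\le0\}$.

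The routes differ for the integrals over $\Omega$ itself. For \eqref{singdist:1}, the paper quotes \cite[Lemma B.2.4]{FRRO24} on $\Omega\cap(B_1(x)\setminus B_{d_\Omega(x)/2}(x))$. For \eqref{singdist:5}, outside $B_{\rho/2}(x)$ with $\rho=\max\{\eps,d_\Omega(x)\}$, it uses dyadic annuli around $x$, the coarea formula and $\mathcal H^{n-1}(\{d_\Omega=t\}\cap B_{2^{-i+1}}(x))\lesssim2^{-i(n-1)}$. When $d_\Omega(x)<\eps$ it adds the crude bound $(d_\Omega+\eps)^{-1}\le\eps^{-1}$ on the roughly $\ln(\eps/d_\Omega(x))$ annuli between $d_\Omega(x)/2$ and $\eps$; that count is the source of $|\ln d_\Omega(x)|$.

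Flattening inside $\Omega$ as well turns your proof into a single calculus computation, and it is sharper. Your three-range split needs the omitted range $3t/2\le y_n\le2t$ added, which contributes $\lesssim t^\lambda/(t+\eps)$. It then gives $c\,t^{\lambda-1}(1+\ln(1+t/\eps))$ for $t\gtrsim\eps$ and $c\,\eps^{\lambda-1}$ for $t\lesssim\eps$, which already implies \eqref{singdist:5}. So the $|\ln d_\Omega(x)|$ bookkeeping you flag as delicate is not needed. In exchange, the paper's route reuses a known lemma and works with level sets of $d_\Omega$ rather than a chart containing both $x$ and $y$.

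Some of your side remarks need correcting, though none affects the argument.
\begin{itemize}
\item The tangential integral $\int_{\bbR^{n-1}}(|y'|+a)^{\lambda-n}\,dy'=c\,a^{\lambda-1}$ converges for every $\lambda<1$, including $\lambda=0$; a logarithm appears only at $\lambda=1$. The logarithms in the lemma come only from the normal integrals $\int_{2t}^{c}y_n^{-1}\,dy_n$ (case $\lambda=\beta$ of \eqref{singdist:1}) and $\int_0^{t/2}(y_n+\eps)^{-1}\,dy_n$ in \eqref{singdist:5}.
\item \eqref{singdist:4} needs no logarithm: $\int_0^{c\eps}(t+y_n)^{\lambda-1}\,dy_n\le c\min\{\eps^\lambda,\eps\,t^{\lambda-1}\}$, which is exactly what the paper obtains. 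You can drop the restricted $y'$-integrals and the bound $\int_{|y'|\le c}(|y'|+a)^{1-n}\,dy'\lesssim1+|\ln a|$.
\item With a fixed finite cover you need overlapping balls, or else the paper's single chart at $x_0$, for which $|x-y|\ge r_0/2$ when $x\in B_{r_0/2}(x_0)$ and $y\notin B_{r_0}(x_0)$. The boundary layer away from $x$ then contributes $\lesssim\int_\Omega(d_\Omega+\eps)^{-1}\,dy\lesssim1+|\ln\eps|$ to \eqref{singdist:5}, and both right-hand sides dominate this.
\end{itemize}
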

\begin{proof}
We omit the dependence of the constant $c$, as it always depends only on the given data $n,\alpha,r_0,M,N,\lambda,\beta$.

Note from \cite[Lemma B.2.4]{FRRO24} that for $\lambda>0$,
\begin{align}\label{singdist:ineq0}
     \int_{\Omega\cap (B_1(x)\setminus B_{\frac{d_\Omega(x)}2}(x))}\frac{1}{d_\Omega^\beta(y)|x-y|^{n-\lambda}}\,dy\leq c\left(1+d_\Omega^{\lambda-\beta}(x)+\mbox{\large$\chi$}_{\{\lambda=\beta\}}|\ln d_\Omega(x)|\right).
\end{align}
In addition, we observe
\begin{equation}\label{singdist:ineq01}
\begin{aligned}
      &\int_{\Omega\setminus B_1(x)}\frac{1}{d_\Omega^\beta(y)|x-y|^{n-\lambda}}\,dy\leq \int_{\Omega\setminus B_1(x)}\frac{1}{d_\Omega^\beta(y)}\,dy\leq c\\
      & \int_{\Omega\setminus B_1(x)}\frac{1}{(d_\Omega(y)+\eps)|x-y|^{n-\lambda}}\,dy\leq \int_{\Omega\setminus B_1(x)}\frac{1}{(d_\Omega(y)+\eps)}\,dy\leq c(|\ln\eps|+1).
\end{aligned}
\end{equation}
On the other hand, for $\lambda>0$ and $\beta\in(0,1]$, we have
\begin{align}\label{singdist:ineq011}
    &\int_{  \Omega\cap B_{\frac{d_\Omega(x)}2}}\frac{\,dy}{d_\Omega^\beta(y)|x-y|^{n-\lambda}}\leq  d_\Omega(x)^{\lambda-\beta}.
\end{align}
Combining this, \eqref{singdist:ineq0} and \eqref{singdist:ineq01} yields
\begin{align}\label{singdist:ineq1}
     \int_{\Omega}\frac{\,dy}{d_\Omega^\beta(y)|x-y|^{n-\lambda}}\leq c(1+d_\Omega^{\lambda-\beta}(x)).
\end{align}
Next, we estimate the term $J$ defined by 
    \begin{align}\label{singdist:defn.J}
        J\coloneqq \int_{\Omega^\eps\setminus \Omega}\frac{\,dy}{d^\beta_\Omega(y)|x-y|^{n-\lambda}}
    \end{align}
    for any $\eps\in(0,1]$ and $k\in\{0,1\}$. We may assume $r_0\leq \frac14$. Let $x_0\in\partial\Omega$ such that $d_\Omega(x)=|x-x_0|$. We split the estimate of $J$ into two cases. 
    \begin{itemize}
        \item Assume $x\notin B_{\frac{r_0}2}(x_0)$. Then  $|y-x|\geq d_\Omega(x)=|x-x_0|\geq\frac{r_0}{2}$ for any $y\in \Omega^\eps\setminus \Omega$. Therefore, we have 
        \begin{align}\label{singdist:est.Jeasy}
            J\leq c\int_{\Omega^\eps\setminus \Omega}\frac{\,dy}{d^\beta_\Omega(y)}\leq c\int_0^{c\eps}\frac{\,dt}{t^{\beta}} \leq c\eps^{1-\beta}.
        \end{align}
        \item Assume $x\in B_{\frac{r_0}2}(x_0)$. We observe
        \begin{align*}
            J=\int_{(\Omega^\eps\setminus \Omega)\setminus B_{r_0}(x_0)}(\cdot)+\int_{(\Omega^\eps\setminus \Omega)\cap B_{r_0}(x_0)}(\cdot)\eqqcolon J_1+J_2.
        \end{align*}
        For $J_1$, we observe for any $y\in (\Omega^\eps\setminus \Omega)\setminus B_{r_0}(x_0)$, $|y-x|\geq |y-x_0|-|x-x_0|\geq \frac{r_0}{2}$, which gives 
        \begin{align}\label{singdist:est.J_1}
            J_1\leq c\int_0^{c\eps}\frac{\,dt}{t^{\beta}} \leq c\eps^{1-\beta}.
        \end{align}
        Next, for $J_2$, we recall the function $\Psi_0$ satisfying \eqref{defn.hom}. By the change of variables $z=\Psi(y)$, we have 
        \begin{align*}
            J_2\leq c\int_{B_{Cr_0}(y_0)\cap \{-C\eps\leq z_n\leq0\}}\frac{\,dz}{|z_n|^{\beta}|X-z|^{n-\lambda}},
        \end{align*}
        for some constant $C=C(n,\alpha,r_0,M,N)$, where $y_0=\Psi(x_0)(=(y_0',0))$ and $X=\Psi(x)$. Specifically, we have used \eqref{defn.hom} to see that for any $y\in(\Omega^\eps\setminus \Omega)\cap B_{r_0/2}(x_0)$, $|\Psi(x)-\Psi(y)|\leq c|x-y|$, $z_n\leq0$ and $|(\Psi(y))_n|=|z_n|\eqsim d_\Omega(y)$.  We further estimate $J_2$ as
        \begin{align*}
            J_2 \leq c\int_{-C\eps}^0\frac{1}{|z_n|^{\beta}}\int_{B'_{Cr_0}(y_0')}\frac{\,dz'}{(|X'-z'|+|X_{n}|+|z_n|)^{n-\lambda}}\,dz_n\leq c\int_{-C\eps}^0\frac{\,dz_n}{|z_n|^{\beta}(|X_n|+|z_n|)^{1-\lambda}}.
        \end{align*}
        Now we will divide the proof into several cases to get \eqref{singdist:1}-\eqref{singdist:4}.
        \begin{itemize}
            \item Let $\eps=1$, $k=0$. If $0<\lambda\leq \beta$, we estimate $J_2$ as
            \begin{align*}
                J_2\leq c\int_{-|X_n|}^0\frac{\,dz_n}{|z_n|^{\beta}|X_n|^{1-\lambda}}+\int_{-C}^{-|X_n|}\frac{\,dz_n}{|z_n|^{\beta}|z_n|^{1-\lambda}}&\leq c(|X_n|^{\lambda-\beta}+\mbox{\large$\chi$}_{\lambda=\beta}|\ln|X_n|)\\
                &\leq c(d_\Omega^{\lambda-\beta}(x)+\mbox{\large$\chi$}_{\{\lambda=\beta\}}|\ln d_\Omega(x)|)
            \end{align*}
            where $c=c(n,r_0,M,\lambda,\beta)$. If $\lambda>\beta$, then
            \begin{align*}
                J_2\leq \int^{0}_{-C}\frac{\,dz_n}{|z_n|^{1-\lambda+\beta}}\leq c.
            \end{align*} Combining this, \eqref{singdist:est.Jeasy}, \eqref{singdist:est.J_1} and \eqref{singdist:ineq1} proves \eqref{singdist:1}.
            \item Let $k=0$. Then we have
        \begin{align*}
            J_2\leq c\int_{-C\eps}^0\frac{\,dz_n}{|z_n|^{\beta}(|X_n|+|z_n|)^{1-\lambda}}\leq  c\eps^{1-\beta}|X_n|^{-1+\lambda} \leq c\eps^{1-\beta}d_\Omega(x)^{-1+\lambda}.
        \end{align*}
        where $c=c(n,r_0,M,\lambda,\beta)$. Thus, a combination of this, \eqref{singdist:est.Jeasy} and \eqref{singdist:est.J_1} proves \eqref{singdist:2}. 
        \item Let $k=0$ and $\lambda<\beta$. When $\lambda<\beta$ and $|X_n|\leq C\eps$, we have 
        \begin{align*}
            J_2&\leq \int^{0}_{-|X_n|}\frac{\,dz_n}{|z_n|^\beta(|X_n|+|z_n|)^{1-\lambda}}+\int^{-|X_n|}_{-C\eps}\frac{\,dz_n}{|z_n|^\beta(|X_n|+|z_n|)^{1-\lambda}}\\
            &\leq c\left(\int^{0}_{-|X_n|}\frac{\,dz_n}{|z_n|^\beta|X_n|^{1-\lambda}}+\int^{-|X_n|}_{-C\eps}\frac{\,dz_n}{|z_n|^{1-\lambda+\beta}} \right)\\
            &\leq c(|X_n|^{\lambda-\beta}+\eps^{\lambda-\beta}).
        \end{align*}
        In addition, when $|X_n|>C\eps$, we have 
        \begin{align*}
            J_2\leq c\int_{0}^{-C\eps}\frac{\,dz_n}{|z_n|^\beta \eps^{1-\lambda}}\leq c\eps^{\lambda-\beta}.
        \end{align*}
        Combining the above two inequalities, \eqref{singdist:est.Jeasy} and \eqref{singdist:est.J_1}, we get \eqref{singdist:3}.
        \item Let $\lambda>0$ and $\beta=0$. Then
        if $|X_n|\leq C\eps$, then
        \begin{align*}
            J_2&\leq c\left(\int_{-|X_n|}^0\frac{\,dz_n}{(|X_n|+|z_n|)^{1-\lambda}}+\int^{-|X_n|}_{-C\eps}\frac{\,dz_n}{(|X_n|+|z_n|)^{1-\lambda}}\right)\leq c\eps^\lambda .
        \end{align*}
        Next, if $|X_n|>C\eps$, we have 
        \begin{align*}
            J_2\leq c\int_{-C\eps}^0\frac{\,dz_n}{(|X_n|+|z_n|)^{1-\lambda}}\leq c|X_n|^{-1+\lambda}\eps \leq cd_\Omega^{-1+\lambda}(x)\eps.
        \end{align*}
        Next, suppose 
        Thus, combining two cases yields \eqref{singdist:4}.
        \end{itemize}
    \end{itemize}
Now, it remains to prove \eqref{singdist:5}. For this, we are going to prove that for $\rho=\max\{\eps,d_\Omega(x)\}$ and $\lambda>0$,
\begin{align}\label{singdist:ineq02}
    I&\coloneqq\int_{\Omega\cap (B_1(x)\setminus B_{\frac{\rho}2}(x))}\frac{1}{(d_\Omega(y)+\eps)|x-y|^{n-\lambda}}\,dy\leq c(1+|\ln\eps|)\rho^{-1+\lambda}(x).
\end{align}
As in the proof of \cite[Lemma B.2.4]{FRRO24}, we deduce
\begin{align*}
    I\leq c\left(1+\sum_{i=1}^{l}2^{i(n-\lambda)}\int_{(B_{2^{-i+1}}(x)\setminus B_{2^{-i}}(x))\cap \{0<d_\Omega< c2^{-i}\}}(d_\Omega(y)+\eps)^{-1}|\nabla d_\Omega(y)|\,dy\right),
\end{align*}
where $l$ is the positive integer such that $2^{-l}\leq \frac{\rho}4\leq 2^{-l+1}$.
Rewriting $I$ and applying the Coarea formula, we have
\begin{align*}
    I&=c\left(1+\sum_{i=1}^{l}2^{i(n-\lambda)}\int_{(B_{2^{-i+1}}(x)\setminus B_{2^{-i}}(x))\cap \{0<d_\Omega< c2^{-i}\}}(d_\Omega(y)+\eps)^{-1}|\nabla d_\Omega(y)|\,dy\right)\\
    &\leq c\left(1+\sum_{i=1}^{l}2^{i(n-\lambda)}\int_{0}^{c2^{-i}}\frac{1}{t+\eps}\,dt\int_{(B_{2^{-i+1}}(x)\setminus B_{2^{-i}}(x))\cap \{d_\Omega=t\}}\,dH^{n-1}\right)\\
    &\leq c\left(1+\sum_{i=1}^{l}2^{i(1-\lambda)}(1+|\ln\eps|)\right)\\
    &\leq c(1+|\ln\eps|)\rho^{-1+\lambda},
\end{align*}
where we have also used the following estimate
\begin{align*}
    H^{n-1}(\{d_\Omega=t\}\cap (B_{2^{-i+1}}\setminus B_{2^{-i}}))\leq c2^{-i(n-1)},
\end{align*}
as $\{d_\Omega=t\}$ is Lipschitz for small $t$.

If $d_\Omega(x)\geq\eps$, then using \eqref{singdist:ineq02}, \eqref{singdist:ineq01} and \eqref{singdist:ineq011}, we have \eqref{singdist:5}.
Let $d_\Omega(x)<\eps$. Then there is a positive integer $l$ such that $2^{l-1}d_\Omega(x)<\eps\leq 2^ld_\Omega(x)$ we have 
\begin{align*}
    \int_{\Omega\cap(B_\eps(x)\setminus B_{\frac{d_\Omega(x)}2}(x))}\frac{\,dy}{(d_\Omega(y)+\eps)|x-y|^{n-\lambda}}&\leq \sum_{i=0}^{l}\int_{(B_{2^id_\Omega(x)}(x)\setminus B_{2^{i-1}d_\Omega(x)}(x))\cap \Omega}\frac{\,dy}{(d_\Omega(y)+\eps)|x-y|^{n-\lambda}}\\
    &\leq \sum_{i=0}^l\int_{B_{2^id_\Omega(x)}(x)\cap \Omega}\frac{\,dy}{\eps(2^id_\Omega(x))^{n-\lambda}}\\
    &\leq \frac1\eps\sum_{i=0}^l(2^id_\Omega(x))^{\lambda}i.
\end{align*}
Next, as $2^l\leq 2\eps/d_\Omega(x)$, we have 
\begin{equation*}
    l\leq c(|\ln \eps|+|\ln d_\Omega(x)|+1)\leq c(1+|\ln d_\Omega(x)|).
\end{equation*}
Using the above two inequalities, we have
\begin{align*}
     \int_{B_\eps(x)\setminus B_{\frac{d_\Omega(x)}2}(x)}\frac{\,dy}{(d_\Omega(y)+\eps)|x-y|^{n-\lambda}}\leq c\eps^{-1+\lambda}l\leq c\eps^{-1+\lambda}(1+|\ln d_\Omega(x)|).
\end{align*}
Therefore, combining this, \eqref{singdist:ineq02}, \eqref{singdist:ineq01} and the fact that 
\begin{equation*}
    \int_{\Omega\cap B_{\frac{d_\Omega(x)}2}(x)}\frac{\,dy}{(d_\Omega(y)+\eps)|x-y|^{n-\lambda}}\leq c\eps^{-1}\int_{ B_{\eps}(x)}\frac{\,dy}{|x-y|^{n-\lambda}}\leq c\eps^{-1+\lambda}
\end{equation*}
yields \eqref{singdist:5}. Thus, we have proved \eqref{singdist:5} for both cases $d_\Omega(x)>\eps$ and $d_\Omega(x)\leq \eps$. This completes the proof.

Therefore, combining all the cases, the first, second and third claim follow. 
\end{proof}
\begin{lemma}\label{lem.dist.aux2}
    Let $\eps\in(0,1]$ and let $x\in \Omega$. Recall the domain $\Omega^\eps$ given in \eqref{defn.omegaeps} and write $d_\eps(x)\coloneqq \mathrm{dist}(x,\partial\Omega^\eps)$. Then we have the following:
    \begin{itemize}
        \item Let $|z|\leq\frac{d_\eps(x)}{2}$. Then for any $t\in[0,1]$, we have 
        \begin{align}\label{dist.aux2:main1}
            \frac12d_\eps(x)\leq d_\eps(x-tz)\leq \frac32d_\eps(x).
        \end{align}
        \item Let $x-z\in \bbR^n\setminus \Omega^{\eps}$. Then we have 
        \begin{align}\label{dist.aux2:main2}
            |z|\geq\frac{d_\eps(x)}{2}
        \end{align}
        \item Let $x-z\in \Omega$ with $z\in \eps\mathbb{T}^n(p)$ for some $p\in\bbR^n$.
        Then for any $t\in[0,1]$
        \begin{align}\label{dist.aux2:main5}
            \frac{d_\eps(x-z)}2\leq d_\eps(x-(tz+(1-t)p))\leq \frac32 d_\eps(x-z)\quad\text{for any }t\in[0,1].
        \end{align}
    \end{itemize}
\end{lemma}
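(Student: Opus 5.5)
The plan is to argue entirely from the two structural facts about $\Omega^\eps$ that are already available: $d_\eps(\cdot)=\mathrm{dist}(\cdot,\partial\Omega^\eps)$ is $1$-Lipschitz, and \autoref{lem.dist.app} gives $d_\eps(x)\geq 8\sqrt{n}\eps$ for $x\in\Omega$.

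For \eqref{dist.aux2:main1}, take $|z|\le d_\eps(x)/2$ and $t\in[0,1]$. The Lipschitz bound gives
\[
|d_\eps(x-tz)-d_\eps(x)|\le t|z|\le \tfrac12 d_\eps(x),
\]
and both inequalities follow at once. It is worth recording that the segment $\{x-tz\}$ stays inside the ball $B_{d_\eps(x)}(x)$. That ball lies in $\Omega^\eps$, since $x\in\Omega\subset\Omega^\eps$ and the ball does not meet $\partial\Omega^\eps$. So no point of the segment crosses the boundary, and $d_\eps$ there really is the interior distance.

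For \eqref{dist.aux2:main2}, let $x-z\notin\Omega^\eps$. Since $x\in\Omega^\eps$, the segment joining $x$ to $x-z$ must meet $\partial\Omega^\eps$. This gives $|z|\ge d_\eps(x)\ge d_\eps(x)/2$. The only care needed is the closedness convention in \eqref{defn.omegaeps}: if $x-z$ lies exactly on the boundary, the distance is still at most $|z|$.

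For \eqref{dist.aux2:main5}, let $x-z\in\Omega$ and $z\in\eps\mathbb{T}^n(p)$. Then $|z-p|\le \tfrac{\sqrt n}{2}\eps$, so every point $x-(tz+(1-t)p)$ lies within $\tfrac{\sqrt n}{2}\eps$ of $x-z$. Applying \autoref{lem.dist.app} at the point $x-z\in\Omega$ gives $d_\eps(x-z)\ge 8\sqrt n\eps$. Hence $\tfrac{\sqrt n}{2}\eps\le \tfrac1{16}d_\eps(x-z)$. The Lipschitz property then gives the two-sided bound, with constants $\tfrac{15}{16}$ and $\tfrac{17}{16}$, which are better than the stated $\tfrac12,\tfrac32$.

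The main obstacle is mild: checking that the constant $C_0$ from \eqref{app.const.C_0} is large enough that \autoref{lem.dist.app}'s bound $d_\eps\ge 8\sqrt n\eps$ really holds on $\Omega$. That was settled in the proof of \autoref{lem.dist.app}, so here it can simply be quoted.
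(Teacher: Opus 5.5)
Your proof is correct and follows the paper's argument. The paper's proof of \eqref{dist.aux2:main1} is the $1$-Lipschitz property of $d_\eps$ written out via nearest boundary points, \eqref{dist.aux2:main2} uses $B_{d_\eps(x)/2}(x)\subset\Omega^\eps$, and \eqref{dist.aux2:main5} applies the first item at $x-z$ using $d_\eps(x-z)\geq 8\sqrt{n}\eps\geq 8|z-p|$ from \autoref{lem.dist.app}; your version only records the sharper constants ($|z|\geq d_\eps(x)$ and $\tfrac{15}{16},\tfrac{17}{16}$) that the same reasoning gives.
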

\begin{proof}
    The proof is based on elementary computations.
    \begin{itemize}
        \item Let $|z|<\frac{d_\eps(x)}{2}$ and $t\in[0,1]$. First, there is a point $x_0\in\partial\Omega^\eps$ such that $|x-x_0|=d_\eps(x)$. Then,
        \begin{align*}
            d_\eps(x-tz)\leq |x-tz-x_0|\leq |x-x_0|+|z|\leq \frac{3}2d_\eps(x).
        \end{align*}
        Next, there is a point $z_t\in \partial\Omega^\eps$ such that $|x-tz-z_t|=d_\eps(x-tz)$. Then
        \begin{align*}
            d_\eps(x)\leq |x-z_t|\leq |x-tz-z_t|+|tz|\leq d_\eps(x-tz)+\frac{d_\eps(x)}2.
        \end{align*}
 Combining the two above inequalities yields \eqref{dist.aux2:main1}.
        \item Let $x-z\in \bbR^n\setminus \Omega^{\eps}$. Assume $|z|<\frac{d_\eps(x)}{2}$. Then $x-z\in {B_{\frac{d_\eps(x)}2}}(x)$. However, $B_{\frac{d_\eps(x)}2}(x)\subset \Omega^{\eps}$, which contradicts $x-z\notin \bbR^n\setminus \Omega^{\eps}$.
        \item Let $x-z\in \Omega$ with $z\in \eps\mathbb{T}^n(p)$ for some $p\in\bbR^n$. Note that $d_\eps(x-z)\geq 8\sqrt{n}\eps\geq 8|z-p|$ by \autoref{lem.dist.app}. Thus, by applying \eqref{dist.aux2:main1} with $x$, $z$ and $t$ replaced by $x-z$, $z-p$ and $1-t$, respectively, we have the desired result.
    \end{itemize}
    This completes the proof.
\end{proof}

\begin{lemma}\label{lem.dist.tailint}
  Let $\alpha\in(0,1]$, $r_0,M,N>0$. Let $\partial\Omega\in C^{1,\alpha}$ with $C^{1,\alpha}$ radius $r_0$, $C^{1,\alpha}$ norm bounded by $M$ and $\mathrm{diam}(\Omega)\leq N$. There is a small constant $\eps_0=\eps_0(n,\alpha,r_0,M,N)$ such that for any $\eps\leq \eps_0$, the following holds: Let $x\in \Omega$, $\gamma\in(0,1)$, $\beta\geq0$, $\delta\in[0,1)$ and $d_\eps\equiv d_{\Omega^\eps}$.
    \begin{itemize}
    \item We have
     \begin{align}\label{dist.tailint:main3}
        \int_{|z|>\frac{d_\eps(x)}4}\frac{d_\eps^{-\delta}(x-z)}{|z|^{n+\beta}}\mbox{\large$\chi$}_{\{x-z\in \Omega_{\eps}\}}\,dz\leq cd^{-\delta-\beta}_\eps(x)
    \end{align}
    for some constant $c=c(n,\alpha,r_0,M,N,\beta,\delta)$.
    \item We have 
    \begin{align}\label{dist.tailint:main4}
        \int_{|z|>\frac{d_\eps(x)}{4}}\frac{d_\eps^{-\delta}(x-z)}{|z|^{n+\beta}}\mbox{\large$\chi$}_{\{x-z\in \Omega^{2\eps}\setminus \Omega_{2\eps}\}}\,dz\leq c\eps^{1-\delta} d_\eps^{-1-\beta}(x)
    \end{align}
    for some constant $c=c(n,\alpha,r_0,M,N,\beta,\delta)$.
    \end{itemize}
\end{lemma}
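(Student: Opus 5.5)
Both estimates are localized versions of standard integrals of negative powers of a distance function against the kernel $|z|^{-n-\beta}$. I will prove them by a dyadic decomposition in $|z|$ combined with the local flattening of $\partial\Omega^\eps$. By \autoref{lem.c1alphadom}, for $\eps\le\eps_0$ the domain $\Omega^\eps$ has uniform $C^{1,\alpha}$ radius $r_1$ and norm $M_1$. Hence near $\partial\Omega^\eps$ there is a bi-Lipschitz flattening $\Psi$ as in \eqref{defn.hom} with $d_\eps(y)\eqsim |(\Psi(y))_n|$, uniformly in $\eps$. Moreover, by \autoref{lem.exi.reg.dist} and \eqref{defn.omegaeps}, for points $y$ with $y\in\Omega_\eps$ one has $d_\eps(y)\gtrsim \eps + d_\Omega(y)$. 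For $y\in\Omega^{2\eps}\setminus\Omega_{2\eps}$ one has $d_\eps(y)\lesssim\eps$, and this set is contained in the strip $\{y: |\widetilde\sigma_\Omega(y)|\le 2C_0\eps\}$, which has width comparable to $\eps$ and lies within distance $c\eps$ of $\partial\Omega^\eps$. Throughout I use $\sigma_\Omega$ to compare all these distances.

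For \eqref{dist.tailint:main3}, I split $\{|z|>d_\eps(x)/4\}$ into the annuli $A_k=\{2^{k}d_\eps(x)/4<|z|\le 2^{k+1}d_\eps(x)/4\}$, together with the region $|z|>r_1$. The far region contributes at most $c\int_{\Omega}d_\eps^{-\delta}$, which is bounded since $\delta<1$ (by the flattening, or by \eqref{singdist:1}-type bounds). That quantity is $\le c\le c\,d_\eps^{-\delta-\beta}(x)$ since $d_\eps(x)\le c$. On $A_k$, with $\rho_k=2^kd_\eps(x)$, I bound $\int_{B_{\rho_k}(x)}d_\eps^{-\delta}(y)\,dy\le c\rho_k^{n-\delta}$. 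This holds because, after flattening, the set $B_{\rho_k}(x)\cap\Omega^\eps$ is contained in a cylinder of radius $c\rho_k$ over which $\int_0^{c\rho_k}t^{-\delta}dt\lesssim\rho_k^{1-\delta}$. The contribution of $A_k$ is then $\le c\rho_k^{-n-\beta}\rho_k^{n-\delta}=c\rho_k^{-\delta-\beta}$. Summing the geometric series over $k\ge0$ gives $c\,d_\eps^{-\delta-\beta}(x)$; the sum converges because $\delta+\beta>0$, and the degenerate case $\delta=\beta=0$ is immediate from $\int_{|z|>r}|z|^{-n}\chi_{\Omega}$ over a bounded set.

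For \eqref{dist.tailint:main4}, I use the same annuli, but now the integrand is supported in the strip $S_\eps=\{x-z\in\Omega^{2\eps}\setminus\Omega_{2\eps}\}$. The key local estimate is
\[
\int_{B_\rho(x)\cap S_\eps}d_\eps^{-\delta}(y)\,dy\le c\,\rho^{n-1}\eps^{1-\delta}\qquad(\rho\ge c\eps).
\]
To obtain it, I flatten $\partial\Omega^\eps$ and note that $S_\eps$ corresponds to $\{0\le \pm y_n\lesssim\eps\}$ up to bi-Lipschitz maps. The tangential directions contribute $\rho^{n-1}$ and the normal direction contributes $\int_0^{c\eps}t^{-\delta}dt\lesssim\eps^{1-\delta}$. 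One subtlety is that $d_\eps$ vanishes on $\partial\Omega^\eps$, which lies inside the strip, so the normal integral really is $\int t^{-\delta}$ and $\delta<1$ is needed. Another is the exterior part of the strip, which is treated symmetrically. On annuli where $\rho_k$ exceeds the chart radius I replace the bound by the global estimate $\int_{S_\eps}d_\eps^{-\delta}\le c\eps^{1-\delta}$. Summing, $\sum_k\rho_k^{-n-\beta}\rho_k^{n-1}\eps^{1-\delta}\le c\,\eps^{1-\delta}d_\eps^{-1-\beta}(x)$, again a geometric series, now with exponent $-1-\beta<0$.

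The main obstacle is making the flattening uniform in $\eps$ and simultaneously valid for $\partial\Omega^\eps$ and for the level sets defining $\Omega_{\eps},\Omega_{2\eps}$. I would handle this by working with the single $C^{1,\alpha}$ function $\widetilde\sigma_\Omega$ from the proof of \autoref{lem.c1alphadom}. Its gradient is bounded below on $\Omega^{2\eps_0}\setminus\Omega_{2\eps_0}$, so the implicit-function charts straighten all level sets $\{\widetilde\sigma_\Omega=t\}$, $|t|\le 2C_0\eps_0$, at once. In these coordinates $d_\eps(y)\eqsim|\widetilde\sigma_\Omega(y)+C_0\eps|$ near the strip, which yields both the layer-cake bounds above. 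Away from the strip, $d_\eps\gtrsim d_\Omega+\eps$ follows from \autoref{lem.dist.app}.
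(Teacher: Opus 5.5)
Your proposal is correct and follows essentially the paper's proof. The paper also splits $\{|z|>d_\eps(x)/4\}$ into dyadic annuli and flattens $\partial\Omega^\eps$ via the uniform charts of \autoref{lem.c1alphadom} to obtain the layer-cake bound \eqref{dist.tailint:main2}, namely $\int_{\{2^{k-1}d_\eps(x)<|z|\le 2^kd_\eps(x)\}}d_\eps^{-\delta}(x-z)\chi_{\{d_\eps(x-z)\le C_0\}}\,dz\lesssim C_0^{1-\delta}(2^kd_\eps(x))^{n-1}$. It takes $C_0\sim 2^kd_\eps(x)$ for \eqref{dist.tailint:main3}, which is your $\rho_k^{n-\delta}$, and $C_0\sim\eps$ for \eqref{dist.tailint:main4}, which is your $\rho_k^{n-1}\eps^{1-\delta}$, and then sums the geometric series.

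The one slip is your claim that the case $\delta=\beta=0$ is ``immediate''; it is in fact false. Take $x$ with $d_\Omega(x)\sim\eps$. Then a fixed fraction of each annulus $\{\rho<|z|\le2\rho\}$ with $C\eps\le\rho\le cr_0$ lies in $x-\Omega_\eps$, so each annulus contributes $\gtrsim1$. Hence the left side of \eqref{dist.tailint:main3} is of order $\ln(1/\eps)$ and unbounded. So \eqref{dist.tailint:main3} genuinely requires $\beta+\delta>0$. The paper's summation $\sum_k(2^kd_\eps(x))^{-\beta-\delta}$ tacitly needs this too, and it holds in every application, where $\beta=2s$.
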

\begin{proof}
First, by \autoref{lem.c1alphadom}, there is a small constant $\eps_0=\eps_0(\Omega)$ such that for any $\eps\leq \eps_0$, $\Omega^{\eps}$ has $C^{1,\alpha}$ radius $r_1$ and $C^{1,\alpha}$ norm bounded by $M_1$, where $r_1$ and $M_1$ depend only on $n,\alpha,r_0,M,N$. Moreover, for any $x_0\in \partial\Omega^\eps$, there is a homeomorphism $\Psi_0$ having \eqref{defn.hom} with $r_0=r_1$, $M=M_1$.

Based on the flattening argument given in the proof of  \cite[Lemma 2.4]{CCKW24}, we prove for any $C_0>0$,
\begin{align}\label{dist.tailint:main2}
    I_k\coloneqq \int_{2^{k-1}d_\eps(x)<|z|\leq 2^kd_\eps(x),x-z\in\Omega^{2\eps}}d_\eps^{-\delta}(x-z)\mbox{\large$\chi$}_{\{d_\eps(x-z)\leq C_0\}}\,dz\leq c C_0^{1-\delta}(2^kd_\eps(x))^{n-1}
\end{align}
for some constant $c=c(n,\alpha,r_0,M,N,\delta)$. 

First, if $C_0\geq \frac{r_1}{4}$, then we have 
\begin{align*}
    I_k\leq \int_{2^{k-1}<|z|\leq 2^kd_\eps(x),x-z\in\Omega^{2\eps}}d_\eps^{-\delta}(x-z)\,dz\leq c\min\{(2^kd_\eps(x))^{n-1},1\}\leq c C_0^{1-\delta}(2^kd_\eps)^{n-1}
\end{align*}
for some constant $c=c(n,\alpha,r_0,M,N,\delta)$, as $\delta<1$ and $x-z\in\Omega^{2\eps}$. In addition, if $2^kd_\eps(x)\geq \frac{r_1}{4}$, then
\begin{align*}
    I_k\leq \int_{x-z\in\Omega^{2\eps}}d_\eps^{-\delta}(x-z)\mbox{\large$\chi$}_{\{d_\eps(x-z)\leq C_0\}}\,dz\leq cC_0^{1-\delta}\leq c C_0^{1-\delta}(2^kd_\eps)^{n-1}
\end{align*}
for some constant $c=c(n,s,\alpha,r_0,M,N,\delta)$.

Thus, we assume $C_0,2^kd_\eps(x)\leq \frac{r_1}{4}$. Using the change of variables $x-z=y$, we rewrite $I_k$ as
   \begin{align*}
       I_k=\int_{2^{k-1}d_\eps(x)<|y-x|<2^{k}d_\eps(x),y\in \Omega^{2\eps}}d_\eps^{-\delta}(y)\mbox{\large$\chi$}_{\{d_\eps(y)\leq C_0\}}\,dy
   \end{align*}
   Fix $y_1\in \Omega^{2\eps}$ such that $d_\eps(y_1)\leq C_0$ and $|y_1-x|<2^kd_\eps(x)$. Then there is a point $y_0\in \partial\Omega^\eps$ such that $|y_1-y_0|=d_\eps(y_1)$. Indeed, we have 
   \begin{align*}
       A_0=\{|y-x|<2^{k}d_\eps(x)\,:\, d_\eps(y)\leq C_0\}\subset B_{r_0}(y_0),
   \end{align*}
   as for $y\in A_0$,
   \begin{align*}
       |y-y_0|\leq |y-x|+|x-y_1|+|y_1-y_0|\leq 2^kd_\eps(x)+2^kd_\eps(x)+d_\eps(y_1)\leq 3r_1/4.
   \end{align*}
   As $\Omega^\eps$ has $C^{1,\alpha}$ radius $r_1$, there is a homeomorphism $\Psi:B_{r_1}(y_0)\to \Psi_0(B_{r_1}(y_0))\subset B_{C r_1}(y_0)$ that satisfies \eqref{defn.hom} with $r_0=r_1$, $M=M_1$ for some $C=C(n,\alpha,r_0,M,N)$. Now using the change of variables $z=\Psi(y)$ and \eqref{defn.hom}, we estimate $I_{k}$ as
   \begin{align*}
       I_k\leq c\int_{ B_{2^kCd_\eps(x)}(\Psi(x))\cap \{-C C_0\leq z_n\leq C C_0\}}\frac{\,dz}{|z_n|^{\delta}}\leq cC_0^{1-\delta}(2^kd_\eps(x))^{n-1},
   \end{align*}
   where $\Psi(x)=(\Psi(x)',0)$, $c=c(n,r_0,M,N,\beta,\delta)$ such that $\Psi(B_{r_1}(y_0) \cap\{d_\eps(y)\leq C_0\})\subset B_{C r_1}(y_0)\cap \{-C C_0\leq z_n\leq CC_0\}$ and $\Psi(B_{2^kd_\eps}(x))\subset B_{C2^kd_\eps(x)}(\Psi(x)) $. Therefore, we have \eqref{dist.tailint:main2}.

For \eqref{dist.tailint:main3}, using the fact that $d_\eps(x-z)\leq d_\eps(x)+|z|$ and \eqref{dist.tailint:main2}, we have
\begin{align*}
   &\sum_{k=0}^{\infty}\int_{2^{k-2}d_\eps(x)<|z|\leq 2^{k-1}{d_\eps(x)}, x-z\in\Omega}\frac{d_\eps^{-\delta}(x-z)}{|z|^{n+\beta}}\,dz\\
   &\leq c\sum_{k=0}^{\infty}\int_{2^{k-2}d_\eps(x)<|z|\leq 2^{k-1}{d_\eps(x)},x-z\in\Omega}\frac{d_\eps^{-\delta}(x-z)}{(2^{k}d_\eps(x))^{n+\beta}}\mbox{\large$\chi$}_{\{d_\eps(x-z)\leq 2^{k}d_\eps(x)\}}\,dz\\
   &\leq c\sum_{k=0}^\infty(2^kd_\eps(x))^{-\beta-\delta}
\end{align*}
for some constant $c=c(n,\alpha,r_0,M,N,\beta,\delta)$, which proves \eqref{dist.tailint:main3}.

For \eqref{dist.tailint:main4}, we observe $d_\eps(x-z)\leq C\eps$ for some $c=C(n,\alpha,r_0,M,N)$, as $x-z\in \Omega^{2\eps}\setminus \Omega_{2\eps}$. Thus, using \eqref{dist.tailint:main2}, we conclude
   \begin{align*}
        \int_{|z|>\frac{d_\eps(x)}{4}}\frac{d_\eps^{-\delta}(x-z)}{|z|^{n+\beta}}\mbox{\large$\chi$}_{\{x-z\in \Omega^{2\eps}\setminus \Omega_{2\eps}\}}\,dz\leq \sum_{k\geq0}I_k (2^kd(x))^{n+\beta}\leq c\eps^{1-\delta}d_\eps(x)^{-1-\beta}
   \end{align*}
   for some constant $c=c(n,\alpha,r_0,M,N,\beta,\delta)$,
   which completes the proof. 
\end{proof}

Lastly we give some basic properties of the functional given by for $R>0$,
\begin{align*}
    (u:\bbR^n\to \bbR,\phi:\bbR^n\to\bbR^n)\Rightarrow\inf\limits_{a\in\bbR,\,b\in\bbR^n}{E}(u-a-b\cdot(x+\phi);B_R),
\end{align*}
where $E(\cdot)$ is defined by \eqref{defn.excess}.

\begin{lemma}\label{lem.funct}
Let $s>\frac12$ and $R>0$. We have the following:
\begin{itemize}
    \item For any $u\in  L^1_{2s}(\bbR^n)$ and $\phi\in L^\infty(\bbR^n;\bbR^n)$ with $\|\phi\|_{L^\infty}\leq M$ for some constant $M\geq1$, there exist $a_0\in\bbR$ and $b_0\in\bbR^n$ such that 
    \begin{align}\label{funct:exist} 
         \inf_{a\in\bbR,\,b\in\bbR^n}{E}(u-a-b\cdot(x+\phi);B_R)= {E}(u-a_0-b_0\cdot(x+\phi);B_R).
    \end{align}
    In addition, we have 
    \begin{align}\label{funct:bdd.a_0b_0}
        |a_0|+|b_0|\leq c(n,s,R,M)E(u;B_R)
    \end{align}
    \item Assume that there are sequences $(\eps_l)_l$ $(\phi_l)_l$ and $(u_l)_l$ such that
    \begin{align*}
        \|\phi_l\|_{L^\infty(\bbR^n)}\leq \eps_l\quad\text{and}\quad u_l\in L^2(\bbR^n)\quad\text{with}\quad u_l\equiv 0\quad\text{on}\quad\bbR^n\setminus B_{M},
    \end{align*}
    where $\lim\limits_{l\to\infty}\eps_l=0$ and $M\geq 2R$. Let $a_l\in\bbR$ and $b_l\in\bbR^n$ be given by
    \begin{align*}
        \inf_{a\in\bbR,\,b\in\bbR^n}{E}(u_l-a-b\cdot(x+\phi_l);B_R)={E}(u_l-a_l-b_l\cdot(x+\phi_l);B_R).
    \end{align*}
    If $u_l\to u_\infty$ in $L^2(\bbR^n)$, $a_l\to a_\infty$, $b_l\to b_\infty$, then
    \begin{align}\label{funct:limit}
      \inf_{a\in\bbR,\,b\in\bbR^n}{E}(u_\infty-a-b\cdot x;B_R)=\lim_{l\to\infty}E(u_l-a_l-b_l\cdot(x+\phi_l);B_R)=E(u_\infty-a_\infty-b_\infty\cdot x;B_R).
    \end{align}
\end{itemize}
\end{lemma}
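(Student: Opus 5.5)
The plan is to treat the functional $F(a,b)\coloneqq E(u-a-b\cdot(x+\phi);B_R)$ on $\bbR\times\bbR^n$ by elementary convex-analysis arguments, and then pass to the limit in the second claim. First, $F$ is convex and continuous in $(a,b)$. It is a sum of an average of $|u-a-b\cdot(x+\phi)|$ over $B_R$ and a tail integral of the same quantity. Both are convex in $(a,b)$ because the integrand is convex. Both are finite and Lipschitz in $(a,b)$, since $|x+\phi|\le |x|+M$ and $\int_{\bbR^n\setminus B_R}(|y|+M)|y|^{-n-2s}\,dy<\infty$ precisely because $s>\frac12$.

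For existence, show coercivity. By the triangle inequality,
\[
F(a,b)\ \ge\ E(a+b\cdot(x+\phi);B_R)-E(u;B_R)\ \ge\ \dashint_{B_R}|a+b\cdot(x+\phi)|\,dx-E(u;B_R).
\]
The first term bounds $|a|+|b|$ from below, but not directly, since $\phi$ could cancel $b\cdot x$ inside the ball. I would therefore use the tail part instead. Outside $B_{2M'}$ with $M'\gg M$ one has $|a+b\cdot(y+\phi)|\ge |a+b\cdot y|-M|b|$. On a cone of directions where $b\cdot y\ge |b||y|/2$ and $|y|\ge 4M+1$ this gives a lower bound comparable to $|b||y|$ whenever $a\ge0$; the cone with $b\cdot y\le -|b||y|/2$ handles $a\le 0$. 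This yields
\[
\tail(a+b\cdot(x+\phi);B_R)\ \ge\ c^{-1}|b|-c\,|a|\,\mathbf 1\{\ldots\}.
\]
Combining this with the average term, $\dashint_{B_R}|a+b\cdot(x+\phi)|\ge |a|-(R+M)|b|$, suitably weighted, gives $F(a,b)\ge c^{-1}(|a|+|b|)-E(u;B_R)$ with $c=c(n,s,R,M)$.

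Continuity plus coercivity then gives a minimizer $(a_0,b_0)$. Since $F(a_0,b_0)\le F(0,0)=E(u;B_R)$, the bound \eqref{funct:bdd.a_0b_0} follows. The main obstacle is making the coercivity constant uniform in $\phi$, and the tail-cone argument above is what I would try first. A safe fallback is to note that the set $\{(a,b)\}$ with $F\le 2E(u;B_R)+1$ is bounded by a compactness and contradiction argument over $\|\phi\|_\infty\le M$, using weak-$*$ limits of $\phi/|b|\to0$.

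For the second claim, set $G_l(a,b)\coloneqq E(u_l-a-b\cdot(x+\phi_l);B_R)$ and $G_\infty(a,b)\coloneqq E(u_\infty-a-b\cdot x;B_R)$. Since the map $v\mapsto E(v;B_R)$ is sublinear, the triangle inequality gives
\[
|G_l(a,b)-G_\infty(a,b)|\le E(u_l-u_\infty;B_R)+|b|\,E(\phi_l;B_R)\le c\|u_l-u_\infty\|_{L^2(B_M)}+c|b|\varepsilon_l .
\]
Here the support assumption $u_l\equiv0$ outside $B_M$ lets the $L^2$ norm control the tail. This shows $G_l\to G_\infty$ locally uniformly in $(a,b)$.

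Because $(a_l,b_l)\to(a_\infty,b_\infty)$ and $G_\infty$ is continuous, we get $\lim_l G_l(a_l,b_l)=G_\infty(a_\infty,b_\infty)$. For any fixed $(a,b)$ we have $G_l(a_l,b_l)\le G_l(a,b)\to G_\infty(a,b)$, so $G_\infty(a_\infty,b_\infty)\le\inf G_\infty$. The reverse inequality is trivial, which gives \eqref{funct:limit}.
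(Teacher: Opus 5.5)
Your proposal is correct and follows essentially the paper's route. Coercivity comes from a region where $|b\cdot x|$ dominates $M|b|$ on the side matching the sign of $a$. You take a cone at infinity inside the tail, using $s>\frac12$; the paper uses $B_{4M}\cap\{|b\cdot x|>2M|b|\}$, controlled by the average and the tail together. Compactness then gives the minimizer, and the second claim is the same uniform comparison of the functionals combined with minimality.

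The only blemish is the stray term $-c|a|\mathbf{1}\{\ldots\}$ in your tail bound. It is unnecessary, and it would spoil the weighting with the average term if it were really there. The sign-adapted cone in fact gives $\mathrm{Tail}(a+b\cdot(x+\phi);B_R)\ge c^{-1}(|a|+|b|)$ directly.
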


\begin{proof}
First, we observe
\begin{align*}
    \inf_{a\in\bbR,\,b\in\bbR^n}{E}(u-a-b\cdot(x+\phi);B_R)\leq E(u;B_R).
\end{align*}
Thus, there are sequences $(a_k)_k$ and $(b_k)_k$ such that
\begin{align*}
\inf_{a\in\bbR,\,b\in\bbR^n}{E}(u-a-b\cdot(x+\phi);B_R)=\lim_{k\to\infty}E(u-a_k-b_k\cdot(x+\phi);B_R).
\end{align*}
Moreover, by taking $k$ large enough, we may assume
\begin{align*}
    E(u-a_k-b_k\cdot(x+\phi);B_R)\leq E(u;B_R)+C_k
\end{align*}
for some constant $|C_k|\leq1$ and $C_k\to0$ as $k\to\infty$. We then get
\begin{align*}
    E(a_k-b_k\cdot(x+\phi);B_R)\leq 2E(u;B_R)+C_k.
\end{align*}
Thus, we deduce
\begin{align*}
    \int_{B_{4M}\cap \{\left|\frac{b_k}{|b_k|}\cdot x\right|>2M\}}|a_k+b_k\cdot(x+\phi)|\leq c(n,s,R,M)(E(u;B_R)+C_k),
\end{align*}
which implies 
\begin{align}\label{funct:akbk}
    |a_k|+|b_k|\leq c(n,s,R,M)(E(u;B_R)+C_k),
\end{align}
where the constant $c$ is independent of $k$. Up to subsequence, there are $a_0$ and $b_0\in\bbR^n$ such that
\begin{align}\label{funct:limit.akbk} 
    \lim_{k\to\infty}a_k= a_0\quad\text{and}\quad\lim_{k\to\infty}b_k= b_0.
\end{align}
Now using the fact that $||u|-|v||\leq |u-v|$, we have 
\begin{align*}
    |E(u-a_0-b_0\cdot(x+\phi);B_R)-E(u-a_k-b_k\cdot(x+\phi);B_R)|&\leq E(a_0-a_k-(b_0-b_k)\cdot(x+\phi);B_R)\\
    &\leq c(n,s,R,M)(|a_0-a_k|+|b_0-b_k|),
\end{align*}
which implies the existence of $a_0\in\bbR$ and $b_0\in\bbR^n$ such that
\begin{align*}
    \inf_{a\in\bbR,\,b\in\bbR^n}{E}(u-a-b\cdot(x+\phi);B_R)=E(u-a_0-b_0\cdot(x+\phi);B_R).
\end{align*}
Moreover, \eqref{funct:akbk} and \eqref{funct:limit.akbk} together with the fact that $C_k\to 0$ as $k\to\infty$ yields \eqref{funct:bdd.a_0b_0}.

We now prove the second claim. To this end, for any $\delta>0$, there is a large integer $l_0$ such that if $l\geq l_0$, then
    \begin{align}\label{funct:seq}
        \|u_l-u_\infty\|_{L^2(\bbR^n)}+|a_l-a_\infty|+|b_l-b_\infty|+\eps_l\leq \delta.
    \end{align}
    Let $a_0\in\bbR$ and $b_0\in\bbR^n$ satisfy
    \begin{align*}
        \inf_{a\in\bbR,\,b\in\bbR^n}{E}(u_\infty-a-b\cdot x;B_R)={E}(u_\infty-a_0-b_0\cdot x;B_R).
    \end{align*}
    Using \eqref{funct:seq} and $\|\phi_l\|_{L^\infty}\leq \eps_l$, and defining $M_l\coloneqq E(u_l-a_l-b_l\cdot (x+\phi_l);B_R)$, we have
    \begin{align*}
       &E(u_\infty-a_l-b_l\cdot x;B_R)\leq E(u_l-a_l-b_l\cdot (x+\phi_l);B_R)+E(u_l-u_\infty;B_R)+E(b_l\cdot \phi_l;B_R)\leq M_l+c\delta,\\
       &E(u_l-a_0-b_0\cdot x;B_R)\leq E(u_\infty-a_0-b_0\cdot x;B_R)+E(u_l-u_\infty;B_R)\leq E(u_\infty-a_0-b_0\cdot x;B_R)+c\delta
    \end{align*}
    for some constant $c=c(n,s,R)$ independent of $l$. Next by the choices of $a_l,b_l,a_0,b_0$, we derive
    \begin{align*}
       &{E}(u_\infty-a_0-b_0\cdot x;B_R)\leq  E(u_\infty-a_l-b_l\cdot x;B_R)\leq M_l+c\delta,\\
       &M_l\leq E(u_l-a_0-b_0\cdot (x+\phi_l);B_R)\leq E(u_\infty-a_0-b_0\cdot x;B_R)+c\delta,
    \end{align*}
    which implies
    \begin{align*}
 \inf_{a\in\bbR,\,b\in\bbR^n}{E}(u_\infty-a-b\cdot x;B_R)={E}(u_\infty-a_0-b_0\cdot x;B_R)=\lim_{l\to\infty}E(u_l-a_l-b_l\cdot (x+\phi_l);B_R).
    \end{align*}
    Lastly, we have 
    \begin{align*}
        |E(u_l-a_l-b_l\cdot (x+\phi_l);B_R)-E(u_\infty-a_\infty-b_\infty\cdot x;B_R)|&\leq E(u_l-u_\infty;B_R)+E(a_l-a_\infty;B_R)\\
        &\quad+ E((b_l-b_\infty)\cdot x;B_R)+E(b_l\phi_l;B_R).
    \end{align*}
    Using \eqref{funct:seq} and the fact that $\|\phi_l\|_{L^\infty}\leq \eps_l$, we have the desired result \eqref{funct:limit}. This completes the proof.
\end{proof}

\bibliographystyle{alpha}
\bibliography{literature}

\end{document}